\numberwithin{equation}{section}
\newcommand{\qed}{\hfill \ensuremath{\Box}}
\def\XXint#1#2#3{{\setbox0=\hbox{$#1{#2#3}{\int}$}
\vcenter{\hbox{$#2#3$}}\kern-.5\wd0}}
\newcommand{\tr}{\textnormal{tr}}
\newcommand{\ric}{\textnormal{Ric}}
\newcommand{\kod}{\textnormal{kod}}
\newcommand{\omegaphi}{\omega_{\varphi}}
\newcommand{\chiphibar}{\chi_{\bar{\varphi}}}
\newcommand{\omegapsi}{\omega_{\psi}}
\newcommand{\bvarphi}{\bar{\varphi}}
\newcommand{\dbar}{\overline{\partial}}
\newcommand{\ddt}[1]{\frac{\partial #1}{\partial t}}
\newcommand{\PP}{\mathcal{P}}
\newcommand{\OO}{\mathcal{O}}
\newcommand{\II}{\mathcal{I}}
\newcommand{\FF}{\mathcal{F}}
\newcommand{\AAA}{\mathcal{A}}
\newcommand{\Xcr}{ X_{can}^{\circ}}
\newcommand{\Xr} {X^{\circ}}
\newcommand{\hchi}{\hat{\chi}}
\newcommand{\hvarphi}{\hat{\varphi}}
\newcommand{\hF}{\hat{F}}
\newcommand{\hOmega}{\hat{\Omega}}
\newcommand{\ddbar}{\sqrt{-1}\partial\dbar}
\begin{document}
\newcounter{remark}
\newcounter{theor}
\setcounter{remark}{0} \setcounter{theor}{1}
\newtheorem{claim}{Claim}
\newtheorem{theorem}{Theorem}[section]
\newtheorem{proposition}{Proposition}[section]
\newtheorem{lemma}{Lemma}[section]
\newtheorem{definition}{Definition}[section]
\newtheorem{corollary}{Corollary}[section]
\newenvironment{proof}[1][Proof]{\begin{trivlist}
\item[\hskip \labelsep {\bfseries #1}]}{\end{trivlist}}
\newenvironment{remark}[1][Remark]{\addtocounter{remark}{1} \begin{trivlist}
\item[\hskip \labelsep {\bfseries #1
\thesection.\theremark}]}{\end{trivlist}}

\centerline{\bf\large CANONICAL  MEASURES AND K\"AHLER-RICCI FLOW
 \footnote{Research supported in
part by National Science Foundation grants DMS-0604805 and DMS-0302744.}}

\bigskip

\begin{center}{\large Jian Song$^{*}$   ~  and  ~ Gang Tian$^{\dagger}$}

\bigskip

\begin{abstract} We show that the K\"ahler-Ricci flow on an algebraic  manifold of positive Kodaira dimension and semi-ample canonical line bundle converges to a unique canonical metric on its canonical model. It is also shown that there exists a canonical measure of analytic Zariski decomposition on an algebraic manifold of positive Kodaira dimension. Such a canonical measure is unique and invariant under birational transformations under the assumption of the finite generation of canonical rings.

\end{abstract}

\bigskip

\end{center}

{\footnotesize \tableofcontents}

\bigskip
\bigskip

\section{Introduction}

It has been the subject of intensive study over the last few
decades to study the existence of K\"ahler-Einstein metrics on a compact K\"ahler
manifold, following Yau's solution to the Calabi conjecture (cf.
\cite{Ya2}, \cite{Au}, \cite{Ti2}, \cite{Ti3}). The Ricci flow (cf. \cite{Ha1, Ch}) provides a canonical deformation of K\"ahler metrics in K\"ahler geometry. Cao \cite{Ca} gave an alternative proof
of the existence of K\"ahler-Einstein metrics on a compact
K\"ahler manifold with trivial or negative first Chern class by the K\"ahler-Ricci flow.
However, most algebraic manifolds do not have a definite or
trivial first Chern class. It is a natural question to ask if
there exist any well-defined canonical metrics on these manifolds
or on varieties canonically associated to them.
Tsuji \cite{Ts1} applied the K\"ahler-Ricci flow and proved the existence of a canonical singular K\"ahler-Einstein metric on a minimal algebraic manifold of general type. It was the first attempt to relate the K\"ahler-Ricci flow and canonical metrics to the minimal model program. Since then, many interesting results have been achieved in this direction. The long time existence of the K\"ahler-Ricci flow on a minimal algebraic manifold with any initial K\"ahler metric is established in \cite{TiZha}. The regularity problem of the canonical singular K\"ahler-Einstein metrics on minimal algebraic manifolds of general type is intensively studied in \cite{Zh, EyGuZe1}.

In this paper, we propose a program of finding canonical measures on algebraic varieties of positive Kodaira dimension. Such a canonical measure can be considered as a birational invariant and it induces a canonical singular metric on the canonical model, generalizing the notion of K\"ahler-Einstein metrics.

Let $X$ be an $n$-dimensional compact K\"ahler manifold. A
K\"ahler metric can be given by its K\"ahler form $\omega$ on $X$.
In local coordinates $z_1, ..., z_n$, we can write $\omega$ as
$$\omega=\sqrt{-1}\sum_{i, j=1}^n
g_{i\bar{j}}dz_i\wedge dz_{\bar{j}},$$ where $\{g_{i\bar{j}}\}$ is
a positive definite hermitian matrix function.
Consider the normalized K\"ahler-Ricci flow
\begin{equation}
\label{krflow1}
\left\{
\begin{array}{rcl}
&& {\displaystyle \ddt{ \omega(t,\cdot)} = -
 \ric(\omega(t,\cdot))- \omega(t,\cdot)},\\
&&\\
&&\omega(0,\cdot)=\omega_0,
\end{array} \right.
\end{equation}
where $\omega(t,\cdot)$ is a family of K\"ahler metrics on $X$,
$ \ric(\omega(t,\cdot))$ denotes the Ricci curvature of
$\omega(t,\cdot)$ and $\omega_0$ is a given K\"ahler metric.

Let $X$ be a minimal algebraic manifold. If the canonical line bundle  $K_X$ of $X$ is ample and $\omega_0$
represents $[K_X]$, it is proved in \cite{Ca} that (\ref{krflow1}) has
a global solution $\omega(t,\cdot)$ for all $t\geq 0$ and
$\omega(t,\cdot)$ converges to a unique K\"ahler-Einstein metric of on $X$.
Tsuji showed in \cite{Ts1}  that (\ref{krflow1}) has a global solution
$\omega(t,\cdot)$ under
the assumption that the initial K\"ahler class $[\omega_0]
> [K_X]$.
This additional assumption was removed in \cite{TiZha}, moreover,
if $K_X$ is also big, $\omega(t,\cdot)$ converges to a singular
K\"ahler-Einstein metric with locally bounded K\"ahler potential
as $t$ tends to $\infty$ (see \cite{Ts1, TiZha}).

If $K_X$ is not big,  the Kodaira dimension of $X$ is smaller than its complex dimension. In particular, when $X$ is a minimal K\"ahler surface of  Kodaira dimension $1$,
it must be a minimal elliptic surface and does not
admit any K\"ahler-Einstein current in $-c_1(X)$, with
bounded local potential smooth outside a
subvariety. Hence, one does not expect that $\omega(t,\cdot)$
converges to a smooth K\"ahler-Einstein metric outside a subvariety of $X$ in general.

Let $f: X\rightarrow X_{can}$ be a minimal elliptic surface of
$\kod(X)=1$. Suppose all the singular fibres are given by $X_{s_1}=f^{-1}(s_1)$, ... ,
$X_{s_k}= f^{-1}(s_k)$ of multiplicity $m_i\in \mathbf{N}$, $i=1,...,
k$. In \cite{SoTi}, the authors proved that the K\"ahler-Ricci flow on $X$ converges  for any initial K\"ahler metric to a positive current
$\omega_{can}$ on the canonical model $X_{can}$ satisfying
\begin{equation}\label{elliptic-ke}
\ric(\omega_{can})=-\omega_{can}+\omega_{WP}+ 2\pi \sum_{i=1}^k\frac{m_i
-1}{m_i}[s_i],
\end{equation}
where $\omega_{WP}$ is the induced Weil-Petersson metric and
$[s_i]$ is the current of integration associated to the divisor
$s_i$ on $X_{can}$.  $\omega_{can}$ is called a generalized
K\"ahler-Einstein metric on $X_{can}$.
Moreover, the K\"ahler-Ricci flow is collapsing onto $X_{can}$ exponentially fast with uniformly bounded scalar curvature  away from the singular fibres.

The first result of this paper is to generalize the above convergence result on the K\"ahler-Ricci flow to algebraic manifolds of positive Kodaira dimension and semi-ample canonical bundle.

Let $X$ be an $n$-dimensional
algebraic manifold of Kodaira dimension $0<\kappa < n$. We assume that canonical line bundle $K_X$ is semi-ample, then the canonical ring $R(X, K_X)$ is finitely generated and the pluricanonical system induces an algebraic fibre space $f: X \rightarrow X_{can}$. Each nonsingular fibre of $f$ is a nonsingular Calabi-Yau manifold. We denote by $X_{can}^{\circ}$ the set of all nonsingular points $s\in X_{can}$ such that $f^{-1}(s)$ is a nonsingular fibre and let $X^{\circ}= f^{-1} (X_{can}^{\circ})$.
The $L^2$-metric
on the moduli space of nonsingular Calabi-Yau manifolds induces a semi-positive $(1,1)$-form $\omega_{WP}$ of Weil-Petersson type on
$X_{can}^{\circ}$.  We will study the K\"ahler-Ricci flow starting from any K\"ahler
metric and describe its limiting behavior as time goes to infinity.

\bigskip

\noindent{\bf Theorem A~} {\it
Let $X$ be a nonsingular algebraic variety with semi-ample
canonical line bundle $K_X$ and so $X$ admits an algebraic fibration  $f:
X\rightarrow X_{can}$ over its canonical model $X_{can}$.  Suppose $0< \dim X_{can}= \kappa < \dim X = n$. Then for any initial K\"ahler metric, the
K\"ahler-Ricci flow (\ref{krflow1}) has a global solution
$\omega(t,\cdot)$ for all time $t\in [0, \infty)$ satisfying:

\begin{enumerate}

\item $\omega(t,\cdot)$ converges to $f^*\omega_{can}\in -2\pi
c_1(X)$ as currents for a positive closed $(1,1)$-current $\omega_{can}$ on
$X_{can}$ with continuous local potential.

\item $\omega_{can}$ is smooth on $X_{can}^{\circ}$ and satisfies
the generalized K\"ahler-Einstein equation on $X_{can}^{\circ}$
\begin{equation}
\ric(\omega_{can})=-\omega_{can}+\omega_{WP}.
\end{equation}

\item for any compact subset $K\in X_{can}^{\circ}$, there is a constant
$C_K$ such that
\begin{equation}
||R(t, \cdot)||_{L^\infty(f^{-1}(K))}+ e^{(n- \kappa) t}
\sup_{s \in K} ||\omega^{n-\kappa}(t,\cdot) |_{X_s}||_{L^\infty(X_s)} \le C_K,
\end{equation}
where $X_s = f^{-1}(s)$.

\end{enumerate}
}

Therefore, the K\"ahler-Ricci collapses onto the canonical model with bounded scalar curvature away from the singular fibres and the volume of each nonsingular fibre tends to $0$ exponentially fast. In fact, the local potential of $\omega(t, \cdot)$ converges on $X^{\circ}$ locally in $C^0$-topology (cf. Proposition \ref{uniconv}). It should also converge locally in $C^{1,1}$-topology on $X^{\circ}$ as in the surface case (cf. \cite{SoTi}) and this will be studied in detail in a forthcoming paper.

Similar phenomena also appears in the real setting as a special type-III Ricci flow solution without the presence of singular fibres. It is discovered and intensively studied in \cite{Lo}.

The abundance conjecture in algebraic geometry predicts that the canonical line bundle is semi-ample if it is nef. If the abundance conjecture is true, then  Theorem A immediately implies that on all nonsingular minimal models of positive Kodaira dimension the K\"ahler-Ricci flow converges  to a unique canonical metric on their canonical model.

In general, the canonical line bundle of an algebraic manifold of positive Kodaira dimension is not necessarily semi-ample or even nef. The minimal model program in birational geometry deals with the classification of algebraic varieties and aims to choose a minimal model in each birational equivalence class. Tsuji claimed in \cite{Ts3} that there exists a singular K\"ahler-Einstein metric of analytic Zariski decomposition on algebraic manifolds of general type without assuming the finite generation of the canonical ring. Such a metric is constructed through a family of K\"ahler-Einstein metrics as the limits of a parabolic Monge-Amp\`ere equation of Dirichlet type. The approach is interesting but rather complicated.
The recent exciting development in the study of degenerate complex Monge-Amp\`ere equations (cf. \cite{Ko1, Zh, EyGuZe1}) enables the authors to give an  independent and correct proof.
\bigskip

\noindent{\bf Theorem B.1~} \label{keazd}  {\it Let $X$ be an algebraic manifold of general type. Then there exists a measure $\Omega_{KE}$ on $X$  such that

\begin{enumerate}

\item $(K_X, \Omega_{KE}^{-1})$ is an analytic Zariski
decomposition.

\item  Let $\omega_{KE} = \ddbar \log \Omega_{KE}$ be the closed positive $(1,1)$ current on $X$. Then there exists a non-empty Zariski open subset $U$  of
$X$ such that  $\ric(\omega_{KE})= \ddbar\log (\omega_{KE})^n$ is well-defined on $U$ and

$$\ric(\omega_{KE})=-\omega_{KE}.$$

\end{enumerate}

}

The proof of Theorem B.1 is given in Section 4.3. The existence of such a canonical K\"ahler-Einstein metric is also considered by Siu in \cite{Si1} as an alternative approach to attack the problem of the finite generation of canonical rings. A degenerate Monge-Amp\`ere equation of Dirichlet type is considered and the solution is expected to be unique. Indeed, if the canonical rings are finitely generated, such a solution coincide with the K\"ahler-Einstein metrics constructed in Theorem B.1. We hope that Theorem B.1 might help to gain more understanding of the finite generation of canonical rings from an analytic point of view.
Theorem B.1 can be generalized to algebraic manifolds of positive Kodaira dimension.

\bigskip

\noindent{\bf Theorem B.2~} {\it Let $X$ be an $n$-dimensional algebraic manifold of Kodaira dimension $0<\kappa<n$. There exists a measure $\Omega_{can}$  on $X$ bounded above such that
$\left(K_X, \Omega_{can}^{-1}\right)$ is an analytic Zariski decomposition.
Let $\Phi^{\dagger}: X^{\dagger} \rightarrow Y^{\dagger}$ be any Iitaka fibration of $X$ with $\pi^{\dagger}: X^{\dagger} \rightarrow X$ and $\Omega^{\dagger} = (\pi^{\dagger})^* \Omega_{can}$ . Then

\begin{enumerate}

\item $\left(K_{X^{\dagger}}, \left( \Omega^{\dagger} \right)^{-1} \right)$ is an analytic Zariski decomposition.

\item There exists a closed positive $(1,1)$-current $\omega^{\dagger}$   on $Y^{\dagger}$ such that $\left(\Phi^{\dagger}\right)^*\omega^{\dagger}= \ddbar \log \Omega^{\dagger}$ on a Zariski open set of $X^{\dagger}$. Furthermore, we have

    \begin{equation}
    (\omega^{\dagger})^{\kappa} = (\Phi^{\dagger})_* \Omega^{\dagger},
    \end{equation}
on $Y^{\dagger}$ and so  on a Zariski open set of $Y^{\dagger}$,
    \begin{equation}
    \ric( \omega^{\dagger} )=- \omega^{\dagger} +\omega_{WP}.
    \end{equation}

\end{enumerate}

}

\bigskip
The definition of $\omega_{WP}$ in Theorem B.2 is given in Section 4.4 and it is a generalization of the Weil-Petersson form induced from an algebraic  deformation space of algebraic manifolds of Kodaira dimension $0$.

In fact,  the hermitian metric $\Omega_{can}^{-1}$ (also $\Omega_{KE}^{-1}$) on $K_X$ constructed in the proof of Theorem B.2 (also Theorem B.1) has stronger properties than  the analytic Zariski decomposition. Let  $$\Psi_{X, \epsilon}=  \sum_{m=1}^{\infty} \sum_ {j=0}^{d_m} \epsilon_{m,j} |\sigma_{m,j}|^{\frac{2}{m}} , $$ where $\{ \sigma_{m,j}\}_{j=0}^{d_m}$ spans $H^0(X, mK_X)$ and $\{\epsilon_{m,j}>0 \}$ is a sequence such that $\Psi_{X, \epsilon}$ is convergent. $\Psi_{X, \epsilon}$ is a measure or a semi-positive $(n,n)$-current on $X$. Then from the construction of $\Omega_{can}$ in the proof of Theorem B.2 (also Theorem B.1)

\begin{equation} \frac{\Psi_{X, \epsilon}}{\Omega_{can}} < \infty.
\end{equation}

If the canonical ring $R(X, K_X)$ is finitely generated, one can replace $\Psi_{X, \epsilon}$ by

$$\Psi_{X}= \sum_{m=0}^M \sum_{j=0}^{d_m} |\sigma_{m,j}|^{\frac{2}{m}}$$
for some $M$ sufficiently large.

Recently, the finite generation of canonical rings on algebraic varieties of general type was proved independently by \cite{BiCaHaMc} and \cite{Si2}. By assuming the finite generation of canonical rings, Theorem B.1 and B.2 can be strengthened and the proof can be very much simplified. It turns out that the canonical measure in Theorem B.1 and B.2 is unique and invariant under birational transformations.

\bigskip

\noindent{\bf Theorem C.1~} {\it Let $X$ be an algebraic manifold of general type. If the canonical ring $R(X, K_X)$ is finitely generated, the K\"ahler-Einstein measure constructed in Theorem B.1 is continuous on $X$ and smooth on a Zariski open set of $X$. Furthermore, it is the pullback of the unique canonical K\"ahler-Einstein measure $\Omega_{KE}$ from $X_{can}$ satisfying \begin{equation} (\ddbar\log \Omega_{KE})^n = \Omega_{KE}.\end{equation}
}

\bigskip

The unique K\"ahler-Einstein metric with continuous local potential and the associated K\"ahler-Einstein measure on the canonical model $X_{can}$ is constructed in \cite{EyGuZe1}. The K\"ahler-Einstein measure in Theorem C.1 is invariant under birational transformations and so it can be considered as a birational invariant. Theorem C.1 can also be generalized to all algebraic manifolds of positive Kodaira dimension.

\bigskip\medskip

\noindent{\bf Theorem C.2~} {\it Let $X$ be an $n$-dimensional algebraic manifold of Kodaira dimension $0<\kappa<n$. If the canonical ring $R(X, K_X)$ is finitely generated, then there exists a unique canonical measure $\Omega_{can}$ on $X$ satisfying

\begin{enumerate}

\item  $0< \frac {\Psi_X} { \Omega_{can}}  < \infty$.

\item $\Omega_{can}$ is continuous on $X$ and  smooth on a Zariski open set of $X$.

\item Let  $\Phi: X \dasharrow X_{can}$ be the  pluricanonical map. Then there exists a unique closed positive $(1,1)$-current  $\omega_{can}$ with bounded local potential on $X_{can}$ such that  $\Phi^* \omega_{can}= \ddbar\log \Omega_{can}$ outside the base locus of the pluricanonical system. Furthermore, on $X_{can}$
$$ ( \omega_{can} )^{\kappa} = \Phi_* \Omega_{can},$$
and
$$\ric(\omega_{can})= -\omega_{can} + \bar{\omega}_{WP}.$$
\end{enumerate}

\noindent
In particular,  $\Omega_{can}$ is invariant under birational transformations.

}

\bigskip

$\bar{\omega}_{WP}$ is defined in Section 4.5 (cf. Definition \ref{push2}) and it coincides with $\omega_{WP}$ in Theorem B.2 on a Zariski open set of $X_{can}$ by choosing $Y^{\dagger}$ to be $X_{can}$. Theorem C.1 and Theorem 3.2 are proved in Section 4.5.

\bigskip


\section{Preliminaries}


\subsection{Kodaira dimension and semi-ample fibrations}

Let $X$ be an $n$-dimensional compact complex algebraic manifold and $ L \rightarrow X $ a holomorphic line bundle over $X$. Let $N( L) $ be the semi-group defined by $$N(L) = \{ m\in \mathbf{N} ~ | ~ H^0(X , L^m) \neq  0 \}.$$

Given any $m\in N(L)$,  the linear system $|L^m|= \mathbf{P} H^0(X, L^m)$ induces a rational map $\phi_m$
$$\Phi_m ~ : ~ X  \dashrightarrow \mathbf{CP}^{d_m}$$
by any basis $\{ \sigma_{m,0},~ \sigma_{m,1}, ~... ~, \sigma_{m, d_m} \}$ of  $H^0(X, L^m )$

$$\Phi_m (z) = \left[ \sigma_{m,0},~ \sigma_{m,1}, ~... ~, \sigma_{m, d_m} (z) \right], $$
where $d_m + 1 = \dim H^0(X, L^m) $.
Let $Y_m = \Phi_m (X) \subset \mathbf{CP}^{d_m}$ be the image of the closure of the graph of $\Phi_m$.

\begin{definition} The Iitaka dimension of $L$ is defined to be

$$\kappa(X, L) = \max _{ m\in N(L)} \{ \dim Y_m \}$$ if $N(L) \neq \phi$,  and
$\kappa(X, L) = -\infty $ if $N(L)= \phi$.

\end{definition}

\begin{definition}
Let $X$ be an algebraic manifold and $K_X$ the canonical line bundle over $X$. Then the Kodaira dimension $\kod(X)$ of $X$ is defined to be

$$\kod(X) = \kappa (X, K_X).$$

\end{definition}

The Kodaira dimension  is a birational invariant of an algebraic variety and the Kodaira dimension of a singular variety is  equal to that of its smooth model.

\begin{definition}

Let $L \rightarrow X$ be a holomorphic line bundle  over a compact algebraic manifold $X$. $L$ is called semi-ample if $L^m$ is globally generated for some $m>0$.

\end{definition}

For any $m\in \mathbf{N}$ such that $L^m$ is globally generated,  the linear system $|L^m|$ induces a holomorphic map $\Phi_m$
$$\Phi_m ~ : ~ X  \rightarrow \mathbf{CP}^{d_m}$$
by any basis  of  $H^0(X, L^m )$.
Let $Y_m = \Phi_m (X)$ and so $\Phi_m$ can be  considered as

$$ \Phi_m ~ : ~ X  \rightarrow Y_m .$$

The following theorem is well-known (cf. \cite{La, Ue}).

\begin{theorem}\label{safibration}

Let $L \rightarrow X$ be a semi-ample line bundle over an algebraic manifold $X$. Then there is an algebraic fibre space
$$\Phi_{\infty} : X \rightarrow Y $$
such that for any sufficiently large integer $ m $ with $L^m$ being globally generated,
$$Y_m = Y ~~~~ and ~~~~ \Phi_m = \Phi_{\infty}, $$
where $Y$ is a normal algebraic variety.
Furthermore, there exists an ample line bundle $A$ on $Y$ such that $ L^m = (\Phi_{\infty})^* A$.

\end{theorem}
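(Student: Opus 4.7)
The plan is to use the machinery of Stein factorization together with the finite generation of the section ring $R(X,L)=\bigoplus_{m\geq 0}H^0(X,L^m)$, which is automatic for a semi-ample line bundle. Since $L$ is semi-ample, fix some $m_0 \in \mathbf{N}$ with $L^{m_0}$ globally generated, so that $\Phi_{m_0}: X \to Y_{m_0} \subset \mathbf{CP}^{d_{m_0}}$ is a morphism. Apply Stein factorization to write
\[
\Phi_{m_0}: X \xrightarrow{\;\Phi_{\infty}\;} Y \xrightarrow{\;\nu\;} Y_{m_0},
\]
where $\Phi_{\infty}$ has connected fibres and $\nu$ is finite. Because $X$ is smooth (hence normal) and $\Phi_{\infty}$ is proper with $(\Phi_{\infty})_*\OO_X = \OO_Y$, the variety $Y$ is normal. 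This produces the candidate algebraic fibre space $\Phi_{\infty}: X \to Y$.

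Next I would establish the stabilization claim. The ring $R(X,L)$ is finitely generated, so there exists $m_1$ such that for every $m$ with $L^m$ globally generated and divisible by $m_1$, the multiplication map $\mathrm{Sym}^k H^0(X,L^m) \to H^0(X,L^{km})$ is surjective for all $k$, and $\mathrm{Proj}\,R(X,L^m) = Y$. Using this, I would argue that for any sufficiently large and divisible $m$, two points $x,x' \in X$ satisfy $\Phi_m(x)=\Phi_m(x')$ if and only if every section in $H^0(X,L^m)$ takes the same value on them (in the sense of the projective quotient), which by the finite generation is equivalent to $\Phi_{\infty}(x)=\Phi_{\infty}(x')$. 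Consequently the fibres of $\Phi_m$ agree with those of $\Phi_{\infty}$, and one gets a commutative diagram giving a bijective finite map $Y \to Y_m$; normality of $Y$ and Zariski's main theorem then force $Y_m = Y$ and $\Phi_m = \Phi_{\infty}$.

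For the descent of $L^m$ to an ample line bundle on $Y$, the key observation is that $L^m$ is trivial along each connected fibre of $\Phi_{\infty}$: the sections in $H^0(X,L^m)$ are constant on fibres of $\Phi_{\infty}$ (they are pullbacks of sections under the finite map $\nu$ composed with the coordinates of $\mathbf{CP}^{d_m}$, and Stein factorization makes them constant along connected fibres). A standard descent argument then produces a line bundle $A_m$ on $Y$ with $L^m = \Phi_{\infty}^* A_m$. Taking $m$ large enough that $\Phi_m$ coincides with $\Phi_{\infty}$, the line bundle $A_m$ is generated by global sections and separates points of $Y$ after a further power (since $\Phi_{km}$ stabilizes too); by the projective embedding it defines, $A_m$ is in fact very ample, hence ample. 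Renaming $A = A_m$ gives the final conclusion.

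The main obstacle I anticipate is the stabilization argument in the second paragraph: one must rule out the possibility that different choices of large $m$ yield non-isomorphic images $Y_m$ or different fibre structures. The cleanest route is to invoke finite generation of $R(X,L)$ and identify $Y$ with $\mathrm{Proj}\,R(X,L)$ intrinsically, after which $\Phi_m=\Phi_{\infty}$ for all sufficiently divisible $m$ is essentially tautological; extending this from "sufficiently divisible" to "all sufficiently large $m$ with $L^m$ globally generated" then requires the finite-map argument sketched above. Everything else—normality, existence of ample descent, and the equality $L^m = \Phi_{\infty}^* A$—follows from standard descent and ampleness criteria.
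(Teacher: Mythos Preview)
The paper does not prove this theorem; it simply records it as well known and cites \cite{La, Ue}. So there is no in-paper argument to compare against, and your sketch is essentially the standard proof one finds in those references (Stein factorization, stabilization of the images $Y_m$, descent of $L^m$ to an ample bundle on $Y$).

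One minor caution: you invoke finite generation of $R(X,L)$ as an \emph{input} to the stabilization step, but in most treatments this is a \emph{consequence} of the fibration theorem rather than a hypothesis, so as written the argument skirts circularity. The cleaner route (and the one in Lazarsfeld) is a direct Noetherian argument: for $m$ ranging over multiples of a fixed $m_0$ with $L^{m_0}$ free, the fibres of $\Phi_m$ through a given point form a decreasing chain of closed subvarieties, hence stabilize; combined with the factorizations $Y_{km_0}\to Y_{m_0}$ this forces the maps to stabilize. Alternatively, you can justify finite generation independently via the projection formula applied to $\Phi_{m_0}$ and coherence of $(\Phi_{m_0})_*\mathcal{O}_X$, which removes the circularity. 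Either way your outline is sound.
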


If $L$ is semi-ample, the graded ring $R(X, L) = \oplus_{m\geq 0} H^0( X, L^m)$ is finitely generated and so $R(X , L)=\oplus_{m\geq 0} H^0(X, L^m)$ is the coordinate ring of $Y$.

\begin{definition} \label{iitaka}
Let $L \rightarrow X$ be a semi-ample line bundle over an algebraic manifold $X$. Then the algebraic fibre space
$\Phi_{\infty}: X \rightarrow Y$  as in Theorem \ref{safibration} is called the Iitaka fibration associated to $L$ and it is completely determined by the linear system $|L^m|$ for sufficiently large $m$.

In particular, if the canonical bundle $K_X$ is semi-ample, the algebraic fibre space  associated to $K_X$
$$f: X \rightarrow X_{can}$$
is called the Iitaka fibration of $X$, where $f=\Phi_{\infty}$ and $X_{can}$ is called the canonical model of $X$.
\end{definition}


\subsection{Iitaka fibrations}

In general, the canonical line bundle is not necessarily semi-ample, and the asymptotic behavior of the pluricanonical maps is characterized  by the following fundamental theorem on Kodaira dimensions due to Iitaka (cf. \cite{Ue}).

\begin{theorem} Let $X$ be an $n$-dimensional algebraic manifold of positive Kodaira dimension. Then for all sufficiently large $m\in N(K_X)$, the pluricanonical maps $\Phi_m: X \rightarrow Y_m$ are birationally equivalent to an algebraic fibre space
$$\Phi^{\dagger} : X^{\dagger} \rightarrow Y^{\dagger}$$ unique up to birational equivalence satisfying

\begin{enumerate}

\item  There exists a commutative diagram for sufficiently large $m\in N(K_X)$  \begin{equation}
\begin{diagram}
\node{X} \arrow{s,l,..}{{\small \Phi_m} }     \node{X^{\dagger}}  \arrow{w,t}{\pi^{\dagger}} \arrow{s,r}{{\small \Phi^{\dagger}}}\\
\node{Y_m}      \node{Y^{\dagger}} \arrow{w,b,..}{\mu_m}
\end{diagram}
\end{equation}
of rational maps with $\pi^{\dagger}$ and $\mu_m$ being birational.

\item $\dim Y^{\dagger} = \kod(X)$.

\item A very general fibre of $\Phi^{\dagger}$ has Kodaira dimension $0$.

\end{enumerate}

\end{theorem}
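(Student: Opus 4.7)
The plan is to follow Iitaka's classical construction: build $\Phi^{\dagger}$ as a birational model of a sufficiently high pluricanonical map, and then analyze its very general fibres using pluricanonical sections.

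First I would fix $m_{0} \in N(K_{X})$ sufficiently large and divisible so that $\dim Y_{m_{0}} = \kappa$; this is possible because the image dimension of $\Phi_{m}$ stabilizes at $\kappa = \kod(X)$ once $m \in N(K_{X})$ is large enough (this is essentially how $\kappa$ is characterized). I would then resolve the base locus of $|K_{X}^{m_{0}}|$ by a sequence of blowups $\pi^{\dagger}: X^{\dagger}\to X$ so that the mobile part of $(\pi^{\dagger})^{*}|K_{X}^{m_{0}}|$ becomes base-point free, yielding an honest morphism $\widetilde{\Phi}: X^{\dagger}\to Y_{m_{0}}$. Applying Stein factorization $\widetilde{\Phi} = h \circ \Phi^{\dagger}$ produces $\Phi^{\dagger}: X^{\dagger}\to Y^{\dagger}$ with connected fibres onto a normal projective variety $Y^{\dagger}$, together with a finite map $h: Y^{\dagger}\to Y_{m_{0}}$. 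Item (2) follows at once, since $\dim Y^{\dagger} = \dim Y_{m_{0}} = \kappa$.

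Next I would establish the birational compatibility in (1). For any sufficiently large $m \in N(K_{X})$ I consider $\Phi_{m} \circ \pi^{\dagger}$. Because $\Phi_{m_{0}} \circ \pi^{\dagger} = h \circ \Phi^{\dagger}$ is constant on each connected fibre $F$ of $\Phi^{\dagger}$, the sections of $(\pi^{\dagger})^{*}K_{X}^{m}$ that span the corresponding linear system have proportional restrictions to $F$ (or vanish simultaneously there); hence $\Phi_{m} \circ \pi^{\dagger}$ contracts $F$ to a point. Combined with $\dim Y_{m} = \kappa = \dim Y^{\dagger}$ for all sufficiently large $m$, this gives a birational rational map $\mu_{m}: Y^{\dagger}\dashrightarrow Y_{m}$ making the square commute. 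The uniqueness up to birational equivalence of $\Phi^{\dagger}$ then follows because any two such stabilizations are both birational to $(X,\Phi_{m})$ for the same large $m$.

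The hard step is (3). By Iitaka's \emph{easy addition} inequality $\kod(X) \le \dim Y^{\dagger} + \kod(F)$, together with $\dim Y^{\dagger} = \kappa = \kod(X)$, one immediately gets $\kod(F) \ge 0$ for a very general smooth fibre. For the reverse inequality I would argue by contradiction: if $\kod(F) \ge 1$, then using the adjunction $K_{X^{\dagger}}|_{F} = K_{F}$ (valid on a smooth general fibre) and pushing forward the relative pluricanonical sheaves $\Phi^{\dagger}_{*}(NK_{X^{\dagger}})$, one produces enough extra sections of $NK_{X^{\dagger}}$ for some large $N$ to force $\dim Y_{N} > \kappa$, contradicting the definition of $\kod(X)$. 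The technical core is an asymptotic lower bound on $\dim H^{0}(X^{\dagger}, N K_{X^{\dagger}})$ via the Leray spectral sequence for $\Phi^{\dagger}$, showing that fibre-wise growth of order $N^{\kod(F)}$ multiplies the base-wise growth of order $N^{\dim Y^{\dagger}}$; this relative growth estimate, which must be done carefully enough to distinguish $\kappa$ from $\kappa + \kod(F)$, is the main obstacle.
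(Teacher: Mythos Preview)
The paper does not prove this theorem. It appears in Section~2.2 as a ``fundamental theorem on Kodaira dimensions due to Iitaka'' with a citation to Ueno's lecture notes, and no argument is supplied. So there is nothing in the paper to compare your proof against; the result is treated as a standard preliminary fact.

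On its own merits your outline follows the classical construction (resolve the base locus, take Stein factorization, analyze the very general fibre), which is the standard route. One logical gap is worth flagging. In your argument for item~(1) you claim that sections of $(\pi^{\dagger})^{*}K_X^{m}$ have proportional restrictions to a fibre $F$ of $\Phi^{\dagger}$ \emph{because} $\Phi_{m_0}\circ\pi^{\dagger}$ is constant on $F$. But that hypothesis only controls the $m_0$-pluricanonical sections, not the $m$-pluricanonical ones for an arbitrary large $m$: the linear system $|mK_X|$ may well contain sections that are not products coming from $|m_0K_X|$. In the standard proof item~(3) is established first; once $\kod(F)=0$ is known, one has $h^0(F, mK_F)\le 1$ for every $m\ge 1$, and then the proportionality of restricted sections --- hence the commutative diagram in (1) --- follows immediately. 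So your two steps should be taken in the opposite order. A second point: you should verify that the finite map $h:Y^{\dagger}\to Y_{m_0}$ produced by Stein factorization is generically of degree one (i.e.\ birational), which is where choosing $m_0$ large enough so that the general fibre of $\Phi_{m_0}$ is already connected becomes relevant.
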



\subsection{Analytic Zariski decomposition}

Let $X$ be a compact complex manifold and $L$ be a holomorphic
line bundle on $X$ equipped with a smooth hermitian metric $h_0$.

A singular hermitian metric $h$ on $L$ is given by
$$h=h_0e^{-\varphi}$$ for some $\varphi\in L^1(M)$.

Let $\Theta_{h_0}$ be the curvature of $h_0$ defined by
$$\Theta_{h_0}=-\ddbar \log h_0.$$ Then the curvature $\Theta_h$  of $h$
as a current is defined by
$$\Theta_h=\Theta_{h_0}+\ddbar \varphi.$$

\begin{definition}
$L$ is called pseudoeffective if there exists a singular hermitian
metric $h$ on $L$ such that the curvature $\Theta_h$ is a closed
positive current. Let $$\PP_{h_0}(X)=\{ \varphi\in L^1(X)~|~
\Theta_{h_0}+\ddbar \varphi \geq 0 ~{\rm as ~ current} \}.$$

\end{definition}
 
\begin{definition}Let $\varphi\in \PP_{h_0}(X)$ and $h=h_0 e^{-\varphi}$. The
multiplier ideal sheaf $\II(h)\subset \OO_X(L)$ or $\II(\varphi)$
is defined by

$$\Gamma(U, \II(h))=\{ f\in \Gamma(U,
\OO_X(L))~|~|f|^2_{h_0}e^{-\varphi}\in L_{loc}^1(U)\}.$$

\end{definition}

The notion of analytic Zariski decomposition is an analytic analog of Zariski decomposition and it is introduced in \cite{Ts1} to study  a pseudoeffective line bundle.

\begin{definition}
 A singular hermitian metric $h$ on $L$ is an analytic Zariski
 decomposition  if

 \begin{enumerate}

\item $\Theta_h$ is a closed semi-positive current,

\item for every $m\geq 0$, the natural inclusion

$$H^0(X, \OO_X(mL)\otimes \II(h^m))\rightarrow H^0(X, \OO_X(mL))$$
is an isomorphism.

 \end{enumerate}

\end{definition}


\subsection{Complex Monge-Amp\`ere equations}

Let $X$ be an $n$-dimensional K\"ahler manifold and let $\omega$ be a smooth closed semi-positive $(1,1)$-form. $\omega$ is K\"ahler if it is positive and $\omega$ is called big if $[\omega]^n = \int_X \omega^n >0$.

\begin{definition} A quasi-plurisubharmonic function associated to $\omega$ is a function $\varphi: X \rightarrow [-\infty, \infty)$ such that for any smooth local potential $\psi$ of $\omega$, $\psi+\varphi$ is plurisubharmonic.  We denote by $PSH(X, \omega)$ the set of all quasi-plurisubharmonic functions associated to $\omega$ on $X$.

\end{definition}

The following comparison principle for quasi-plurisubhharmonic functions on compact K\"ahler manifolds is well-known.
\begin{theorem}\label{kol} Let $X$ be an $n$-dimensional K\"ahler manifold.   Suppose $\varphi,~\psi \in PSH(X, \omega)$ for a big smooth closed semi-positive $(1,1)$-form $\omega$. Then

$$\int_{\varphi<\psi} (\omega + \ddbar \psi)^n \leq \int_{\varphi < \varphi} (\omega + \ddbar \phi)^n.$$

\end{theorem}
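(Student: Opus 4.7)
The plan is to follow the classical Bedford--Taylor/Koł{}odziej strategy: reduce to the bounded case by canonical approximation, then apply a local envelope argument that exploits the closedness of $\omega + \ddbar\varphi$. First I would set $\varphi_k = \max(\varphi, -k)$ and $\psi_k = \max(\psi, -k)$, both of which lie in $PSH(X, \omega)\cap L^\infty(X)$ and decrease to $\varphi, \psi$ respectively. The sets $\{\varphi_k < \psi_k\}$ differ from $\{\varphi < \psi\}$ only on $\{\psi \leq -k\}$, which has vanishing non-pluripolar mass in the limit, so bounding $\int_{\{\varphi_k < \psi_k\}}(\omega + \ddbar \psi_k)^n$ uniformly and passing to the limit via lower semicontinuity of Monge--Amp\`ere mass on plurifine open sets yields the desired inequality for the original $\varphi, \psi$.

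Second, in the bounded case, the key is an envelope trick. For $\varepsilon > 0$ set $u_\varepsilon = \max(\varphi, \psi - \varepsilon)$; then $u_\varepsilon \in PSH(X, \omega)\cap L^\infty(X)$, and the plurifine locality of the Monge--Amp\`ere operator gives
\begin{equation*}
(\omega + \ddbar u_\varepsilon)^n = (\omega + \ddbar \psi)^n \ \ \text{on}\ \{\varphi < \psi - \varepsilon\}, \qquad
(\omega + \ddbar u_\varepsilon)^n = (\omega + \ddbar \varphi)^n \ \ \text{on}\ \{\varphi > \psi - \varepsilon\}.
\end{equation*}
Since $\omega$ is big, one has the total mass identity $\int_X (\omega + \ddbar u_\varepsilon)^n = \int_X (\omega + \ddbar \varphi)^n = [\omega]^n$, obtained by Stokes' theorem after regularizing through Demailly's approximation of quasi-psh functions with analytic singularities in the big class. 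Partitioning $X$ into the three plurifine Borel pieces $\{\varphi<\psi-\varepsilon\}$, $\{\varphi=\psi-\varepsilon\}$, $\{\varphi>\psi-\varepsilon\}$ and cancelling the common contribution on $\{\varphi>\psi-\varepsilon\}$ yields
\begin{equation*}
\int_{\{\varphi<\psi-\varepsilon\}} (\omega + \ddbar \psi)^n \ \leq\ \int_{\{\varphi<\psi-\varepsilon\}} (\omega + \ddbar \varphi)^n,
\end{equation*}
and letting $\varepsilon \searrow 0$ gives the conclusion.

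The main obstacle is that $\omega$ is merely big and semi-positive, not K\"ahler, so the Monge--Amp\`ere measures need not be well-defined for genuinely unbounded $\varphi, \psi$ in the naive sense. I would handle this by working from the start with the non-pluripolar product of Boucksom--Eyssidieux--Guedj--Zeriahi (or, as in the applications of this paper, by restricting to the subclass with bounded or continuous potentials that is actually used downstream), so that total mass is conserved and the plurifine locality used in the envelope step remains valid. The bigness hypothesis on $\omega$ is precisely what guarantees the mass identity $\int_X(\omega+\ddbar\varphi)^n = [\omega]^n > 0$ used in cancelling the common term; without it, degeneration on the non-ample locus would destroy the argument.
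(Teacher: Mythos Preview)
The paper does not prove this theorem; it is stated as the ``well-known'' comparison principle and then used as a black box. Your argument is the standard Bedford--Taylor/Ko\l{}odziej envelope proof and is correct in outline.

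One small correction: you attribute the total-mass identity $\int_X(\omega+\ddbar u)^n=[\omega]^n$ for bounded $u\in PSH(X,\omega)$ to the bigness hypothesis, but in fact this identity holds for any smooth closed real $(1,1)$-form $\omega$ and any bounded $\omega$-psh $u$, by expanding $(\omega+\ddbar u)^n$ and applying Stokes to each term containing $\ddbar u$ (the Bedford--Taylor product of bounded psh currents is already a closed current of the right bidegree). Bigness only ensures $[\omega]^n>0$, which is irrelevant to the inequality itself; the comparison principle is valid for semi-positive $\omega$ without the big assumption. In the paper the bigness hypothesis is there because that is the setting in which the theorem is applied, not because the proof requires it.
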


In \cite{Ko1}, Kolodziej proved the fundamental theorem on the existence of continuous solutions to the Monge-Amp\`ere equation $ (\omega + \ddbar \varphi)^n = F \omega^n $, where $\omega$ is a K\"ahler form  and $F\in L^p(X, \omega^n)$ for some $p>1$. 
Its generalization was independently carried out in [Zh] and [EyGuZe1].
They proved  that there is a bounded solution when $\omega$ is
semi-positive and big. A detailed proof for the
continuity of the solution
was given in [DiZh] (also see [Zh] for an earlier and sketched proof).
These generalizations are summarized in the
following. 
\begin{theorem}\label{zhang} Let $X$ be an $n$-dimensional K\"ahler manifold and let $\omega$ be a big smooth closed semi-positive $(1,1)$-form. Then there exists a unique continuous solution to the following Monge-Amp\`ere equation

$$ (\omega + \ddbar \varphi)^n =  F\Omega, $$
where $\Omega>0$ is a smooth volume form on $X$, $F\in L^{p}(X, \Omega)$ for some $p>1$ and $\int_X F\Omega = \int_X \omega^n$.

\end{theorem}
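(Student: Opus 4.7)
\medskip

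\noindent\textbf{Proof proposal for Theorem \ref{zhang}.}
The plan is to reduce to Kolodziej's theorem on a K\"ahler manifold by approximation and then to establish a uniform $L^{\infty}$ estimate that allows us to pass to the limit. Fix an auxiliary K\"ahler form $\omega_{0}$ on $X$ and, for $\epsilon\in(0,1]$, set $\omega_{\epsilon}=\omega+\epsilon\,\omega_{0}$, which is K\"ahler. Choose constants $c_{\epsilon}>0$ with $c_{\epsilon}\int_{X}F\Omega=\int_{X}\omega_{\epsilon}^{n}$; clearly $c_{\epsilon}\to 1$ and $c_{\epsilon}F$ remains bounded in $L^{p}(X,\Omega)$ uniformly in $\epsilon$. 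Applying Kolodziej's theorem (the K\"ahler case) yields a continuous solution $\varphi_{\epsilon}\in PSH(X,\omega_{\epsilon})$, normalized by $\sup_{X}\varphi_{\epsilon}=0$, of
$$(\omega_{\epsilon}+\ddbar\varphi_{\epsilon})^{n}=c_{\epsilon}F\Omega.$$
The candidate solution will be obtained as a limit (along a subsequence $\epsilon_{k}\downarrow 0$) of $\varphi_{\epsilon}$.

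The heart of the argument is a uniform bound $\|\varphi_{\epsilon}\|_{L^{\infty}(X)}\le C$ independent of $\epsilon$. Since $\omega$ is big, Demailly's regularization gives a function $\psi_{0}\in PSH(X,\omega)$ with analytic singularities along a proper analytic subvariety $E\subset X$ such that $\omega+\ddbar\psi_{0}\ge\delta\,\omega_{0}$ on $X\setminus E$ for some $\delta>0$; we normalize so that $\psi_{0}\le 0$. Following Kolodziej's capacity scheme, I would introduce the $\omega_{\epsilon}$-capacity
$$\mathrm{Cap}_{\omega_{\epsilon}}(K)=\sup\Bigl\{\int_{K}(\omega_{\epsilon}+\ddbar u)^{n}\;:\;u\in PSH(X,\omega_{\epsilon}),\;-1\le u\le 0\Bigr\}$$
and, using $\psi_{0}$ to bound $\mathrm{Cap}_{\omega_{\epsilon}}$ uniformly from below by a multiple of the $\omega_{0}$-capacity on compact subsets of $X\setminus E$, apply the H\"older inequality together with the $L^{p}$-integrability of $F$ to obtain
$$\mathrm{Cap}_{\omega_{\epsilon}}\bigl(\{\varphi_{\epsilon}<-t\}\bigr)\le \frac{A\,\|F\|_{L^{p}}}{t^{\alpha}}\quad\text{for}\quad t\ge t_{0},$$
with $A,\alpha,t_{0}$ independent of $\epsilon$. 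Kolodziej's iteration lemma then converts this into the uniform sup bound. Passing to the limit $\epsilon_{k}\downarrow 0$, Hartogs' lemma together with $\sup\varphi_{\epsilon}=0$ and the uniform $L^{\infty}$ bound gives a subsequence converging in $L^{1}(X)$ (and almost everywhere) to some $\varphi\in PSH(X,\omega)\cap L^{\infty}(X)$, and continuity of the Bedford--Taylor Monge--Amp\`ere operator under uniformly bounded decreasing or $L^{1}$-convergent sequences yields $(\omega+\ddbar\varphi)^{n}=F\Omega$ weakly.

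For continuity of $\varphi$, I would revisit Kolodziej's stability argument: the same capacity estimate, applied to the difference of two solutions corresponding to slightly different right-hand sides or to $\varphi_{\epsilon}$ and a convolution regularization of $\varphi$, produces a modulus of continuity for $\varphi_{\epsilon}$ depending only on $\|F\|_{L^{p}}$ and $\delta$, which is inherited by the limit; this is essentially the content of the Dinew--Zhang refinement. Uniqueness is then immediate from the comparison principle in Theorem \ref{kol}: if $\varphi,\varphi'$ are two bounded solutions with $\sup(\varphi-\varphi')>0$, applying Theorem \ref{kol} on the set $\{\varphi<\varphi'+c\}$ for small $c$ forces $(\omega+\ddbar\varphi')^{n}=(\omega+\ddbar\varphi)^{n}$ to contradict strict positivity of the volume of the sub-level set. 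The main obstacle is the uniform capacity-to-sup estimate in the degenerate regime: because $\omega$ is only semi-positive, the usual $\omega$-capacity can collapse along $E$, and one must carefully trade integrations against $\omega_{\epsilon}^{n}$ for integrations against $\omega_{0}^{n}$ using $\psi_{0}$ while keeping all constants independent of $\epsilon$.
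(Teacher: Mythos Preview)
The paper does not give its own proof of Theorem \ref{zhang}; it is quoted as a preliminary result, with the K\"ahler case attributed to \cite{Ko1} and the semi-positive big extension (and the continuity statement) attributed to \cite{Zh}, \cite{EyGuZe1} and \cite{DiZh}. Your outline---perturb $\omega$ to the K\"ahler forms $\omega_{\epsilon}=\omega+\epsilon\omega_{0}$, solve by Kolodziej for each $\epsilon$, exploit bigness via a quasi-plurisubharmonic function with analytic singularities to get capacity estimates uniform in $\epsilon$, pass to the limit, and upgrade to continuity by a stability estimate---is exactly the strategy of those references, so there is no ``paper's proof'' to compare with; you are reconstructing the cited literature.

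One genuine gap in your sketch is the uniqueness step. If $\varphi$ and $\varphi'$ are two bounded solutions of $(\omega+\ddbar\cdot)^{n}=F\Omega$, then on $\{\varphi<\varphi'+c\}$ the comparison principle (Theorem \ref{kol}) only yields
\[
\int_{\{\varphi<\varphi'+c\}} F\Omega \;\le\; \int_{\{\varphi<\varphi'+c\}} F\Omega,
\]
an equality, not a contradiction; indeed, since the right-hand side does not involve $\varphi$, any translate $\varphi+const$ is again a solution, so uniqueness can hold only modulo additive constants. Establishing this in the merely semi-positive big setting is not an immediate consequence of Theorem \ref{kol} and requires an additional ingredient (e.g.\ a domination principle for non-K\"ahler reference forms, or the stability estimates you invoke for continuity), which is part of what \cite{EyGuZe1} and \cite{DiZh} provide.
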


Recently,  Demailly and Pali proved the following uniform estimate and we refer the readers to the general statement in \cite{DePa}. Such an $L^{\infty}$-estimate is also independently obtained in \cite{EyGuZe2}.

\begin{theorem}\label{D-P}
Let $X$ be an $n$-dimensional  K\"ahler manifold. Let $\Omega>0$ be a smooth volume form and $\omega$ be a smooth K\"ahler form such that $\omega \leq \omega_0$ for some smooth K\"ahler form $\omega_0$ on $X$.  Let
$\varphi\in PSH(X, \omega)\cap L^{\infty}(X)$ be a solution of the degenerate complex Monge-Amp\`ere equation
$
(\omega+ \ddbar \varphi)^n= F\Omega
$
with $F\in L^{p}(X)$ for some $p>1$.
Suppose \begin{enumerate}
\item $\displaystyle \int_X  \left( \frac{F}{ [\omega] ^n}\right) ^p ~\Omega \leq A$,

\item $\displaystyle \frac{\omega^n}{[\omega]^n \Omega}+ \displaystyle  \int_X \left(\frac{[\omega]^{n}\Omega}{\omega^n}\right)^{\epsilon} \Omega \leq B,$ for some $\epsilon>0$.

\end{enumerate}
Then $$\sup_X \varphi - \inf_X \varphi \leq C(\Omega, \omega_0, \epsilon, p,  A, B).$$

\end{theorem}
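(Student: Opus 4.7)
\textbf{Proof proposal for Theorem \ref{D-P}.}
The plan is to adapt Ko\l odziej's pluripotential-theoretic $L^\infty$ estimate to the present degenerate setting, in which the reference volume form $\Omega$ takes the place of $\omega^n$ in the classical argument and the hypothesis $\omega\leq\omega_0$ keeps all constants uniform as $\omega$ possibly degenerates. Normalize $\sup_X\varphi=0$, let $m=-\inf_X\varphi$, and for $t>0$ introduce the sublevel set $E(t)=\{\varphi<-t\}$, the Monge--Amp\`ere mass $V(t)=\int_{E(t)}(\omega+\ddbar\varphi)^n=\int_{E(t)}F\,\Omega$, and the volume $\mu(t)=\int_{E(t)}\omega^n$. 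The goal is to bound $m$ in terms of $\Omega,\omega_0,\epsilon,p,A,B$.

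The first step is a double H\"older estimate that converts hypotheses (1) and (2) into a decay of $V(t)$ in terms of $\mu(t)$. H\"older with exponent $p$ applied to $V(t)=\int_{E(t)}F\,\Omega$ and combined with (1) yields $V(t)\leq A^{1/p}[\omega]^n\bigl(\int_{E(t)}\Omega\bigr)^{1-1/p}$. A second H\"older, writing $\Omega=(\Omega/\omega^n)\,\omega^n$ with exponents $1+\epsilon$ and $(1+\epsilon)/\epsilon$ and invoking the integrability of $(\Omega/\omega^n)^\epsilon\,\Omega$ supplied by (2), gives $\int_{E(t)}\Omega\leq C(B,\epsilon,[\omega]^n)\,\mu(t)^{\epsilon/(1+\epsilon)}$. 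Chaining the two inequalities produces $V(t)\leq C_1\,\mu(t)^{\gamma}$ for an explicit $\gamma=\gamma(p,\epsilon)>0$.

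The second step is to invoke the comparison principle (Theorem \ref{kol}) to convert this into a recursion of the form $s^n\mu(t+s)\leq C_2\,V(t)$ with $C_2=C_2(\omega_0)$. Following Ko\l odziej's scheme, one compares $\varphi$ against a test function of shape $\psi_{s,t}=(1-s)\varphi+s\rho-t$, where $\rho$ is a fixed model potential adapted to $\omega_0$, and exploits the Minkowski-type inequality $(\omega_0+\ddbar\psi_{s,t})^n\geq s^n(\omega_0+\ddbar\rho)^n$. Since $\varphi\in PSH(X,\omega)\subset PSH(X,\omega_0)$ by the hypothesis $\omega\leq\omega_0$, the classical machinery can be run with $\omega_0$ as the background form, provided the given equation $(\omega+\ddbar\varphi)^n=F\,\Omega$ is transferred back to $(\omega_0+\ddbar\varphi)^n$ via the binomial expansion $\bigl((\omega+\ddbar\varphi)+(\omega_0-\omega)\bigr)^n$. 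Substituting the first step produces the self-improving recursion $s^n\mu(t+s)\leq C_3\,\mu(t)^\gamma$, which a De\,Giorgi-type iteration lemma (choose $t_k,s_k$ geometrically with $\sum s_k$ bounded) converts into $\mu(t)\equiv 0$ for $t\geq t_\infty\leq C(\Omega,\omega_0,\epsilon,p,A,B)$, bounding $m$ and hence $\operatorname{osc}_X\varphi$.

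The main obstacle I anticipate is the comparison step: securing the clean $s^n$ factor with $C_2$ depending only on $\omega_0$ while correctly routing information back and forth between the two background forms $\omega$ and $\omega_0$ requires careful handling of the discrepancy $\omega_0-\omega$ through the mixed Monge--Amp\`ere expansion. This transfer is where the hypothesis $\omega\leq\omega_0$ is fully exploited and constitutes the heart of the Demailly--Pali refinement over Ko\l odziej's original argument; once the recursion is in hand, the double-H\"older first step and the De\,Giorgi iteration are essentially routine.
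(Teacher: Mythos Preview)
The paper does not prove Theorem~\ref{D-P}: it is quoted verbatim as a result of Demailly--Pali \cite{DePa} (with an independent version in \cite{EyGuZe2}), and the reader is explicitly referred to those papers for the argument. There is therefore nothing in the present paper to compare your proposal against.

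That said, your outline is a fair high-level summary of the Ko\l odziej-type pluripotential scheme that Demailly--Pali refine. A word of caution on your Step~2: the actual argument in \cite{DePa} does not proceed by transferring the equation to the fixed background $\omega_0$ via the binomial expansion of $(\omega_0+\ddbar\varphi)^n$; rather, it works with the Monge--Amp\`ere capacity $\mathrm{Cap}_\omega$ relative to the \emph{varying} form $\omega$ and proves directly that $\mathrm{Cap}_\omega$ of sublevel sets decays in a manner controlled by the quantities in hypotheses (1) and (2), with the comparison principle applied purely in the $\omega$-framework. The role of $\omega\leq\omega_0$ is then to control auxiliary quantities (such as $\sup_X\varphi$ via an $L^1$ bound for $\omega_0$-psh functions) uniformly, not to change the background form in the comparison step. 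Your proposed route through the mixed Monge--Amp\`ere terms $\sum\binom{n}{k}(\omega+\ddbar\varphi)^k\wedge(\omega_0-\omega)^{n-k}$ would require uniform control of each cross term, which is not obviously available and is not how the original proof is organized.
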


The estimate in Theorem \ref{D-P} assumes very weak dependence on the reference form $\omega$ and it is essential in deriving the $C^0$-estimate for the K\"ahler-Ricci flow on algebraic manifolds with positive Kodaira dimension and semi-ample canonical line bundle (cf. Section 5.2).


\section{Canonical metrics for semi-ample canonical bundle}


\subsection{Canonical metrics on canonical models}

Let $X$ be an $n$-dimensional complex algebraic manifold with semi-ample canonical line bundle $K_X$.  Fix $m \in N(K_X)$ sufficiently large and let $f = \Phi_m$, the Iitaka fibration of $X$ is then given by the following holomorphic map
$$ f: X \rightarrow X_{can} \subset \mathbf{CP}^{d_m}.$$
We assume that $0 < \kappa = \kod(X) < n$ and so $X$ is an algebraic fibre space over $X_{can}$.
Let
$$X_{can}^{\circ} = \{ y \in X_{can} ~ | ~ y ~ {\rm is ~ a ~ nonsingular  ~ and ~} X_y=f^{-1}(s) ~ {\rm is ~ nonsingular ~ fibre} \}$$
and $X^{\circ} = f^{-1} ( X_{can}^{\circ})$.
The following proposition is well-known.

\begin{proposition} We have $$K_X = \frac{1}{m} f^* \mathcal{O}(1).$$
For all $y \in X_{can}^{\circ}$, $K_{X_y}$ is numerically trivial and so  $c_1(X_y)=0$.

\end{proposition}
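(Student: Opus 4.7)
The plan is to read off both assertions directly from the setup of the Iitaka fibration and the adjunction formula, so almost nothing beyond bookkeeping is required.

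For the first identity, I would invoke Theorem \ref{safibration} applied to $L = K_X$: for $m$ sufficiently large, $K_X^m$ is globally generated and $f = \Phi_m : X \to X_{can}$ is the morphism defined by the complete linear system $|K_X^m|$. Because $X_{can}$ is realized as the image of $\Phi_m$ inside $\mathbf{CP}^{d_m}$ and the sections $\sigma_{m,0}, \ldots, \sigma_{m,d_m}$ defining $\Phi_m$ are precisely the pullbacks of the homogeneous coordinates on $\mathbf{CP}^{d_m}$, one has $\Phi_m^* \mathcal{O}_{\mathbf{CP}^{d_m}}(1) = K_X^m$, and hence $f^* \mathcal{O}(1) = K_X^m$, which is exactly the stated identity $K_X = \frac{1}{m} f^* \mathcal{O}(1)$ in $\mathrm{Pic}(X) \otimes \mathbf{Q}$.

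For the second assertion, I would use the adjunction formula. Fix $y \in X_{can}^{\circ}$. By definition of $X_{can}^{\circ}$, the point $y$ is a regular value of $f$, so $f$ is a holomorphic submersion along $X_y = f^{-1}(y)$ and $X_y$ is a smooth compact submanifold of $X$ of codimension $\kappa = \dim X_{can}$. The normal bundle of $X_y$ in $X$ is
\[
N_{X_y/X} \;=\; f^{*} T_{y} X_{can} \big|_{X_y},
\]
and since $T_y X_{can}$ is a $\kappa$-dimensional vector space, this is a trivial holomorphic vector bundle on $X_y$. The adjunction formula then gives $K_{X_y} = K_X|_{X_y} \otimes \det N_{X_y/X} = K_X|_{X_y}$.

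Combining the two steps yields $K_{X_y}^{m} = K_X^{m}\big|_{X_y} = \bigl(f^{*} \mathcal{O}(1)\bigr)\big|_{X_y}$. But $f$ contracts $X_y$ to the single point $y$, so this last bundle is the pullback of a line bundle on a point and hence holomorphically trivial. Thus $K_{X_y}^{m}$ is trivial, so $K_{X_y}$ is torsion in $\mathrm{Pic}(X_y)$ and in particular numerically trivial, giving $c_1(X_y) = 0$. There is no serious obstacle here; the only thing to be careful about is that the identity $K_X = \frac{1}{m} f^* \mathcal{O}(1)$ is a rational statement about line bundles which should be interpreted integrally as $K_X^m = f^* \mathcal{O}(1)$, since this is how it is used in the triviality argument for $K_{X_y}^m$.
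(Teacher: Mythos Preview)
Your argument is correct. The paper does not actually supply a proof of this proposition; it is introduced with the phrase ``The following proposition is well-known'' and no further justification is given. Your write-up simply fills in the standard details (Theorem~\ref{safibration} for the identification $K_X^m = f^*\mathcal{O}(1)$, and adjunction plus triviality of the normal bundle of a smooth fibre for the second part), which is exactly what one would expect for a result labelled well-known.
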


Thus $X$ can be considered as a holomorphic fibration of polarized Calabi-Yau manifolds over its canonical model $X_{can}$. Since $f: X \rightarrow X_{can} \in \mathbf{ CP }^{d_m}$ and $ - c_1(X) = \frac{1}{m} [ f^* \mathcal{O} (1) ]$, we can define
$$\chi = \frac{1}{m} \ddbar \log \sum_{j=0}^{d_m} \left| \sigma_{m,j} \right|^2  \in - 2\pi c_1(X)$$
as a multiple of the pulled back Fubini-Study metric on $\mathbf{CP}^{d_m}$ by a basis $\{ \sigma_{m,j} \}_{j=0}^{d_m} \subset H^0(X, K_X^m)$.

We can also consider $\chi$ as the restriction of the multiple  of the Fubini-Study metric on the normal variety $X_{can}$ and we identify $\chi$ and $f^*\chi$ for convenience.

Let $\Omega = \sum_{ j=0 } ^{ d_m} \left|\sigma_{m,j} \right|^2 $. Since $K_X^m$ is base point free, $\Omega$ is a smooth nondegenerate volume form on $X$ such that
$$\ddbar \log \Omega = \chi. $$

\begin{definition}

The pushforward $ f_* \Omega $  with respect to the holomorphic map $f: X \rightarrow X_{can}$ is defined as currents as the following. For any continuous function $\psi$ on $X_{can}$

$$\int_{X_{can}} \psi f_* \Omega = \int_X (f^* \psi )\Omega.$$

\end{definition}

\begin{lemma} On  $X_{can}^{\circ}$,

$$ f_* \Omega = \int_{X_y} \Omega. $$

\end{lemma}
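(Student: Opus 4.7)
The plan is to reduce the identity to classical fiber integration on the submersion locus. First I would note that, by definition of $X_{can}^\circ$ and $X^\circ$, the restriction $f|_{X^\circ} : X^\circ \to X_{can}^\circ$ is a proper holomorphic submersion with compact nonsingular Calabi--Yau fibers of dimension $n-\kappa$. Moreover, since $\Omega = \sum_{j=0}^{d_m}|\sigma_{m,j}|^2$ is a smooth positive volume form on $X$, and the exceptional sets $X \setminus X^\circ$ and $X_{can} \setminus X_{can}^\circ$ are proper analytic subvarieties, they are negligible for integration against $\Omega$ and for the pairing with continuous test functions on $X_{can}$.

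Second, I would invoke the classical fiber integration formula for a proper holomorphic submersion. In adapted local coordinates $(y_1,\dots,y_\kappa)$ on $X_{can}^\circ$ and $(y,w)$ on $X^\circ$ with $f(y,w)=y$, one splits the top form $\Omega$ on $X$ into its horizontal and vertical parts and applies Fubini. Thus, for any continuous $\psi$ on $X_{can}$ supported in $X_{can}^\circ$,
$$
\int_X (f^*\psi)\, \Omega \;=\; \int_{X^\circ} (f^*\psi)\, \Omega \;=\; \int_{X_{can}^\circ} \psi(y) \Bigl(\int_{X_y} \Omega \Bigr),
$$
where the inner integral is the usual fiber integral and defines a smooth $(\kappa,\kappa)$-form on $X_{can}^\circ$ because $f|_{X^\circ}$ is a proper submersion, so the fibers vary smoothly and compactly.

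Comparing this with the defining property
$$
\int_{X_{can}} \psi \, f_*\Omega = \int_X (f^*\psi)\, \Omega,
$$
which was just introduced, I conclude that on the Zariski open set $X_{can}^\circ$ the distributional pushforward $f_*\Omega$ coincides with the smooth fiber-integral form $\int_{X_y}\Omega$. I do not expect a real obstacle: the only point requiring care is that the contribution over the singular locus $X_{can}\setminus X_{can}^\circ$ really can be ignored when testing against functions supported in $X_{can}^\circ$, which follows from $X\setminus X^\circ$ being a proper analytic subset and hence of $\Omega$-measure zero. The regularity of the resulting $(\kappa,\kappa)$-form on $X_{can}^\circ$ is then a standard consequence of integrating a smooth form along the smooth compact fibers of a submersion.
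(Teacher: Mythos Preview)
Your argument is correct and is exactly the standard fiber-integration justification; the paper in fact states this lemma without proof, treating it as immediate from the definition of the pushforward, so you have simply spelled out what the authors leave implicit.
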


\begin{definition}  We define a function  $F$ on $X_{can}$ by
\begin{equation} \label{vol-comp}
F =   \frac{ f_* \Omega }{\chi^{\kappa} } .
\end{equation}

\end{definition}

\begin{lemma}\label{sf}
Given  any K\"ahler class $[\omega]$ on $X$, there is a
smooth function $\psi$ on $X^{\circ}$ such that $\omega_{SF}:=\omega+ \ddbar   \psi$ is a closed semi-flat ($1,1$)-form in the
following sense: the restriction of $\omega_{SF}$ to each smooth
$X_y\subset X^{\circ}$ is a Ricci flat K\"ahler metric.
\end{lemma}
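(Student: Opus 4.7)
The plan is to solve a fiberwise Calabi--Yau equation and glue the solutions into a single smooth function on $\Xr$. By Ehresmann's theorem, $f: \Xr \to \Xcr$ is a smooth submersion, hence locally a smooth fiber bundle, so it suffices to construct $\psi$ over a small open set $U \subset \Xcr$. On each fiber $X_y$ for $y \in U$, the restriction $\omega|_{X_y}$ is a K\"ahler form, and since $K_X = \frac{1}{m} f^* \OO(1)$ forces $K_{X_y} \cong \OO_{X_y}$, we have $c_1(X_y) = 0$. Thus Yau's theorem provides a unique Ricci-flat K\"ahler metric on $X_y$ in the class $[\omega|_{X_y}]$, and the real task is to make this depend smoothly on the parameter $y$.

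For this, I would use the smooth global volume form $\Omega = \sum_j |\sigma_{m,j}|^2$ on $X$ to manufacture a smooth family of fiberwise Calabi--Yau reference volume forms. Pick local holomorphic coordinates $w = (w_1, \ldots, w_\kappa)$ on $U$ and set $\eta := \Omega / f^*(c_\kappa\, dw \wedge d\bar w)$ on $f^{-1}(U)$, where $c_\kappa$ is the appropriate sign constant; then $\eta|_{X_y}$ is a smooth positive $(n-\kappa, n-\kappa)$-form on $X_y$. Because $K_{X_y}$ is trivial, a local holomorphic trivializing section $s_y$ of $K_{X_y}$ lets one write $\eta|_{X_y} = a(y)\, s_y \wedge \overline{s}_y$ for a positive smooth $a(y)$, so $\eta|_{X_y}$ is a Ricci-flat volume form on the fiber. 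On each fiber I would solve the Monge--Amp\`ere equation
\begin{equation*}
(\omega|_{X_y} + \ddbar \psi_y)^{n-\kappa} = c(y)\, \eta|_{X_y}, \qquad \int_{X_y} \psi_y \,(\omega|_{X_y})^{n-\kappa} = 0,
\end{equation*}
where the constant $c(y)$ is forced by $c(y)\int_{X_y} \eta|_{X_y} = [\omega|_{X_y}]^{n-\kappa}$. Yau's theorem gives a unique smooth $\psi_y$, and the resulting K\"ahler form $\omega|_{X_y} + \ddbar \psi_y$ is Ricci-flat on $X_y$.

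Smooth dependence of $\psi_y$ on $y$ --- the main technical point --- follows from the implicit function theorem applied to the fiberwise Monge--Amp\`ere operator on the total family, together with elliptic regularity with parameters: the linearization at a Ricci-flat solution is a positive multiple of the fiber Laplacian of that metric, whose kernel on zero-mean functions is trivial, so the implicit function theorem applies at each $y_0 \in U$ and yields joint $C^\infty$ smoothness of $(y,x) \mapsto \psi_y(x)$. The normalization makes $\psi_y$ unique, so the local constructions on overlapping $U$'s automatically agree and patch to a global smooth $\psi$ on $\Xr$; then $\omega_{SF} := \omega + \ddbar \psi$ restricts on each smooth fiber to the Yau Ricci-flat metric in $[\omega|_{X_y}]$, as required. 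I emphasize that no global information on $\Xcr$ or control near the singular fibers is used here; the uniform $C^0$-estimates for $\psi$ near the singular fibers needed later will require the degenerate Monge--Amp\`ere results (Theorems \ref{zhang} and \ref{D-P}) rather than Yau's classical theorem.
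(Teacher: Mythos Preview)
Your proof is correct and follows essentially the same strategy as the paper: solve the fiberwise Calabi--Yau equation furnished by Yau's theorem, normalize to get uniqueness, and infer smooth dependence on the base parameter. The paper's version differs only cosmetically---it first produces the fiberwise Ricci potential $h_y$ by Hodge theory and then solves $(\omega_y+\sqrt{-1}\partial_V\dbar_V\psi_y)^{n-\kappa}=e^{h_y}\omega_y^{n-\kappa}$, whereas you go directly to a Ricci-flat target volume form built from the ambient $\Omega$; your route has the mild advantage that the right-hand side is manifestly smooth in all variables, which makes the implicit-function-theorem step cleaner than the paper's one-line ``since $f$ is holomorphic''. One small inaccuracy: from $mK_X=f^*\mathcal{O}(1)$ you only get that $K_{X_y}$ is torsion (this is exactly what the paper states as ``numerically trivial''), not that $K_{X_y}\cong\mathcal{O}_{X_y}$; but your argument survives unchanged, since what you actually need is $\ddbar\log(\eta|_{X_y})=0$, and that follows immediately from $\ddbar\log\Omega=\chi$ being pulled back from the base.
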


\begin{proof}
For each $y\in X^{\circ}_{can}$, let $\omega_y$ be the restriction of
$\omega$ to $X_y$ and $\partial_V$ and $\dbar_V$ be the
restriction of $\partial$ and $\dbar$ to $X_y$. Then by the Hodge
theory, there is a unique function $h_y$ on $X_y$ defined by
\begin{equation}
\left\{
\begin{array}{rcl}
&&\partial_V\dbar_V h_y=-\partial_V\dbar_V\log \omega_y^{n-\kappa}, \\
&&\\
&&\int_{X_y}e^{h_y}\omega_y^{n-\kappa}=\int_{X_y}\omega_y^{n-\kappa}.
\end{array} \right.
\end{equation}
By Yau's solution to the Calabi conjecture, there is a unique
$\psi_y$ solving the following Monge-Amp\`ere equation
\begin{equation}
\left\{
\begin{array}{rcl}
&&\frac{(\omega_y+\sqrt{-1}\partial_V\dbar_V\psi_y)^{n-\kappa}}
{\omega_y^{n-\kappa}}=e^{h_y}\\
&&\\
&&\int_{X_y}\psi_y\omega_y^{n-\kappa}=0.
\end{array} \right.
\end{equation}
Since $f$ is holomorphic, $\psi(z,s)=\psi_y(z)$ is well-defined as
a smooth function on $X^{\circ}$.

\qed
\end{proof}

For each  $y\in X_{can}^{\circ}$, there exists a holomorphic $(n-\kappa, 0)$ form $\eta$ on $X_y$ such that $\eta\wedge \overline{\eta}$ is a Calabi-Yau volume form and $\int_{X_y} \eta \wedge \overline{ \eta } = \int_{X_y} \left( \omega|_{X_y} \right)^{ n - \kappa}$.

\begin{definition} The closed $(n-\kappa, n-\kappa)$-current $\Theta$ on $X^{\circ}$ is defined to be

\begin{equation}
\Theta = \left( \omega_{SF} \right) ^{n-\kappa}.
\end{equation}
Let $\Theta_y = \Theta |_{X_y}$ for $y\in X_{can}^{\circ}$ be the restriction of $\Theta$ on a nonsingular fibre $X_y$. Then $\Theta_y$ is a smooth Calabi-Yau volume form with $$\int_{X_y} \Theta_y = [\omega]^{ n -\kappa } \cdot X_y = constant .$$

\end{definition}

We can always scale $[\omega]$ such that $[\omega]^{n -\kappa} \cdot X_y = 1 $ for $y \in \Xcr$.

\begin{lemma} On $X^{\circ}$, we have
\begin{equation}
\label{vol-comp2} f^*F =   \left( \frac{\Omega}{ \Theta \wedge \chi^{\kappa}} \right) .
\end{equation}
Furthermore,  $\Theta$ can be extended to $X$ as current such that
$ f^*F =   \left( \frac{\Omega}{ \Theta \wedge \chi^{\kappa}} \right) $ on $X$.

\end{lemma}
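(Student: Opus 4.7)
The plan is to verify the identity $f^{*}F=\Omega/(\Theta\wedge \chi^{\kappa})$ pointwise on $X^{\circ}$ in two steps: first show that the function $G:=\Omega/(\Theta\wedge \chi^{\kappa})$ is constant along each nonsingular fibre, then identify that constant with $F\circ f$ via a Fubini computation; the global extension to $X$ will then follow by reading the relation as the defining equation for $\Theta$ as a current.

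On $X^{\circ}$ the ratio $G$ is a smooth positive function, because $\Omega$ is a smooth volume form, $\Theta|_{X_y}$ is the Ricci-flat Calabi-Yau volume form on the nonsingular fibre $X_y$ by the construction in Lemma \ref{sf}, and $\chi^{\kappa}$ is the pullback of a nondegenerate form from $X_{can}^{\circ}$. To show $G$ is constant on each $X_y$, I apply $\partial_V\dbar_V$ to $\log G|_{X_y}$. Because $\chi$ is pulled back from $X_{can}$, its restriction to the fibre vanishes, so $\partial_V\dbar_V \log \Omega|_{X_y}=\chi|_{X_y}=0$, and the contribution of $\log(\chi^{\kappa})$ vanishes for the same reason. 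The Calabi-Yau normalization of $\Theta_y$ gives $\partial_V\dbar_V \log \Theta_y=-\ric(\Theta_y)=0$. Thus $\log G|_{X_y}$ is pluriharmonic on the compact Kähler manifold $X_y$ and therefore constant; write $G\equiv G(y)$ with $y=f(x)$.

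To identify $G(y)$ with $F(y)$, I unwind the pushforward definition: for any continuous $\psi$ on $X_{can}$,
\begin{equation*}
\int_{X_{can}}\psi\,f_{*}\Omega=\int_X(f^{*}\psi)\,\Omega=\int_X(f^{*}\psi)(f^{*}G)\,\Theta\wedge \chi^{\kappa}.
\end{equation*}
Since $f^{*}\psi$ and $f^{*}G$ are constant along fibres and only the purely vertical part of $\Theta$ survives the wedge against $\chi^{\kappa}$, Fubini combined with the normalization $\int_{X_y}\Theta_y=[\omega]^{n-\kappa}\cdot X_y=1$ reduces the right-hand side to $\int_{X_{can}}\psi\, G\,\chi^{\kappa}$. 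Comparing with the definition $F=f_{*}\Omega/\chi^{\kappa}$ yields $G=F$ on $X_{can}^{\circ}$, hence the identity on $X^{\circ}$.

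For the extension to $X$, I rewrite the identity as $\Omega=(f^{*}F)\,\Theta\wedge \chi^{\kappa}$ and take this as the defining relation for $\Theta$ globally. Since $\Omega$ and $\chi^{\kappa}$ are smooth on all of $X$, the existence of $\Theta$ as a closed positive $(n-\kappa,n-\kappa)$-current on $X$ reduces to showing that $F$ is at worst a bounded nonnegative function on $X_{can}$. I expect the regularity of $F$ across the discriminant locus to be the main obstacle: it amounts to a uniform fibrewise bound for $\int_{X_y}\Omega$, which should follow from standard semi-positivity properties of the direct image $f_{*}K_X^m$ together with the compatibility of $\Omega$ with the Fubini--Study form defining $\chi$. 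Once $F$ is so controlled, the formula defines $\Theta$ globally as a closed positive current on $X$ that agrees with $\omega_{SF}^{\,n-\kappa}$ on $X^{\circ}$.
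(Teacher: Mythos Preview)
Your treatment of the identity on $X^\circ$ is essentially the paper's own argument: you show $G=\Omega/(\Theta\wedge\chi^\kappa)$ is fibrewise constant by a pluriharmonicity argument, then identify it with $F$ via Fubini and the normalization $\int_{X_y}\Theta_y=1$. This part is fine.

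Your handling of the extension, however, contains a genuine error. You claim the extension ``reduces to showing that $F$ is at worst a bounded nonnegative function on $X_{can}$,'' and you propose to obtain this from semi-positivity of $f_*K_X^m$. But $F$ is \emph{not} bounded: the very next result in the paper (Proposition~\ref{Fso}) shows only that $F\in L^{1+\epsilon}(X_{can})$, and the proposition after that shows $\pi^*F$ has at worst pole singularities on a resolution of $X_{can}$. So the boundedness you are aiming for is simply false in general, and no amount of direct-image positivity will produce it.

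What the statement actually asserts is much weaker than you are trying to prove: it does not claim $\Theta$ extends as a \emph{closed positive} current, only as a current satisfying the stated relation. The paper's argument is essentially definitional: one declares the extension of $\Theta$ by requiring $\int_X\psi'\,(f^*F)\,\Theta\wedge\chi^\kappa=\int_X\psi'\,\Omega$ for every test function $\psi'$ on $X$. Since $\Omega$ is a smooth volume form and $f^*F>0$ on the dense open set $X^\circ$, this determines $\Theta\wedge\chi^\kappa=\Omega/(f^*F)$ as a measure on $X$ (note that where $F$ blows up, $1/(f^*F)$ tends to zero, so this is perfectly well-defined). You should drop the boundedness claim and the appeal to positivity of direct images, and replace your last paragraph with this direct definitional extension.
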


\begin{proof}

Let $\FF=  \left(  \frac{\Omega}{ \Theta \wedge \chi^{\kappa}} \right) $ be defined on $X^{\circ}$. We first show that $\FF$ is constant along each fibre $X_y$ for $y\in X_{can}^{\circ}$.

Since $\chi$ is the pullback from $X_{can}$,
we have
$$\sqrt{-1} \partial_V \dbar_V\log\Omega=
\sqrt{-1}\partial_V\dbar_V\log \Theta \wedge
\chi^{\kappa}=0$$
on each nonsingular fibre $X_y$. On the other hand, $\FF$ is smooth on each $X_y$ for $y \in X_{can}^{\circ} $, therefore  $\FF$ is constant
along  $X_y$ and $\FF$  can be considered as  the pullback of a
function from $X_{can}^{\circ}$.

Now we can show (\ref{vol-comp2}). Let $\psi$ be any smooth test function on $X_{can}^{\circ}$. Let $y_0$ be a fixed point in $\Xcr$.

\begin{eqnarray*}
\int_{\Xcr} \psi F \chi^{\kappa} &=&  \int_{\Xcr} \psi f_* \Omega=\int_{y \in \Xcr}  \psi \left( \int_{X_{y}} \Omega \right) =  \int_{\Xr} \psi \Omega\\
\end{eqnarray*}

On the other hand,

\begin{eqnarray*}
\int_{\Xcr} \psi \FF \chi^{\kappa} &=&  \int_{ y \in \Xcr} \psi \left( \frac{ \Omega }{ \Theta \wedge \chi^{\kappa} } \right) \left( \int_{X_y} \Theta_y  \right) \chi^{\kappa} \\
&=&  \int_{ y \in \Xcr} \int_{X_y}\psi \left( \frac{ \Omega }{ \Theta \wedge \chi^{\kappa} } \right)   \Theta_y  \wedge \chi^{\kappa} \\
& = &  \int_{ \Xr} \psi \Omega .
\end{eqnarray*}
Therefore $f^*F=\FF$.

\noindent Let $\psi'$ be any smooth test function on $X$. $\Theta$ can be extended as current to $X$ such that

$$\int_{X} \psi' f^*F \Theta \wedge \chi^{\kappa} = \int_X \psi' \Omega . $$

\qed
\end{proof}

\begin{proposition} \label{Fso} $F$ is smooth on $X_{can}^{\circ}$ and there exists $\epsilon >0 $ such that
$$F\in L^{1+\epsilon} (X_{can} ).$$

\end{proposition}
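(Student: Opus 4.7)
My plan is to handle the two assertions of the proposition separately, since they call for quite different techniques.

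For the smoothness claim, over $X_{can}^{\circ}$ the map $f$ restricts to a proper smooth submersion with compact Calabi--Yau fibres. Fibre integration of the smooth positive volume form $\Omega$ along such a submersion produces a smooth positive measure on $X_{can}^{\circ}$, and since $\chi$ is the pullback of a multiple of the Fubini--Study form on $X_{can} \subset \mathbf{CP}^{d_m}$ and $X_{can}^{\circ}$ lies in its regular locus, $\chi^{\kappa}$ is a smooth positive volume form there. Their ratio $F = f_{*}\Omega / \chi^{\kappa}$ is therefore smooth on $X_{can}^{\circ}$.

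For the $L^{1+\epsilon}$ bound, the $L^{1}$ estimate is immediate from
\[
\int_{X_{can}} F\,\chi^{\kappa} \;=\; \int_{X} \Omega \;<\; \infty,
\]
so the task is to control the rate at which $F$ blows up along the discriminant $\Delta := X_{can}\setminus X_{can}^{\circ}$. I would pass to a log resolution: apply Hironaka to obtain birational models $\pi : \tilde{X}\to X$ and $\mu : \tilde{X}_{can} \to X_{can}$ such that the induced map $\tilde{f} : \tilde{X} \to \tilde{X}_{can}$ has simple-normal-crossing discriminant $\tilde{\Delta} = \sum_{i} \tilde{\Delta}_{i}$, with $\tilde{f}^{-1}(\tilde{\Delta})$ together with the exceptional locus of $\pi$ forming an SNC divisor on $\tilde{X}$. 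Since the pushforward measure and the ratio $F$ transform naturally under birational modification of the base, an $L^{1+\epsilon}$ bound for $F$ on $X_{can}$ is equivalent to an $L^{1+\epsilon}$ bound for $\tilde{f}_{*}(\pi^{*}\Omega) / \mu^{*}\chi^{\kappa}$ on $\tilde{X}_{can}$. Working locally near a point of $\tilde{\Delta}$ in SNC coordinates $(t_{1},\dots,t_{\kappa})$ with $\tilde{\Delta} = \{t_{1}\cdots t_{\ell} = 0\}$ and in toroidal fibre coordinates, a monomial-by-monomial calculation of the fibre integral of $\pi^{*}\Omega$ yields an estimate of the form
\[
F(t) \;\leq\; C \prod_{i=1}^{\ell} |t_{i}|^{-a_{i}}\,\bigl|\log |t_{i}|\bigr|^{b_{i}}
\]
with nonnegative integers $b_{i}$ and rational exponents $a_{i} < 1$.

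Once such a bound is in place, $F \in L^{1+\epsilon}$ follows for any $\epsilon > 0$ with $(1+\epsilon)\max_{i} a_{i} < 1$, by elementary integration in polar coordinates around each boundary divisor $\tilde{\Delta}_{i}$. The main obstacle is establishing the strict inequality $a_{i} < 1$ with a uniform positive gap: the finiteness of $\int_{X}\Omega$ only gives $a_{i}\leq 1$, and one must rule out the logarithmic borderline case. The cleanest route is via Kawamata's semi-stable reduction --- after a cyclic base change $t_{i}\mapsto t_{i}^{N}$ for sufficiently divisible $N$, the fibration becomes semi-stable in a neighbourhood of each boundary component, and the fibre volumes of the pulled-back family stay uniformly bounded. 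Translating this bound back through the ramified cover yields the required strict bound on the original exponents $a_{i}$, and hence the desired $L^{1+\epsilon}$ integrability of $F$.
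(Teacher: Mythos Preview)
Your treatment of smoothness is fine and matches what the paper takes for granted. The $L^{1+\epsilon}$ argument, however, takes a much heavier route than the paper and leaves its hardest step unjustified.

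The paper's argument is entirely elementary. It first rewrites the integral using the identity $f^{*}F=\Omega/(\Theta\wedge\chi^{\kappa})$ from the preceding lemma:
\[
\int_{X_{can}} F^{1+\epsilon}\,\chi^{\kappa}
\;=\;\int_{X}(f^{*}F)^{1+\epsilon}\,\chi^{\kappa}\wedge\Theta
\;=\;\int_{X}(f^{*}F)^{\epsilon}\,\Omega.
\]
Then it bounds $F(y)$ pointwise by observing that, since $\int_{X_{y}}\omega^{n-\kappa}=\int_{X_{y}}\omega_{SF}^{n-\kappa}$, there is a point $z_{0}\in X_{y}$ where these two $(n-\kappa,n-\kappa)$-forms agree, so
\[
F(y)=\frac{\Omega}{\chi^{\kappa}\wedge\omega_{SF}^{n-\kappa}}(z_{0})
=\frac{\Omega}{\chi^{\kappa}\wedge\omega^{n-\kappa}}(z_{0})
\le\sup_{X_{y}}\frac{\Omega}{\chi^{\kappa}\wedge\omega^{n-\kappa}}.
\]
The right-hand side is the supremum over $X_{y}$ of a ratio of \emph{smooth} forms on $X$; its denominator vanishes only to finite order along a subvariety, so $f^{*}F$ is bounded by a fixed negative power of a divisor's defining section. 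Hence $(f^{*}F)^{\epsilon}\Omega$ is integrable for $\epsilon$ small. No resolution, no semi-stable reduction, no asymptotic expansion is needed.

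By contrast, your route requires you to establish the strict inequality $a_{i}<1$ for the pole exponents of the fibre integral, and you correctly flag this as the crux. But your semi-stable reduction sketch does not close the gap: you assert that after base change the fibre volumes of the pulled-back family stay uniformly bounded, yet this is precisely the nontrivial statement that would need proof (and even once you have it, translating the bound back through a ramified cover to conclude $a_{i}<1$ rather than merely $a_{i}\le 1$ requires care about how the ramification interacts with the Jacobian of the resolution). So your proposal replaces a two-line pointwise estimate by a substantial piece of degeneration theory whose key input is left open. The paper's trick of comparing $\omega_{SF}^{n-\kappa}$ with the fixed smooth form $\omega^{n-\kappa}$ at a single well-chosen point on each fibre is the idea you are missing.
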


\begin{proof} Calculate

$$ \int_{X_{can}} F^{1+\epsilon} \chi^{\kappa} = \int_{ X  } \left( f^*F \right) ^{1+\epsilon} \chi^{\kappa} \wedge \Theta = \int_{X} \left( f^* F \right) ^{\epsilon} \Omega.$$
Also for any $y \in X_{can}^{\circ}$,  we can choose $z_0 \in X_y$ such that  on $X_y$ $\omega^{n -\kappa} (z_0) = \omega_{SF} ^{n -\kappa} (z_0)$ since $X_y$ is smooth and $\int_{X_y} \omega^{n-\kappa} = \int_{X_y} \omega_{SF}^{ n -\kappa}$. Then

\begin{eqnarray*}
| F (y) | &=&  \frac{\Omega} {\chi^{\kappa} \wedge \omega_{SF}^{ n -\kappa} } \\
& = &   \frac{\Omega } {\chi^{\kappa} \wedge \omega^{ n -\kappa} } ~\frac{\omega^{n -\kappa} } { \omega_{SF}^{ n -\kappa} } \left( z_0 \right) \\
&&\\
&=&  \frac{\Omega } {\chi^{\kappa} \wedge \omega^{ n -\kappa} } \left( z_0 \right) \\
&&\\
&\leq& \sup_{X_y} \frac{\Omega } {\chi^{\kappa} \wedge \omega^{ n -\kappa} }.
\end{eqnarray*}
Therefore $F$ is bounded by poles and $ \left( f^* F \right) ^{\epsilon} $ is integrable for sufficiently small $\epsilon>0$.

\qed
\end{proof}

\begin{proposition}
Let $\pi: Y \rightarrow X_{can}$ be a smooth model of $X_{can}$ by resolution of singularities of $X_{can}$. $ \pi^* F$ has at worst pole singularities on $Y$.

\end{proposition}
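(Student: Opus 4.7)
The plan is to transfer the question to a common smooth model where the relevant objects admit explicit normal crossings descriptions. By Hironaka applied to the graph of the rational map $X \dashrightarrow Y$ induced by $f$ and $\pi^{-1}$, I would construct a smooth algebraic variety $\tilde X$ with a birational morphism $\tilde \pi : \tilde X \to X$ and a surjective algebraic morphism $\tilde f : \tilde X \to Y$ satisfying $f \circ \tilde \pi = \pi \circ \tilde f$. After further blow-ups I would arrange that the exceptional divisors of $\tilde \pi$ and of $\pi$, the discriminant divisor $D \subset Y$ of $\tilde f$, the zero locus of the smooth semi-positive form $\pi^*\chi^\kappa$, and the preimage $\tilde f^{-1}(D) \subset \tilde X$ are all supported on simple normal crossings divisors. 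Since $\tilde \pi$ is birational and $\Omega$ is a smooth volume form, $\tilde \pi^* \Omega$ is a smooth semi-positive $(n,n)$-form on $\tilde X$, and by the projection formula for proper push-forwards of $L^1$ densities,
$$ \pi^* (f_* \Omega) \;=\; \tilde f_* (\tilde \pi^* \Omega) $$
as currents on $Y$.

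Next I would carry out a local analysis around a point $y_0$ in the union of these divisors. Choose local coordinates $(w_1, \ldots, w_\kappa)$ on $Y$ in which the SNC divisor is $\{w_1 \cdots w_r = 0\}$, and adapted local coordinates $(z_1, \ldots, z_n)$ on $\tilde X$ in which $\tilde f$ takes monomial form and $\tilde \pi^* \Omega$ has the shape $\prod_i |z_i|^{2\alpha_i}\, u \, dV_z$ with non-negative rational $\alpha_i$ and smooth positive $u$. Similarly $\pi^*\chi^\kappa$ takes the local form $\prod_j |w_j|^{2\beta_j}\, v\, dV_w$ with non-negative integer $\beta_j$ and smooth positive $v$. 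A Fubini-style integration over the monomial fibres of $\tilde f$ then produces
$$ \tilde f_*(\tilde \pi^* \Omega) \;=\; \prod_j |w_j|^{2\gamma_j} \cdot g \, dV_w $$
with $\gamma_j \in \mathbf{Q}$ and $g$ smooth and bounded.

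Dividing the two local expressions yields
$$ \pi^* F \;=\; \prod_j |w_j|^{2(\gamma_j - \beta_j)} \cdot (g/v), $$
which has at worst pole singularities along the SNC divisor on $Y$, as desired. The $L^{1+\epsilon}$-bound of Proposition \ref{Fso} additionally forces $\gamma_j - \beta_j > -1$, but the qualitative conclusion follows directly from the local expression without this quantitative refinement.

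The main obstacle is producing the explicit monomial form of $\tilde f_*(\tilde \pi^* \Omega)$ in normal crossings coordinates on $Y$. In higher dimensions this typically requires either a toric reduction of the fibration $\tilde f$ so that the fibrewise integration can be performed directly, or an application of semistable reduction along transverse curves together with control of how $\Omega$ degenerates on the singular fibres. Once the local monomial description is secured, the remainder of the argument is a straightforward bookkeeping of exponents.
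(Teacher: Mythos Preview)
Your route is genuinely different from the paper's. The paper never attempts a local monomial computation of the fibre integral. Instead it argues analytically: setting $\mathcal{F}_N = |S_D|^{2N}_{h_D} F$ for a divisor $D$ containing $X_{can}\setminus X_{can}^\circ$, it differentiates under the fibre integral to bound $\ddbar \mathcal{F}_N$ on $X_{can}^\circ$ by $C\chi$ for $N$ large, pulls back to the resolution $Y$ to get $|\Delta_Y(\pi^*\mathcal{F}_N)|\le C$, and then iterates (each iteration costing a higher power of $|S_D|^2$) to bound $|(\Delta_Y)^k(\pi^*\mathcal{F}_N)|$ for every $k$. Standard elliptic regularity then gives $\pi^*\mathcal{F}_N\in C^k$, which is exactly the pole statement. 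This avoids any structural description of the degeneration of $f$ and uses only that smooth quantities on $X$ become bounded after multiplication by a high enough power of $|S_D|^2$.

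Your proposal has a concrete gap beyond the one you flag. Even after arranging $\tilde f$ to be toroidal (which, as you note, already needs something like weak semistable reduction), the fibre integral $\tilde f_*(\tilde\pi^*\Omega)$ does \emph{not} in general take the form $\prod_j |w_j|^{2\gamma_j}\, g\, dV_w$ with $g$ smooth. The correct local asymptotics of such period-type integrals involve polynomials in $\log|w_j|$ multiplying the monomial factors (this is the Nilsson-class behaviour underlying the nilpotent orbit theorem). So the displayed formula is false as stated. The conclusion can still be salvaged, since $|w|^{2N}(\log|w|^2)^k$ lies in $C^l$ once $N$ is large relative to $l$, so log factors are still dominated by poles in the sense the proposition requires; but you would need to state and justify this refined expansion rather than the clean monomial form. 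Compared with the paper's argument, your approach trades an elementary PDE bootstrap for substantial algebro-geometric input (semistable reduction plus control of log terms in fibre integrals); the paper's method is both shorter and more self-contained here.
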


\begin{proof}

Let $D$ be a divisor on $X_{can}$ such that $X_{can}\setminus X_{can}^{\circ} \subset S_D$. Let $S_D$ be the defining section of $D$ and $h_D$ be the hermitian metric on the line bundle associated to $[D]$ such that $\pi^* \left( |S_D|^2_{h_D}\right) $ is a smooth function. For any continuous volume form  $\Omega'$ on $X$,
\begin{eqnarray*}
&& f_* \left( |S_D|^{2N}_{h_D} \Omega' \right) \\
&=& \int_{X_s}  |S_D|^{2N}_{h_D} \Omega' \\
&=& \left( \int_{X_s}  |S_D|^{2N}_{h_D}\left(  \frac{\Omega'}{\chi^{\kappa} \wedge \omega^{n -\kappa}} \right) \left( \omega \left|_{X_s} \right. \right) ^{n-\kappa} \right)\chi^{\kappa}
\end{eqnarray*}
for $s\in X_{can}^{\circ}$.
Since $|S_D|^{2N}_{h_D} \frac{\Omega'}{\chi^{\kappa} \wedge \omega^{n -\kappa}} < \infty$ for sufficiently large $N$,
there exists  a constant $C$ such that
$$0\leq  f_* \left( |S_D|^{2N}_{h_D} \Omega' \right) < \chi^{\kappa}.$$

Let $\FF_{N} = |S_D|^{2N}_{h_D} F$. Then on $X_{can}^{\circ}$

\begin{eqnarray*}
&& \ddbar \FF_{N} \\
&=& \ddbar \left( \int_{X_s} |S_D|^{2N}_{h_D} \frac{\Omega}{\chi^{\kappa}\wedge \omega^{n-\kappa} } \omega^{n-\kappa} \right) \\
&=& \int_{X_s}  \ddbar \left( |S_D|^{2N}_{h_D}\frac{\Omega}{\chi^{\kappa}\wedge \omega^{n-\kappa} } \right) \wedge \omega^{n-\kappa} \\
&\leq& C |S_D|^{2M}_{h_D}  \int_{X_s}   \omega^{n-\kappa + 1} \\
\end{eqnarray*}
for some sufficiently large $M$ by choosing $N$ sufficiently large.

Let $\eta $ be any semi-positive smooth $(\kappa-1, \kappa-1)$-form supported on $X_{can}^{\circ}$. Then
\begin{eqnarray*}
&& \int_{ X_{can} } \eta \wedge \ddbar \FF_N \\
%
%
&\leq & C |S_D|^{2M}_{h_D}  \int_X f^*\eta \wedge \omega^{n-\kappa+1} \\
&=& C |S_D|^{2M}_{h_D} \int_{X_{can}} \left(  \frac{ \int_{X_s}  \omega^{n-\kappa+1} \wedge \chi^{\kappa-1} } {\chi^{\kappa}}  \right) f^* \eta \wedge \chi \\
&\leq &  C' |S_D|^{2L}_{h_D} \int_{X_{can}} \eta \wedge \chi  \\
\end{eqnarray*}
if we chose $N$ sufficiently large.

Similar lower bound of $\ddbar \FF_N$ can be achieved and so for sufficiently large $N$,  on $X_{can}^{\circ}$

$$                   - C \chi \leq  \ddbar \FF_N \leq C \chi .$$

Let $\omega_Y$ be a K\"ahler metric on $Y$ and $\Delta_Y$ be the Laplace operator associated to $\omega_Y$. Then  for sufficiently large $N$

$$   \left|  \Delta_Y  \pi^* \FF_N \right| \leq C.$$

Also we can assume that $\pi^* \left( |S_D|^{2N} _{h_D} \right) \omega_Y \leq f^* \chi$ for sufficiently large $N$. After repeating the above estimates, we have for any $k \geq 0$, there exists sufficiently large $N$ and $C_{k, N}$ such that

$$ \left| \left( \Delta_Y \right)^{ k } \left( \pi^* \FF_N \right) \right| \leq C_{k, N}.$$

By standard elliptic estimates, $\pi^* \FF_N$ is uniformly bounded in $C^k$ if we choose $N$ sufficiently large, therefore $\pi^* F$ can have at worst pole singularities.

\qed
\end{proof}

Now let us recall some facts on the Weil-Petersson metric on the
moduli space $\mathcal{M}$ of polarized Calabi-Yau manifolds of
dimension $n-\kappa$ . Let $\mathcal{X}\rightarrow \mathcal{M}$ be
a universal family of Calabi-Yau manifolds.
Let $(U; t_1, ..., t_{\kappa})$ be a local holomorphic coordinate chart
of $\mathcal{M}$, where $\kappa=\dim {\mathcal M}$. Then each
$\frac{\partial}{\partial t_i}$ corresponds to an element
$\iota(\frac{\partial}{\partial t_i})\in H^1(\mathcal{X}_t,
T_{\mathcal{X}_t})$ through the Kodaira-Spencer map $\iota$. The
Weil-Petersson metric is defined by the $L^2$-inner product of
harmonic forms representing classes in $H^1(\mathcal{X}_t,
T_{\mathcal{X}_t})$. In the case of Calabi-Yau manifolds, as first shown in
\cite{Ti4}, it can be
expressed as follows. Let $\Psi$ be a nonzero holomorphic
$(n-\kappa,0)$-form on the fibre $\mathcal{X}_t$ and
$\Psi\lrcorner\iota(\frac{\partial}{\partial t_i})$ be the
contraction of $\Psi$ and $\frac{\partial}{\partial t_i}$. Then
the Weil-Petersson metric is given by
\begin{equation}
\left( \frac{\partial}{\partial t_i}, \frac{\partial}{\partial
\bar{t_j}} \right)_{\omega_{WP}}=\frac{\int_{\mathcal{X}_t}\Psi\lrcorner\iota(\frac{\partial}{\partial
t_i})\wedge\overline{\Psi\lrcorner\iota(\frac{\partial}{\partial
t_i})}}{\int_{\mathcal{X}_t}\Psi\wedge\overline{\Psi}}.
\end{equation}
One can also represent $\omega_{WP}$ as the curvature form of the
first Hodge bundle $f_*
\Omega^{n-\kappa}_{\mathcal{X}/\mathcal{M}}$ (cf. \cite{Ti4}). Let $\Psi$ be a
nonzero local holomorphic section of $f_*
\Omega^{n-\kappa}_{\mathcal{X}/\mathcal{M}}$ and one can define
the hermitian metric $h_{WP}$ on $f_*
\Omega^{n-\kappa}_{\mathcal{X}/\mathcal{M}}$ by
\begin{equation}
|\Psi_t|^2_{h_{WP}}=\int_{\mathcal{X}_t}\Psi_t\wedge\overline{\Psi_t}.
\end{equation}
Then the Weil-Petersson metric is given by
\begin{equation}
\omega_{WP}=\ric(h_{WP}).
\end{equation}

\noindent The Weil-Petersson metric can also be considered a canonical hermitian metric on the
dualizing sheaf
$f_*(\Omega^{n-\kappa}_{X/X_{can}})=(f_{*1}\mathcal{O}_X)^{\vee}$
over $X_{can}^{\circ}$.

 Let $X$ be an $n$-dimensional algebraic manifold.
 Suppose its canonical line bundle $K_X$ is semi-positive and
 $0<\kappa=\kod(X)<n$. Let $X_{can}$ be the canonical model of $X$. We define a canonical hermitian
 metric $h_{can}$ on $f_*(\Omega^{n-\kappa}_{X/X_{can}}) $ in the way that
for any smooth $(n-\kappa,
 0)$-form $\eta$ on a nonsingular fibre $X_y$,
\begin{equation}
|\eta|^2_{h_{WP}}=\frac{\eta\wedge\bar{\eta}\wedge\chi^{\kappa}}
{ \Theta \wedge\chi^{\kappa}}
=\frac{\int_{X_y}\eta\wedge\bar{\eta}}
{\int_{X_y} \Theta }.
\end{equation}

\begin{definition} \label{hkemetric} Let $X$ be an $n$-dimensional  algebraic manifold with semi-ample canonical line bundle $K_X$. Suppose $0 < \kod(X) < n$ and so $f: X \rightarrow X_{can}$ is a holomorphic fibration of Calabi-Yau manifolds. A closed positive $(1,1)$-current $\omega$ on $X_{can}$ is called a canonical metric if it satisfies the following.

\begin{enumerate}

\item $f^* \omega \in -2 \pi c_1 (X) $.

\item $\omega$ is smooth outside a subvariety of  $ X_{can} $ and $\left( f^*\omega \right)^{\kappa} \wedge \Theta$ is continuous on $X$.

\item $\ric(\omega) = -\ddbar \log \omega^{\kappa}$ is well-defined on $X$ as a current and  on $\Xcr $

\begin{equation}
\ric(\omega) = -\omega + \omega_{WP}.
\end{equation}

\end{enumerate}

\end{definition}

\begin{definition} \label{HKEvol}

Suppose $\omega_{can} $ is a canonical metric on $X_{can}$. We define the canonical volume form $\Omega_{can}$ on $X$ to be

\begin{equation}
\Omega_{KE} = \left(f^* \omega_{KE} \right)^{\kappa} \wedge \Theta.
\end{equation}

\end{definition}


\subsection{Existence and uniqueness}


The main goal of this section is to prove the existence and uniqueness for canonical metrics on the canonical models.

\begin{theorem} \label{HKEthm} Let $X$ be an $n$-dimensional  algebraic manifold with semi-ample canonical line bundle $K_X$. Suppose $0 < \kappa = \kod(X) < n$.  There exists a unique  canonical metric on $X_{can}$.

\end{theorem}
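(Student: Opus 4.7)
The plan is to reduce the construction of $\omega_{can}$ to solving a scalar complex Monge-Amp\`ere equation on $X_{can}$, then invoke the degenerate Monge-Amp\`ere theory from Section 2.4. I would seek $\omega_{can} = \chi + \ddbar \varphi$ with $\varphi \in PSH(X_{can}, \chi)$. Using the pushforward identity $f^*F = \Omega/(\Theta \wedge \chi^\kappa)$ from Lemma 3.4 together with the identification of $\omega_{WP}$ as $\ric(h_{WP})$ on the dualizing sheaf $f_*\Omega_{X/X_{can}}^{n-\kappa}$ given at the end of Section 3.1, a direct $\ddbar$-computation shows that the generalized K\"ahler-Einstein equation $\ric(\omega_{can}) = -\omega_{can} + \omega_{WP}$ on $\Xcr$ is equivalent to
\begin{equation*}
(\chi + \ddbar \varphi)^\kappa = e^{\varphi}\, F\, \chi^\kappa
\end{equation*}
on $X_{can}$, after a suitable normalization of $F$. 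Pulling back to $X$ turns this into $(f^*\omega_{can})^\kappa \wedge \Theta = e^{f^*\varphi}\Omega$, so the required continuity of $(f^*\omega_{can})^\kappa \wedge \Theta$ on $X$ follows at once from continuity of $\varphi$.

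For existence, I would pass to a resolution of singularities $\pi : Y \to X_{can}$, on which $\pi^*\chi$ is a smooth closed semi-positive $(1,1)$-form that is big since $\chi$ comes from the Fubini-Study class. By Proposition \ref{Fso} and the subsequent proposition, $\pi^*F \in L^{1+\epsilon}(Y)$ for some $\epsilon > 0$ and has at worst pole singularities along a proper analytic subset. I would then run the continuity method
\begin{equation*}
(\pi^*\chi + \ddbar \varphi_t)^\kappa = e^{t\varphi_t}\, \pi^*F\, (\pi^*\chi)^\kappa, \qquad t \in [0,1].
\end{equation*}
At $t = 0$ Theorem \ref{zhang} furnishes a continuous solution. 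Openness is standard, as the linearization at $\varphi_t$ is $\Delta_{\omega_t} - t$, which is invertible on $C^{2,\alpha}$ for $t > 0$. For closedness the key input is a uniform $L^\infty$ bound on $\varphi_t$; I would extract this from the Demailly-Pali estimate (Theorem \ref{D-P}), whose weak dependence on the reference form is exactly what the degenerate class requires, with hypotheses met by the $L^{1+\epsilon}$ control on $\pi^*F$. A Kolodziej capacity argument then upgrades the $L^\infty$ bound to continuity of $\varphi_t$ uniformly in $t$.

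Smoothness of $\varphi$ on $\Xcr$ is a standard elliptic bootstrap, since $\chi$ is smooth K\"ahler and $F$ is smooth and positive there (Proposition \ref{Fso}), so the Monge-Amp\`ere operator is uniformly elliptic on compact subsets of $\Xcr$. Uniqueness comes from the comparison principle Theorem \ref{kol}: if $\varphi_1,\varphi_2$ both solve the equation and $\delta > 0$, applying Theorem \ref{kol} on the set $\{\varphi_1 > \varphi_2 + \delta\} \subset Y$ combined with the strict monotonicity of the right-hand side in $\varphi$ (the factor $e^{\varphi}$) forces $\varphi_1 \le \varphi_2$; exchanging roles gives equality, hence the canonical metric $\omega_{can}$ is unique on $X_{can}$.

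The main obstacle is the uniform $L^\infty$ bound along the continuity path. The reference form $\pi^*\chi$ is only semi-positive and degenerates over the exceptional locus of $\pi$ and over the discriminant of the Iitaka fibration, and $\pi^*F$ blows up precisely there, so classical Aubin-Yau Moser iteration is not available. One must regularize $\pi^*\chi$ by a family of genuine K\"ahler forms, verify the two hypotheses of Theorem \ref{D-P} uniformly in both $t$ and the regularization parameter using the integrability and pole structure of $\pi^*F$, and only then pass to the limit. Once the uniform $L^\infty$ estimate is secured, the remaining steps are close in spirit to the arguments developed in \cite{Zh} and \cite{EyGuZe1}.
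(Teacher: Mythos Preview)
Your reduction to the scalar Monge--Amp\`ere equation $(\chi+\ddbar\varphi)^\kappa=e^\varphi F\chi^\kappa$ on $X_{can}$ is exactly what the paper does, and the passage to a resolution $\pi:Y\to X_{can}$ is also the same first move. From there the two arguments diverge.

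Your continuity path in $t$ with the fixed degenerate form $\pi^*\chi$ has a gap at the openness step: since $\pi^*\chi$ is only semi-positive on $Y$, the metrics $\omega_t=\pi^*\chi+\ddbar\varphi_t$ are not K\"ahler on all of $Y$, so $\Delta_{\omega_t}-t$ is a degenerate operator and the implicit function theorem in $C^{2,\alpha}(Y)$ is not available. You flag the degeneracy only as an obstacle to the $L^\infty$ estimate, but it already breaks the path before you get there. The paper avoids this by dispensing with the $t$-parameter entirely: it regularizes $\hat\chi=\pi^*\chi$ to genuine K\"ahler forms $\chi_j=\hat\chi+\tfrac{1}{j}\omega_0$ and simultaneously smooths $\hat F$ to $F_j$ bounded below, solves $(\chi_j+\ddbar\varphi_j)^\kappa=F_j e^{\varphi_j}\chi_j^\kappa$ directly by Yau, and then passes to the limit. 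The uniform $C^0$ bound comes from Theorem~\ref{zhang} (not Theorem~\ref{D-P}, which the paper reserves for the flow in Section~5); uniform weighted second-order bounds are obtained by a Yau--Tsuji computation with barrier $\log(|S_E|^{2\alpha}\tr_{\omega_0}\omega_j)-A\Phi_j$, yielding $\tr_{\omega_0}(\omega_j)\le C|S_E|^{-2\alpha}|S_D|^{-2\beta}$ rather than appealing to an unspecified bootstrap. Once you commit to regularizing the reference form, your continuity-in-$t$ layer becomes redundant.

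On uniqueness there is a second gap. Your comparison-principle argument shows at most that the Monge--Amp\`ere equation has a unique bounded solution; but the theorem asserts uniqueness of canonical metrics in the sense of Definition~\ref{hkemetric}, and that definition does not a priori hand you a bounded $\varphi$ solving the scalar equation. The paper closes this loop: given any canonical metric $\omega=\chi+\ddbar\varphi$, it forms $\xi=(\omega^\kappa/\chi^\kappa)(Fe^{\varphi})^{-1}$, observes that $\ddbar\log\xi=0$ on $\Xcr$, and uses the continuity of $(f^*\omega)^\kappa\wedge\Theta$ on all of $X$ (this is condition~2 in Definition~\ref{hkemetric}) together with a resolution of $X_{can}$ to conclude $\xi$ is a positive constant. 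Only then does uniqueness of the Monge--Amp\`ere solution finish the argument.
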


We will need the following theorem of solving singular Monge-Amp\`ere equation to prove Theorem \ref{HKEthm}.

\begin{theorem}\label{MAthm} There exits a
unique solution $\varphi \in  PSH (\chi)\cap
C^{0}(X_{can})\cap C^{\infty} (\Xcr ) $ to the
following Monge-Amp\`ere equation on $X_{can}$
\begin{equation}\label{MAeqn}
(\chi+\ddbar \varphi )^{ \kappa } =F e^{ \varphi } { \chi^{\kappa} }.
\end{equation}

\end{theorem}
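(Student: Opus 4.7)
The plan is to construct $\varphi$ as the limit of smooth solutions of regularized Monge--Amp\`ere equations on a smooth model of $X_{can}$, combining the uniform $L^\infty$ estimate of Theorem \ref{D-P} with the continuity result of Theorem \ref{zhang}. I would first pass to a smooth resolution $\pi: Y \to X_{can}$, on which $\chi_Y := \pi^*\chi$ is a smooth, semi-positive, and big $(1,1)$-form. By the previous proposition $\pi^*F$ has only polar singularities on $Y$, and by Proposition \ref{Fso} it lies in $L^{1+\epsilon_0}(Y)$ for some $\epsilon_0>0$. Fix a K\"ahler form $\omega_Y$ on $Y$ and take smooth positive approximants $F_\delta$ with $c_\delta \le F_\delta \le C_\delta$ and $F_\delta \to \pi^*F$ in $L^{1+\epsilon_0}$, and set $\chi_s := \chi_Y + s\omega_Y$ for $s>0$, which is K\"ahler.

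For each $(s,\delta)$ with $s>0$ the K\"ahler equation
\[
(\chi_s + \ddbar \varphi_{s,\delta})^\kappa = F_\delta \, e^{\varphi_{s,\delta}} \chi_s^\kappa
\]
is of standard Aubin type and admits a unique smooth solution $\varphi_{s,\delta}$ on $Y$ by the classical continuity method (the $e^\varphi$ coefficient makes the linearization invertible). The decisive step is to bootstrap a uniform two-sided $L^\infty$ bound independent of $s,\delta$: the maximum principle at the extrema of $\varphi_{s,\delta}$ combined with the identity $\int_Y F_\delta e^{\varphi_{s,\delta}} \chi_s^\kappa = \int_Y \chi_s^\kappa$ pins down $\varphi_{s,\delta}$ on average, and Theorem \ref{D-P} — whose hypotheses hold thanks to the $L^{1+\epsilon_0}$ bound on $\pi^*F$ and the integrability of a small negative power of $\chi_s^\kappa$ against a fixed smooth volume form on $Y$ — provides the oscillation bound $\sup_Y \varphi_{s,\delta} - \inf_Y \varphi_{s,\delta} \le C$ uniformly. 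Weak compactness in $PSH(Y,\chi_Y) \cap L^\infty(Y)$ together with stability of the complex Monge--Amp\`ere operator then extract a bounded $\chi_Y$-psh limit $\varphi$ solving $(\chi_Y + \ddbar \varphi)^\kappa = \pi^*F \, e^\varphi \chi_Y^\kappa$ in the pluripotential sense, and Theorem \ref{zhang} upgrades it to a continuous solution on $Y$. Since the right hand side is $\pi$-pulled-back and $\pi$ is biholomorphic over a Zariski open set, the comparison principle applied on exceptional fibres forces $\varphi$ to be $\pi$-constant, so it descends to a continuous $\chi$-psh solution on $X_{can}$; on $X_{can}^\circ$, where $\chi$ is K\"ahler and $F$ is smooth and strictly positive, standard interior Evans--Krylov and Schauder estimates give $\varphi \in C^\infty(X_{can}^\circ)$.

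Uniqueness is immediate from Theorem \ref{kol}: if $\varphi_1,\varphi_2$ are two continuous solutions and $U := \{\varphi_1 < \varphi_2\}$ had positive measure, the comparison principle would give
\[
\int_U F e^{\varphi_2} \chi^\kappa = \int_U (\chi + \ddbar \varphi_2)^\kappa \le \int_U (\chi + \ddbar \varphi_1)^\kappa = \int_U F e^{\varphi_1} \chi^\kappa,
\]
contradicting $F>0$ almost everywhere and $e^{\varphi_2}>e^{\varphi_1}$ on $U$; by symmetry $\varphi_1 = \varphi_2$. The main technical obstacle I foresee is verifying the integrability hypotheses of Theorem \ref{D-P} uniformly in the regularization parameters: one has to select a fixed smooth reference volume form on $Y$ against which both $\chi_s^\kappa$ and its reciprocal admit the required $L^p$ and $L^\epsilon$ bounds as $s\to 0$, while simultaneously controlling the polar behaviour of $\pi^*F$ along the exceptional locus of $\pi$ and the preimage of $X_{can}\setminus X_{can}^\circ$. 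Once this integrability package is set up, the rest of the argument is the by now standard degenerate Monge--Amp\`ere machinery.
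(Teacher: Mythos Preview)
Your overall architecture---pass to a smooth resolution $\pi:Y\to X_{can}$, regularize both the reference form and the right-hand side, solve, obtain uniform $C^0$ bounds, take a limit, and descend---is exactly the paper's strategy.  Two points, however, deserve correction.

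First, a minor one: you reach for Theorem~\ref{D-P} for the $L^\infty$ estimate and then flag its integrability hypotheses as ``the main technical obstacle''.  The paper avoids this altogether by using Theorem~\ref{zhang} directly: once the right-hand side is uniformly in $L^{1+\epsilon}$ against a fixed volume form, Theorem~\ref{zhang} already gives a uniform $C^0$ bound (and, in fact, existence and continuity of the limiting solution in one stroke).  Theorem~\ref{D-P} is reserved in the paper for the parabolic problem in Section~5, where the reference form genuinely degenerates in volume.

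Second, and this is a real gap: your route to $C^\infty$ regularity on $X_{can}^\circ$ is not valid as written.  You produce a merely \emph{continuous} solution and then invoke ``standard interior Evans--Krylov and Schauder estimates''.  Evans--Krylov takes a $C^{1,1}$ (or at least uniformly elliptic $C^2$) solution to $C^{2,\alpha}$; it does not manufacture second derivatives out of a continuous plurisubharmonic function.  The missing ingredient is precisely a uniform Laplacian estimate on the approximants $\varphi_{s,\delta}$ away from the bad set, and that is where the actual work lies.  The paper carries this out explicitly (its Step~3): using Kodaira's lemma to write $[\hat\chi]-\epsilon[D]>0$, one runs a Yau--Aubin type maximum-principle argument on the quantity $\log\bigl(|S_E|^{2\alpha}\tr_{\omega_0}(\omega_j)\bigr)-A\Phi_j$, obtaining
\[
\tr_{\omega_0}(\omega_j)\le \frac{C}{|S_E|^{2\alpha}_{h_E}\,|S_D|^{2\beta}_{h_D}}
\]
uniformly in $j$.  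Only after this does the equation become uniformly elliptic on compact subsets of $Y\setminus(E\cup D)$, and then Evans--Krylov and Schauder bootstrap legitimately; varying $D$ eliminates it and leaves $C^\infty$ on $Y\setminus E\cong X_{can}^\circ$.  So your ``main technical obstacle'' is misidentified: the heart of the proof is the second-order estimate, not the $L^\infty$ estimate.
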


\bigskip
\noindent{\bf Proof of Theorem \ref{HKEthm} } ~~ We will prove Theorem \ref{HKEthm} by assuming Theorem \ref{MAthm}.

\noindent Let $\varphi$ be the solution in Theorem \ref{HKEthm} and $\omega = \chi + \ddbar \varphi$.

\begin{enumerate}

\item

$$f^*\omega = f^*\chi + \ddbar f^* \varphi \in - c_1(X).$$
and it proves $1.$ in Definition \ref{hkemetric}.

\item

By Theorem \ref{MAthm}, $\omega$ is smooth on $ \Xcr $ and
$$\left( f^* \omega \right) ^{\kappa} \wedge \Theta = \Omega e^{f^*\varphi}$$
is continuous since $f^*\varphi$ is continuous on $X$ and $\Omega$ is a smooth volume form. This proves $2.$ in Definition \ref{hkemetric}

\item

Then
$$\ric( \omega) = -\ddbar \log \omega^{\kappa} =- \ddbar \log \chi^{\kappa} -\ddbar \log F - \ddbar \varphi$$
is well-defined as a current on $X_{can}$.

Calculate on $\Xcr$

\begin{eqnarray*}
&& \ddbar \log \chi^{\kappa} + \ddbar \log F + \ddbar \varphi \\
&=& \ddbar \log \chi^{\kappa} + \ddbar \log \left( \frac{ \Omega } {\Theta \wedge \chi^{\kappa}} \right) + \omega -\chi\\
&=& \omega + \left( - \ddbar \log \left( \Theta \wedge \chi^{\kappa} \right) + \ddbar \log \chi^{ \kappa }  \right) \\
&=& \omega - \omega_{WP}.
\end{eqnarray*}

Therefore

$$ \ric(\omega) = -\omega + \omega_{WP}.$$

So we have proved $3.$ in Definition \ref{hkemetric}.

\end{enumerate}

Now we will prove the uniqueness of the  canonical  metric.

Let $\omega = \chi + \ddbar \varphi $ be a  canonical  metric on $X_{can}$. Then by the equation for the canonical metric, we have on $\Xcr$
$$ \ddbar \log \left( \frac{ \omega^{\kappa} }{\chi^{\kappa} } \right) = \ddbar \log \left( \frac{\Omega} { \Theta \wedge \chi ^{\kappa} } e^{ \varphi } \right) .$$
Let  $\xi = \left( \frac{ \omega^{\kappa} }{\chi^{\kappa} } \right) \left( \frac{\Omega} { \Theta \wedge \chi ^{\kappa} } e^{ \varphi } \right)^{-1}$. Then on $\Xcr$ we have
$$\ddbar \log  \xi = 0. $$
On the other hand,
$$f^* \xi = \frac{ \omega ^{\kappa} \wedge \Theta } {\Omega} e^{-\varphi}$$
extends to a strictly positive continuous function on $X$. Since $f$ is a holomorphic map, $\xi$  extends to a continuous function on $X_{can}$. Let $\pi: X_{can}' \rightarrow X_{can} $ be a resolution of $X_{can}$. Then $\pi^* \xi$ is continuous on $X_{can}'$ and $\ddbar \log \pi^* \xi = 0$ so that  $\pi^* \xi= constant$ on $X_{can}'$. Therefore $\xi= constant>0 $ on $X_{can}$ and   $\varphi' = \varphi+ \log \xi \in PSH(\chi) \cap C^0(X_{can}) $ solves the Monge-Amp\`ere equation (\ref{MAeqn}). The uniqueness of the solution for  (\ref{MAeqn})implies the uniqueness of the  canonical  metric.

\qed


\begin{corollary}

 Let $\omega_{can}$ be a  canonical  metric on $X_{can}$ and $\Omega_{can}$ the  canonical  volume form on $X$. Then

\begin{equation}
f^* \omega_{can} = \ddbar \log \Omega_{can}.
\end{equation}

\end{corollary}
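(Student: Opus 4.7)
The plan is to express both sides in terms of the Monge-Amp\`ere potential $\varphi$ constructed in the proof of Theorem \ref{HKEthm} and then collapse the identity into a single exponential factor on $\Omega$.  By that construction, $\omega_{can} = \chi + \ddbar \varphi$ for the unique $\varphi \in PSH(X_{can}, \chi) \cap C^{0}(X_{can}) \cap C^{\infty}(\Xcr)$ solving $\omega_{can}^{\kappa} = F e^{\varphi} \chi^{\kappa}$.  Pulling this equation back by $f$ and wedging with $\Theta$, together with the identification of $\chi$ with $f^{*}\chi$, yields
\[
\Omega_{can} \;=\; (f^{*}\omega_{can})^{\kappa} \wedge \Theta \;=\; e^{f^{*}\varphi}\,(f^{*}F)\,\chi^{\kappa} \wedge \Theta .
\]

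Next, I would invoke the volume comparison already established earlier in this section: on $\Xr$ one has $f^{*}F = \Omega / (\Theta \wedge \chi^{\kappa})$, and $\Theta$ extends as a current to all of $X$ so that the identity $(f^{*}F)\,\Theta \wedge \chi^{\kappa} = \Omega$ holds globally on $X$.  Substituting produces the clean formula
\[
\Omega_{can} \;=\; e^{f^{*}\varphi}\,\Omega ,
\]
valid as continuous positive volume forms on $X$: here $e^{f^{*}\varphi}$ is continuous by Theorem \ref{MAthm}, and $\Omega = \sum_{j} |\sigma_{m,j}|^{2}$ is smooth and strictly positive because $K_{X}^{m}$ is base point free.

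To finish, I would apply $\ddbar \log$ to both sides.  Recalling that $\ddbar \log \Omega = \chi$ by the definition of $\chi$ and $\Omega$, and that $f^{*}\varphi$ is a continuous global function on $X$, both sides have well-defined $\ddbar \log$ as $(1,1)$-currents on $X$, and we obtain
\[
\ddbar \log \Omega_{can} \;=\; \ddbar (f^{*}\varphi) + \ddbar \log \Omega \;=\; f^{*}(\chi + \ddbar \varphi) \;=\; f^{*}\omega_{can},
\]
which is the desired identity.

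The argument is essentially a direct unpacking of the construction of $\omega_{can}$ as a Monge-Amp\`ere potential, and no new a priori estimates beyond those in Theorem \ref{MAthm} are required.  The only subtle point is that the pointwise relations initially derived on $\Xr$ must extend across the singular fibres as currents on $X$; this is guaranteed by the extension of $\Theta$ from the earlier lemma and by the continuity of $f^{*}\varphi$, which together ensure that $\Omega_{can}$ itself is continuous and strictly positive on all of $X$ (consistent with condition $2$ of Definition \ref{hkemetric}), so that taking $\ddbar \log$ commutes with pullback in the sense of currents.
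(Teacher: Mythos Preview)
Your proof is correct and follows essentially the same approach as the paper: both arguments write $\omega_{can} = \chi + \ddbar\varphi$ via Theorem \ref{MAthm}, use the Monge-Amp\`ere equation together with the identity $(f^{*}F)\,\Theta\wedge\chi^{\kappa}=\Omega$ to obtain $\Omega_{can}=e^{f^{*}\varphi}\Omega$, and then apply $\ddbar\log$ using $\ddbar\log\Omega=\chi$. Your version simply spells out a few justifications (the extension across singular fibres and the continuity/positivity of $\Omega_{can}$) that the paper leaves implicit.
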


\begin{proof}

Let $\omega_{can} = \chi + \ddbar \varphi $, where $\varphi$ be the solution of the Monge-Amp\`ere equation (\ref{MAeqn}) in Theorem \ref{MAthm}. Then
$$\Omega_{can} = \left( f^*\omega_{can} \right)^{\kappa} \wedge \Theta = \Omega e^{ f^* \varphi}$$
and so
$$ \ddbar \log \Omega_{can} = \ddbar \log \Omega + \ddbar f^*\varphi = f^* \omega_{can}.$$

\qed
\end{proof}

\bigskip


\noindent{\bf Proof of Theorem \ref{MAthm}}

\medskip

\noindent{\it Step 1.} {\bf Approximation}

\medskip

\noindent Let $\pi: Y \rightarrow X_{can}$ be a resolution of
singularities such that $E=\pi^*(X_{can}\backslash
X_{can}^{\circ})$ is a divisor with simple normal crossings.  Let
$\hchi=\pi^*\chi$ and $\hF=\pi^* F$. Then $\hchi$ is a closed
semi-positive $(1,1)$-form on $Y$ and
$\int_{ Y} \hchi^{\kappa}=\int_{X_{can}} \chi^{\kappa}>0$,
i.e., $\hchi$ is big.  We will consider the following
Monge-Amp\`ere equation on $Y$

\begin{equation}\label{maequation}
\frac{(\hchi+\ddbar \hvarphi )^{\kappa}}
{\hchi^{\kappa}}=\hF e^{\hvarphi}.
\end{equation}

Since $\hchi$ is a big semi-positive closed $(1,1)$-form and so $\hat{L} =  \frac{1}{m} \pi^* \mathcal{O} (1)$ is a semi-positive big line bundle on $Y$. By Kodaira's lemma, there exists a divisor $D$ such that for any $\epsilon>0$,  $[L]- \epsilon [D]$ is an ample divisor on $Y$. Let $S_D$ be the defining section of $D$ and choose a smooth hermitian metric $h_D$ on the line bundle associated to $[D]$ such that

$$ \hchi + \epsilon \ddbar \log h_D >0 . $$

Fix $\epsilon_0 >0 $ and  define a K\"ahler form $\omega_0 = \hchi + \epsilon_0 \ddbar \log h_D >0$.

$\hF \in L^{1+\epsilon}(Y, \hchi^{\kappa} )$ for some $\epsilon>0$ since  $F \in L^{1+\epsilon}( X_{can} , \chi^{\kappa} )$ for some $\epsilon>0$. Then for each $k>0$ there exists a family of  functions $ \{ F_j \}_{j=1}^{\infty}$ on $Y$  satisfying the following.

\begin{enumerate}

\item $ F_j \in C^{4} (Y)$ for all $j$ and  $ F_j \rightarrow \hF$ in $L^{1+\epsilon}(Y, \hchi^{\kappa} )$ as $ j \rightarrow \infty $.

\item There exists $C>0$ such that $ \log F_j \geq - C$ for all $j$.


\item There exist $\lambda $,  $C > 0 $ such that for all $j$

$$ \left| \left|  |S_E|_{h_E}^{2\lambda }  F_j \right| \right|_{ C^{2}( Y )} \leq C,$$
where $S_E$ is a defining section of $E$ and $h_E$ is a fixed smooth hermitian metric on the line bundled associated to $[E]$.
\end{enumerate}

For example, we can choose $\hF_j$ to be defined by

$$  F_j = \exp \left(  \left( \frac{ |S_E|_{ h_E}^{2 \alpha} } { j^{-1} + |S_E|_{ h_E}^{2\alpha} } \right) \log \hF \right)$$
for sufficiently large $\alpha >0 $.

We also choose a K\"ahler form $\omega_0$ and let $\displaystyle \chi_j = \hchi +\frac{1}{j}~ \omega_0$.
We consider the following Monge-Amp\`ere equation

\begin{equation}\label{MAfamily}
\frac{( \chi_j + \ddbar \varphi_j )^{\kappa } }{ \left( \chi_j \right) ^{ \kappa } }= F_j
e^{\varphi_j }
\end{equation}
for sufficiently large $\alpha$.

By Yau's solution to the Calabi conjecture, for each $ j $, there exists a unique solution $\varphi_j \in C^{3}(Y) \cap C^{\infty} ( Y \setminus
E )$ solving (\ref{MAfamily}).  We will derive uniform estimates for $\varphi_j$.

\bigskip

\noindent{\it Step 2.} {\bf Zeroth order estimates}

\begin{proposition} There exists $C>0$ such that for all $ j$,

\begin{equation}
\left| \left| \varphi_j \right| \right| _{ L^{\infty} (Y ) } \leq C.
\end{equation}

Furthermore, there exists a unique solution  $\varphi_{\infty} \in PSH (Y, \hchi) \cap C^0 (Y)$ solving the Monge-Amp\`ere equation (\ref{maequation}) such that
\begin{equation}
\varphi_{j} \rightarrow \varphi_{\infty}
\end{equation}
in $L^1(Y, \omega_0^{\kappa} )$ as $j\rightarrow \infty$.

\end{proposition}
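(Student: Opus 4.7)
The plan is to establish a uniform $L^\infty$ bound on $\varphi_j$ in three steps (upper bound by the maximum principle, oscillation bound by Demailly-Pali, lower bound on the sup via integration), then extract an $L^1$ limit and upgrade it to a continuous solution via Theorem~\ref{zhang}. The \textbf{upper bound} is immediate from the maximum principle: at a point $p$ where $\varphi_j$ attains its maximum, $\ddbar\varphi_j(p)\le 0$, so the eigenvalues of $\chi_j+\ddbar\varphi_j$ at $p$ are dominated by those of $\chi_j$, giving $(\chi_j+\ddbar\varphi_j)^\kappa\le \chi_j^\kappa$ at $p$ and hence $F_j(p)e^{\varphi_j(p)}\le 1$; since $\log F_j\ge -C$ uniformly by construction, $\sup_Y \varphi_j\le C$.

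For the \textbf{lower bound}, set $\tilde\varphi_j=\varphi_j-\sup_Y\varphi_j\le 0$, which satisfies $(\chi_j+\ddbar\tilde\varphi_j)^\kappa = G_j\,\chi_j^\kappa$ with $G_j=F_j e^{\varphi_j}$. The upper bound on $\varphi_j$ and the uniform $L^{1+\epsilon}$ bound on $F_j$ imply $\|G_j\|_{L^{1+\epsilon}}\le C$; since $\chi_j\le \hchi+\omega_0$, Theorem~\ref{D-P} applied to $\tilde\varphi_j$ yields $\sup_Y\tilde\varphi_j-\inf_Y\tilde\varphi_j\le C$, so $\tilde\varphi_j\ge -C$. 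A lower bound on $\sup_Y\varphi_j$ is then obtained by integrating: $\int_Y\chi_j^\kappa = \int_Y F_j e^{\varphi_j}\chi_j^\kappa \le e^{\sup_Y\varphi_j}\int_Y F_j\chi_j^\kappa$, and since $\int_Y\chi_j^\kappa\to[\hchi]^\kappa>0$ while $\int_Y F_j\chi_j^\kappa$ stays bounded, $\sup_Y\varphi_j\ge -C$, hence $\|\varphi_j\|_{L^\infty(Y)}\le C$ uniformly.

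With the $L^\infty$ bound in hand, standard compactness for quasi-plurisubharmonic functions extracts a subsequence $\varphi_{j_k}\to\varphi_\infty$ in $L^1(Y)$ with $\varphi_\infty\in PSH(Y,\hchi)\cap L^\infty(Y)$; Bedford-Taylor continuity of the Monge-Amp\`ere operator on $L^\infty$-bounded sequences, combined with in-measure convergence of $F_j e^{\varphi_j}$, passes to the limit, giving $(\hchi+\ddbar\varphi_\infty)^\kappa=\hF e^{\varphi_\infty}\hchi^\kappa$. Rewriting the right-hand side as $H\,\Omega$ with $H\in L^{1+\epsilon}(Y)$ (using $\varphi_\infty$ bounded) and invoking the continuity clause of Theorem~\ref{zhang} upgrades $\varphi_\infty$ to $C^0(Y)$; uniqueness of bounded PSH solutions follows from the comparison principle (Theorem~\ref{kol}) applied on $\{\varphi<\psi\}$, where the inequality $\int_{\varphi<\psi}\hF e^\psi\hchi^\kappa\le \int_{\varphi<\psi}\hF e^\varphi\hchi^\kappa$ combined with $e^\psi>e^\varphi$ on this set forces it to have measure zero, and this upgrades subsequential convergence to full-sequence convergence. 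The principal technical hurdle is verifying the integrability conditions of Theorem~\ref{D-P} \emph{uniformly} in $j$ --- in particular $\int_Y(\Omega/\chi_j^\kappa)^\epsilon\Omega\le B$ with $B$ independent of $j$; since $\chi_j\ge\hchi$, this reduces to $\int_Y(\Omega/\hchi^\kappa)^\epsilon\Omega<\infty$ for some small $\epsilon>0$, which follows from Kodaira's lemma (already invoked in Step~1) and the resulting lower bound $\hchi^\kappa\ge c\,|S_D|_{h_D}^{2\epsilon}\Omega$.
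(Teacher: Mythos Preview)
Your proof is essentially correct and reaches the same conclusion, but it takes a somewhat different and more elaborate route than the paper.  The paper obtains the uniform $L^\infty$ bound by appealing directly to (the quantitative form of) Theorem~\ref{zhang}: after the maximum principle gives $\varphi_j\le C$, one checks that $e^{\varphi_j}F_j\,\chi_j^\kappa/\Omega_0$ is uniformly bounded in $L^{1+\epsilon}(Y,\Omega_0)$, and the Ko{\l}odziej-type estimate (for $\chi_j$ semi-positive and big, uniformly close to $\hchi$) then bounds $\|\varphi_j\|_{L^\infty}$.  You instead invoke the Demailly--Pali estimate (Theorem~\ref{D-P}) to control the oscillation and add a separate integration argument to bound $\sup\varphi_j$ from below; this is a legitimate alternative, and arguably makes the uniformity in $j$ more transparent since Theorem~\ref{D-P} is formulated precisely for degenerating reference forms.

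Two remarks are in order.  First, your verification of hypothesis~(2) of Theorem~\ref{D-P} via Kodaira's lemma is not quite right: Kodaira's lemma gives only a cohomological statement ($[\hchi]-\epsilon[D]$ ample), not the pointwise lower bound $\hchi^\kappa\ge c\,|S_D|_{h_D}^{2\epsilon}\Omega$ you claim.  The correct justification is that $\hchi=\pi^*\chi$ is the pullback of a positive form under the resolution $\pi$, so $\hchi^\kappa/\Omega_0$ equals $|\!\det d\pi|^2$ times a smooth positive function; the Jacobian is holomorphic and vanishes to finite order along the exceptional divisor, which gives the required integrability of $(\Omega_0/\hchi^\kappa)^\epsilon$ for small $\epsilon$.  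Second, your route to $\varphi_\infty$ is more circuitous than necessary: you extract an $L^1$ limit, pass to the limit in the Monge--Amp\`ere equation via Bedford--Taylor continuity, and then upgrade to $C^0$.  The paper instead applies Theorem~\ref{zhang} \emph{directly} to the limiting equation $(\hchi+\ddbar\varphi)^\kappa=\hF e^{\varphi}\hchi^\kappa$ to produce the unique $\varphi_\infty\in PSH(Y,\hchi)\cap C^0(Y)$, and then uniqueness together with standard compactness in $PSH(Y,\hchi)$ forces $\varphi_j\to\varphi_\infty$ in $L^1$; this avoids the (somewhat delicate) step of passing Monge--Amp\`ere masses to the limit.
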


\begin{proof}   By Yau's theorem, for each $j$, there exists a smooth solution $\varphi_j$ solving the Monge-Amp\`ere equation (\ref{maequation}).  We first derive a uniform upper bound for $\varphi_j$. Suppose that $\varphi_j$ achieves its maximum at $y_0$ on $Y$. Then applying the maximum principle, we have

$$\varphi_j \leq \varphi_j(y_0) \leq \frac{1}{ F_j (y_0)}  \leq \sup_Y  \left( \frac{1}{ F_j} \right) \leq C.$$

Let $ \Omega_0 $ be a smooth nowhere vanishing volume form on $Y$. We have to verify that $  F_j  \left( \frac{ \hchi_j^{\kappa} }{\hOmega} \right)  $ is uniformly bounded in $L^{1+\epsilon }(Y, \hOmega) $ for some $\delta>0$ for all $j$.

\begin{eqnarray*}
\left| \left| e^{\varphi_j} F_j  \left(\frac{ \chi_j^{\kappa} } {\Omega_0}  \right) \right| \right|^{1+ \epsilon }_{L^{1+ \epsilon }(Y, \Omega_0)}
=  \int_Y e^{(1+ \epsilon ) \varphi_j} F_j^{1+ \epsilon } \left( \frac{ \chi_j^{\kappa} }{\Omega_0} \right) ^{ \epsilon }  \chi_j^{\kappa} \leq  C \int_Y F_j^{1+\epsilon }   \chi_j^{\kappa} = C \left| \left| F_j  \right| \right|^{1+ \epsilon }_{L^{1+ \epsilon }(Y, \Omega_0)} .
\end{eqnarray*}
Therefore $ \left| \left| \varphi_j \right| \right|_{ L^{\infty} (Y )} $ is uniformly bounded by Theorem \ref{zhang}, since $ \left| \left| F_j  \right| \right|^{1+ \epsilon }_{L^{1+ \epsilon }(Y, \Omega_0)} $ is uniformly bounded.

Also Theorem \ref{zhang} gives a unique solution $\varphi_{\infty} \in PSH( Y, \hchi )\cap C^0 (Y)$ solving the Monge-Amp\`ere equation \ref{maequation}. By the uniqueness of such $\varphi_{\infty}$, we have $\varphi_j \rightarrow \varphi_{\infty}$ in $L^1$ by standard potential theory.


%

%
%
%

\qed
\end{proof}

\bigskip
\bigskip

\noindent{\it Step 3.} {\bf Second order estimates}

\bigskip
\noindent We now apply the maximum principle and prove a second
order estimate for $\varphi_j$ by using a modified argument in
\cite{Ya2}. Note that Tsuji used a similar trick in \cite{Ts1} for the second-order estimate.

Let $\omega_j = \chi_j + \ddbar \varphi_j$, $\Delta_0$ and $\Delta_j$ be the Laplace operator associated to $\omega_0$ and $\omega_j$.
The following lemma is
proved by standard calculation.

\begin{lemma} There exists a uniform constant $C>0 $ only depending on $\omega_0$ such that on $Y \setminus \left(  E \cup D \right) $
$$\Delta_j \log \tr_{\omega_0}(\omega_j)\geq
 -C \left( 1 + \tr_{\omega_j} (\omega_0) +  \frac{1 + \left| \Delta_0 \log F_j  \right|}
{\tr_{ \omega_0} ( \omega_j )} \right) . $$
\end{lemma}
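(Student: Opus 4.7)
The plan is to apply the classical Aubin--Yau second-order inequality to the approximating solutions $\omega_j$ and then read off each term of the asserted lower bound by substituting the Monge--Amp\`ere equation (\ref{MAfamily}).

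First I would invoke Yau's inequality in the form appropriate for two K\"ahler forms $\omega_0$ and $\omega_j$ on the $\kappa$-dimensional manifold $Y$: in normal coordinates for $\omega_0$ at a point where $\omega_j$ is simultaneously diagonalized,
\begin{equation*}
\Delta_{\omega_j}\log\tr_{\omega_0}\omega_j \;\geq\; \frac{\Delta_{\omega_0}\log\bigl(\omega_j^{\kappa}/\omega_0^{\kappa}\bigr)}{\tr_{\omega_0}\omega_j} \;-\; B\,\tr_{\omega_j}\omega_0 \;-\; \frac{\tr_{\omega_0}\ric(\omega_0)}{\tr_{\omega_0}\omega_j},
\end{equation*}
where $-B$ is a lower bound for the holomorphic bisectional curvature of $\omega_0$; the bisectional term bounds the contraction $g_j^{i\bar l}R^{\omega_0}_{i\bar l k\bar k}$ from below by $-B\tr_{\omega_j}\omega_0$ in the usual fashion. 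The curvature summand on the right already accounts for the $\tr_{\omega_j}(\omega_0)$ contribution in the statement, and $\tr_{\omega_0}\ric(\omega_0)$ is a bounded function on $Y$ that will be absorbed into the final constant.

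Next I would substitute the Monge--Amp\`ere equation $\omega_j^{\kappa}=F_j e^{\varphi_j}\chi_j^{\kappa}$ to expand the numerator:
\begin{equation*}
\Delta_{\omega_0}\log\bigl(\omega_j^{\kappa}/\omega_0^{\kappa}\bigr) \;=\; \Delta_{\omega_0}\log F_j \;+\; \bigl(\tr_{\omega_0}\omega_j-\tr_{\omega_0}\chi_j\bigr) \;+\; \bigl(\tr_{\omega_0}\ric(\omega_0)-\tr_{\omega_0}\ric(\chi_j)\bigr),
\end{equation*}
using $\Delta_{\omega_0}\varphi_j=\tr_{\omega_0}\omega_j-\tr_{\omega_0}\chi_j$ and $\ddbar\log(\chi_j^{\kappa}/\omega_0^{\kappa})=\ric(\omega_0)-\ric(\chi_j)$. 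Dividing through by $\tr_{\omega_0}\omega_j$, the self-trace produces a harmless $+1$, the $\tr_{\omega_0}\ric(\omega_0)$ contribution cancels the last term in Yau's inequality, and the term in $\log F_j$ gives exactly the $\Delta_0\log F_j/\tr_{\omega_0}\omega_j$ piece of the claimed bound; what remains in the numerator is $-\tr_{\omega_0}\chi_j-\tr_{\omega_0}\ric(\chi_j)$.

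The main technical point is therefore to verify the uniform (in $j$) boundedness of $\tr_{\omega_0}\chi_j$ and $\tr_{\omega_0}\ric(\chi_j)$ on $Y\setminus(E\cup D)$. Because $\chi_j=\hat\chi+j^{-1}\omega_0$ with $\hat\chi$ smooth and strictly positive on $Y\setminus E$, the forms $\chi_j$ converge to $\hat\chi$ in $C^{\infty}_{\mathrm{loc}}$ on $Y\setminus(E\cup D)$, so both $\tr_{\omega_0}\chi_j$ and $\tr_{\omega_0}\ric(\chi_j)$ are dominated by constants depending only on the fixed background data $(\omega_0,\hat\chi)$ --- which is what the statement encodes in ``only depending on $\omega_0$.'' Absorbing these bounded pieces and the $+1$ into a single $-C$ yields the asserted inequality. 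This uniform curvature control is the only delicate step, and it also explains why the estimate is restricted to $Y\setminus(E\cup D)$: the behaviour of $\ric(\chi_j)$ near $E$ is unbounded as $j\to\infty$, and will later be handled by the auxiliary weight $|S_E|^{2\lambda}$ in the global maximum-principle argument.
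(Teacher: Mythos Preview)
Your overall strategy --- Yau's second-order inequality followed by substitution of the Monge--Amp\`ere equation --- is exactly the ``standard calculation'' the paper invokes, and your algebraic expansion of $\Delta_{\omega_0}\log(\omega_j^{\kappa}/\omega_0^{\kappa})$ is correct.

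The gap is in the last step, where you claim that $\tr_{\omega_0}\ric(\chi_j)$ is bounded above on $Y\setminus(E\cup D)$ by a constant depending only on the background data. Your justification is $C^\infty_{\mathrm{loc}}$ convergence of $\chi_j$ to $\hat\chi$ on $Y\setminus(E\cup D)$, but local convergence gives bounds only on \emph{compact} subsets; it says nothing about uniformity as one approaches $E$ from within the open set $Y\setminus(E\cup D)$. You in fact acknowledge in your final sentence that $\ric(\chi_j)$ blows up near $E$ as $j\to\infty$, which is inconsistent with the uniform bound you assert a few lines earlier. Concretely, in normal coordinates for $\omega_0$ at a point where some eigenvalue of $\hat\chi$ is small, the term
\[
-\sum_{p}\tr\bigl((\hat\chi+j^{-1}\omega_0)^{-1}(\partial_p\hat\chi)\,(\hat\chi+j^{-1}\omega_0)^{-1}(\bar\partial_p\hat\chi)\bigr)
\]
in $\Delta_{\omega_0}\log\det\chi_j$ is negative of order $j^{2}$, so $\tr_{\omega_0}\ric(\chi_j)$ is not uniformly bounded above near $E$. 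Thus the step ``absorb $\tr_{\omega_0}\ric(\chi_j)$ into $C$'' fails as written. Either the inequality should retain a residual term controlled by the degeneration of $\chi_j$ along $E$ --- to be killed later by the weight $|S_E|^{2\alpha}$ in the maximum-principle argument, exactly as the $|\Delta_0\log F_j|$ term is --- or a separate one-sided estimate on $\tr_{\omega_0}\ric(\chi_j)$ must be supplied; the $C^\infty_{\mathrm{loc}}$ argument does neither.
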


\begin{theorem}  There exist  $\alpha, ~ \beta,~
C>0 $ such that for all $j$ and $z\in Y \setminus \left(  E\cup D \right) $
\begin{equation}\label{2nd}
\tr_{ \omega_0 } ( \omega_j ) ( z ) \leq \frac{C}{ |S_E|_{h_E}^{2\alpha} | S_D |_{h_D}^{2\beta} }
\end{equation}

\end{theorem}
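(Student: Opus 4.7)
The strategy is to adapt Yau's classical $C^2$-estimate via the maximum principle to an auxiliary quantity whose logarithmic correction terms along $E$ and $D$ force the maximum into $Y\setminus(E\cup D)$; there the singularities of $F_j$ along $E$ and the degeneracy of $\chi_j$ near $D$ are mild enough to run a standard computation. The scheme follows the framework of Tsuji (cf.\ \cite{Ts1}) and Tian--Zhang (cf.\ \cite{TiZha}). Concretely I would consider
\begin{equation*}
H \;=\; \log \tr_{\omega_0}(\omega_j) \;-\; A\,\varphi_j \;+\; \alpha\log|S_E|^{2}_{h_E} \;+\; \beta\log|S_D|^{2}_{h_D},
\end{equation*}
with $A,\alpha,\beta>0$ uniform constants to be chosen. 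Since $\varphi_j\in C^{3}(Y)$ by Yau's theorem applied to (\ref{MAfamily}), the function $\log\tr_{\omega_0}(\omega_j)-A\varphi_j$ is smooth on $Y$, and the log-barrier terms are the only source of $-\infty$; hence $\max_Y H$ is attained at some $z_0\in Y\setminus(E\cup D)$.

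At $z_0$ I combine $\Delta_j H(z_0)\le 0$ with the preceding lemma for $\Delta_j\log\tr_{\omega_0}(\omega_j)$, the identity $\Delta_j\varphi_j=\kappa-\tr_{\omega_j}(\chi_j)$, and the smooth curvature expressions $\Delta_j\log|S_E|^{2}_{h_E}=-\tr_{\omega_j}(\Theta_{h_E})$ (similarly for $D$). Using $|\Theta_{h_E}|,|\Theta_{h_D}|\le C\omega_0$ together with the crucial positivity $A\,\tr_{\omega_j}(\chi_j)\ge(A/2)\tr_{\omega_j}(\omega_0)$ --- which is available after coupling $\omega_0=\hchi+\epsilon_0\ddbar\log h_D$ with a sufficiently small $\epsilon_0$ and taking $A$ large relative to $\alpha,\beta$ and the curvature constants --- the negative $\tr_{\omega_j}(\omega_0)$ contributions are absorbed, leaving
\begin{equation*}
\tr_{\omega_j}(\omega_0)(z_0)\;\le\; C_1 \;+\; C_2\,\frac{1+|\Delta_0\log F_j|(z_0)}{\tr_{\omega_0}(\omega_j)(z_0)}.
\end{equation*}

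To close the loop I would apply the arithmetic-geometric-mean inequality $\tr_{\omega_0}(\omega_j)\le C\,(\tr_{\omega_j}(\omega_0))^{\kappa-1}\cdot\omega_j^\kappa/\omega_0^\kappa$, substitute (\ref{MAfamily}) and use $\|\varphi_j\|_\infty\le C$ together with $\chi_j\le C\omega_0$ to obtain $\omega_j^\kappa/\omega_0^\kappa\le CF_j$. Feeding in the pole estimates $F_j\le C|S_E|^{-2\lambda}_{h_E}$ and $|\Delta_0\log F_j|\le C(1+|S_E|^{-2\lambda'}_{h_E})$ --- both consequences of the uniform $C^2$-bound on $|S_E|^{2\lambda}_{h_E}F_j$ combined with $\log F_j\ge -C$ --- yields at $z_0$ a pointwise inequality of the form $\log\tr_{\omega_0}(\omega_j)(z_0)\le C-\gamma\log|S_E|^{2}_{h_E}(z_0)-\delta\log|S_D|^{2}_{h_D}(z_0)$ for explicit $\gamma,\delta$ depending on $\kappa,\lambda,\lambda'$. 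Choosing $\alpha\ge\gamma$ and $\beta\ge\delta$ (and normalizing $|S_E|^{2}_{h_E},|S_D|^{2}_{h_D}\le 1$) forces $H(z_0)\le C$, and since $H(z)\le H(z_0)$ for every $z\in Y$, rearranging delivers precisely the claimed estimate (\ref{2nd}).

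The principal obstacle is ensuring that \emph{every} constant in the argument is independent of $j$. This comes down to two points: (i) preserving the positivity $A\,\tr_{\omega_j}(\chi_j)\ge(A/2)\tr_{\omega_j}(\omega_0)$ as $\tfrac{1}{j}\to 0$, which relies on the ample-minus-divisor decomposition $\omega_0-\hchi=\epsilon_0\ddbar\log h_D$ built from Kodaira's lemma and on choosing $\epsilon_0$ small enough that $|\epsilon_0\Theta_{h_D}|$ does not overwhelm $\omega_0$; and (ii) propagating uniform $L^\infty$-control on $F_j$ and on $|\Delta_0\log F_j|$ through the approximation, which follows from the uniform $C^2$ and pointwise-positivity bounds designed into the construction of $F_j$. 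Marrying these uniform inputs at the interior maximum is the delicate step of the proof.
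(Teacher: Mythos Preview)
Your proposal is correct and follows essentially the same route as the paper's proof: the auxiliary function $H=\log\tr_{\omega_0}(\omega_j)-A\varphi_j+\alpha\log|S_E|^2_{h_E}+\beta\log|S_D|^2_{h_D}$ coincides with the paper's $H_j=\log\bigl(|S_E|^{2\alpha}_{h_E}\tr_{\omega_0}(\omega_j)\bigr)-A\bigl(\varphi_j-\epsilon_0\log|S_D|^2_{h_D}\bigr)$ once one sets $\beta=A\epsilon_0$, and both arguments close via the preceding Laplacian lemma, the AM--GM inequality linking $\tr_{\omega_0}(\omega_j)$ and $\tr_{\omega_j}(\omega_0)$, and the maximum principle. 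The only cosmetic difference is the order in which constants are fixed: the paper first chooses $\alpha$ large enough that $|S_E|^{2\alpha}_{h_E}F_j$ and $|S_E|^{2\alpha}_{h_E}\lvert\Delta_0\log F_j\rvert$ are uniformly bounded (so the bad term is absorbed \emph{before} the maximum principle), whereas you bound $\tr_{\omega_0}(\omega_j)(z_0)$ by a negative power of $|S_E|^2_{h_E}$ \emph{after} applying the maximum principle and then pick $\alpha$ to compensate; either ordering is valid and yields the same estimate.
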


\begin{proof}

Define
$$
\Phi_j = \varphi_j - \epsilon_0 \log|S_D|^2_{h_D}$$
and
$$ H_{j}  =  \log
\left(  |S_E |^{2 \alpha }_{h_E} \tr_{\omega_0 } ( \omega_j ) \right)
-A \Phi_j
$$
for some constants
$\alpha, ~A>0$ to be determined later.


First calculate on $ Y \setminus \left( E\cup D \right) $
\begin{eqnarray*}
&&\Delta_j  H_{ j}\\
&&\\
&=&
\Delta_j \log \tr_{ \omega_0 } ( \omega_j )  - A \Delta_j \Phi_j +
 \alpha \Delta_j \log |S_E| _ { h_E } ^2\\
&&\\
&=&
\Delta_j \log \tr_{\omega_0 } ( \omega_j ) - A
\tr_ { \omega_j } \left( \omega_j - \omega_0 \right)-
\alpha  \tr_{ \omega_j } ( \ric( h_E)) \\
&&\\
&\geq&  (A - C_1) \tr_{\omega_j }( \omega_0 )-\alpha \tr_{\omega_j }
( \ric ( h_E ) )-  \frac{ C_1\left(  1+ \left| \Delta_0  \log F_j \right| \right)  }
{ \tr_{\omega_0 }(\omega_j) } - \frac{C_1} {\tr_{\omega_0 }( \omega_j)} -  \kappa A - C_1
\end{eqnarray*}
Choose a sufficiently large $\alpha>0$, such that  there exists a
constant $C_2 >0$ with the following:
\begin{eqnarray*}
\frac{  \left| S_E \right|^{2 \alpha}_{h_E} \left( 1 + \left| \Delta_0 \log F_j \right| \right)}  { \tr_{\omega_0} (\omega_j)} 
%
%
%
%
&\leq & \frac { C _2 } { \tr_{\omega_0 } (\omega_j )  }
\end{eqnarray*}
and

$$ |S_E|_{h_E}^{2\alpha} F_j \leq C_2.$$
Applying  the elementary inequality
\begin{eqnarray*}
\tr_{ \omega_j } ( \omega_0 ) &\geq&  C_3
\left (\tr_{ \omega_0 } ( \omega_j ) \right)^{ \frac{1} { \kappa - 1 } }
\left( \frac{ \omega_0^{ \kappa } } { \omega_j ^{ \kappa } } \right)
^{\frac{1}{\kappa-1}}  \\
&&\\
&=& C_3  \left(  \frac{ e^{ - \varphi_j  } } { F_j }  \tr_{ \omega_0 } ( \omega_j )   \left( \frac{ \omega_0^{ \kappa } } { \chi_j ^{ \kappa } } \right)  \right)^{ \frac{1} { \kappa - 1 } }  \\
&&\\
&\geq& C_4 \left(  \left| S_E \right|_{h_E}^{2\alpha} \tr_{\omega_0} ( \omega_j ) \right)^{ \frac{1} { \kappa - 1 } }
\end{eqnarray*}

We can always choose $A$ sufficiently large such that
$$
\Delta_j H_j   \geq   C_5 A  \left(  \left| S_E \right|_{h_E}^{2\alpha} \tr_{\omega_0} ( \omega_j ) \right)^{ \frac{1} { \kappa - 1 } } - \frac { C _4 |S_E|^{4\alpha}_{h_E} } {    \left| S_E \right|_{h_E}^{2\alpha} \tr_{\omega_0} ( \omega_j )  } - C_6 A.
$$

For any $j$,  suppose $$\sup_{z \in Y }
H_j (z)=H_j (z_0) $$ for some $z_0\in Y \setminus \left( E \cup D \right) $  since
$H_j = - \infty $ along $ E \cup D $. By the maximum principle, $\Delta_j
H_j (z_0)\leq 0$. By straightforward calculation, there exists $C_7>0$ such that  %
$$ \left. \left| S_E \right| _{h_E}^{2\alpha}  \tr_{\omega_0 } (\omega_j) \right|_{z = z_0}  \leq C_7.$$
Hence there exists a uniform constant $C_8>0$ such that

$$H_j \leq  H_j(z_0) = \left. \log \left( |S_E|^{2\alpha} \tr_{\omega_0} (\omega_j) \right) \right|_{z=z_0}- \varphi_j (z_0) + \epsilon_0 \log |S_D|^{2}_{h_D} (z = z_0) \leq C_8.$$
The theorem is proved since

$$ \tr_{\omega_0 } (\omega_j ) \leq |S_E|^{-2\alpha}_{h_E} \exp \left( H_j + A \Phi_j \right) \leq \frac{ C_9 } {  |S_E|^{2\alpha}_{h_E}  |S_D|^{2\beta}_{h_D}  } . $$

\qed
\end{proof}

\bigskip
\bigskip


\noindent {\it Step 4. } {\bf $C^k$  estimates}

\bigskip

\noindent From the second order estimates, the Monge-Amper\`e
equation (\ref{MAfamily}) is uniformly elliptic on any compact set of
$Y \setminus (E\cup D) $. The $C^k$-estimates are local estimates and can be
derived by standard Schauder estimates and elliptic estimates. Therefore for any compact subset $K$ of  $Y \setminus (E\cup D) $, there exist constants $C_{K, R}$ such that

$$ \left| \left| \varphi_j \right| \right|_{C^R( K)} \leq C_{R, K}.$$

\bigskip
\bigskip

\noindent {\it Step 5. } {\bf Proof of Theorem \ref{MAthm}}

\bigskip

By taking a sequence, we can assume $\varphi_j \rightarrow
\varphi_{\infty} \in L^{1 } (Y, \omega_0^{\kappa}  )$. Therefore $$\varphi_{\infty} \in C^0(Y) \cap C^{\infty} ( Y\setminus (E\cup D)).$$ On the other hand, one can choose different divisors $D$ such that $[\hat{\chi}]-\epsilon [D]>0$ for all sufficiently small $\epsilon>0$ and the intersection of such divisors is contained in $E$. Therefore $\varphi_{\infty} \in C^{\infty} (Y \backslash E)$.
Each fibre of the resolution $\pi$ is connected and so   $\varphi_{\infty}$ is constant along the fibre since $\varphi_{\infty} \in PSH(Y, \hchi) \cap C^0( Y) $ and $\hchi\geq 0$.
Therefore $\varphi_{\infty}$ descends to a solution $\varphi \in PSH(X_{can}, \chi) \cap C^0( X_{can}) $ solving equation (\ref{MAeqn}) as in Theorem \ref{MAthm}. Furthermore, $\varphi$ is smooth on $X_{can}^{\circ}$. This completes the proof of Theorem \ref{MAthm}.

 \qed

\bigskip


\section{Canonical measures on algebraic manifolds of non-negative Kodaira dimension}

\subsection{Canonical measures on surfaces of non-negative Kodaira dimension }

Let $X$ be a K\"ahler surface of positive Kodaira dimension and
$X_{min}$ be its minimal model derived by $\pi: X \rightarrow
X_{min}$ contracting all the $(-1)$-curves $E=\cup_i E_i$.

If $\kod(X)=2$, $X$ is a surface of general type. Let $\Phi:
X_{min}\rightarrow X_{can}$ be holomorphic canonical map from
$X_{min}$ to its canonical model $X_{can}$ with possible orbifold
singularities. There exists a unique smooth orbifold
K\"ahler-Einstein metric $\omega^{\dagger}$ on $X_{can}$. We define

$$\omega_{KE}=(\pi\circ\Phi)^*\omega^{\dagger}+\ddbar\log |E|^2.$$

Choose a smooth positive $(1,1)$-form $\omega\in -c_1(X_{min})$
and a smooth volume form $\Omega$ with $\ddbar\log
\Omega=\omega$. Then $\Phi^*\omega^{\dagger}=\omega+\ddbar
\varphi$ for some $\varphi\in C^0(X_{min})$ satisfying the
following Monge-Amp\`ere equation

$$\frac{(\omega+\ddbar \varphi)^2}{\Omega}=e^{\varphi}.$$
Furthermore, $\varphi$ is smooth outside the exceptional locus of
the pluricanonical system. Let $\Omega_{KE}=\pi^*(e^{\varphi} \Omega )$ the pullback of the unique holomorphic  K\"ahler-Einstein volume from its canonical model. It is a continuous measure on $X$ vanishing exactly on $E$ of order one. Then  $ \omega_{KE} = Ric(\Omega_{KE})$ on $X$ and $\omega_{KE}^2= \Omega_{KE}$ on $X\setminus E$.

\begin{proposition} Let $X$ be a K\"ahler surface of general type. Then

\begin{enumerate}

\item $\omega_{KE}\in -c_1(X)$,

\item $\ric(\omega_{KE})=-\omega_{KE}$ on $X \backslash E$,

\item $h_{KE}= \Omega_{KE}^{-1}$ is an analytic Zariski decomposition for $K_X$.

\end{enumerate}

\end{proposition}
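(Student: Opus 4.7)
The plan is to handle the three assertions in order, exploiting the birational identities $K_X = \pi^*K_{X_{min}} + E$ and $K_{X_{min}} = \Phi^*K_{X_{can}}$ together with the Monge--Amp\`ere equation satisfied by $\varphi$. For (1), the approach is a cohomology-class computation: the divisorial identities combine to give $K_X = (\pi\circ\Phi)^*K_{X_{can}} + E$, and since $\omega^{\dagger}$ represents $-c_1(X_{can})$ while $\ddbar\log|E|^2$ represents $[E]$ by Poincar\'e--Lelong, adding yields $[\omega_{KE}] = -c_1(X)$.

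For (2), one restricts to $X\setminus E$, where $\pi$ is a biholomorphism onto $X_{min}\setminus\pi(E)$ and $\ddbar\log|E|^2$ vanishes identically, so $\omega_{KE} = \pi^*\Phi^*\omega^{\dagger}$ and $\Omega_{KE} = \pi^*(e^{\varphi}\Omega)$ there. Taking $\ddbar\log$ of the Monge--Amp\`ere equation $(\omega + \ddbar\varphi)^2 = e^{\varphi}\Omega$ on $X_{min}$ and using $\ddbar\log\Omega = \omega$ gives $-\ric(\Phi^*\omega^{\dagger}) = \omega + \ddbar\varphi = \Phi^*\omega^{\dagger}$ on the smooth locus of $X_{min}$. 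Pulling back under the biholomorphism $\pi|_{X\setminus E}$ yields $\ric(\omega_{KE}) = -\omega_{KE}$ on $X\setminus E$.

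For (3), the curvature current of $h_{KE} = \Omega_{KE}^{-1}$ equals $\ddbar\log\Omega_{KE}^{-1} = \omega_{KE}$ by (1)--(2), and is positive as a sum of the positive currents $(\pi\circ\Phi)^*\omega^{\dagger}$ and $[E]$. For the multiplier-ideal clause one must show that every global pluricanonical section $s \in H^0(X, mK_X)$ is $L^2$ with respect to $h_{KE}^m$. The key observation is that $K_X = \pi^*K_{X_{min}} + E$ forces $s$ to vanish to order at least $m$ along $E$: locally near a smooth point of $E_i = \{z_1=0\}$ one writes $s = z_1^m s'$ with $s'$ holomorphic, while $h_{KE}^m$ is comparable to $|z_1|^{-2m}$ times a bounded factor. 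These two effects cancel so that $|s|^2_{h_{KE}^m}$ is locally bounded, hence $L^1_{\mathrm{loc}}$, forcing $\II(h_{KE}^m) = \OO_X(mK_X)$ and the required isomorphism $H^0(X, mK_X \otimes \II(h_{KE}^m)) \cong H^0(X, mK_X)$.

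The main obstacle will be the distributional bookkeeping underlying (1) and (3): fixing a hermitian trivialization so that $\log|E|^2$ has the Poincar\'e--Lelong behavior $\ddbar\log|E|^2 = [E]$ is routine but must be set up cleanly, and checking pluricanonical integrability requires care at nodal intersections of the $E_i$ and at preimages of the canonical singularities of $X_{can}$, where the local descriptions of $\Omega_{KE}$ and of sections of $mK_X$ are more delicate than at smooth points of $E$.
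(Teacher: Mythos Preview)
Your proposal is correct and follows exactly the approach implicit in the paper's construction preceding the proposition: the paper records that $\Omega_{KE}=\pi^*(e^{\varphi}\Omega)$ is continuous, vanishes exactly on $E$ of order one, and satisfies $\omega_{KE}=\ddbar\log\Omega_{KE}$ and $\omega_{KE}^2=\Omega_{KE}$ on $X\setminus E$, and treats the proposition as an immediate consequence. Your write-up simply unpacks these facts, including the Jacobian cancellation $|J_\pi|^{2m}$ that makes $|s|^2_{h_{KE}^m}$ locally bounded for $s\in H^0(X,mK_X)$; this is the same mechanism behind the paper's remark that $\Omega_{KE}$ vanishes on $E$ to order one.
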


If $\kod(X)=1$, $X$ is an elliptic surface. Let $\Phi:
X_{min}\rightarrow X_{can}$ be the holomorphic pluricanonical fibration from
$X_{min}$ to its canonical model $X_{can}$. By Theorem \ref{HKEthm} in Section 3.2, there exist a canonical metric $\omega^{\dagger}$ on $X_{can}$   and a canonical measure $\Omega^{\dagger} $ on $X_{min}$ such that $\Phi^* ( \omega^{\dagger} ) \in - c_1 (X_{min})$ and
$\ric(\Omega^{\dagger}) = \Phi^*( \omega^{\dagger} )$. Let $\Omega_{can} = \pi^* ( \Omega^{\dagger})$ and $\omega_{can} = \ric(\Omega_{can}$.

\begin{proposition} Let $X$ be a K\"ahler surface of Kodaira dimension $1$. Then

\begin{enumerate}

\item $\omega_{can}\in -c_1(X)$,

\item $\omega_{can} = (\Phi\circ\pi)^* (\omega^{\dagger})$ on $ X\backslash E$.

\item $h_{can} = \Omega_{can}^{-1}$ is an analytic Zariski decomposition for $K_X$.

\end{enumerate}

\end{proposition}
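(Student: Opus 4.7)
The plan is to reduce each assertion to the corresponding statement on the minimal model $X_{min}$ (which is an elliptic surface with $K_{X_{min}}$ semi-ample, already treated by Theorem \ref{HKEthm}), using that $\pi:X\to X_{min}$ is a composition of point blow-ups. Away from $E$ the map $\pi$ is a biholomorphism; near $E$ I would work in standard blow-up coordinates $\pi(z_1,z_2)=(z_1,z_1 z_2)$, so that the local density of $\Omega_{can}=\pi^{*}\Omega^{\dagger}$ becomes $g(z_1,z_1 z_2)|z_1|^{2}$, where $g$ denotes the local density of $\Omega^{\dagger}$. In particular $\Omega_{can}$ is continuous with $\log\Omega_{can}\in L^{1}_{\mathrm{loc}}$, so $h_{can}=\Omega_{can}^{-1}$ is a bona fide singular hermitian metric on $K_X$ and its curvature current $\omega_{can}=\ric(\Omega_{can})$ represents $c_{1}(K_X)=-c_{1}(X)$; this gives item 1.

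For item 2, $\pi$ restricts to a biholomorphism $X\setminus E\to X_{min}\setminus\pi(E)$, so $\ric$ commutes with $\pi^{*}$ there. Combined with the canonical metric identity $\ric(\Omega^{\dagger})=\Phi^{*}\omega^{\dagger}$ supplied by Theorem \ref{HKEthm}, this immediately yields
$$\omega_{can}\big|_{X\setminus E}=\pi^{*}\ric(\Omega^{\dagger})\big|_{X\setminus E}=(\Phi\circ\pi)^{*}\omega^{\dagger}\big|_{X\setminus E}.$$

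Item 3 requires verifying the two defining conditions of an analytic Zariski decomposition. For semi-positivity of the curvature, the Poincar\'e--Lelong formula applied to the local expression in item 1 yields the global current identity $\omega_{can}=(\Phi\circ\pi)^{*}\omega^{\dagger}+[E]$ on all of $X$, both summands of which are closed semi-positive currents. For the isomorphism of global sections, the adjunction relation $K_X\cdot E_i=-1$ for each $(-1)$-curve forces every $\sigma\in H^{0}(X,mK_X)$ to vanish to order $m$ along $E$ and hence to be the pullback of a unique $\sigma'\in H^{0}(X_{min},mK_{X_{min}})$. The key computation is then that the factor $|z_1|^{2m}$ in $|\sigma|^{2}$ coming from this forced vanishing exactly cancels the factor $|z_1|^{2m}$ in $\Omega_{can}^{m}$, giving
$$|\sigma|^{2}_{h_{can}^{m}}=\pi^{*}\bigl(|\sigma'|^{2}_{(h^{\dagger})^{m}}\bigr).$$
Local integrability is therefore preserved under $\pi^{*}$, and since $h^{\dagger}=(\Omega^{\dagger})^{-1}$ is itself an analytic Zariski decomposition for $K_{X_{min}}$ by the semi-ample case treated in Section 3, every such $\sigma$ lies in $H^{0}(X,mK_X\otimes\mathcal{I}(h_{can}^{m}))$, which is the required surjectivity.

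The only genuinely delicate step is the exponent matching in item 3(b): one must observe that the $(-1)$-curve numerology $K_X\cdot E_i=-1$ forces precisely the order of vanishing needed to cancel the Jacobian factor $|z_1|^{2}$ produced by the blow-up, so that no multiplier ideal obstruction arises along $E$. Once this cancellation is noted, all remaining work is routine bookkeeping of pullbacks, and the proof closely mirrors the general type case treated immediately before the statement.
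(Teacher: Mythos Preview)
The paper states this proposition without proof, treating it as an immediate consequence of the construction of $\omega^{\dagger}$ and $\Omega^{\dagger}$ on $X_{min}$ in Section~3 together with the blow-up relation $K_X=\pi^*K_{X_{min}}+E$. Your proposal supplies precisely those details, and the strategy of reducing everything to $X_{min}$ via $\pi^*$ is the intended one.

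Two small refinements are worth noting. First, the relation $K_X\cdot E_i=-1$ is literally correct only when each $E_i$ is a $(-1)$-curve on $X$ itself; if $\pi$ involves iterated blow-downs then some exceptional components carry discrepancy $a_i>1$, and the global identity becomes $\omega_{can}=(\Phi\circ\pi)^*\omega^{\dagger}+\sum_i a_i[E_i]$, with the cancellation in your computation of $|\sigma|^2_{h_{can}^m}$ matching a factor $|z|^{2ma_i}$ on both sides rather than $|z|^{2m}$. The cleaner way to phrase this step is simply to invoke the birational invariance of plurigenera, $H^0(X,mK_X)=\pi^*H^0(X_{min},mK_{X_{min}})$, which encodes all the vanishing orders automatically. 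Second, the assertion that local integrability is preserved under $\pi^*$ is false for arbitrary $L^1_{\mathrm{loc}}$ functions under a blow-up; what actually makes step 3(b) work is stronger: by Theorem~\ref{MAthm} one has $\Omega^{\dagger}=e^{\varphi}\Omega$ with $\varphi\in C^0(X_{min})$ and $\Omega$ a smooth volume form, so $|\sigma'|^2_{(h^{\dagger})^m}$ is \emph{bounded}, and boundedness is trivially preserved under pullback. With these adjustments your argument is complete.
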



\subsection{Ricci-flat metrics on K\"ahler manifolds of zero Kodaira dimension}

There have been many interesting results on singular Ricci-flat metrics. In \cite{EyGuZe1}, singular Ricci- flat metrics are studied on normal K\"ahler spaces. In \cite{To}, singular Ricci-flat metrics are derived as the limit of smooth Ricci-flat K\"ahler metrics along certain degeneration of K\"ahler classes. In this section, we construct singular Ricci-flat metrics on algebraic manifolds of Kodaira dimension $0$ as an immediate application of Theorem \ref{zhang}.

Let $X$ be an $n$-dimensional algebraic manifold of Kodaira dimension $0$. Suppose $ L \rightarrow X$ is a holomorphic line bundle such that $L$ is big and semi-ample. There exists a big smooth semi-positive $(1,1)$-form $\omega \in c_1 (L)$. Let $\eta \in H^0(X, K_X^m)$ be the holomorphic $m$-tuple $n$-form for some $m\in N(K_X)$. Let $$\Omega= \frac{ [\omega]^n} {\int_X \left( \eta\otimes \overline{\eta} \right)^{\frac{1}{m}} } \left( \eta\otimes \overline{\eta} \right)^{\frac{1}{m}}$$ be a smooth $(n,n)$-form on $X$ and so $\int_X \Omega = [\omega]^n$. $\Omega$ is independent of the choice of $m\in N(K_X)$ and $\eta$ because the Kodaira dimension of $X$ is $0$. $\Omega$ is unique up to a scalar multiplication and it can be degenerate because $K_X$ is not necessarily nef.

Consider the following degenerate Monge-Amp\`ere equation
\begin{equation}\label{cy}
(\omega + \ddbar \varphi)^n = \Omega.
\end{equation}
By Theorem \ref{zhang}, there exists a continuous solution $\varphi$ t to equation (\ref{cy}) unique up to a constant. Let $\omega_{CY} = \omega + \ddbar \varphi$.

\begin{proposition} $\omega_{CY}$ is the unique closed semi-positive $(1,1)$-current in $c_1(L)$ with continuous local potential such that $\omega_{CY}^n = \Omega$ and therefore outside the base locus of the pluricanonical system
$$\ric(\omega_{CY}) = 0.$$
Furthermore, $\omega_{CY}$ is smooth on a Zariski open set of $X$.

\end{proposition}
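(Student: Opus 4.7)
First I would establish existence via Theorem \ref{zhang}. In any local trivialization of $K_X^m$ one has $\eta=f(z)(dz_1\wedge\cdots\wedge dz_n)^{\otimes m}$ with $f$ holomorphic, so $\Omega$ is locally of the form $C|f|^{2/m}$ times a smooth volume form. Since $|f|^{2/m}$ is continuous, it lies in $L^p$ for every $p\ge 1$ against any smooth background volume form. Because $L$ is big one has $[\omega]^n>0$, and by construction $\int_X\Omega=[\omega]^n$, so the normalization hypothesis of Theorem \ref{zhang} is satisfied. The theorem then yields a continuous $\omega$-plurisubharmonic $\varphi$ with $(\omega+\ddbar\varphi)^n=\Omega$, and $\omega_{CY}:=\omega+\ddbar\varphi$ is the required current.

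For uniqueness, suppose $\omega'_{CY}=\omega+\ddbar\varphi'$ with $\varphi'\in PSH(X,\omega)\cap C^0(X)$ also satisfies $(\omega+\ddbar\varphi')^n=\Omega$. The plan is to apply the comparison principle, Theorem \ref{kol}, to the pair $(\varphi,\varphi'+c)$ with $c$ chosen so that the maximum of $\varphi-\varphi'-c$ is zero, and then to the symmetric pair, combining these with a standard $\epsilon$-perturbation argument to obtain $\varphi=\varphi'+c$ on $X$. This is the classical Kolodziej uniqueness argument, extended to the big semi-positive setting by the references following Theorem \ref{zhang}; the conclusion is that $\omega_{CY}$ is unique as a current.

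For Ricci-flatness outside the base locus $B$ of the pluricanonical system, note that on $X\setminus B$ the section $\eta$ is nowhere vanishing, and in the local picture above $\log\Omega$ differs from the logarithm of the Euclidean volume form by the pluriharmonic function $\tfrac{1}{m}(\log f+\log\bar f)$. Hence $\ddbar\log\Omega=0$ on $X\setminus B$. Combined with smoothness of $\omega_{CY}$ on a Zariski open subset (the next step), this yields $\ric(\omega_{CY})=-\ddbar\log\omega_{CY}^n=-\ddbar\log\Omega=0$ there.

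The main and most technical step is smoothness of $\omega_{CY}$ on a Zariski open set, and this is where I expect the bulk of the work. Since $L$ is big and semi-ample, the morphism $\Phi_m:X\to Y\subset\mathbf{CP}^{d_m}$ induced by $|L^m|$ for sufficiently large $m$ is birational onto its image, so there is a nonempty Zariski open $U\subset X$ on which $\Phi_m$ is an isomorphism and $\omega=\tfrac{1}{m}\Phi_m^*\omega_{FS}$ is a genuine smooth K\"ahler form. On $U\setminus B$ the density $\Omega$ is smooth and strictly positive, and the identity $\omega_{CY}^n=\Omega>0$ forces $\omega_{CY}$ to be pointwise positive there. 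I would then adapt the second-order estimate from Step 3 of the proof of Theorem \ref{MAthm} locally on $U\setminus B$ (with a cutoff to absorb boundary terms), apply Evans-Krylov to upgrade to $C^{2,\alpha}_{loc}$, and bootstrap via Schauder estimates to reach $\varphi\in C^{\infty}_{loc}(U\setminus B)$. The chief obstacle is that $\omega$ fails to be K\"ahler outside $U$, so all estimates must be genuinely interior and cannot be globalized as in Theorem \ref{MAthm}; however, the uniform interior ellipticity furnished by $\omega_{CY}>0$ and smoothness of $\Omega$ on $U\setminus B$ makes the bootstrap routine.
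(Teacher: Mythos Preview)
The paper does not actually supply a proof of this proposition; it is stated as an immediate consequence of Theorem \ref{zhang}, with the regularity implicitly handled by the same machinery developed in Theorems \ref{MAthm} and \ref{ke_m}. Your existence, uniqueness, and Ricci-flatness arguments are correct and coincide with what the paper leaves implicit.

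For smoothness on a Zariski open set, however, your plan diverges from the paper's method and carries a real gap. You propose to localize the second-order estimate of Step 3 in the proof of Theorem \ref{MAthm} on $U\setminus B$ ``with a cutoff to absorb boundary terms,'' but that estimate is a \emph{global} maximum-principle argument: one adds divisor terms to the test quantity $\log\tr_{\omega_0}(\omega_j)$ so that it tends to $-\infty$ along a fixed divisor, forcing the interior maximum into the good region. Localizing this by a cutoff is not routine; it would require genuine interior Laplacian estimates for complex Monge--Amp\`ere (of B{\l}ocki or Chen--He type), which you do not invoke, and your appeal to ``uniform interior ellipticity furnished by $\omega_{CY}>0$'' is circular, since pointwise positivity of $\omega_{CY}$ is exactly what the $C^2$ estimate is meant to deliver. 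The paper's route, used explicitly in the proof of Theorem \ref{ke_m} where the line bundle is likewise big and semi-ample, is to apply Kodaira's lemma: choose a divisor $D$ with $[\omega]-\epsilon[D]$ ample, approximate $\omega$ by genuine K\"ahler forms, and run the global estimate of Theorem \ref{MAthm} with $D$ together with the base locus of $|mK_X|$ playing the role of the exceptional set. This gives $\varphi\in C^\infty$ off a fixed divisor directly, without any interior cutoff.
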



\subsection{K\"ahler-Einstein metrics on algebraic manifolds of general type}

In this section, we will prove Theorem B.1. Let $X$ be an $n$-dimensional nonsingular algebraic variety of general type.

We choose a sequence of resolution of indeterminacies of the pluricanonical systems $\Phi_{m!}$

\begin{equation}
\begin{diagram}
\node{X}      \node{X_{m_0}}  \arrow{w,t}{\pi_{m_0}}  \node{X_{m_0+1}}  \arrow{w,t}{\pi_{m_0+1}} \node{ \cdot\cdot\cdot} \arrow{w,t}{\pi_{m_0+2}} \node{X_m} \arrow{w,t}{\pi_m}  \node{X_{m+1}} \arrow{w,t}{\pi_{m+1}} \node{\cdot\cdot\cdot} \arrow{w,t}{\pi_{m+2}}
\end{diagram}
\end{equation}

for $m_0$ sufficiently large, such that

\begin{enumerate}

\item

$$(\bar{\pi}_m)^* (m! K_X) = L_m + E_m,$$
where $\bar{\pi}_m= \pi_m\circ\pi_{m-1} \circ...\circ\pi_{m_0}$.

\item

$$E_m = \sum_{j} c_{m, j} E_{m, j} $$ is the fixed part of $\left| \bar{\pi}_m^*  ( m!K_X) \right|$ with each $E_{m, j}$ being a divisor with simple normal crossings.

\item $L_m$ is a globally generated line bundle on $X_m$.

\end{enumerate}

Let $ \{ \sigma_{m, j} \} _{ j = 0} ^{d_m}  $ be a basis of $ H^0 (X , m! K_X )$ and
 $ \{ \zeta_{m, j} \} _{ j = 0} ^{d_m}  $ be a basis of $ H^0 (X_m , L_m )$ such that $$  \pi_m ^* \sigma_{ m, j } = \zeta_{m, j}  E_m  .$$

We can consider $ | \sigma_{m, j} |^{\frac{2}{m!}} $ as an smooth $(n, n)$-form on $X$ as $ \sigma_{m, j} \in m! K_X $.   Let $\Omega_m = \left( \sum_{j=0}^{d_m} \left| \sigma_{m, j} \right|^2 \right)^{\frac{1}{m!} }$ and then $\pi_k^* \Omega_m $ is a smooth and possibly degenerate volume form on $X_k$.    For simplicity we also use $\Omega_m$ for $(\pi_{k})^* (\Omega_m) $ for all $k\geq m_0$.

The following lemma is obviously by the construction of $\Omega_m$.

\begin{lemma} On $X$ and so on $X_k$ for $k\geq m_0$,

\begin{equation}
\frac{\Omega_m}{\Omega_{m+1}} < \infty.
\end{equation}

\end{lemma}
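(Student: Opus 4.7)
The plan is to compare the two measures by using the basic algebraic fact that $(m+1)$-fold tensor products of sections of $m!K_X$ live in $(m+1)!K_X$, together with pointwise Cauchy--Schwarz in the line bundle $(m+1)!K_X$.

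First I would fix $j \in \{0, \ldots, d_m\}$ and observe that $\sigma_{m,j}^{\otimes(m+1)}$ is a global section of $(m+1)!K_X$. Since $\{\sigma_{m+1,k}\}_{k=0}^{d_{m+1}}$ is a basis of $H^0(X,(m+1)!K_X)$, there are constants $c_{j,k}$ with
\[
\sigma_{m,j}^{\otimes(m+1)} = \sum_{k=0}^{d_{m+1}} c_{j,k}\,\sigma_{m+1,k}.
\]
Pointwise in any local trivialization of $(m+1)!K_X$ this is an identity of holomorphic functions, and the Cauchy--Schwarz inequality yields
\[
|\sigma_{m,j}|^{2(m+1)} \;=\; \bigl|\sigma_{m,j}^{\otimes(m+1)}\bigr|^{2} \;\leq\; \Bigl(\sum_{k} |c_{j,k}|^{2}\Bigr) \sum_{k} |\sigma_{m+1,k}|^{2}.
\]
The right-hand side is intrinsically defined (independent of trivialization) since both sides transform as hermitian norms on $(m+1)!K_X$.

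Next I would raise both sides to the power $1/(m+1)!$ to pass from $(m+1)!K_X$-measures to $K_X \otimes \overline{K_X}$-measures. Because $2(m+1)/(m+1)! = 2/m!$, this gives
\[
|\sigma_{m,j}|^{2/m!} \;\leq\; C_j \,\Bigl(\sum_{k} |\sigma_{m+1,k}|^{2}\Bigr)^{1/(m+1)!} \;=\; C_j\, \Omega_{m+1},
\]
as measures on $X$, for a constant $C_j$ depending only on the coefficients $c_{j,k}$. Combining this with the elementary bound $\sum_{j} |\sigma_{m,j}|^{2} \leq (d_m+1)\,\max_{j} |\sigma_{m,j}|^{2}$ and taking the $1/m!$-th root yields
\[
\Omega_m \;=\; \Bigl(\sum_{j} |\sigma_{m,j}|^{2}\Bigr)^{1/m!} \;\leq\; (d_m+1)^{1/m!} \max_{j} |\sigma_{m,j}|^{2/m!} \;\leq\; C\,\Omega_{m+1},
\]
which is the desired inequality on $X$. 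Finally, since $\Omega_m$ on $X_k$ is by convention the pullback $\bar{\pi}_k^{*}\Omega_m$, pulling the inequality back gives $\Omega_m/\Omega_{m+1} \leq C$ on each $X_k$ for $k \geq m_0$.

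There is really no hard step here: the assertion is an elementary comparison, and the only mild subtlety is keeping track of the exponents so that Cauchy--Schwarz in $(m+1)!K_X$ produces a clean comparison of the root-measures $\Omega_m$ and $\Omega_{m+1}$. Passing from pointwise bounds on $|\sigma|^2$ to bounds on the volume forms $|\sigma|^{2/m!}$ requires only that the root function $t \mapsto t^{1/m!}$ is subadditive up to a dimensional constant, which is absorbed into the finite factor $(d_m+1)^{1/m!}$.
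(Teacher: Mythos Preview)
Your proof is correct and makes explicit what the paper leaves implicit: the paper simply states that the lemma is ``obviously by the construction of $\Omega_m$'' and gives no further argument. Your approach---using that $\sigma_{m,j}^{\otimes(m+1)}$ lies in $H^0(X,(m+1)!K_X)$ because $(m+1)\cdot m!=(m+1)!$, and then applying Cauchy--Schwarz---is exactly the natural way to unpack that sentence, and all the exponent bookkeeping is right.
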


Define a smooth closed semi-positive $(1,1)$-form $\omega_{m}$ on $X_m$ by
$$\omega_{m}= \frac{1}{m!} \ddbar \log \left( \sum_{j= 0 } ^{ d_m} | \zeta_j |^2 \right).$$
Obviously $\ddbar \log \left( \sum_{j= 0 } ^{ d_m} | \zeta_j |^2 \right)$ is the pullback of the Fubini-Study metric on $\mathbf{CP}^{d_m}$ from the linear system $| L_m |$.

\begin{theorem}\label{ke_m} Let $R_m$ be the exceptional locus of the linear systems associated to $L_m$.
There exists a unique solution $\varphi_m\in C^0(X_m) \cap C^{
\infty }( X_m \setminus \left(\cup_{j} E_{m, j} \cup R_m
\right))$ to the following Monge-Amp\`ere equation

\begin{equation}\label{gt1}
\left(\omega_{m}+\ddbar \varphi \right)^n =
e^{\varphi}  \Omega_m.
\end{equation}
Therefore
$\omega_{KE, m}=\omega_{m}+\ddbar \varphi_m$ is a
K\"ahler-Einstein current on $X_m$ satisfying

\begin{enumerate}

\item $\omega_{KE, m}$ is a positive current on $X_m$ and strictly
positive on $X_m \setminus \left(\cup_{j} E_{m, j} \cup R_m
\right) $,

\item $\ric(\omega_{KE, m})= -\omega_{KE, m}$ on $X_m \setminus
\left(\cup_{j} E_{m, j} \right) $.

\end{enumerate}

\end{theorem}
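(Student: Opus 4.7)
The plan is to mirror the strategy used in the proof of Theorem \ref{MAthm}, adapted to the setting on $X_m$. The reference form $\omega_m$ is smooth, closed, semi-positive, and big (big because $L_m$ induces a birational morphism to $\mathbf{CP}^{d_m}$ for $m$ sufficiently large, so $\int_{X_m}\omega_m^n>0$), while $\Omega_m$ is smooth but degenerate along $E_m\cup R_m$. First I would regularize: fix a K\"ahler form $\omega_0$ on $X_m$ and a smooth strictly positive volume form $\Omega_0$, and for $\epsilon>0$ consider
\begin{equation*}
(\omega_m+\epsilon\omega_0+\ddbar\varphi_\epsilon)^n = e^{\varphi_\epsilon}(\Omega_m+\epsilon\Omega_0).
\end{equation*}
For each fixed $\epsilon>0$, both the reference form and the right-hand side are smooth and positive, so Aubin--Yau's theorem for the K\"ahler--Einstein equation of negative scalar type (equivalently the continuity method for $(\omega+\ddbar\varphi)^n=e^\varphi F\omega^n$ with $F>0$ smooth) produces a unique smooth solution $\varphi_\epsilon\in C^\infty(X_m)$.

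Next I would establish a uniform $C^0$ bound. The upper bound is immediate from the maximum principle applied at a maximum of $\varphi_\epsilon$, since $\Omega_m/\omega_0^n$ is bounded on $X_m$ (it vanishes on $E_m$ but does not blow up). For a lower bound independent of $\epsilon$, I rewrite the equation as $(\omega_m+\epsilon\omega_0+\ddbar\varphi_\epsilon)^n = G_\epsilon\,\omega_0^n$ with $G_\epsilon=e^{\varphi_\epsilon}(\Omega_m+\epsilon\Omega_0)/\omega_0^n$, observe that $\Omega_m/\omega_0^n\in L^{1+\delta}(\omega_0^n)$ for some $\delta>0$ (since $\Omega_m$ vanishes along a divisor), and invoke the Kolodziej--Zhang estimate (Theorem \ref{zhang}), or more precisely the uniform form of Demailly--Pali (Theorem \ref{D-P}) to accommodate the degeneration of the reference form as $\epsilon\to 0$. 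This gives $\|\varphi_\epsilon\|_{L^\infty(X_m)}\le C$ uniformly.

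The core step, which is also the main obstacle, is the interior second-order estimate on compact subsets of $X_m\setminus(\cup_j E_{m,j}\cup R_m)$. Since $\omega_m$ is big and semi-positive, Kodaira's lemma provides a divisor $D$ such that $[\omega_m]-\epsilon_0[D]$ is K\"ahler; let $h_D$ be a Hermitian metric on $[D]$ with $\omega_m+\epsilon_0\ddbar\log h_D>0$. Following Yau's trace inequality combined with Tsuji's trick, I would form an auxiliary function of the form
\begin{equation*}
H_\epsilon \;=\; \log\bigl(|S_E|_{h_E}^{2\alpha}\,\tr_{\omega_0}(\omega_m+\epsilon\omega_0+\ddbar\varphi_\epsilon)\bigr)\;-\;A\bigl(\varphi_\epsilon-\epsilon_0\log|S_D|_{h_D}^2\bigr),
\end{equation*}
where $E=\sum E_{m,j}$ and $\alpha, A$ are chosen large enough to absorb the curvature contribution of $h_E$ and the terms coming from $\ddbar\log(\Omega_m/\omega_0^n)$, which have poles controlled by $|S_E|_{h_E}^{-2\alpha}$ for suitable $\alpha$. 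Applying the maximum principle to $H_\epsilon$, which attains its interior maximum away from $E\cup D$, yields $|S_E|_{h_E}^{2\alpha}|S_D|_{h_D}^{2\beta}\tr_{\omega_0}(\omega_m+\epsilon\omega_0+\ddbar\varphi_\epsilon)\le C$ uniformly in $\epsilon$. The Monge--Amp\`ere equation is then uniformly elliptic on compact subsets of $X_m\setminus(E\cup R_m)$ (and $R_m$ is controlled by varying the divisor $D$, exactly as in Step 5 of Theorem \ref{MAthm}).

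With uniform $C^0$ and interior $C^2$ estimates in hand, standard Evans--Krylov and Schauder bootstrapping provide uniform $C^k$ bounds on compact subsets of $X_m\setminus(E\cup R_m)$. Passing to a subsequence $\epsilon\to 0$ gives a limit $\varphi_m\in PSH(X_m,\omega_m)\cap C^0(X_m)\cap C^\infty(X_m\setminus(\cup_j E_{m,j}\cup R_m))$ solving \eqref{gt1}. Uniqueness is handled by the comparison principle (Theorem \ref{kol}): if $\varphi,\psi$ are two $C^0$ solutions in $PSH(X_m,\omega_m)$, then on $\{\varphi<\psi\}$ one has $(\omega_m+\ddbar\psi)^n=e^\psi\Omega_m>e^\varphi\Omega_m=(\omega_m+\ddbar\varphi)^n$, which contradicts Theorem \ref{kol} unless the set is empty; the symmetric argument forces $\varphi\equiv\psi$. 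The properties (1) and (2) of $\omega_{KE,m}$ then follow directly: strict positivity on $X_m\setminus(\cup_j E_{m,j}\cup R_m)$ from $\omega_{KE,m}^n=e^{\varphi_m}\Omega_m>0$ there, and $\ric(\omega_{KE,m})=-\omega_{KE,m}$ from $-\ddbar\log(e^{\varphi_m}\Omega_m)=-\omega_{KE,m}$ since $\Omega_m$ is locally a smooth multiple of a pluricanonical form.
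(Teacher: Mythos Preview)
Your proposal is correct and follows essentially the same route as the paper, just with the intermediate estimates spelled out rather than delegated. The paper's proof is much terser: it observes that $F_m=\Omega_m/\omega_m^n$ has at worst pole singularities along $R_m$, checks $\int_{X_m}F_m^{1+\epsilon}\omega_m^n=\int_{X_m}F_m^\epsilon\,\Omega_m<\infty$, and then directly invokes Theorem~\ref{zhang} to get the unique continuous solution; smoothness on $X_m\setminus(\cup_jE_{m,j}\cup R_m)$ is obtained by citing the approximation and second-order estimate argument of Theorem~\ref{MAthm} together with Kodaira's lemma for $L_m$. Your explicit regularization $(\omega_m+\epsilon\omega_0,\ \Omega_m+\epsilon\Omega_0)$ reproduces exactly that machinery.

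Two small inaccuracies worth cleaning up: (i) $\Omega_m$ is smooth on $X_m$ and degenerates only along $\cup_jE_{m,j}$ (not along $R_m$, which is where $\omega_m^n$ degenerates); (ii) your justification ``$\Omega_m/\omega_0^n\in L^{1+\delta}$ since $\Omega_m$ vanishes along a divisor'' is backwards---the ratio is in fact bounded because $\Omega_m$ is smooth and $\omega_0^n>0$, so it lies in every $L^p$. Neither point affects the argument.
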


\begin{proof} Let $F_m = \frac{ \Omega_m } {\left( \omega_{m} \right)^n  }$. $F_m$ has at worst pole singularities on $X_m$ and
$$ \int_{X_m} \left( F_m \right) ^{ 1 + \epsilon } ( \omega_{m} )^n = \int_{X_m} (F_m)^{\epsilon} \Omega_m < \infty .$$ By Theorem \ref{zhang}, there exists a unique $ \varphi_m \in PSH( X_m, \omega_{m} ) \cap C^0(X_m)$ solving the equation (\ref{gt1}).

Also $L_m$ is an semi-ample line bundle and furthermore it is big. Then by Kodaira's Lemma, there exists a divisor $[F_m]$ such that
$$ [ L_m ] - \epsilon
[F_m] = \pi_m^* [K_X] - \sum_{j} \frac{c_{m, j}}{m} [E_{m, j}]- \epsilon
[F_m]    $$ is ample for sufficiently small $\epsilon>0$. By a similar argument in the proof of Theorem \ref{MAthm}, we can show that $\varphi_m \in C^{\infty} \left( X_m \setminus \left(\cup_{j} E_{m, j} \cup R_m
\right) \right)$.

\qed
\end{proof}

\begin{corollary} $ e^{\varphi_m} \Omega_m$ descends from $X_m$ to a continuous measure on $X$ and $h_m = e^{-\varphi_m} \Omega_m^{-1}$ is a singular hermitian metric on $X$ with its curvature $\Theta_{h_m} \geq 0$. Furthermore, on $X\backslash Bs(| m_0 K_X |)$,

\begin{equation}\label{ke_mx}
(\ddbar\log \Omega_m + \ddbar \varphi_m )^n = e^{\varphi_m}  \Omega_m.
\end{equation}

\end{corollary}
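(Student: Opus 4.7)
The plan is to descend the data from $X_m$ to $X$ along the birational morphism $\bar{\pi}_m$, exploiting two key facts: by construction $\Omega_m$ on $X_m$ is the pullback of the smooth $(n,n)$-form $\Omega_m = (\sum_j|\sigma_{m,j}|^2)^{1/m!}$ on $X$, which vanishes precisely along $Bs(|m!K_X|)$; and $\bar{\pi}_m$ restricts to a biholomorphism $X_m\setminus E_m \xrightarrow{\sim} X\setminus Bs(|m!K_X|)$. Everything else follows by transferring $\varphi_m$ through this identification and controlling it near the base locus.

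For the descent, on $X\setminus Bs(|m!K_X|)$ transport $\varphi_m$ via the above biholomorphism to obtain a continuous bounded function, still denoted $\varphi_m$. The product $e^{\varphi_m}\Omega_m$ is then continuous on this Zariski open set, and because $\Omega_m$ vanishes continuously along $Bs(|m!K_X|)$ while $e^{\varphi_m}$ stays uniformly bounded (by $\|\varphi_m\|_{L^\infty(X_m)}<\infty$), the product extends continuously by zero to all of $X$; a change-of-variables argument identifies this extension with the pushforward $(\bar{\pi}_m)_*(e^{\varphi_m}\Omega_m)$. The metric $h_m = (e^{\varphi_m}\Omega_m)^{-1}$ is then a singular hermitian metric on $K_X$, with curvature given on $X_m$ by $\omega_{KE,m} + [E_m]/m!$ (the closed positive $\omega_{KE,m}$ plus the Poincar\'e--Lelong current of $E_m/m!$). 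Since both summands are positive currents and pushforward by $\bar{\pi}_m$ preserves positivity, we obtain $\Theta_{h_m}\geq 0$ on $X$.

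Finally, on $X\setminus Bs(|m_0K_X|)$ equation (\ref{ke_mx}) follows from (\ref{gt1}): for $m\geq m_0$ we have $m_0\mid m!$, and forming $m!/m_0$-fold products of sections of $m_0K_X$ shows $Bs(|m!K_X|)\subseteq Bs(|m_0K_X|)$, so $\bar{\pi}_m$ is a biholomorphism over $X\setminus Bs(|m_0K_X|)$; there $E_m$ does not meet $\bar{\pi}_m^{-1}(X\setminus Bs(|m_0K_X|))$, and consequently $\omega_m=\ddbar\log\Omega_m$ (the contribution of $E_m$ being zero off its support). Pulling (\ref{gt1}) through this identification yields $(\ddbar\log\Omega_m+\ddbar\varphi_m)^n=e^{\varphi_m}\Omega_m$ on $X\setminus Bs(|m_0K_X|)$. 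The subtle step requiring the most care is the continuous extension across $Bs(|m!K_X|)$ in the descent: the pulled-back $\varphi_m$ may fail to extend continuously there, because $\varphi_m$ on $X_m$ need not be constant on positive-dimensional fibers of $\bar{\pi}_m$, but the vanishing rate of $\Omega_m$ along $Bs(|m!K_X|)$ dominates this discontinuity and guarantees that $e^{\varphi_m}\Omega_m$ extends continuously by zero.
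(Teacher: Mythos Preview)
Your proof is correct and follows essentially the same outline as the paper's: descend via the birational isomorphism over $X\setminus Bs(|m!K_X|)$, use $\|\varphi_m\|_{L^\infty}<\infty$ together with the vanishing of $\Omega_m$ along the base locus to get continuity, and read off \eqref{ke_mx} from \eqref{gt1} after noting $Bs(|m!K_X|)\subseteq Bs(|m_0K_X|)$.

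The one genuine difference is in the curvature step. The paper argues directly on $X$: since $\log\Omega_m+\varphi_m$ is bounded above and $\ddbar(\log\Omega_m+\varphi_m)\ge 0$ off the base locus, the standard extension theorem for quasi-plurisubharmonic functions across closed complete pluripolar sets gives $\Theta_{h_m}\ge 0$ on all of $X$. You instead work upstairs on $X_m$, identify $(\bar\pi_m)^*\Theta_{h_m}=\omega_{KE,m}+\frac{1}{m!}[E_m]$ via Poincar\'e--Lelong, and push forward. Both arguments are short and standard; the paper's avoids checking that $\ddbar$ commutes with $(\bar\pi_m)_*(\bar\pi_m)^*$, while yours has the minor advantage of making the Lelong numbers of $\Theta_{h_m}$ along the base locus explicit.
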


\begin{proof} $e^{\varphi_m} \Omega_m$ can be pulled back as a continuous volume form on $X \backslash Bs( m! K_X)$. On the other hand, $\varphi_{m}$ is uniformly bounded in $L^{\infty}(X)$, also $\Omega_m$ is smooth on $X$ and  vanishes exactly on  $Bs( |m! K_X| )$. Therefore $ e^{\varphi_m} \Omega_m$ is continuous on $X$.

$Bs( | m! K_X| )$ is a complete closed pluripolar set on $X$ using $H^0(X, m!K_m)$. Since $\log \Omega_m +  \varphi_m$  is uniformly bounded above and $\ddbar \log \Omega_m + \ddbar \varphi_m$ is a positive closed $(1,1)$ current on $X\backslash Bs( | m_0 K_X | )$, $\ddbar \log \Omega_m + \ddbar \varphi_m$ extends to a positive closed current on $X$ using local argument.

Equation (\ref{ke_mx}) is then derived directly from Equation (\ref{gt1}).

\qed
\end{proof}

Although $\Theta_{h_m}$ is a singular K\"ahler-Einstein metric on $X$,  but without assuming finite generation of canonical rings, $(K_X, h_m)$ is not necessarily an analytic Zariski decomposition. One has to let $m$ tend to infinity in order for $e^{\varphi_m} \Omega_m$ to have the least vanishing order.

Let $D$ be an ample divisor on $X$ such that $K_X+D$ is ample.
Then there exists a hermitian metric $h_D$ on the line bundle
induced by $[D]$ such that $\omega_0-\ddbar \log h_D>0$. We can
also assume that $D$ contains the base locus of all $|m!K_X|$ for $m\geq m_0$.

\begin{lemma}\label{upbound1}  Let $\Omega_0$ be a smooth and nowhere vanishing volume form on $X$ . Then there exists a constant $C>0$ such that for each $m\geq m_0$,

$$e^{\varphi_m}\Omega_m \leq C \Omega_0.$$

\end{lemma}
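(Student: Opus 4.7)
The plan is to apply the maximum principle to a twisted $\omega_0$-quasi-psh function derived from $\varphi_m$, using the ampleness of $K_X+D$ to supply a K\"ahler background and the moving-divisor trick in $|kD|$ to remove the resulting logarithmic singularity along $D$.

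First I would descend the Monge-Amp\`ere equation from $X_m$ to $X$. Writing $\omega_0 := \ddbar\log\Omega_0 \in c_1(K_X)$ (smooth but in general not semi-positive) and $u_m := \varphi_m + \log(\Omega_m/\Omega_0)$, one checks $\omega_0 + \ddbar u_m = \chi_m + \ddbar\varphi_m$, where $\chi_m = \tfrac{1}{m!}\ddbar\log\sum_j|\sigma_{m,j}|^2$ is a closed positive current on $X$; since $\pi_m$ is an isomorphism away from $\mathrm{Bs}(|m!K_X|) \subset D$, equation \eqref{gt1} becomes
$$(\omega_0 + \ddbar u_m)^n = e^{u_m}\,\Omega_0 \qquad \text{on } X\setminus \mathrm{Bs}(|m!K_X|),$$
with $u_m$ extending to a $\omega_0$-quasi-psh function on $X$ tending to $-\infty$ on the base locus. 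The lemma is equivalent to $\sup_X u_m \le \log C$ uniformly in $m$.

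Because $\omega_0$ is not K\"ahler, I twist using the divisor $D$. By hypothesis $\omega_K := \omega_0 - \ddbar\log h_D$ is K\"ahler, and with $\tilde u_m := u_m + \log|s_D|^2_{h_D}$ one has $\tilde u_m \in \mathrm{PSH}(X,\omega_K)$, $\tilde u_m \to -\infty$ on $D$, and $\omega_K + \ddbar\tilde u_m = \omega_0 + \ddbar u_m$ on $X\setminus D$. At the point $x_0 \in X\setminus(D\cup\mathrm{Bs})$ where $\tilde u_m$ attains its supremum, the inequality $(\omega_K+\ddbar\tilde u_m)^n(x_0) \le \omega_K^n(x_0)$ together with the equation forces $u_m(x_0) \le C_1 := \log\sup_X(\omega_K^n/\Omega_0)$; normalizing $|s_D|_{h_D}\le 1$, this yields $\sup_X \tilde u_m \le C_1$, i.e.\ $u_m \le C_1 - \log|s_D|^2_{h_D}$ on $X\setminus D$.

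To remove the logarithmic blowup along $D$, I replace $D$ by $kD$ for $k$ large enough that $|kD|$ is base-point free, noting that $K_X+kD$ remains ample so the associated K\"ahler form $\omega_K$ is still available. Picking finitely many sections $s^{(1)},\dots,s^{(N)} \in H^0(X,\mathcal{O}(kD))$ whose maximum $\max_i|s^{(i)}|^2_{h_{kD}}$ is bounded below by a uniform $c_0 > 0$ on $X$, and running the above maximum-principle argument with each $s^{(i)}$ in place of $s_D$, one obtains $u_m \le C_1 - \log|s^{(i)}|^2_{h_{kD}}$ for each $i$. Selecting at each $x$ the index that saturates the lower bound produces the uniform estimate $u_m \le C_1 + |\log c_0|$ on $X\setminus\mathrm{Bs}$, hence $e^{\varphi_m}\Omega_m \le C\,\Omega_0$ on all of $X$. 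The main obstacle, in my view, is the descent step: one must justify that the equation and the $\omega_0$-quasi-psh character of $u_m$ pass cleanly across the exceptional locus of $\pi_m$ and across $\mathrm{Bs}$; once this is handled via standard Bedford-Taylor extension across pluripolar sets, the maximum-principle and moving-divisor steps are routine.
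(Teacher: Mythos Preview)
Your argument is essentially correct, but the paper takes a shorter route that avoids the moving-divisor step entirely. Rather than twisting by the full $\log|s_D|^2_{h_D}$, obtaining a bound with a logarithmic pole along $D$, and then averaging over sections of $|kD|$ to kill the pole, the paper twists by $\epsilon\log|S_D|^2_{h_D}+\epsilon^2\log|S_{D_m}|^2_{h_{D_m}}$ with a small parameter $\epsilon>0$, where $D$ is any ample divisor (with $\theta_D>0$) and $D_m$ is a divisor containing the locus where $\varphi_m$ fails to be smooth and $\Omega_m$ vanishes. The maximum principle then gives
\[
e^{\varphi_m}\frac{\Omega_m}{\Omega_0}\le |S_{D_m}|_{h_{D_m}}^{-2\epsilon^2}|S_D|_{h_D}^{-2\epsilon}\,\max_X\Bigl(|S_{D_m}|_{h_{D_m}}^{2\epsilon^2}|S_D|_{h_D}^{2\epsilon}\,\frac{(\chi_0+\epsilon^2\theta_{D_m}+\epsilon\theta_D)^n}{\Omega_0}\Bigr),
\]
and simply letting $\epsilon\to 0$ yields the uniform bound $\max_X(\chi_0^n/\Omega_0)$, with no base-point-freeness of any linear system required. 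The $\epsilon$-weight makes the pole disappear in the limit, and the $\epsilon^2$-weight on $D_m$ guarantees that its contribution vanishes even faster.

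Your identification of the ``main obstacle'' is slightly off. The descent of the equation and the $\chi_0$-psh property of $u_m$ to $X$ are already established in the corollary immediately preceding this lemma; this is not where the difficulty lies. The genuine technical point you are skating over is that the maximum of $\tilde u_m$ might be attained on the non-smooth locus of $\varphi_m$, where you cannot apply the classical maximum principle. This is not a Bedford--Taylor extension issue; the fix is exactly the additional $\epsilon^2\log|S_{D_m}|^2$ perturbation the paper inserts, which forces the maximum into the smooth locus. In your framework you would need to add such a term separately before running the maximum principle for each $s^{(i)}$, and then send that parameter to zero --- at which point you are essentially doing the paper's argument anyway, plus an unnecessary moving-divisor layer on top.
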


\begin{proof} Let $\chi_0 = \ddbar \log \Omega_0$. Let $D_m$ be a divisor on $X$ such that on $X \backslash D_m$,  $\varphi_m$ is smooth  and $\Omega_m$ is strictly positive. Let $D$ be an ample divisor on $X$. Let $S_{D_{m}}$ be a defining function and $h_{D_m}$ be a smooth hermitian metric on the line bundle associated to $[D_{m}]$. Let $S_{D}$ be a defining function and $h_D$ be a smooth positively curved hermitian metric on the line bundle associated to $[D]$.

Let $\theta_{D_{m}} = -\ddbar \log h_{D_{m}}$ and $\theta_{D} = -\ddbar \log h_D > 0$. We also define

$$ \psi_{m, \epsilon} = \varphi_m + \log \frac{\Omega_m}{\Omega_0} + \epsilon^2 \log |S_{D_{m}}|_{h_{m}}^2 + \epsilon \log | S_D|^2_{h_D}.$$

 For simplicity, we use $\chi_0$ and $ \Omega_0$ for $(\bar{\pi}_m)^* \chi_0$ and $(\bar{\pi}_m)^* \Omega_0$. Notice that $\chi_0$ is not necessarily positive and $\Omega_0$ might vanish somewhere on $X_m$. Then outside $D_{m}$ and $D$, $\psi_{m, \epsilon}$ satisfies the following equation

 $$ \frac{ (\chi_0 + \epsilon^2 \theta_{D_{m}} + \epsilon \theta_{D} + \ddbar \log \psi_{m, \epsilon} )^n } {\Omega_0 } = |S_{D_{m}}|^{-2\epsilon^2}_{h_{D_{m}}} |S_D|^{-2\epsilon}_{h_D} e^{\psi_{m, \epsilon}}.$$

 It is easy to see that $\psi_{m, \epsilon}$ tends to $-\infty$ near $D_{m}$ and $D$, and for $\epsilon>0$ sufficiently small, $\theta_D + \epsilon \theta_{D_{m}} >0$.      By the maximum principle

$$ e^{\psi_{m, \epsilon}} \leq  \max_X \left(|S_{D_{m}}|^{2\epsilon^2}_{h_{D_{m}}} |S_D|^{2\epsilon}_{h_D}  \frac{ (\chi_0 + \epsilon^2 \theta_{D_{m}} + \epsilon \theta_{D}  )^n } {\Omega_0 } \right) $$
and
$$e^{\varphi_m}\frac{\Omega_m}{\Omega_0}  \leq |S_{D_{m}}|^{-2\epsilon^2}_{h_{D_m}} |S_D|^{-2\epsilon}_{h_D}   \max_X  \left(|S_{D_{m}}|^{2\epsilon^2}_{h_{D_{(m)}}} |S_D|^{2\epsilon}_{h_D}  \frac{ (\chi_0 + \epsilon^2 \theta_{D_{m}} + \epsilon \theta_{D}  )^n } {\Omega_0 } \right) .$$

Now we let $\epsilon$ tend to $0$, and

$$e^{\varphi_m}\frac{\Omega_m}{\Omega_0}  \leq \max_X  \left( \frac{ (\chi_0   )^n } {\Omega_0 } \right) .$$

\qed
\end{proof}

\begin{proposition} \label{mon1} $e^{\varphi_m}\Omega_m$ is increasing, that is, on $X$

\begin{equation}
e^{\varphi_m} \Omega_m \leq e^{\varphi_{m+1}} \Omega_{m+1}.
\end{equation}

\end{proposition}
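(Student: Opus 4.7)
The proof is an instance of the standard monotonicity of Bergman-type singular hermitian metrics on $K_X$: the claimed inequality is equivalent to the singular metrics $h_m=e^{-\varphi_m}\Omega_m^{-1}$ on $K_X$ being pointwise decreasing in $m$. My plan is to lift both Monge--Amp\`ere equations to the smooth model $X_{m+1}$ via $\pi:=\pi_{m+1}\colon X_{m+1}\to X_m$ and reduce the comparison to a single comparison principle with a common reference K\"ahler form $\omega_{m+1}$. The key geometric input is that the fixed parts of the pluricanonical systems nest: because the tensor products $\sigma_{m,j_0}\otimes\cdots\otimes\sigma_{m,j_m}$ lie in $H^0(X,(m{+}1)!K_X)$ and are spanned by the basis $\{\sigma_{m+1,k}\}$, we have $E_{m+1}\le(m{+}1)\pi^*E_m$ on $X_{m+1}$, so that $D:=\pi^*E_m/m!-E_{m+1}/(m{+}1)!$ is an effective $\mathbb Q$-divisor. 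I choose compatible hermitian metrics $h_{E_m}, h_{E_{m+1}}, h_D$ so that the factorization $|S_{E_m}|^{2/m!}=|S_{E_{m+1}}|^{2/(m{+}1)!}|S_D|^{2}_{h_D}$ holds and $\Theta_{h_D}=\omega_{m+1}-\pi^*\omega_m$; this is possible by the $\partial\bar\partial$-lemma.

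Set $\rho:=\log|S_D|^{2}_{h_D}$ and $\tilde\varphi_m:=\pi^*\varphi_m+\rho$ on $X_{m+1}$. A direct computation gives $\omega_{m+1}+\ddbar\tilde\varphi_m=\pi^*\omega_{KE,m}+[D]\ge 0$, placing $\tilde\varphi_m\in PSH(X_{m+1},\omega_{m+1})\cap L^\infty$, and off $\mathrm{supp}(D)$ one obtains the Monge--Amp\`ere equation
$$(\omega_{m+1}+\ddbar\tilde\varphi_m)^n = e^{\tilde\varphi_m}\bigl(e^{-\rho}\pi^*\Omega_m\bigr).$$
Thus $\tilde\varphi_m$ and $\varphi_{m+1}$ both solve equations of the form $(\omega_{m+1}+\ddbar\phi)^n=e^\phi F$ with respective source densities $F_{\tilde\varphi_m}=e^{-\rho}\pi^*\Omega_m$ and $F_{\varphi_{m+1}}=\Omega_{m+1}$. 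Using the metric factorization, the pointwise ratio reduces to $F_{\tilde\varphi_m}/F_{\varphi_{m+1}}=(\sum_j|\zeta_{m,j}|^2)^{1/m!}/(\sum_k|\zeta_{m+1,k}|^2)^{1/(m{+}1)!}$, which in turn is controlled by the pointwise Bergman-type inequality
$$\Bigl(\sum_j|\sigma_{m,j}|^2\Bigr)^{m+1}\le\sum_k|\sigma_{m+1,k}|^2,$$
obtained by expanding each symmetric product $\prod_{i=0}^m\sigma_{m,j_i}\in H^0(X,(m{+}1)!K_X)$ in the basis $\{\sigma_{m+1,k}\}$. Since the canonical measure $e^{\varphi_k}\Omega_k$ is invariant under multiplicative rescaling of the bases, an overall constant in the Bergman bound can be absorbed by a compatible choice of normalizations.

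The conclusion now follows by applying the maximum principle to $\tilde\varphi_m-\varphi_{m+1}$. Because $\tilde\varphi_m\to-\infty$ on $\mathrm{supp}(D)$, the supremum is attained at an interior smooth point $p_0$ of $X_{m+1}\setminus(D\cup E_{m+1}\cup R_{m+1})$, where $\ddbar(\tilde\varphi_m-\varphi_{m+1})(p_0)\le 0$ combined with the two Monge--Amp\`ere equations yields
$$e^{(\tilde\varphi_m-\varphi_{m+1})(p_0)}\le \bigl(F_{\varphi_{m+1}}/F_{\tilde\varphi_m}\bigr)(p_0).$$
Unwinding $\tilde\varphi_m=\pi^*\varphi_m+\rho$ and substituting the sharp Bergman bound $F_{\tilde\varphi_m}\le F_{\varphi_{m+1}}$ pointwise produces $e^{\pi^*\varphi_m}\pi^*\Omega_m\le e^{\varphi_{m+1}}\Omega_{m+1}$ on $X_{m+1}$, which descends to the required inequality between continuous measures on $X$.

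The main obstacle is obtaining the sharp constant $1$ in the Bergman inequality: a naive expansion only yields a pointwise bound with some unspecified multiplicative constant depending on the change-of-basis, while the inequality between the canonical measures is intrinsic and so must hold with constant exactly $1$. Pinning this down requires a careful, compatible choice of normalization of the bases $\{\sigma_{m,j}\}$ and $\{\sigma_{m+1,k}\}$ using the scale-invariance of the canonical measures. A secondary technical point is the behavior near the boundary divisors $R_m, R_{m+1}, E_m, E_{m+1}$, where the smoothness of $\varphi_m, \varphi_{m+1}$ degenerates; this is handled by an approximation argument analogous to the regularization in the proof of Theorem~4.1, followed by passage to the limit.
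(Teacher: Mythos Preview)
Your argument has a genuine gap: the maximum principle is applied to the wrong auxiliary function, and consequently the conclusion you draw does not follow. At the maximum $p_0$ of $\tilde\varphi_m-\varphi_{m+1}$ you correctly obtain
\[
e^{(\tilde\varphi_m-\varphi_{m+1})(p_0)}\le (F_{\varphi_{m+1}}/F_{\tilde\varphi_m})(p_0),
\]
and after unwinding $\tilde\varphi_m=\pi^*\varphi_m+\rho$ this indeed gives $e^{\pi^*\varphi_m}\pi^*\Omega_m\le e^{\varphi_{m+1}}\Omega_{m+1}$ \emph{at the single point $p_0$}. But $p_0$ is the maximum of $\tilde\varphi_m-\varphi_{m+1}$, not of $\pi^*\varphi_m+\log\pi^*\Omega_m-\varphi_{m+1}-\log\Omega_{m+1}$; these differ by the nonconstant function $\log(F_{\tilde\varphi_m}/F_{\varphi_{m+1}})$. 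Even granting your ``sharp Bergman bound'' $F_{\tilde\varphi_m}\le F_{\varphi_{m+1}}$, the best you get globally is
\[
e^{\pi^*\varphi_m}\pi^*\Omega_m = e^{\tilde\varphi_m}F_{\tilde\varphi_m}\le e^{(\tilde\varphi_m-\varphi_{m+1})(p_0)}\,e^{\varphi_{m+1}}F_{\varphi_{m+1}}\le \bigl(F_{\varphi_{m+1}}/F_{\tilde\varphi_m}\bigr)(p_0)\cdot e^{\varphi_{m+1}}\Omega_{m+1},
\]
and the leading constant is $\ge 1$, not $=1$. So the sharp Bergman inequality, which you flag as the main obstacle, is in fact neither sufficient nor necessary.

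The paper's proof sidesteps all of this by choosing the auxiliary function to be the ratio itself,
\[
\psi=\varphi_{m+1}-\varphi_m+\log\frac{\Omega_{m+1}}{\Omega_m},
\]
and observing that on the open set $U_{m+1}=\{\Omega_{m+1}/\Omega_m<\infty\}$ the identity $\omega_{m+1}+\ddbar\varphi_{m+1}=\omega_m+\ddbar\varphi_m+\ddbar\psi$ holds (because $\ddbar\log\Omega_k=\omega_k$ off the fixed locus), so that $\psi$ satisfies the \emph{clean} equation
\[
(\omega_m+\ddbar\varphi_m+\ddbar\psi)^n=e^{\psi}(\omega_m+\ddbar\varphi_m)^n
\]
with no extraneous factor. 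Since $V=\{\psi<0\}\subset U_{m+1}$, the pluripotential comparison principle (Theorem~\ref{kol}) applied on $V$ forces $\psi\ge 0$ directly. No Bergman-type inequality, no pointwise maximum principle at a smooth point, and no normalization of bases is required. To repair your argument you should drop the splitting into $\rho$ and the Bergman factor and work with $\psi$ as above; your effective-divisor setup then just reproves the identity $\omega_{m+1}-\pi^*\omega_m=\ddbar\log(\Omega_{m+1}/\pi^*\Omega_m)$ on $U_{m+1}$.
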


\begin{proof} We shall compared $e^{\varphi_m}\Omega_m$  and $e^{\varphi_{m+1}}\Omega_{m+1}$ on $X_{m+1}$.
 Let $$U_{(m+1)}=\left\{ s \in X_{m+1}~\left|~ \frac{\Omega_{m+1}}{\Omega_m } < \infty \right. \right\}.$$
$\varphi_m$ and $\varphi_{m+1}$ are the solutions of

$$  \left( \omega_{m} + \ddbar \varphi_{m} \right) ^ n = e^{\varphi_{m+1}} \Omega_{m}.$$
and
$$  \left( \omega_{m+1} + \ddbar \varphi_{m+1} \right) ^ n = e^{\varphi_{m+1}} \Omega_{m+1}.$$
Define $$\psi = \varphi_{m+1} -\varphi_{m} + \log \frac{\Omega_{m}}{\Omega_{m+1}}$$ and $$V=\{ z\in X_{m+1} ~|~ \psi < 0 \} . $$ It is easy to see that

$$V \subset U_{m+1}$$
since both $\varphi_{m+1}$ and $\varphi_{m}$ are in $L^{\infty}(X_{m+1})$.\\

On $U_{m+1}$, $\log \left(\frac{ (E_{m+1})^{m!} }{ \left((\pi_{m+1})^*E_m\right)^{(m+1)!}}\right)$ is holomorphic and so

\begin{eqnarray*}
&&\omega_{m+1}+\ddbar\varphi_{m+1} \\
&=& \omega_{m}+ \ddbar \varphi_{m} + \ddbar \log  (\varphi_{m+1} -\varphi_{m}) + \ddbar \log \left( \frac{\left( \sum_{j=0}^{d_{m+1}} |\zeta_{m+1, j} |^2 \right)^{\frac{1}{(m+1)!}}} {\left( \sum_{j=0}^{d_m} |(\pi_{m+1})^*\zeta_{m, j} |^2 \right)^{\frac{1}{m!}}}\right)\\
&=& \omega_{m}+ \ddbar \varphi_{m} + \ddbar \psi+ \ddbar \log \left(\frac{ |E_{m+1} |^{\frac{2}{ (m+1)! }} }{ |(\pi_{m+1})^*E_m|^{\frac{2}{m!}}}\right)\\
&=& \omega_{m}+ \ddbar \varphi_{m}+ \ddbar \psi .
\end{eqnarray*}
Hence $\psi$ satisfies the following equation on $U_{m+1}$
\begin{equation}
\frac{ \left( \omega_{m}+ \ddbar \varphi_{m} + \ddbar \psi \right)^n }
 { \left( \omega_{m}+ \ddbar \varphi_{m} \right)^{n} }= e^{\psi}.
\end{equation}
By the comparison test,
\begin{eqnarray*}
&&\int_V \left( \omega_{m} + \ddbar \varphi_{m} \right)^n\\
&\leq& \int_V \left( \omega_{m} + \ddbar \varphi_{m} + \ddbar \psi \right)^n\\
&=&\int_V e^{\psi} \left( \omega_{m} + \ddbar \varphi_{m} \right)^n.
\end{eqnarray*}
Therefore $\Psi=0$ on $V$ due to the fact that it is continuous on $V$, and so on $X_{m+1}$
$$\psi\geq 0.$$
This completes the proof.

\qed
\end{proof}

\begin{proposition} There exists a measure $\Omega_{KE}$ on $X$ such that

\begin{enumerate}

\item

$\Omega_{KE} = \lim_{m\rightarrow \infty} e^{\varphi_m} \Omega_m.$

\item $(K_X, \Omega_{KE}^{-1})$ is an analytic Zariski decomposition. Furthermore, on $X$

$$\frac{\Omega_{KE}} {\Omega_0} < \infty ~~~~~{\it and }~~~~~\frac{\Psi_{X, \epsilon}}{\Omega_{KE}} < \infty.$$

\item

 $\omega_{KE} = \ddbar\log\Omega_{KE} \in - c_1(X)$ is a closed positive K\"ahler-Einstein current and on $X\backslash Bs(X, K_X)$,

$$ (\omega_{KE})^n = \Omega_{KE} ~~~~~{\it and ~ so }~~~~~ \ric( \omega_{KE} )= -\omega_{KE}.$$

\end{enumerate}

\end{proposition}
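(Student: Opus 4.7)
The plan is to obtain $\Omega_{KE}$ as a monotone pointwise limit and then to upgrade this to all the required properties in turn. By Proposition \ref{mon1} the family $\{e^{\varphi_m}\Omega_m\}$ is nondecreasing in $m$, and Lemma \ref{upbound1} gives the uniform bound $e^{\varphi_m}\Omega_m \leq C\Omega_0$. Monotone convergence therefore produces $\Omega_{KE} := \lim_{m\to\infty} e^{\varphi_m}\Omega_m$ as a finite positive measure on $X$ with $\Omega_{KE}/\Omega_0 \leq C$, giving (1) and the upper half of (2). Fixing a smooth reference $\omega_0 \in -c_1(X)$ with local potential $\log\Omega_0$, each $u_m := \log(e^{\varphi_m}\Omega_m) - \log\Omega_0$ is $\omega_0$-plurisubharmonic (using the positivity of $\omega_{KE,m}$ from Theorem \ref{ke_m}), pointwise increasing, and uniformly bounded above. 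Its pointwise limit $u_\infty$ is therefore $\omega_0$-plurisubharmonic, which identifies $\omega_{KE} := \ddbar \log \Omega_{KE} = \omega_0 + \ddbar u_\infty$ as a closed positive $(1,1)$-current in $-c_1(X)$.

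The lower half of (2) follows from a direct comparison against the approximants. For any $\sigma \in H^0(X, kK_X)$ and any $m$ with $k \mid m!$, the section $\sigma^{m!/k} \in H^0(X, m!K_X)$ can be expanded as $\sigma^{m!/k} = \sum_j c_j \sigma_{m,j}$ in the chosen basis. A pointwise Cauchy--Schwarz estimate in a local trivialization of $m!K_X$ gives $|\sigma^{m!/k}|^2 \leq C_\sigma \sum_j |\sigma_{m,j}|^2$, and taking $\tfrac{1}{m!}$-th powers yields the $(n,n)$-form inequality $|\sigma|^{2/k} = |\sigma^{m!/k}|^{2/m!} \leq C_\sigma^{1/m!}\Omega_m$. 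Since the $\varphi_m$ are uniformly bounded, $\Omega_m \leq C' e^{\varphi_m}\Omega_m \leq C'\Omega_{KE}$, so $|\sigma|^{2/k}/\Omega_{KE}$ is bounded on $X$. Applying this to each generator with the summable weights $\epsilon_{m,j}$ yields $\Psi_{X,\epsilon}/\Omega_{KE} < \infty$. The same pointwise estimate, written in local coordinates with $\Omega_{KE} = e^{-\phi_{KE}}\Omega_0$ and $\sigma = f\,(dz)^{\otimes k}$, gives $|f|^2 e^{k\phi_{KE}} \leq C^k$, so $|\sigma|^2 h_{KE}^k$ is locally bounded and hence locally integrable; equivalently, $\sigma \in H^0(X, kK_X \otimes \II(h_{KE}^k))$. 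Together with the semi-positivity of $\omega_{KE}$ just established, this shows that $(K_X, \Omega_{KE}^{-1})$ is an analytic Zariski decomposition.

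For (3), the Monge--Amp\`ere equation (\ref{ke_mx}) reads $(\ddbar \log(e^{\varphi_m}\Omega_m))^n = e^{\varphi_m}\Omega_m$ on $X \setminus Bs(|m_0 K_X|)$, after writing $\ddbar\log\Omega_m + \ddbar\varphi_m = \ddbar \log(e^{\varphi_m}\Omega_m)$. On this open set the local potentials $\log(e^{\varphi_m}\Omega_m)$ are locally bounded and increase to the locally bounded potential $\log\Omega_{KE}$ (bounded below because $\Omega_{KE} \geq e^{\varphi_{m_0}}\Omega_{m_0} > 0$ there). The Bedford--Taylor continuity of the Monge--Amp\`ere operator along monotone sequences of locally bounded plurisubharmonic functions therefore gives the weak convergence $(\ddbar\log(e^{\varphi_m}\Omega_m))^n \to \omega_{KE}^n$, while the right-hand sides converge weakly to $\Omega_{KE}$ by monotone convergence. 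Hence $\omega_{KE}^n = \Omega_{KE}$ on $X \setminus Bs(|m_0K_X|)$, and since $Bs(X,K_X) = Bs(|m_0K_X|)$ for $m_0$ large enough, the identity holds on $X \setminus Bs(X, K_X)$. Applying $-\ddbar\log$ then yields $\ric(\omega_{KE}) = -\ddbar\log\Omega_{KE} = -\omega_{KE}$. The main technical obstacle is part (2): the AZD property requires the pointwise ratio $|\sigma|^{2/k}/\Omega_{KE}$ to be bounded rather than merely $L^p$-controlled, and this is precisely what the monotonicity of Proposition \ref{mon1} delivers, by forcing every available pluricanonical section to be absorbed into the limit $\Omega_{KE}$.
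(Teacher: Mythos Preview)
Your argument follows essentially the same route as the paper: use Proposition \ref{mon1} and Lemma \ref{upbound1} to produce the limit, pass the plurisubharmonicity to $u_\infty$, verify the analytic Zariski decomposition by bounding $|\sigma|^{2/k}/\Omega_{KE}$, and then let $m\to\infty$ in equation (\ref{ke_mx}). Your use of Bedford--Taylor monotone continuity for the Monge--Amp\`ere operator is in fact cleaner than the paper's one-line appeal to ``uniform convergence on compact subsets of $X\setminus Bs(X,K_X)$'', which is not justified there.

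There is one genuine slip. You assert that $Bs(X,K_X) = Bs(|m_0K_X|)$ for $m_0$ large, but this is exactly the statement that the stable base locus stabilizes at a finite level, which is \emph{not} assumed here (it would follow from finite generation of the canonical ring, and the whole point of Theorem~B.1 is to avoid that hypothesis). The fix is immediate: your Bedford--Taylor argument already gives $\omega_{KE}^n = \Omega_{KE}$ on $X\setminus Bs(|m_0!K_X|)$ for every $m_0$, and since the sets $Bs(|m!K_X|)$ are nested decreasing with $\bigcap_m Bs(|m!K_X|) = Bs(X,K_X)$, taking the union over $m_0$ yields the identity on all of $X\setminus Bs(X,K_X)$.

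A minor wording issue: when you write ``since the $\varphi_m$ are uniformly bounded'' to get $\Omega_m \leq C' e^{\varphi_m}\Omega_m$, you only need that each individual $\varphi_m$ is bounded below (which holds because $\varphi_m\in C^0(X_m)$), not uniformity in $m$. The constant may depend on $m$, but that is harmless since you only need $|\sigma|^{2/k}/\Omega_{KE}$ to be finite, not bounded uniformly in $m$.
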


\begin{proof} By Lemma \ref{upbound1} and Proposition \ref{mon1}, we can define $\Omega_{KE}$

$$\Omega_{KE} = \lim_{m\rightarrow \infty} e^{\varphi_m} \Omega_m$$
and $\frac{\Omega_m}{\Omega_0} $ is uniformly bounded from above.   Since $\varphi_m + \log \frac{\Omega_m}{\Omega_0} \in PSH(X, \chi_0) $ and $\{\varphi_m + \log \frac{\Omega_m}{\Omega_0}\}_{m=m_0}^{\infty}$ is convergent in $L^1(X)$. Also $PSH(X, \chi_0)\cap L^1(X)$ is closed in $L^1(X)$. Therefore

$$ \lim_{m\rightarrow \infty}  \left( \varphi_m + \log \frac{\Omega_m}{\Omega_0} \right) = \log \frac {\Omega_{KE}}{\Omega_0}$$
in $PSH(X, \chi_0)\cap L^1(X)$ and $\log \frac {\Omega_{KE}}{\Omega_0} \in PSH(X, \chi_0)\cap L^1(X)$.

Let $h_{KE} = \Omega_{KE}^{-1}$ be the hermitian metric on $K_X$. By the construction of $\Omega_m$,

$$ |\sigma|^2_{h_{KE}^m} < \infty$$
for any section $\sigma\in H^0(X, mK_X)$. Therefore $\frac{\Psi_{X, \epsilon}}{\Omega_{KE}} < \infty$ and so

$$H^0(X, \OO_X(mK_X)\otimes \II(h_{KE}^m))\rightarrow H^0(X, \OO_X(mK_X))$$
is an isomorphism and $(K_X, h_{KE})$ is an analytic Zariski decomposition.

Since $\varphi_m + \log \frac{\Omega_m}{\Omega_0}$ converges uniformly on any compact set of $X\backslash Bs(X, K_X)$ to $\log \frac{\Omega_{KE}}{\Omega_0}$, we have on $X\backslash Bs(X, K_X)$,
$$(\ddbar \log \Omega_{KE} )^n = \lim_{m \rightarrow \infty}  (\ddbar \log \Omega_m + \ddbar \varphi_m )^n = \lim_{m \rightarrow \infty}  e^{\varphi_m} \Omega_m  =   \Omega_{KE}.$$

This concludes the proof of the proposition as well as Theorem B.1.

\qed

\end{proof}

\begin{remark}

The existence of such a canonical metric does not depend on the finite generation of the canonical ring of $X$.  The regularity and uniqueness of such K\"aher-Einstein metrics will be investigated in our future study.

\end{remark}

\bigskip


\subsection{Algebraic manifolds of positive Kodaira dimension}

Let $X$ be an $n$-dimensional nonsingular algebraic variety of Kodaira dimension $\kappa$, where $0< \kappa < n$. Let $\Phi^{\dagger}: X^{\dagger} \rightarrow Y^{\dagger}$ be the Iitaka fibration of $X$ unique up to birational equivalence. Let $\Phi_m$ be the pluricanonical map associated to the linear system $|m K_X|$. Then for $m$ sufficiently large there exists a commutative diagram

\begin{equation}
\begin{diagram}
\node{X} \arrow{s,l,..}{{\small \Phi_m} }     \node{X^{\dagger}}  \arrow{w,t}{\pi^{\dagger}} \arrow{s,r}{{\small \Phi^{\dagger}}}\\
\node{Y_m}      \node{Y^{\dagger}} \arrow{w,b,..}{\mu_m}
\end{diagram}
\end{equation}
as in Section 2.2. A very general fibre of $\Phi^{\dagger}$ has Kodaira dimension zero.

We will generalize the notion of the Weil-Petersson metric on a special local
deformation space of K\"ahler manifolds of zero Kodaira dimension.

\begin{definition}\label{wp2}

Let $f: \mathcal{X}\rightarrow B$ be a holomorphic nonsingular fibration over a ball $B\in \mathbf{C}^{\kappa}$ such that for any $t\in B$, $\mathcal{X}_t = f^{-1} (t)$ is a nonsingular fibre of dimension $n-\kappa$.
Let $t=(t_1, ..., t_{\kappa})$ be the holomorphic coordinates of $B$, where $\kappa=\dim {\mathcal M}$.
Then each
$\frac{\partial}{\partial t_i}$ corresponds to an element
$\iota(\frac{\partial}{\partial t_i})\in H^1(\mathcal{X}_t,
T_{\mathcal{X}_t})$ through the Kodaira-Spencer map $\iota$. We assume that there exists a nontrivial holomorphic
$(n-\kappa,0)$-form $\Psi$ on $\mathcal{X}$ such that its restriction on each fibre $\mathcal{X}_t$ is also a nontrivial holomorphic
$(n-\kappa,0)$-form on $\mathcal{X}$.  Then the
Weil-Petersson metric is defined by the $L^2$-inner product
\begin{equation}
\left( \frac{\partial}{\partial t_i}, \frac{\partial}{\partial
\bar{t_j}} \right)_{\omega_{WP}}=\frac{\int_{\mathcal{X}_t}\Psi\lrcorner\iota(\frac{\partial}{\partial
t_i})\wedge\overline{\Psi\lrcorner\iota(\frac{\partial}{\partial
t_i})}}{\int_{\mathcal{X}_t}\Psi\wedge\overline{\Psi}},
\end{equation}
where $\Psi\lrcorner\iota(\frac{\partial}{\partial t_i})$ is the
contraction of $\Psi$ and $\iota(\frac{\partial}{\partial t_i})$.
\end{definition}

The metric defined above is a pseudometric and the associated closed $(1,1)$-form $\omega_{WP}$ is only semi-positive in general. In Definition \ref{wp2}, $\omega_{WP}$ depends on the choice of the holomorphic $(n-\kappa, 0)$- form $\Psi$, however, it is uniquely determined for the Iitaka fibration $\Phi^{\dagger} : X^{\dagger} \rightarrow Y^{\dagger}$ by the following lemma.

\begin{lemma} The Weil-Petersson metric is well-defined on a Zariski open  set of $Y^{\dagger}$ and it is unique.
\end{lemma}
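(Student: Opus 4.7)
The plan is to split the lemma into an existence claim, namely that the hypothesis of Definition \ref{wp2} can be arranged on a Zariski open $U \subset Y^{\dagger}$, and a uniqueness claim, namely that the resulting $(1,1)$-form is independent of all choices made. For existence, I would first restrict to the Zariski open set over which $\Phi^{\dagger}$ is a smooth proper morphism, which exists by generic smoothness of the Iitaka fibration. Since every very general fiber has Kodaira dimension zero, there is a positive integer $m$ for which $h^0(X^{\dagger}_t, m K_{X^{\dagger}_t}) = 1$ holds on a further Zariski open subset; by Grauert's base-change theorem the direct image $\Phi^{\dagger}_* ( m K_{X^{\dagger}/Y^{\dagger}} )$ is then an invertible sheaf on that subset, which I continue to call $U$. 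Over any polydisc $B \subset U$ I choose a trivialising section $\eta_B$, i.e.\ a relative $m$-pluricanonical form nowhere vanishing on each fiber.

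To apply Definition \ref{wp2} on $B$, I formally set $\Psi = \eta_B^{1/m}$. Every appearance of $\Psi$ in the Weil-Petersson formula enters through the pairings $\Psi \wedge \overline{\Psi}$ or $\Psi \lrcorner \iota(\partial/\partial t_i) \wedge \overline{\Psi \lrcorner \iota(\partial/\partial t_j)}$, both of which are single-valued top forms on each fiber and can be rewritten as fractional-power expressions in $\eta_B \wedge \overline{\eta}_B$. Equivalently, I can pass to the $m$-fold cyclic cover $\tilde X \to (\Phi^{\dagger})^{-1}(B)$ ramified along the zero locus of $\eta_B$, on which $\Psi$ becomes an honest holomorphic $(n-\kappa,0)$-form on each fiber; Definition \ref{wp2} then applies verbatim on $\tilde X$, and Galois invariance under the cyclic action lets the resulting semi-positive $(1,1)$-form descend to $B$.

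For uniqueness, any second trivialising section satisfies $\eta_B' = c\,\eta_B$ for some nowhere vanishing holomorphic $c : B \to \mathbf{C}^{*}$, which corresponds to $\Psi \mapsto c^{1/m}\Psi$. Both the numerator and the denominator of the Weil-Petersson quotient then scale by the same factor $|c(t)|^{2/m}$, so the ratio is unchanged; hence the locally defined metrics glue across overlapping polydiscs into a globally well-defined semi-positive $(1,1)$-form on $U$. The main obstacle is the literal mismatch between Definition \ref{wp2}, which demands a genuine holomorphic $(n-\kappa, 0)$-form $\Psi$, and the Iitaka setting, where only a pluricanonical form is available on fibers of zero Kodaira dimension. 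I expect the cleanest way around this is the cyclic-cover reduction above, which returns the problem to the classical Calabi-Yau deformation case where independence of $\omega_{WP}$ on the choice of fiberwise holomorphic volume form is standard (cf.\ \cite{Ti4}); the only thing then left to check is that the Kodaira-Spencer contractions transform naturally under the cover, which is routine.
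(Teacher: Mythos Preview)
Your argument is correct and follows the same skeleton as the paper's: restrict to the Zariski open locus where the fibration is smooth, produce a relative (pluri)canonical form, and deduce uniqueness from the fact that any two such forms have constant ratio on each fiber because the generic fiber has Kodaira dimension zero. The paper phrases the uniqueness step as ``$\Psi_1/\Psi_2|_{\mathcal{X}_t}$ is constant for very general $t$, hence for all $t$ by smoothness,'' which is your observation $\eta_B' = c\,\eta_B$ recast. Where you differ is in your treatment of the mismatch between Definition~\ref{wp2} (which asks for an honest relative $(n-\kappa,0)$-form) and the Iitaka setting (which only supplies a pluricanonical one): the paper disposes of this in one line, ``without loss of generality $H^0(X^\dagger, K_{X^\dagger})\neq\emptyset$, otherwise replace $K_X$ by a sufficiently large power,'' and then works as though a genuine $\Psi$ exists. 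Your cyclic-cover reduction makes this honest, and is the more careful route; in practice, however, the Hodge-bundle curvature formula $\omega_{WP}=-\sqrt{-1}\partial\bar\partial\log\int_{X_t}(\eta_B\wedge\overline{\eta_B})^{1/m}$, which the paper also records, is already single-valued without any cover and is what gets used in the later Monge--Amp\`ere computations, so the extra machinery buys rigor rather than new content.
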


\begin{proof} Let $B\subset Y^{\dagger}$ be a nonsingular open neighborhood such that each fibre over $B$ is nonsingular. Let $\mathcal{X} = (\Phi^{\dagger})^{-1} (B) \subset X^{\dagger}$. Without loss of generality, we assume  $H^0(X^{\dagger}, K_{X^{\dagger}})\neq \phi$, otherwise we can replace $K_X$ by a sufficiently large power of $K_X$,  and so $H^0(B, (\Phi^{\dagger})_*
\Omega^{n-\kappa}_{\mathcal{X}/B}) \neq \phi$. Then the assumption in Definition \ref{wp2} can be satisfied. Let $\Psi_1$ and $\Psi_2$ be two holomorphic $(n-\kappa)$-forms over $B$ in Definition \ref{wp2}. Since a very general fibre has Kodaira dimension zero, for a general point $t\in B$,

$$ \left. \frac{\Psi_1}{\Psi_2}\right|_{\mathcal{X}_t} = constant$$
and so $ \left. \frac{\Psi_1}{\Psi_2}\right|_{\mathcal{X}_t} $ is constant on each point $t\in B$ since  $ \left. \frac{\Psi_1}{\Psi_2}\right|_{\mathcal{X}_t}$ is smooth on $\mathcal{X}$.

One can also represent $\omega_{WP}$ as the curvature form of the
first Hodge bundle $f_*
\Omega^{n-\kappa}_{\mathcal{X}/B}$ with the same assumption as in Definition \ref{wp2}. Let $\Psi$ be a
nonzero local holomorphic section of $f_*
\Omega^{n-\kappa}_{\mathcal{X}/B}$ and one can define
the hermitian metric $h_{WP}$ on $f_*
\Omega^{n-\kappa}_{\mathcal{X}/B}$ by
\begin{equation}
|\Psi_t|^2_{h_{WP}}=\int_{\mathcal{X}_t}\Psi_t\wedge\overline{\Psi_t}.
\end{equation}
Then the Weil-Petersson metric is given by
\begin{equation}
\omega_{WP}=\ric(h_{WP}).
\end{equation}Therefore the Weil-Petersson metric is unique on $B$.

\qed
\end{proof}

We choose a sequence of resolution of indeterminacies of the pluricanonical systems $\Phi_{m!}=
\Phi_{|m! K_X|}$

\begin{equation}
\begin{diagram}
\node{X}      \node{X_{m_0}}  \arrow{w,t}{\pi_{m_0}}  \node{X_{m_0+1}}  \arrow{w,t}{\pi_{m_0+1}} \node{ \cdot\cdot\cdot} \arrow{w,t}{\pi_{m_0+2}} \node{X_m} \arrow{w,t}{\pi_m}  \node{X_{m+1}} \arrow{w,t}{\pi_{m+1}} \node{\cdot\cdot\cdot} \arrow{w,t}{\pi_{m+2}}
\end{diagram}
\end{equation}

for $m_0$ sufficiently large, such that

\begin{enumerate}

\item

$$(\bar{\pi}_m)^* ( m! K_X ) = L_m + E_m,$$
where $\bar{\pi}_m= \pi_m\circ\pi_{m-1} \circ...\circ\pi_{m_0}$.

\item

$$E_m = \sum_{j} c_{m, j} E_{m, j} $$ is the fixed part of $\left| (\bar{\pi}_m) ^* ( m!K_X ) \right|$ with each $E_{m, j}$ being a divisor with simple normal crossings.

\item $L_m$ is a globally generated line bundle on $X_m$.

\end{enumerate}

Let $Y_m$ be the variety determined by the pluricanonical system $| m! K_X |$ and $\Psi_m  = \Psi_{|L_m|}$ be the rational map associated to the linear system $|L_m|$. Then we have the following diagram

\begin{equation}
\begin{diagram}\label{diag5}
\node{X} \arrow{se,b,..}{\Phi_m}      \node[2]{X_m} \arrow[2]{w,t}{\bar{\pi}_m} \arrow{sw,r}{\Psi_m} \\
\node[2]{Y_m}
\end{diagram}
\end{equation}

There exists a commutative diagram by choosing $m_0$ sufficiently large

\begin{equation}\label{diagone}
\begin{diagram}
\node{X}      \node{X_{m_0}}  \arrow{w,t}{\pi_{m_0}} \arrow{s,r}{\Psi_{m_0}} \node{X_{m_0+1}}  \arrow{w,t}{\pi_{m_0+1}} \arrow{s,r}{\Psi_{m_0+1}} \node{ \cdot\cdot\cdot} \arrow{w,t}{\pi_{m_0+2}} \node{X_m} \arrow{w,t}{\pi_m} \arrow{s,r}{\Psi_m} \node{X_{m+1}} \arrow{w,t}{\pi_{m+1}} \arrow{s,r}{\Psi_{m+1}} \node{\cdot\cdot\cdot} \arrow{w,t}{\pi_{m+2}}\\
\node{}       \node{Y_{m_0} }   \node{ Y_{m_0+1}} \arrow{w,t,..}{\mu_{m_0+1}}     \node{ \cdot\cdot\cdot} \arrow{w,t,..}{\mu_{m_0+2}} \node{Y_{m}} \arrow{w,t,..}{\mu_m}  \node{Y_{m+1}} \arrow{w,t,..}{\mu_{m+1}}  \node{\cdot\cdot\cdot} \arrow{w,t,..}{\mu_{m+2}}
\end{diagram}
\end{equation}
of rational maps and holomorphic maps where the horizontal maps are birational and  $\mu_m $ is given by the projection from $|m!K_X|$ to $|(m-1)!K_X |$ as a subspace of $|m!K_X|$.

Let $\{ \sigma_{m, j}\}_{j=0}^{d_m}$ be a basis of $H^0(X, m!K_X)$ and $\{\zeta_{m, j}\}_{j=0}^{d_m}$ be a basis of $H^0(X, L_m)$ such that

$$(\bar{\pi}_m)^* \sigma_{m, j} = \zeta_{m, j} E_m  \in H^0( X_m, (\bar{\pi}_m)^* (m! K_X)).$$
Then
$$\frac{ \left( \sum_{j=0}^{d_{m+1}} \sigma_{m+1, j}\otimes \overline{\sigma_{m+1, j}} \right)^{\frac{1}{(m+1)!}}} { \left( \sum_{j=0}^{d_m} \sigma_{m, j}\otimes \overline{\sigma_{m, j}} \right)^{\frac{1}{m!}}} <\infty,$$
where $ 1+d_m = \dim H^0(X, m!K_X)$.

Let $\Omega_m = \left( \sum_{j=0}^{d_m} \sigma_{m, j}\otimes \overline{\sigma_{m, j}} \right)^{\frac{1}{m!}}$ and $\Omega_{(m)}= (\Psi_m) _* \Omega_m$. Let $$\omega_{m}= \frac{1}{m} \ddbar\log \left( \sum_{j=0}^{d_m} |\zeta_{m, j}|^2\right)$$ be the normalized Fubini-Study metric on $Y_{m}$.
Then the same argument in Proposition \ref{Fso} gives the following Lemma.

\begin{lemma}  There exists $p=p(m)>1$ such that

\begin{equation}
F_{m}=\frac{\Omega_{(m)}}{\omega_{m}^{\kappa}} \in L^p \left( Y_{m}, \omega_{m}^{\kappa} \right).
\end{equation}

\end{lemma}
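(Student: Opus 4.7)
The plan is to follow the proof of Proposition~\ref{Fso} step by step, with the Calabi--Yau fibration $f : X \to X_{can}$ replaced by the semi-ample fibration $\Psi_m : X_m \to Y_m$. From the definition of the pushforward of measures and the relation $(\Psi_m)_* \Omega_m = F_m \omega_m^\kappa$ one obtains
\begin{equation*}
\int_{Y_m} F_m^{1+\epsilon} \omega_m^\kappa = \int_{Y_m} F_m^{\epsilon} \, (\Psi_m)_* \Omega_m = \int_{X_m} (\Psi_m^* F_m)^\epsilon \, \Omega_m,
\end{equation*}
so the problem reduces to bounding the right-hand side for some small $\epsilon = \epsilon(m) > 0$.

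For the pointwise estimate I would fix any smooth K\"ahler form $\omega$ on $X_m$ and introduce
\begin{equation*}
\FF_m = \frac{\Omega_m}{\omega_m^\kappa \wedge \omega^{n-\kappa}} ,
\end{equation*}
a non-negative function well defined on a Zariski open subset of $X_m$. Since $\omega_m^\kappa$ is pulled back from $Y_m$, the same manipulation as in Proposition~\ref{Fso} shows that on each smooth fibre $X_y = \Psi_m^{-1}(y)$,
\begin{equation*}
F_m(y) = \int_{X_y} \FF_m \, \omega^{n-\kappa}|_{X_y},
\end{equation*}
where the fibrewise volume $V := [\omega]^{n-\kappa} \cdot [X_y]$ is a topological constant independent of $y$. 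Applying Jensen's inequality to the convex function $x \mapsto x^{1+\epsilon}$ and then Fubini gives
\begin{equation*}
\int_{Y_m} F_m^{1+\epsilon} \omega_m^\kappa \le V^\epsilon \int_{X_m} \FF_m^{\epsilon} \, \Omega_m .
\end{equation*}

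It then suffices to verify that $\FF_m$ has at worst pole singularities along the exceptional locus of $\bar\pi_m$, the fixed part $E_m$, and the base locus of $|L_m|$. This is clear from the explicit construction: $\Omega_m$ is a smooth (possibly degenerate) $(n,n)$-form on $X_m$, and $\omega_m^\kappa \wedge \omega^{n-\kappa}$ vanishes to finite order along these divisors. Hence $\FF_m^{\epsilon}\, \Omega_m$ is integrable on $X_m$ for $\epsilon > 0$ chosen sufficiently small, yielding $F_m \in L^{1+\epsilon}(Y_m, \omega_m^\kappa)$ as required.

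The main subtlety I foresee is quantifying the vanishing orders of $\Omega_m$ and of $\omega_m^\kappa \wedge \omega^{n-\kappa}$ along their various common irreducible components; a standard resolution of singularities that brings all of these divisors into simple normal crossings should reduce the last step to a local monomial integrability check, which will dictate the admissible range of $\epsilon(m)$. Unlike in Proposition~\ref{Fso}, no semi-flat fibre metric or Ricci-flat structure is needed, because the Jensen step absorbs the role played there by the pointwise agreement of $\omega^{n-\kappa}$ and $\omega_{SF}^{n-\kappa}$ at a specific point of the fibre.
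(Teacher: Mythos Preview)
Your proposal is correct. The paper gives no separate proof here, merely pointing to ``the same argument in Proposition~\ref{Fso}''. That argument first rewrites $\int F^{1+\epsilon}\chi^\kappa = \int (f^*F)^\epsilon \Omega$ and then obtains a \emph{pointwise} pole bound on $f^*F$: using that $\frac{\Omega}{\chi^\kappa\wedge\omega_{SF}^{n-\kappa}}$ is constant along each smooth fibre, one selects a point $z_0\in X_y$ where $\omega^{n-\kappa}=\omega_{SF}^{n-\kappa}$ (such a point exists since the two forms have equal fibre integral) and deduces $F(y)\le \sup_{X_y}\FF$. Transporting this verbatim to $\Psi_m:X_m\to Y_m$ requires a substitute for $\omega_{SF}$, namely a fibrewise volume coming from the Kodaira-dimension-$0$ structure of the generic fibre (cf.\ the reasoning in Lemma~\ref{constantfibre}); this is available but needs a word of justification.

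Your Jensen step bypasses that issue: rather than bounding $\Psi_m^*F_m$ pointwise, you bound $\int_{Y_m} F_m^{1+\epsilon}\omega_m^\kappa$ directly by $V^\epsilon\int_{X_m}\FF_m^\epsilon\,\Omega_m$, and then invoke the pole-type integrability of $\FF_m$. Both routes land on the identical final check (integrability of $\FF_m^\epsilon\Omega_m$ for small $\epsilon$), so the difference is one of technique rather than substance; your version has the mild advantage of not needing any special structure on the fibres, exactly as you observe in your last paragraph. One cosmetic point: your opening equality $\int_{Y_m} F_m^{1+\epsilon}\omega_m^\kappa = \int_{X_m}(\Psi_m^*F_m)^\epsilon\Omega_m$ is correct but is never actually used once you switch to the Jensen bound, so you could drop it.
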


The following proposition is immediate as in Section 3.2.
\begin{proposition} There exists a unique solution $\varphi_{m} \in PSH( Y_m, \omega_m) \cap C^0( Y_{m} )$ to the following Monge-Amp\`ere equation

\begin{equation}
(\omega_{m} + \ddbar \varphi_{m} )^{\kappa} = e^{\varphi_{m}} \Omega_{(m)}.
\end{equation}
Furthermore, $\varphi_{m}$ is smooth on a Zariski open set of $Y_{m}$.

\end{proposition}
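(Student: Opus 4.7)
The plan is to reduce to the smooth setting by passing to a resolution of $Y_m$ and then apply the machinery already developed in the proof of Theorem \ref{MAthm}. Let $\pi : Y \to Y_m$ be a resolution of singularities with exceptional locus $E$ a divisor of simple normal crossings. Set $\hat\omega_m = \pi^*\omega_m$ and $\hat F_m = \pi^* F_m$; then $\hat\omega_m$ is a smooth closed semi-positive $(1,1)$-form on $Y$ with $[\hat\omega_m]^\kappa = [\omega_m]^\kappa > 0$, hence big, and $\hat F_m \in L^p(Y, \hat\omega_m^\kappa)$ by the preceding lemma. On $Y$ I would study the pulled-back Monge-Amp\`ere equation
$$(\hat\omega_m + \ddbar\hat\varphi)^\kappa \;=\; \hat F_m\, e^{\hat\varphi}\, \hat\omega_m^\kappa.$$

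First, I would carry out the approximation exactly as in Step 1 of the proof of Theorem \ref{MAthm}: perturb $\hat\omega_m$ by $\tfrac{1}{j}\omega_0$ for an auxiliary K\"ahler form $\omega_0$ on $Y$, and regularize $\hat F_m$ to a sequence $F_j \in C^4(Y)$ with $F_j \to \hat F_m$ in $L^p$ and $\log F_j$ uniformly bounded below. Yau's theorem produces smooth solutions $\hat\varphi_j$ to the perturbed equations. The maximum principle gives $\sup_Y \hat\varphi_j \le C$, and combined with the uniform $L^p$ bound on the right-hand side (the $e^{\hat\varphi_j}$ factor being uniformly bounded above), Theorem \ref{zhang} yields a uniform $L^\infty$ bound on $\hat\varphi_j$ and convergence along a subsequence to the unique continuous solution $\hat\varphi_\infty \in PSH(Y,\hat\omega_m)\cap C^0(Y)$ of the limiting equation.

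For smoothness I would invoke Kodaira's lemma as in Theorem \ref{MAthm}: bigness and semi-positivity of $\hat\omega_m$ yield, for every sufficiently small $\epsilon>0$, a divisor $D_\epsilon$ with $[\hat\omega_m] - \epsilon[D_\epsilon]$ ample on $Y$. The second-order estimate of Step 3 in the proof of Theorem \ref{MAthm}, together with standard interior Schauder and elliptic bootstrap, then gives $C^\infty$ control of $\hat\varphi_\infty$ away from $E\cup D_\epsilon$. Varying the divisor $D_\epsilon$ so that the intersection of the $D_\epsilon$'s is contained in $E$ confines the potential singular set of $\hat\varphi_\infty$ to a proper Zariski closed subset of $Y$.

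Finally, I would descend the solution to $Y_m$. Since $\pi$ has connected compact fibers and $\hat\omega_m = \pi^*\omega_m$ restricts trivially to each such fiber, the condition $\hat\omega_m + \ddbar\hat\varphi_\infty \ge 0$ forces $\hat\varphi_\infty|_{\pi^{-1}(y)}$ to be plurisubharmonic and thus constant for every $y \in Y_m$; consequently $\hat\varphi_\infty$ descends to $\varphi_m \in PSH(Y_m,\omega_m) \cap C^0(Y_m)$ solving the equation, with smoothness passing to a Zariski open subset of $Y_m$. For uniqueness, given two continuous solutions $\varphi, \varphi'$, on the open set $\{\varphi'<\varphi\}$ the comparison principle (Theorem \ref{kol}) gives $\int_{\varphi'<\varphi} (\omega_m+\ddbar\varphi)^\kappa \le \int_{\varphi'<\varphi} (\omega_m+\ddbar\varphi')^\kappa$, while the equation yields the reverse strict inequality through $e^\varphi > e^{\varphi'}$ wherever $F_m>0$; this is a contradiction unless the set has measure zero. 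The main obstacle I anticipate is not existence or uniqueness, both of which follow from now-standard degenerate Monge-Amp\`ere technique, but the careful bookkeeping in the descent step needed to identify a genuinely Zariski open smooth locus \emph{on the possibly singular} $Y_m$ itself, rather than merely on the resolution $Y$.
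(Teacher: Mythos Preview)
Your proposal is correct and follows precisely the route the paper indicates: the paper gives no separate proof for this proposition, remarking only that it is ``immediate as in Section 3.2,'' i.e.\ one repeats the argument of Theorem~\ref{MAthm} with $Y_m$ and $\omega_m$ in place of $X_{can}$ and $\chi$. Your write-up is a faithful elaboration of that argument (resolution, approximation, $C^0$ and second-order estimates via Kodaira's lemma, descent along connected fibers); the only minor difference is that you spell out uniqueness via the comparison principle, whereas the paper simply invokes Theorem~\ref{zhang}, but these are equivalent.
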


For simplicity, we abuse the notations and use $\Omega_m$ for $ (\bar{\pi}_{k})^* \Omega_m$ for $k\geq m_0 $ without causing confusion.

\begin{lemma}\label{constantfibre} There exists a Zariski open set $U$ of $Y_{m+1}$ such that$\frac{\Omega_m }{\Omega_{m+1}}$ is constant along each fibre of $\Psi_{m+1}$ over $U$.
\end{lemma}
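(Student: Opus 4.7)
The strategy is to raise the ratio to the power $(m+1)!$, turning both numerator and denominator into sums of squared pointwise norms of global sections of the single line bundle $(m+1)!K_X$, and then to exploit the fact that a very general fibre of the Iitaka fibration has Kodaira dimension zero to force these sections to be pairwise proportional on each such fibre. Explicitly,
\[
\left(\frac{\Omega_m}{\Omega_{m+1}}\right)^{(m+1)!}
=\frac{\bigl(\sum_{j=0}^{d_m}|\sigma_{m,j}|^{2}\bigr)^{m+1}}{\sum_{k=0}^{d_{m+1}}|\sigma_{m+1,k}|^{2}}
=\frac{\sum_{J}|\tau_{J}|^{2}}{\sum_{k}|\sigma_{m+1,k}|^{2}},
\]
where $J=(j_0,\dots,j_m)$ runs over ordered multi-indices and $\tau_{J}=\sigma_{m,j_0}\otimes\cdots\otimes\sigma_{m,j_m}\in H^{0}(X,(m+1)!K_X)$ lies in the same space of global sections as the $\sigma_{m+1,k}$.

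I would then choose a Zariski open $U\subset Y_{m+1}$ such that, over $U$: (i) $\Psi_{m+1}$ is a smooth proper submersion; (ii) the birational equivalence between $\Psi_{m+1}$ and the Iitaka fibration $\Phi^{\dagger}$ furnished by diagram (\ref{diagone}) restricts to an isomorphism of fibre spaces, so that every fibre $F=\Psi_{m+1}^{-1}(y)$ is birational to a very general fibre of $\Phi^{\dagger}$ and hence has Kodaira dimension zero; (iii) the birational morphism $\bar{\pi}_{m+1}$ is an isomorphism in a neighborhood of $F$, so that $\bar F=\bar{\pi}_{m+1}(F)\subset X$ is smooth with $K_X|_{\bar F}=K_{\bar F}$ by adjunction (the conormal bundle to a smooth fibre of a proper submersion being trivial); and (iv) at least one $\sigma_{m+1,k}$ does not vanish identically on $\bar F$. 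Each condition is Zariski open in $Y_{m+1}$, so their intersection $U$ is nonempty and Zariski open.

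For $y\in U$, restriction sends $\sigma_{m+1,k}\mapsto\sigma_{m+1,k}|_{\bar F}$ and $\tau_J\mapsto\tau_J|_{\bar F}$ into $H^{0}(\bar F,(m+1)!K_{\bar F})$. Since $\kod(\bar F)=0$ this space has dimension at most one, and by (iv) it is exactly one-dimensional; fixing any nonzero generator $s_y$ produces scalars $b_k(y),t_J(y)$ depending only on $y$ with $\sigma_{m+1,k}|_{\bar F}=b_k(y)\,s_y$ and $\tau_J|_{\bar F}=t_J(y)\,s_y$. Substituting into the displayed identity yields
\[
\left(\frac{\Omega_m}{\Omega_{m+1}}\right)^{(m+1)!}\!\bigg|_{F}
=\frac{\sum_{J}|t_{J}(y)|^{2}}{\sum_{k}|b_{k}(y)|^{2}},
\]
which depends only on $y$, and taking $(m+1)!$-th roots proves the lemma. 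The main obstacle I anticipate is packaging conditions (i)--(iv) into a single Zariski open $U$: in particular, justifying that $\bar F$ has Kodaira dimension zero for every (not merely very general) $y\in U$ requires combining the birational invariance of Kodaira dimension with the very-general Iitaka statement and shrinking $U$ accordingly, and the adjunction step relies on a careful analysis of the exceptional divisor of $\bar\pi_{m+1}$ to ensure it meets $F$ in codimension $\geq 1$ over $U$.
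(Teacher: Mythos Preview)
Your proposal is correct and rests on the same idea as the paper: restrict the pluricanonical sections to a very general fibre of the Iitaka fibration, use that such a fibre has Kodaira dimension zero so that $\dim H^0(F,(m+1)!K_F)\le 1$, and conclude that all the sections are proportional there, forcing the ratio to be constant. The only substantive difference is where the adjunction is performed: the paper works on the Iitaka model $X^{\dagger}$ and trivializes $K_{X^{\dagger}}/K_{F}$ by dividing by powers of a local nowhere-vanishing holomorphic $\kappa$-form $\eta$ on the base $Y^{\dagger}$, then transfers the conclusion to $\Psi_{m+1}$ via the commutative diagram. This sidesteps your condition (iii) entirely---the exceptional locus of $\bar\pi_{m+1}$ may well be horizontal over $Y_{m+1}$, so $\bar F\subset X$ need not be smooth on any Zariski open set; but as you anticipate, one can instead view $(\bar\pi_{m+1})^*\sigma_{\bullet}|_F$ as sections of $m!K_F$ via the inclusion $(\bar\pi_{m+1})^*K_X\hookrightarrow K_{X_{m+1}}$ and adjunction on $X_{m+1}$, which is exactly what the $\eta$-trick accomplishes on $X^{\dagger}$.
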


\begin{proof} Let $F =\frac{  \Omega_m }{\Omega_{m+1}}$ and it is easy to see that $F$ is smooth. We consider the following diagram

\[
\begin{diagram}
\node{X_{m+1}} \arrow{s,l,}{{\small \Psi_{m+1} } }    \node{X^{\dagger}}  \arrow{w,t,..}{f} \arrow{s,r}{{\small \Phi^{\dagger}}}\\
\node{Y_{m+1}}      \node{Y^{\dagger}} \arrow{w,b,..}{g }
\end{diagram}
\]
where  $\Phi^{\dagger}: X^{\dagger} \rightarrow Y^{\dagger}$ is an Iitaka fibration of $X$.

A very general fiber of $\Phi_{\dagger}$ is nonsingular of Kodaira dimension $0$. Let $F_{s_0}= (\Phi^{\dagger})^{-1}(s_0)$ be a very general fibre. Consider $B=\{ s \in Y^{\dagger}~|~ |s-s_0| < \delta \}$ such that for any $s\in B$, $F_{s}= (\Phi^{\dagger})^{-1}(s)$ is non-singular. Let $\eta$ be a nowhere-vanishing holomorphic $\kappa$-form on $B$. Then

$$\left. \frac{(\pi^{\dagger})^*\sigma_{m, j}}{ \eta^m} \right|_{s_0} \in H^0(F_{s_0}, m!K_{F_{s_0}})$$
and
$$\left. \frac{ (\pi^{\dagger})^*\sigma_{m+1, j}}{ \eta^{m+1}} \right|_{s_0} \in H^0(F_{s_0}, (m+1)!K_{F_{s_0}}).$$

Since $\dim PH^0( F_{s_0}, k K_{F_{s_0}})=0$ for any $k\geq1$ and $\frac{  \Omega_m }{\Omega_{m+1}}< \infty$, $\frac{ \Omega_m }{\Omega_{m+1}}$ must be constant on each $F_{s_0}$.

Therefore $f^* F$ is constant on a very general fibre of $\Phi^{\dagger}$. By Hartog's theorem, $f^* F$ is smooth on $X^{\dagger}$ and so $f^*F$ is the pullback of a function on a Zariski open set of $Y^{\dagger}$. By the commutative diagram, on a Zariski open set of $X_{m+1}$, $F$ is the pullback of function on $Y_{m+1}$ and so $F$ has to be constant on a very general fibre of $\Psi_{m+1}$.

\qed
\end{proof}

Hence $\frac{\Omega_{m}}{\Omega_{m+1}}$ descends to a function on $Y_{m+1}$. By the commutative diagram, the following lemma is immediate.
\begin{lemma} \label{sss} For each $m \geq m_0$, $$(\psi_{m+1})_*  \Omega_m = (\mu_{m+1})^* \Omega_{(m)}.$$

\end{lemma}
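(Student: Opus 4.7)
The plan is a direct diagram chase from the commutative square $\Psi_m \circ \pi_{m+1} = \mu_{m+1} \circ \Psi_{m+1}$ in (\ref{diagone}), using that $\pi_{m+1}$ and $\mu_{m+1}$ are birational. Both sides of the claimed identity are positive Borel measures on $Y_{m+1}$, so it suffices to pair them against smooth test functions $\psi$ supported in a Zariski open set $U \subset Y_{m+1}$ chosen so that $\mu_{m+1}|_U$ is a biholomorphism onto its image, $\Psi_{m+1}$ and $\Psi_m$ are honest morphisms over $U$ and $\mu_{m+1}(U)$, and $\pi_{m+1}$ restricts to a biholomorphism between $\Psi_{m+1}^{-1}(U)$ and $\Psi_m^{-1}(\mu_{m+1}(U))$. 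Such a $U$ exists with complement contained in a proper subvariety of $Y_{m+1}$.

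For such $\psi$, the computation I would carry out is
\begin{align*}
\int_{Y_{m+1}} \psi \, (\Psi_{m+1})_* \Omega_m
&= \int_{X_{m+1}} \Psi_{m+1}^* \psi \cdot \Omega_m
= \int_{X_m} (\Psi_{m+1} \circ \pi_{m+1}^{-1})^* \psi \cdot \Omega_m \\
&= \int_{X_m} \Psi_m^* \bigl((\mu_{m+1}^{-1})^* \psi\bigr) \cdot \Omega_m
= \int_{Y_m} (\mu_{m+1}^{-1})^* \psi \cdot \Omega_{(m)} \\
&= \int_{Y_{m+1}} \psi \cdot \mu_{m+1}^* \Omega_{(m)},
\end{align*}
where the first line uses the definition of pushforward together with birational invariance of integration under $\pi_{m+1}$ (recalling that $\Omega_m$ on $X_{m+1}$ is by convention $\pi_{m+1}^*\Omega_m$), the second line applies commutativity of (\ref{diagone}) followed by the definition $\Omega_{(m)} = (\Psi_m)_* \Omega_m$, and the last line is change of variables under $\mu_{m+1}^{-1}$. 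This yields equality of the two measures on $U$, which then extends to all of $Y_{m+1}$ because both sides are finite positive measures agreeing off a proper subvariety.

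I anticipate no serious obstacle — the author's remark that the lemma is \emph{immediate} from the commutative diagram reflects precisely the chase above. The only mild technical care is to choose the Zariski open set $U$ so that all maps in (\ref{diagone}) simultaneously become regular biholomorphisms on the relevant restrictions, which is routine since each exceptional locus is proper and a finite intersection of Zariski open sets is Zariski open. Note that Lemma \ref{constantfibre} is not strictly needed for this particular identity — it plays its role earlier, in justifying that the ratios $\Omega_m/\Omega_{m+1}$ themselves descend through the $\Psi_{m+1}$-fibration, a finer statement than what is required here.
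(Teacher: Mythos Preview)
Your proposal is correct and matches the paper's own treatment: the paper gives no detailed proof, merely stating that the lemma is immediate from the commutative diagram (\ref{diagone}), and your test-function diagram chase is precisely the way one unpacks that remark. Your observation that Lemma~\ref{constantfibre} is not needed here is also accurate; the sentence preceding the lemma in the paper is setting up the subsequent corollary rather than feeding into this statement.
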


The following corollary is immediate by the diagram \ref{diagone}, Lemma \ref{constantfibre} and \ref{sss}.

\begin{corollary} For each $m \geq m_0$, on $X_{m+1}$

$$ \frac{\Omega_m}{\Omega_{m+1} }  = \frac { (\mu_{m+1})^* \Omega_{(m)}}{\Omega_{(m+1)}}.$$

\end{corollary}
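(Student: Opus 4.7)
The plan is to combine Lemma \ref{constantfibre} (which says $\Omega_m/\Omega_{m+1}$ descends through $\Psi_{m+1}$) with Lemma \ref{sss} (which says $(\Psi_{m+1})_*\Omega_m=(\mu_{m+1})^*\Omega_{(m)}$) by applying the fiber-integration $(\Psi_{m+1})_*$ to the identity $\Omega_m = (\Omega_m/\Omega_{m+1})\,\Omega_{m+1}$ and using the projection formula for functions that are constant along fibers.

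First I would set $G := \Omega_m/\Omega_{m+1}$. This is well-defined as a measurable (in fact smooth, away from the vanishing loci of the sections) ratio of two semi-positive $(n,n)$-forms on $X_{m+1}$, and is finite since $\Psi_{X,\epsilon}/\Omega_{m+1}<\infty$-type arguments (the construction of $\Omega_m$) already give $\Omega_m/\Omega_{m+1}<\infty$. By Lemma \ref{constantfibre}, there is a Zariski open set $U\subset Y_{m+1}$ over which $G$ is constant on each fiber of $\Psi_{m+1}$; hence $G=(\Psi_{m+1})^*\widetilde G$ for a function $\widetilde G$ on $U$.

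Next I would push forward. On $(\Psi_{m+1})^{-1}(U)$ we have $\Omega_m = (\Psi_{m+1})^*\widetilde G\cdot\Omega_{m+1}$, and the projection formula for the fiberwise integration along $\Psi_{m+1}$ gives
\begin{equation}
(\Psi_{m+1})_*\Omega_m \;=\; \widetilde G\cdot (\Psi_{m+1})_*\Omega_{m+1} \;=\; \widetilde G\cdot \Omega_{(m+1)},
\end{equation}
where the last equality is the definition $\Omega_{(m+1)}=(\Psi_{m+1})_*\Omega_{m+1}$. On the other hand, Lemma \ref{sss} identifies the left-hand side as $(\mu_{m+1})^*\Omega_{(m)}$. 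Comparing,
\begin{equation}
\widetilde G \;=\; \frac{(\mu_{m+1})^*\Omega_{(m)}}{\Omega_{(m+1)}}
\end{equation}
on $U$, and pulling back by $\Psi_{m+1}$ recovers $G=\Omega_m/\Omega_{m+1}$ on a Zariski open set of $X_{m+1}$. Since both sides are smooth where defined and the equation extends across the exceptional loci by continuity/zero-extension (the vanishing orders match by construction of the $\Omega_m$ from the pluricanonical sections), the identity holds on all of $X_{m+1}$.

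The only delicate point is making the projection formula rigorous in the presence of singular fibers and base loci of the linear systems $|L_m|,|L_{m+1}|$; this is handled by restricting first to the Zariski open set $U$ of Lemma \ref{constantfibre} where $\Psi_{m+1}$ is a submersion with connected fibers (so fiber integration commutes with multiplication by pull-back functions), and then using the Hartogs-type extension argument already invoked in Lemma \ref{constantfibre} together with the fact that both ratios are locally bounded to extend the equality across the exceptional divisors. No further estimates are needed beyond what is already established.
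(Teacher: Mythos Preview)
Your proof is correct and is essentially the argument the paper intends: the paper states the corollary as ``immediate by the diagram (\ref{diagone}), Lemma \ref{constantfibre} and Lemma \ref{sss}'' without writing out any details, and your use of the projection formula for fiber integration is exactly how one makes ``immediate'' precise here. Your care about restricting to the Zariski open set where $\Psi_{m+1}$ is a submersion and then extending is appropriate and matches the level of rigor elsewhere in the paper.
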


\begin{lemma} Let $U_{m+1}=\{ s \in Y_{m+1}~|~ \frac{\Omega_{m+1}}{  \Omega_m } < \infty \}$. Then $\mu_{m+1}$ is holomorphic on $U_{m+1}$.

\end{lemma}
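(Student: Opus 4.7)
The plan is to argue by contradiction: suppose $y_0 \in U_{m+1}$ lies in the indeterminacy locus of $\mu_{m+1}$. Passing to the normalization of $Y_{m+1}$ if necessary, choose a resolution $p: Z \to Y_{m+1}$ so that $\tilde{\mu} := \mu_{m+1} \circ p : Z \to Y_m$ is a holomorphic morphism with $p$ birational. By the indeterminacy assumption, the fiber $F := p^{-1}(y_0)$ is positive-dimensional; let $F_0 \subset F$ be a top-dimensional irreducible component, and fix a generic smooth point $z_0 \in F_0$.

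The two inputs I would combine are: (i) the preceding corollary, which gives the relation $p^*\tilde{\rho} \cdot p^*\Omega_{(m+1)} = \tilde{\mu}^*\Omega_{(m)}$ on $Z$, where $\tilde{\rho}$ is the descent of $\Omega_m/\Omega_{m+1}$ to $Y_{m+1}$ produced by Lemma \ref{constantfibre}; and (ii) the monotonicity from Proposition \ref{mon1} combined with the definition of $U_{m+1}$, which forces $\Omega_m$ and $\Omega_{m+1}$ to vanish to identical orders in a neighborhood of $y_0$, so that $\tilde{\rho}$ is bounded above and below by positive constants there. The core is a local Jacobian computation in adapted coordinates $(x_1, \ldots, x_\kappa)$ on $Z$ around $z_0$ with $F_0 = \{x_1 = 0\}$. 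Because $p$ contracts $F_0$ to the point $y_0$, writing $p = (p_1, \ldots, p_\kappa)$ with $p_i - y_{0,i} = x_1 h_i$, an elementary determinant expansion gives $p^*\Omega_{(m+1)} \sim |x_1|^{2(\kappa-1)}$ along $F_0$. If $\tilde{\mu}(F_0)$ has dimension $k$, a parallel determinant expansion, after straightening $\tilde{\mu}(F_0)$ into a linear subspace of $Y_m$, yields $\tilde{\mu}^*\Omega_{(m)} \sim |x_1|^{2(\kappa-1-k)}$. Therefore $p^*\tilde{\rho} \sim |x_1|^{-2k}$, which blows up unless $k = 0$; boundedness of $\tilde{\rho}$ then forces $k = 0$, i.e.\ $\tilde{\mu}$ is constant on every top-dimensional component of $F$.

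To finish, I would invoke Zariski's main theorem for the birational morphism $p$ to the normal variety $Y_{m+1}$: the fiber $F$ is connected, so the constant values of $\tilde{\mu}$ on adjacent top-dimensional components must agree, and hence $\tilde{\mu}(F) = \{y^*\}$ is a single point. Setting $\mu_{m+1}(y_0) := y^*$ extends $\mu_{m+1}$ holomorphically across $y_0$, contradicting the assumption that $y_0$ lies in the indeterminacy locus. The main obstacle is justifying the sharpness of the Jacobian vanishing orders above, since $\Omega_{(m)}$ and $\Omega_{(m+1)}$ are only measures and may degenerate a priori; the resolution is that on $U_{m+1}$ the boundedness of $\tilde{\rho}$ forces $\Omega_{(m)}$ and $\Omega_{(m+1)}$ to be locally comparable to smooth nondegenerate volume forms up to bounded nonzero factors, so the vanishing orders in the Jacobian ratios are governed purely by the Jacobians of $p$ and $\tilde{\mu}$.
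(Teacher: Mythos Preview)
Your approach is far more elaborate than what the lemma requires, and the final ``resolution'' you offer for the acknowledged obstacle is not correct. The boundedness of $\tilde{\rho} = \Omega_m/\Omega_{m+1}$ on $U_{m+1}$ tells you only that $\Omega_{(m)}$ and $\Omega_{(m+1)}$ are comparable \emph{to each other} near $y_0$; it does \emph{not} force either of them to be comparable to a smooth nondegenerate volume form. Both pushforward measures can degenerate (or blow up) at $y_0$ in exactly the same way---for instance along the discriminant of $\Psi_{m+1}$---and then your Jacobian order count for $p^*\Omega_{(m+1)}$ and $\tilde{\mu}^*\Omega_{(m)}$ is contaminated by the unknown vanishing of the density factors $F_{m+1}$ and $F_m$. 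Without independent control on those densities (which the paper never establishes and does not need), the comparison $|x_1|^{2(\kappa-1)}$ versus $|x_1|^{2(\kappa-1-k)}$ simply does not follow. Your citation of the monotonicity proposition is also misplaced: in this section the monotonicity is proved \emph{after} the present lemma, and in any case what you actually use is just $\Omega_m/\Omega_{m+1} < \infty$ together with the definition of $U_{m+1}$.

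The paper's proof bypasses all of this by remembering what $\mu_{m+1}$ actually is: a linear projection induced by the sub-linear system of $|L_{m+1}|$ spanned by the sections $(\bar\pi_{m+1})^*\sigma_{m,j}^{\,m+1}/E_{m+1}$. Its indeterminacy locus is precisely the common zero set of those sections. On $(\Psi_{m+1})^{-1}(U_{m+1})$ the two-sided bound $0 < \Omega_m/\Omega_{m+1} < \infty$ gives $\bigl(\sum_j |\sigma_{m,j}|^2\bigr)^{m+1} \sim \bigl(\sum_j |\zeta_{m+1,j}|^2\bigr)\,|E_{m+1}|^2$, and since $L_{m+1}$ is globally generated the right-hand factor $\sum_j |\zeta_{m+1,j}|^2$ is nowhere zero. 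Hence $\bigl(\sum_j |\sigma_{m,j}|^2\bigr)^{m+1}/|E_{m+1}|^2 > 0$ there, i.e.\ the sub-linear system has no base points over $U_{m+1}$, and $\mu_{m+1}$ is a morphism. This is a two-line computation with sections, and it is what you should replace your argument with.
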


\begin{proof} On $(\Psi_{m+1})^{-1} \left( U_{m+1} \right)$,

$$ 0< \frac{ \left(\sum_j |\sigma_{m, j}|^2\right)^{\frac{1}{m} }} { \left( \sum_j |\sigma_{m+1, j} |^2 \right)^{\frac{1}{m+1}}} < \infty. $$
Both $L_{m+1}$ and $(\pi_m)^* L_{m}$ are globally generated. Therefore the base locus of
$$\left\{\frac{ \left((\bar{\pi}_{m+1})^*\sigma_{m, j}\right)^{m+1} } {E_{m+1}} \right\}_{j=0}^{d_m}$$
 is outside $U_{m+1}$ and so $\mu_{m+1}$ is well defined on $U_{m+1}$.

\qed
\end{proof}

\begin{proposition}

For any $m\geq m_0$, the measure $e^{\varphi_{(m)}} \Omega_{(m)}$ is increasing, that is, on $Y_{m+1}$

\begin{equation}
(\mu_{m+1})^* \left( e^{\varphi_{(m)}} \Omega_{(m)} \right)  \leq e^{\varphi_{(m+1)}} \Omega_{(m+1)}.
\end{equation}

\end{proposition}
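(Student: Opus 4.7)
The plan is to imitate the monotonicity argument of Proposition \ref{mon1} from the general type setting, but transported down to the base $Y_{m+1}$. By the preceding lemma the rational map $\mu_{m+1}: Y_{m+1} \dashrightarrow Y_m$ is in fact holomorphic on the Zariski open set $U_{m+1}=\{s\in Y_{m+1} \mid \Omega_{m+1}/\Omega_m < \infty\}$, and by the corollary $(\mu_{m+1})^*\Omega_{(m)}/\Omega_{(m+1)}$ is a well-defined function on $U_{m+1}$. It therefore makes sense to set
$$
\psi \;=\; \varphi_{(m+1)} - (\mu_{m+1})^*\varphi_{(m)} + \log\frac{(\mu_{m+1})^*\Omega_{(m)}}{\Omega_{(m+1)}}
$$
on $U_{m+1}$, and to introduce $V=\{\psi<0\}\subset U_{m+1}$. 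Since both $\varphi_{(m)}$ and $\varphi_{(m+1)}$ are in $L^\infty$ and the logarithmic term tends to $-\infty$ precisely where $\Omega_{m+1}/\Omega_m \to \infty$, we have $\psi\to+\infty$ on $\partial U_{m+1}$, so $V$ is relatively compact in $U_{m+1}$ and $\psi=0$ on $\partial V$.

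The key step is the $\ddbar$-identity
$$
\omega_{m+1}+\ddbar\varphi_{(m+1)} \;=\; (\mu_{m+1})^*\bigl(\omega_m+\ddbar\varphi_{(m)}\bigr)+\ddbar\psi
$$
on $U_{m+1}$. This reduces to checking
$$
\omega_{m+1}-(\mu_{m+1})^*\omega_m \;=\; \ddbar\log\frac{(\mu_{m+1})^*\Omega_{(m)}}{\Omega_{(m+1)}} ,
$$
and here we pull everything back to $X_{m+1}$ via $\Psi_{m+1}$. Using the definitions of $\omega_m,\omega_{m+1}$ as normalized Fubini--Study forms and the relation $(\bar{\pi}_m)^*\sigma_{m,j}=\zeta_{m,j} E_m$, the difference $\omega_{m+1}-(\mu_{m+1})^*\omega_m$ equals $\ddbar$ of $\log$ of a ratio of bases of pluricanonical sections, and the exceptional divisors $E_m,E_{m+1}$ cancel so that the remaining factor coincides with $\Omega_m/\Omega_{m+1}=(\mu_{m+1})^*\Omega_{(m)}/\Omega_{(m+1)}$ up to a holomorphic, hence $\ddbar$-closed, multiplier on $U_{m+1}$. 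Granted this identity, substituting into the Monge--Amp\`ere equation satisfied by $\varphi_{(m+1)}$ and using the pulled-back equation for $(\mu_{m+1})^*\varphi_{(m)}$ yields
$$
\bigl((\mu_{m+1})^*(\omega_m+\ddbar\varphi_{(m)})+\ddbar\psi\bigr)^\kappa \;=\; e^{\psi}\,\bigl((\mu_{m+1})^*(\omega_m+\ddbar\varphi_{(m)})\bigr)^\kappa
$$
on $U_{m+1}$.

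Finally, apply the comparison principle (Theorem \ref{kol}) on $V$: since $\psi\le 0$ on $V$ with $\psi=0$ on $\partial V$, one has
$$
\int_V \bigl((\mu_{m+1})^*(\omega_m+\ddbar\varphi_{(m)})\bigr)^\kappa \;\le\; \int_V \bigl((\mu_{m+1})^*(\omega_m+\ddbar\varphi_{(m)})+\ddbar\psi\bigr)^\kappa \;=\; \int_V e^{\psi}\bigl((\mu_{m+1})^*(\omega_m+\ddbar\varphi_{(m)})\bigr)^\kappa.
$$
As $e^{\psi}<1$ on $V$, this forces the reference measure $\bigl((\mu_{m+1})^*(\omega_m+\ddbar\varphi_{(m)})\bigr)^\kappa$ to vanish on $V$. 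Since $\omega_m+\ddbar\varphi_{(m)}$ is a genuine K\"ahler current on the Zariski open smooth locus of $Y_m$, and $\mu_{m+1}$ is a birational holomorphic map over $U_{m+1}$, this is impossible unless $V$ has empty interior; thus $\psi\geq 0$ on $U_{m+1}$ and, by continuity and density, on all of $Y_{m+1}$. Exponentiating gives the claimed inequality.

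The main obstacle is the $\ddbar$-identity: one must carefully keep track of the exceptional divisors $E_m$ and $E_{m+1}$ and of the fact that a basis of $H^0(X,m!K_X)^{\otimes(m+1)}$ embeds as a subspace of $H^0(X,(m+1)!K_X)^{\otimes m}$ only up to the multiplication by the sections cutting out $E_m$ and $E_{m+1}$. Everything else is a verbatim repetition of the comparison argument in Proposition \ref{mon1}, transported from $X_{m+1}$ to $Y_{m+1}$ via $\Psi_{m+1}$ and the corollary identifying $\Omega_m/\Omega_{m+1}$ with the pulled-back ratio on the base.
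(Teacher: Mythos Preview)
Your overall strategy is exactly the paper's: introduce an auxiliary $\psi$, establish a $\ddbar$-identity making $\omega_{m+1}+\ddbar\varphi_{(m+1)}$ the pullback of $\omega_m+\ddbar\varphi_{(m)}$ plus $\ddbar\psi$, then run the comparison principle on $V=\{\psi<0\}$. The problem is a sign: your logarithmic ratio is inverted. The paper takes
\[
\psi \;=\; \varphi_{(m+1)} - (\mu_{m+1})^*\varphi_{(m)} + \log\frac{\Omega_{(m+1)}}{(\mu_{m+1})^*\Omega_{(m)}},
\]
with $\Omega_{(m+1)}$ in the numerator. This sign controls every subsequent step, and with your choice all three break:
\begin{itemize}
\item You correctly note that your log term tends to $-\infty$ where $\Omega_{m+1}/\Omega_m\to\infty$; but then your $\psi\to-\infty$ on $\partial U_{m+1}$, not $+\infty$, so $V=\{\psi<0\}$ is not bounded away from the boundary and the comparison argument cannot be localized there.
\item The identity $\omega_{m+1}-(\mu_{m+1})^*\omega_m=\ddbar\log\dfrac{(\mu_{m+1})^*\Omega_{(m)}}{\Omega_{(m+1)}}$ is false. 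Pulling back to $X_{m+1}$ and writing $|\sigma_{k,j}|^2=|\zeta_{k,j}|^2|E_k|^2$ gives $\omega_{m+1}-(\mu_{m+1})^*\omega_m=\ddbar\log\dfrac{\Omega_{(m+1)}}{(\mu_{m+1})^*\Omega_{(m)}}$ on $U_{m+1}$, the opposite sign; the exceptional-divisor ratio is $\ddbar$-closed on $U_{m+1}$ exactly as you anticipate, but it does not flip the orientation of the remaining factor.
\item Even granting $\psi\ge 0$ with your $\psi$, exponentiating yields $e^{\varphi_{(m+1)}}(\mu_{m+1})^*\Omega_{(m)}\ge e^{(\mu_{m+1})^*\varphi_{(m)}}\Omega_{(m+1)}$, which is not the inequality in the statement.
\end{itemize}
With the numerator and denominator swapped, all three points go through: $\psi\to+\infty$ near $\partial U_{m+1}$ so $V\Subset U_{m+1}$, the $\ddbar$-identity holds, and $\psi\ge 0$ reads $(\mu_{m+1})^*(e^{\varphi_{(m)}}\Omega_{(m)})\le e^{\varphi_{(m+1)}}\Omega_{(m+1)}$. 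The paper also first passes to a nonsingular model of $Y_{m+1}$ before running the comparison, a technical point you should include; otherwise your outline coincides with the paper's proof.
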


\begin{proof} By resolution of singularities, we can assume $Y_{(m+1)}$ is non-singular by replacing $Y_{m+1}$ by its non-singular model. Let $\omega_{m}' = (\mu_{m+1})^* \omega_{m}$, $\varphi_{m}'= (\mu_{m+1})^* \varphi_{m}$ and $\Omega'_{(m)} = (\mu_{m+1})^*\Omega_{(m)}$. So $\varphi'_{m}$ satisfies the following Monge-Amp\`ere equation
$$ \left( \omega'_{m} + \ddbar \varphi'_{m} \right) ^{\kappa} = e^{\varphi'_{m}} \Omega'_{(m)}$$
on $U_{m+1}$, where the $\omega'_m$ is a smooth and positive, and so the Monge-Amp\`ere mass $\left( \omega'_{m} + \ddbar \varphi'_{m} \right) ^{\kappa}$ is well-defined.
Also $\varphi_{m+1}$ is the solution of

$$  \left( \omega_{m+1} + \ddbar \varphi_{m+1} \right) ^{\kappa} = e^{\varphi_{m+1}} \Omega_{(m+1)}.$$

Define $$\psi= \varphi_{m+1} -\varphi'_{m} + \log \frac{\Omega_{(m+1)}}{\Omega'_{(m)}}$$ and $V=\{ z \in Y_m ~|~ \psi \leq 0 \}$. It is easy to see that

$$V \subset U_{m+1}.$$
since both $\varphi_{m+1}$ and $\varphi'_{m}$ are in $L^{\infty}(Y_{m+1})$.
In particular,  $\varphi'_{m}\in C^0( U_{m+1} )$.

On $U_{m+1}$, $\log \left(\frac{ |E_{m+1}|^{2m} }{ |(\pi_m)^*E_m |^{2(m+1)}}\right)$ is smooth and so

\begin{eqnarray*}
&&\omega_{m+1}+\ddbar\varphi_{m+1} \\
&=& \omega'_{m}+ \ddbar \varphi'_{m} + \ddbar \log  (\varphi_{m+1} -\varphi'_{m}) + \ddbar \log \left( \frac{\left( \sum_j |\zeta_{m+1, j} |^2 \right)^{\frac{1}{m+1}}} {\left( \sum_j |(\pi_m)^*\zeta_{m, j} |^2 \right)^{\frac{1}{m}}}\right)\\
&=& \omega'_{m}+ \ddbar \varphi'_{m} + \ddbar \psi - \ddbar \log \left(\frac{ |E_{m+1} |^{\frac{2}{m+1}} }{ |(\pi_m)^*E_m|^{\frac{2}{m}}}\right)\\
&=& \omega'_{m}+ \ddbar \varphi'_{m}+ \ddbar \psi .
\end{eqnarray*}

Hence $\psi$ satisfies the following equation on $U_{m+1}$

\begin{equation}
\frac{ \left( \omega'_{m}+ \ddbar \varphi'_{m} + \ddbar \psi \right)^{\kappa} }
 { \left( \omega'_{m}+ \ddbar \varphi'_{m} \right)^{\kappa} }= e^{\psi}.
\end{equation}

By the comparison test, we have on $V=\{ z \in Y_{m+1}~|~ \psi(z) < 0 \} \subset U_{m+1}$

\begin{eqnarray*}
&&\int_V \left( \omega'_{m} + \ddbar \varphi'_{m} \right)^{\kappa}\\
&\leq& \int_V \left( \omega'_{m} + \ddbar \varphi'_{m} + \ddbar \psi \right)^{\kappa}\\
&=&\int_V e^{\psi} \left( \omega'_{m} + \ddbar \varphi'_{m} \right)^{\kappa}.
\end{eqnarray*}

Therefore $\psi=0$ on $V$ due to the fact that it is continuous on $V$, and so on $Y_{m+1}$

$$\psi\geq 0.$$

This completes the proof.

\qed
\end{proof}

 For simplicity we again use $\varphi_m$ for $(\Psi_m)^{*} \varphi_m$. We can consider $\varphi_m$ as a function on $X$, $X_m$ or $X^{\dagger}$ since they  are all birationally equivalent.

\begin{corollary} \label{mon2}
For any $m\geq m_0$, the measure $e^{\varphi_{m}} \Omega_{m}$ is increasing, that is,

\begin{equation}
 e^{\varphi_m} \Omega_{m}   \leq e^{\varphi_{m+1}} \Omega_{m+1}.
\end{equation}
\end{corollary}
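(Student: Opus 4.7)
The plan is to deduce this corollary as a pullback of the preceding Proposition (the one on $Y_{m+1}$) via the holomorphic map $\Psi_{m+1}: X_{m+1}\rightarrow Y_{m+1}$, using the commutative diagram (\ref{diagone}), Lemma \ref{sss}, and the Corollary following it. All the quantities in the statement are naturally pullbacks from $Y_{m+1}$ once the identity $\frac{\Omega_m}{\Omega_{m+1}}=\frac{(\mu_{m+1})^* \Omega_{(m)}}{\Omega_{(m+1)}}$ is invoked, so the problem reduces to transferring an inequality from the base $Y_{m+1}$ back up to $X_{m+1}$.

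More concretely, first I would rewrite the inequality on $Y_{m+1}$ produced by the previous proposition, namely $(\mu_{m+1})^*(e^{\varphi_{(m)}}\Omega_{(m)})\leq e^{\varphi_{(m+1)}}\Omega_{(m+1)}$, in the equivalent divided form
$$\mu_{m+1}^*\varphi_{(m)}-\varphi_{(m+1)}\;\leq\;\log\frac{\Omega_{(m+1)}}{(\mu_{m+1})^*\Omega_{(m)}},$$
which is valid on the Zariski open set $U_{m+1}\subset Y_{m+1}$ on which $\mu_{m+1}$ is holomorphic (the preceding lemma). Pulling this back by $\Psi_{m+1}$ and noting that, by the commutative diagram (\ref{diagone}), $(\Psi_{m+1})^*\mu_{m+1}^*\varphi_{(m)}$ coincides with the function called $\varphi_m$ on $X_{m+1}$ (its pullback from $Y_m$), and $(\Psi_{m+1})^*\varphi_{(m+1)}$ is the function $\varphi_{m+1}$ on $X_{m+1}$, yields
$$\varphi_m-\varphi_{m+1}\;\leq\;(\Psi_{m+1})^*\log\frac{\Omega_{(m+1)}}{(\mu_{m+1})^*\Omega_{(m)}}.$$
Now I would apply the Corollary following Lemma \ref{sss}, which states exactly that $\frac{\Omega_m}{\Omega_{m+1}}=\frac{(\mu_{m+1})^*\Omega_{(m)}}{\Omega_{(m+1)}}$ on $X_{m+1}$, so the right-hand side equals $\log\frac{\Omega_{m+1}}{\Omega_m}$. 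Exponentiating gives $e^{\varphi_m}\Omega_m\leq e^{\varphi_{m+1}}\Omega_{m+1}$ on $(\Psi_{m+1})^{-1}(U_{m+1})$.

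Finally, to conclude on all of $X_{m+1}$ (and hence on $X$), I would extend the inequality by continuity and by the construction of these measures. Both $\varphi_m$ and $\varphi_{m+1}$ are bounded, and $\Omega_m$, $\Omega_{m+1}$ are smooth semipositive $(n,n)$-forms vanishing only along the fixed parts $E_m$, $E_{m+1}$; thus $e^{\varphi_m}\Omega_m$ and $e^{\varphi_{m+1}}\Omega_{m+1}$ descend to continuous measures on $X$. Since the inequality holds on a Zariski open dense subset where both sides are continuous, and both vanish along the complementary divisors, it persists on all of $X$. The only subtlety, which is the main technical point rather than an obstacle, is checking that the Zariski open set $(\Psi_{m+1})^{-1}(U_{m+1})$ captures enough of $X_{m+1}$ for this continuity extension to apply; this follows from the preceding lemma, which ensures $U_{m+1}$ is dense in $Y_{m+1}$, combined with the surjectivity of $\Psi_{m+1}$ onto $Y_{m+1}$.
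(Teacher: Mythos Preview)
Your proposal is correct and is precisely the argument the paper has in mind: the corollary is stated without proof immediately after the proposition on $Y_{m+1}$, and your derivation---pulling back the inequality $(\mu_{m+1})^*(e^{\varphi_{(m)}}\Omega_{(m)})\le e^{\varphi_{(m+1)}}\Omega_{(m+1)}$ via $\Psi_{m+1}$ and invoking the identity $\frac{\Omega_m}{\Omega_{m+1}}=\frac{(\mu_{m+1})^*\Omega_{(m)}}{\Omega_{(m+1)}}$---is exactly the intended (and only natural) route. The continuity extension you describe across the complement of the Zariski open set is routine and matches the paper's standing conventions.
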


\begin{lemma} $e^{\varphi_m} \Omega_m$ is continuous on $X$ and let $h_m = e^{-\varphi_m} \Omega_m^{-1}$ be the hermitian metric on $K_X$. Then $$\Theta_{h_m}\geq 0.$$

\end{lemma}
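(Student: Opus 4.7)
The statement has two parts, both of which follow without too much difficulty from what has already been set up in the section.

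For the continuity of $e^{\varphi_m}\Omega_m$ on $X$, the form $\Omega_m=\bigl(\sum_{j=0}^{d_m}\sigma_{m,j}\otimes\overline{\sigma_{m,j}}\bigr)^{1/m!}$ is a continuous nonnegative $(n,n)$-form on $X$ that is smooth and strictly positive off $Bs(|m!K_X|)$ and vanishes continuously on $Bs(|m!K_X|)$. Since the pluricanonical map $\Psi_m:X\dashrightarrow Y_m$ is holomorphic on $X\setminus Bs(|m!K_X|)$ and $\varphi_m\in C^0(Y_m)\cap L^\infty(Y_m)$ by the previous proposition, the pullback $\Psi_m^*\varphi_m$ is continuous and bounded on $X\setminus Bs(|m!K_X|)$, so $e^{\varphi_m}\Omega_m$ is continuous and strictly positive there. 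The uniform boundedness of $e^{\varphi_m}$ together with the continuous vanishing of $\Omega_m$ on $Bs(|m!K_X|)$ extends $e^{\varphi_m}\Omega_m$ by zero continuously across the base locus.

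For $\Theta_{h_m}\geq 0$, I pull back to the resolution $\bar\pi_m:X_m\to X$, on which $\Psi_m:X_m\to Y_m$ is holomorphic. Using the factorization $(\bar\pi_m)^*\sigma_{m,j}=\zeta_{m,j}\cdot s_{E_m}$, where $s_{E_m}$ is a defining section of the fixed divisor $E_m$,
\[
(\bar\pi_m)^*\Omega_m=|s_{E_m}|^{2/m!}\Bigl(\sum_{j=0}^{d_m}|\zeta_{m,j}|^2\Bigr)^{1/m!},
\]
so applying $\ddbar\log$ and Poincar\'e-Lelong yields
\[
(\bar\pi_m)^*\ddbar\log\Omega_m=\Psi_m^*\omega_m+c\,[E_m]
\]
for a positive constant $c$. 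Since $\varphi_m\in PSH(Y_m,\omega_m)$, this gives
\[
(\bar\pi_m)^*\Theta_{h_m}=\Psi_m^*(\omega_m+\ddbar\varphi_m)+c\,[E_m]\geq 0
\]
as a closed positive $(1,1)$-current on $X_m$.

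To conclude positivity on $X$ itself, I use the continuity already proved. In a local trivialization of $K_X$ the potential of $\Theta_{h_m}$ is $u:=\log(e^{\varphi_m}\Omega_m)$ up to a smooth correction; $u$ is upper semi-continuous and bounded above on $X$, and on $X\setminus Bs(|m!K_X|)$ the identity above descends to $\ddbar u=\Psi_m^*(\omega_m+\ddbar\varphi_m)\geq 0$, so $u$ is plurisubharmonic there. Since $Bs(|m!K_X|)$ is an analytic (hence pluripolar) subset of $X$, the classical extension theorem for bounded-above plurisubharmonic functions across pluripolar analytic sets extends $u$ to a plurisubharmonic function on all of $X$, giving $\Theta_{h_m}=\ddbar u\geq 0$ as a closed positive $(1,1)$-current on $X$. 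The only real subtlety is this final extension across the base locus, which is precisely why the continuity of $e^{\varphi_m}\Omega_m$ was proved first.
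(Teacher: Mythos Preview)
Your proof is correct and follows essentially the same approach as the paper: continuity via the boundedness of $\varphi_m$ and the continuous vanishing of $\Omega_m$ on the base locus, and positivity of $\Theta_{h_m}$ via the extension theorem for bounded-above plurisubharmonic functions across the closed pluripolar set $Bs(|m!K_X|)$. Your intermediate computation on $X_m$ using Poincar\'e--Lelong is a helpful justification of the inequality $\ddbar\log\Omega_m+\ddbar\varphi_m\geq 0$ on $X\setminus Bs(|m!K_X|)$ that the paper simply asserts; note that this computation already gives $(\bar\pi_m)^*\Theta_{h_m}\geq 0$ on all of $X_m$, so you could alternatively conclude directly by pushing forward along the proper birational map $\bar\pi_m$, bypassing the extension argument.
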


\begin{proof} Obviously, $e^{\varphi_m}\Omega_m$ is continuous on $X\backslash Bs(|m!K_X|)$ since the inverse of $\bar{\pi}_m$ is isomorphic from $X_m\backslash (\bar{\pi}_m)^{-1} (Bs(|m!K_X|))$ to $X \backslash  (Bs(|m!K_X|)$. On the other hand, $\varphi_m \in L^{\infty}(X)$ and $\Omega_m$ vanishes exactly on $Bs(|m!K_X|)$. Hence $e^{\varphi_m} \Omega_m$ is continuous.

$\ddbar \log \Omega_m + \ddbar \varphi_m \geq 0$ on $X \backslash  Bs(|m!K_X|)$ and $Bs(|m!K_X|)$ is a closed complete pluripolar set of $X$. By the fact that $e^{\varphi_m}\Omega_m$ is bounded above on $X$, $$\Theta_{h_m} = \ddbar \log \Omega_m + \ddbar \varphi_m $$
extends to a closed positive current on $X$ by local argument. This completes the proof of the lemma.

\qed
\end{proof}

\begin{lemma} \label{upbound2} Let $\Omega$ be a smooth volume form on $X^{\dagger}$. Then there exist a divisor $D$ of $X^{\dagger}$ and a constant $C>0$ such that

\begin{equation}
   e^{\varphi_{m}} \Omega_{m}   \leq  |S_D|_{h_D}^{-2}\Omega,
\end{equation}
where $S_D$ is a defining section of $[D]$ and $h_D$ is a fixed smooth hermitian metric of the line bundle associated to $[D]$.

\end{lemma}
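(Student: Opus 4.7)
The plan is to imitate the Tsuji-style maximum principle argument from Lemma \ref{upbound1}, but lifted to $X^{\dagger}$ (or equivalently to $X_m$ for $m$ large), so as to exploit the fact that $\varphi_m$, originally defined on $Y_m$, pulls back via $\Psi_m$ to a $(\Psi_m)^*\omega_m$-plurisubharmonic function on $X^{\dagger}$ that is constant along the fibres of the Iitaka fibration. Concretely, I would first fix auxiliary data on $X^{\dagger}$: an ample divisor $D$ with $K_{X^{\dagger}}+D$ ample, equipped with a smooth Hermitian metric $h_D$ whose curvature $\theta_D$ makes $\chi_0+\theta_D>0$, where $\chi_0=\ddbar\log\Omega$ represents $c_1(K_{X^{\dagger}})$. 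For each $m$, I would also choose a divisor $D_m$ containing the exceptional part $E_m$ together with the indeterminacy locus of $\Psi_m\circ\bar\pi_m^{-1}$, with a smooth Hermitian metric $h_{D_m}$. The test function is
\[ \psi_{m,\epsilon}=\varphi_m+\log(\Omega_m/\Omega)+\epsilon^2\log|S_{D_m}|^2_{h_{D_m}}+\epsilon\log|S_D|^2_{h_D}, \]
which tends to $-\infty$ along $D\cup D_m$, so its supremum over $X^{\dagger}\setminus(D\cup D_m)$ is attained at an interior point $z_0$.

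The central computation, using $\ddbar\log\Omega_m=(\Psi_m)^*\omega_m$ away from $E_m$, yields
\[ \chi_0+\epsilon^2\theta_{D_m}+\epsilon\theta_D+\ddbar\psi_{m,\epsilon}=(\Psi_m)^*(\omega_m+\ddbar\varphi_m). \]
At the maximum $z_0$ we have $\ddbar\psi_{m,\epsilon}(z_0)\leq 0$, so $(\Psi_m)^*(\omega_m+\ddbar\varphi_m)(z_0)\leq\chi_0+\epsilon^2\theta_{D_m}+\epsilon\theta_D$ as $(1,1)$-forms. Raising to the $\kappa$-th power and invoking the Monge--Amp\`ere equation on $Y_m$, I obtain
\[ (\Psi_m)^*(e^{\varphi_m}\Omega_{(m)})(z_0)\leq(\chi_0+\epsilon^2\theta_{D_m}+\epsilon\theta_D)^{\kappa}(z_0) \]
as $(\kappa,\kappa)$-forms.

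To translate this $\kappa$-form inequality into the desired $(n,n)$-form bound, I would wedge both sides with a semi-flat $(n-\kappa,n-\kappa)$-form $\Theta$ built from the Iitaka fibration $\Phi^{\dagger}:X^{\dagger}\to Y^{\dagger}$, in the spirit of Lemma \ref{sf}, normalized so that its restriction to each nonsingular fibre is a Calabi--Yau volume form matching the fibre part of $\Omega_m$ up to a uniformly bounded density. Wedging converts the pointwise inequality at $z_0$ into $e^{\psi_{m,\epsilon}(z_0)}\leq C|S_{D_m}|^{2\epsilon^2}(z_0)|S_D|^{2\epsilon}(z_0)$, so that $e^{\psi_{m,\epsilon}}\leq C'$ on all of $X^{\dagger}\setminus(D\cup D_m)$ with $C'$ independent of $m$. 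Rearranging gives $e^{\varphi_m}\Omega_m\leq C'|S_{D_m}|^{-2\epsilon^2}|S_D|^{-2\epsilon}\Omega$. Sending $\epsilon^2\to 0$ first removes the $D_m$ contribution (so the bound is uniform in $m$), and keeping $\epsilon>0$ fixed while replacing $D$ by an appropriate multiple absorbs the exponent, yielding $e^{\varphi_m}\Omega_m\leq C|S_D|^{-2}_{h_D}\Omega$.

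The main obstacle is the dimension mismatch: the Monge--Amp\`ere equation lives on $Y_m$ of dimension $\kappa$, whereas the sought estimate is an $(n,n)$-form inequality on $X^{\dagger}$. The maximum principle only directly controls the rank-$\kappa$ pulled-back $(1,1)$-form $(\Psi_m)^*(\omega_m+\ddbar\varphi_m)$, and bridging to a bound on the full volume $e^{\varphi_m}\Omega_m$ requires the transverse wedging by a fibre Calabi--Yau form together with the verification that the fibrewise densities relating $\Omega_m$ to $(\Psi_m)^*\Omega_{(m)}$ are uniformly bounded in $m$ on compact subsets of the Zariski open set where $\Phi^{\dagger}$ is smooth.
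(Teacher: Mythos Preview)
Your outline is on the right track, but the part you flag as ``the main obstacle'' is in fact the crux of the lemma, and you have not resolved it.  Specifically, after you bound $(\Psi_m)^*(e^{\varphi_m}\Omega_{(m)})$ at the maximum, you still need to relate the $(\kappa,\kappa)$-form $(\Psi_m)^*\Omega_{(m)}$ to the $(n,n)$-form $\Omega_m$ by a factor that is bounded \emph{independently of $m$}.  Your proposal to wedge with a semi-flat $(n-\kappa,n-\kappa)$-form $\Theta$ and hope that the resulting fibrewise densities are uniform in $m$ is not justified; a priori the relationship between $\Omega_m$ and the fibre Calabi--Yau volume changes with $m$.

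The paper closes this gap by a different and simpler device.  Rather than building a semi-flat form, one wedges the pulled-back Monge--Amp\`ere equation with $\omega^{n-\kappa}$ for a \emph{fixed} K\"ahler form $\omega$ on $X^{\dagger}$, obtaining on a Zariski open set the scalar equation
\[
(\chi_0+\ddbar\psi_m)^{\kappa}\wedge\omega^{n-\kappa}=F\,e^{\psi_m}\Omega_0,\qquad \psi_m=\varphi_m+\log\frac{\Omega_m}{\Omega_0},
\]
with
\[
F=\frac{(\Phi^{\dagger})^*(\Phi^{\dagger})_*\Omega_m\wedge\omega^{n-\kappa}}{\Omega_m}.
\]
The key point, which is exactly what your argument lacks, is that $F$ is \emph{independent of $m$}: by Lemma~\ref{constantfibre} the ratio $\Omega_m/\Omega_{m_0}$ is constant along a generic fibre of $\Phi^{\dagger}$, and hence $F$ can be rewritten with $m_0$ in place of $m$.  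Once $F$ is fixed, one chooses a divisor $D_1$ (independent of $m$) so that $|S_{D_1}|^2\,\omega^n/(F\Omega_0)<\infty$, an ample $D_2$, and an auxiliary $m$-dependent $D_3$ with weight $\epsilon$; the maximum principle then gives a bound that survives the limit $\epsilon\to 0$, yielding a divisor $D$ and constant $C$ independent of $m$.

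In short: replace your semi-flat $\Theta$ by the fixed wedge $\omega^{n-\kappa}$, and use the fibre-constancy of $\Omega_m/\Omega_{m_0}$ to make the density $F$ independent of $m$.  Without this observation, your $\epsilon^2\to 0$ step removes the $D_m$-dependence in the \emph{barrier} but not in the implicit fibre density, so the final constant would still depend on $m$.
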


\begin{proof} We consider again the following commutative diagram

\[
\begin{diagram}
\node{X_{m}} \arrow{s,l,}{{\small \Psi_m} }     \node{X^{\dagger}}  \arrow{w,t,..}{f_m} \arrow{s,r}{{\small \Phi^{\dagger}}}\\
\node{Y_{m}}      \node{Y^{\dagger}} \arrow{w,b,..}{g_m }
\end{diagram}
\]

Let $\omega$ be a K\"ahler metric and $\Omega_0$ be a smooth and nowhere vanishing volume form on $X^{\dagger}$. From the commutative diagram, on a Zariski open set of $X^{\dagger}$ we have

$$( \ddbar \log \Omega_m + \ddbar \varphi_m )^{\kappa} \wedge \omega^{ n- \kappa} = e^{\varphi_m} (\Phi^{\dagger})^*(\Phi^{\dagger})_* \Omega_m \wedge \omega^{n-\kappa}.$$
Let $\psi_m = \varphi_m + \log\frac{\Omega_m}{\Omega}$, $\chi_0= \ddbar\log \Omega_0$.

\medskip

$\psi_m$ satisfies the following Monge-Amp\`ere equation on $X^{\dagger} \backslash (\pi^{\dagger})^{-1}Bs(|m_0! K_X|)$,

$$ ( \chi_0 + \ddbar \psi_m )^{\kappa} \wedge \omega^{ n- \kappa} = F e^{\psi_m} \Omega_0,$$
where $$F= \frac{(\Phi^{\dagger})^*(\Phi^{\dagger})_* \Omega_m \wedge \omega^{n-\kappa}}{\Omega_m} =  \frac{(\Phi^{\dagger})^*(\Phi^{\dagger})_* \Omega_{m_0} \wedge \omega^{n-\kappa}}{\Omega_{m_0}}.$$

Let $D_1$ be a divisor of $X^{\dagger}$ containing $(\pi^{\dagger})^{-1}Bs(|m_0! K_X|)$ such that the defining section $S_1$ of $[D_1]$  satisfies

$$ |S_1|^2_{h_1} \frac{ \omega^n }{ F \Omega_0} < \infty, $$
where $h_1$ is a fixed smooth hermitian metric of the line bundle associated to $[D_1]$.
Obviously, $D_1$ is independent of the choice of $m$. Let $\Theta_{h_1} = -\ddbar \log h_1$.

Let $D_2$ be an ample divisor of $X^{\dagger}$ independent on the choice of $m$ such that for a smooth hermitian metric $h_2$, we have

$$ \Theta_{h_2} =- \ddbar\log h_2 > - \Theta_{h_1}.$$
Let $S_2$ be the defining section of $D_2$

Let $D_3$ be a divisor of $X^{\dagger}$ depending on the choice of $m$ such that $\psi_m \in C^{\infty}(X^{\dagger} \backslash D_3)$. Let $S_3$ be the defining section of $D_3$ and $h_3$ be a fixed smooth hermitian metric of the line bundle associated to $[D_3]$.
We define $\Theta_{h_3} = -\ddbar\log h_3$ and for $\epsilon>0$  sufficiently small

$$ \Theta_{h_2} > - \Theta_{h_1} - \epsilon \Theta_{h_3}.$$
Now we let
$$\psi_{m, \epsilon} = \varphi_m + \log |S_1|^2_{h_1} +\log |S_2|^2_{h_2} + \epsilon \log |S_3|^2_{h_3},$$ and so $\tilde{\varphi}_{m, \epsilon} $ satisfies

\begin{equation}
( \chi_0 + \Theta_{h_1} +  \Theta_{h_2}+ \epsilon \Theta_{h_3} +  \ddbar\log \psi_{m, \epsilon} )^{\kappa} \wedge \omega^{n-\kappa} =  \frac{F e^{\psi_{m, \epsilon} }}{ |S_1|^2_{h_2}|S_2|^2_{h_2}(|S_3|^2_{h_3})^{\epsilon} }  \Omega_0.
\end{equation}

The maximum of $\psi_{m, \epsilon} $ can only be achieved in $X^{\dagger}\backslash ( D_1 \cup D_2 \cup D_3)$. Then by the maximum principle,

\begin{equation}
\sup_{X^{\dagger}} \psi_{m, \epsilon} \leq  \sup_{X^{\dagger}} \left( |S_1|^2_{h_1}|S_2|^2_{h_2}(|S_3|^2_{h_3})^{\epsilon} \frac{ ( \chi_0+ \Theta_{h_1}+\Theta_{h_2} + \epsilon \Theta_{h_3}  )^{\kappa} \wedge \omega^{n-\kappa} } {F \Omega_0}  \right) = C_{m, \epsilon},
\end{equation}
where $\lim_{ \epsilon \rightarrow \infty} C_{m, \epsilon} = C_{m, 0}= \sup_{X^{\dagger}} \left( |S_1|^2_{h_1}|S_2|^2_{h_2} \frac{ ( \chi_0+ \Theta_{h_1}+\Theta_{h_2}   )^{\kappa} \wedge \omega^{n-\kappa} } {F \Omega_0}  \right)$ and $C_{m, 0}$ is independent of the choice of $m$.

Now let $\epsilon$ tend to $0$. Then there exists a constant $C>0$ independent of the choice of $m$ such that

$$\sup_{X^{\dagger}} \psi_{m} \leq C,$$
that is, there exists $C'>0$ independent of $m$ such that
$$ e^{\varphi_m} \Omega_m \leq C' |S_1|^{-2}_{h_1}|S_2|^{-2}_{h_2} \Omega_0.$$

\qed
\end{proof}

\begin{proposition} \label{conv1} There exists a measure $\Omega_{can}$ on $X$ such that

\begin{enumerate}

\item

$\Omega_{can} = \lim_{m\rightarrow \infty} e^{\varphi_m} \Omega_m.$

\item $(K_X, \Omega_{can}^{-1})$ is an analytic Zariski decomposition. Furthermore,

$$\frac{\Omega_{can}} {\Omega_0} < \infty,  ~~~~~{\it and } ~~~~~ \frac{\Psi_{X, \epsilon}}{\Omega_{can}} < \infty.$$

\end{enumerate}

\end{proposition}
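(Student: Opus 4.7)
The plan is to mirror the argument in Section 4.3 for the general-type case, substituting the corresponding ingredients from the present section: the monotonicity of Corollary \ref{mon2} in place of Proposition \ref{mon1}, and the uniform upper bound of Lemma \ref{upbound2} in place of Lemma \ref{upbound1}. First I would observe that by Corollary \ref{mon2} the sequence $\{e^{\varphi_m}\Omega_m\}_{m\ge m_0}$ is pointwise monotone increasing on $X$, and by Lemma \ref{upbound2} it is dominated pointwise by $C|S_D|_{h_D}^{-2}\Omega$ for some smooth volume form $\Omega$ and effective divisor $D$ independent of $m$. Hence the pointwise monotone limit
\[
\Omega_{can} := \lim_{m\to\infty} e^{\varphi_m}\Omega_m
\]
exists as a measure on $X$; passing to the limit in the upper bound immediately yields $\Omega_{can}/\Omega_0 < \infty$ for any smooth nowhere-vanishing reference volume form $\Omega_0$ on $X$.

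Next I would promote this limit measure to a singular hermitian metric $h_{can} := \Omega_{can}^{-1}$ on $K_X$ with closed positive curvature. Fix $\Omega_0$ with $\chi_0 = \ddbar\log\Omega_0$. Each function $u_m := \varphi_m + \log(\Omega_m/\Omega_0)$ belongs to $PSH(X,\chi_0)\cap L^\infty(X)$, since the preceding lemma showed that $\ddbar\log\Omega_m + \ddbar\varphi_m = \Theta_{h_m} \ge 0$ extends as a closed positive current across the base locus. Because $\{u_m\}$ is increasing and uniformly bounded above, its pointwise limit agrees almost everywhere with its upper semicontinuous regularization, which then lies in $PSH(X,\chi_0)\cap L^1(X)$ by standard pluripotential theory. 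Consequently $\Theta_{h_{can}} = \ddbar\log\Omega_{can} \ge 0$ as a closed positive current on $X$.

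For the analytic Zariski decomposition I would take any $\tau \in H^0(X, mK_X)$ and pick $k \ge m$, so that $\tau^{k!/m} \in H^0(X, k!K_X)$. Expanding in the basis $\{\sigma_{k,j}\}$ that defines $\Omega_k$ and applying Cauchy--Schwarz pointwise yields
\[
|\tau|^{2/m} = |\tau^{k!/m}|^{2/k!} \le C_{\tau,k}\,\Omega_k
\]
on $X$. For this fixed $k$, Theorem \ref{zhang} supplies $\varphi_k \in L^\infty(Y_k)$, and Corollary \ref{mon2} then gives $\Omega_k \le e^{-\inf\varphi_k}\,\Omega_{can}$, so $|\tau|^{2/m}/\Omega_{can} \le C'_{\tau,k} < \infty$. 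Applied to each basis section of each $H^0(X,mK_X)$ and choosing the weights $\epsilon_{m,j}$ small enough, this simultaneously gives convergence of $\Psi_{X,\epsilon}$ and the bound $\Psi_{X,\epsilon}/\Omega_{can} < \infty$. The same pointwise estimate shows $|\tau|^2_{h_{can}^m}$ is locally integrable for every global section of $mK_X$, placing every such $\tau$ in $H^0(X,\OO_X(mK_X)\otimes \II(h_{can}^m))$ and giving the AZD isomorphism.

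The principal subtlety is the absence of a uniform-in-$k$ $L^\infty$ bound for $\varphi_k$: such a bound is \emph{not} available because the Kolodziej-type estimate in Theorem \ref{zhang} requires an $L^p$ bound on $F_m$ with $p=p(m)$ depending on $m$. The argument above circumvents this by invoking $\varphi_k \in L^\infty$ for a single $k$ chosen only after the section $\tau$ is fixed, so no uniform control is needed. The remaining ingredients—monotone convergence of measures and the upper semicontinuous regularization of increasing families of quasi-plurisubharmonic functions—are standard, which is why the bulk of the technical work has already been carried out in establishing Corollary \ref{mon2} and Lemma \ref{upbound2}.
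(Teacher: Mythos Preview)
Your proposal is correct and follows essentially the same route as the paper: monotone convergence via Corollary \ref{mon2} together with the upper bound of Lemma \ref{upbound2} to define $\Omega_{can}$, the closedness of $PSH(X,\chi_0)\cap L^1$ to ensure $\Theta_{h_{can}}\ge 0$, and the construction of $\Omega_m$ to verify the analytic Zariski decomposition. Your write-up is in fact more careful than the paper's in spelling out the $\tau^{k!/m}$ comparison and the point that only $\varphi_k\in L^\infty$ for a single fixed $k$ is needed; one small slip is the phrase ``uniformly bounded above'' for $\{u_m\}$, since Lemma \ref{upbound2} only gives a dominating function with a pole along $D$---but this is still locally $L^1$, so dominated convergence and the standard pluripotential argument go through unchanged.
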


\begin{proof} By Corollary \ref{mon2} and Lemma\ref{upbound2}, we can define $\Omega_{can}$

$$\Omega_{can} = \lim_{m\rightarrow \infty} e^{\varphi_m} \Omega_m$$
and $\frac{\Omega_m}{\Omega_0} <\infty$.   Since $\varphi_m + \log \frac{\Omega_m}{\Omega_0} \in PSH(X, \chi_0) $ and $\{\varphi_m + \log \frac{\Omega_m}{\Omega_0}\}_{m=m_0}^{\infty}$ is convergent in $L^1(X)$. Also $PSH(X, \chi_0)\cap L^1(X)$ is closed in $L^1(X)$. Therefore

$$ \lim_{m\rightarrow \infty}  \left( \varphi_m + \log \frac{\Omega_m}{\Omega_0} \right) = \log \frac {\Omega_{can}}{\Omega_0}$$
in $PSH(X, \chi_0)\cap L^1(X)$ and $\log \frac {\Omega_{can}}{\Omega_0} <\infty$.

Let $h_{can} = \Omega_{can}^{-1}$ be the hermitian metric on $K_X$. By the construction of $\Omega_m$,

$$ |\sigma|^2_{h_{can}^m} < \infty$$
for any section $\sigma\in H^0(X, mK_X)$. Hence $\frac{\Psi_{X, \epsilon}} {\Omega_{can}} < \infty$ and

$$H^0(X, \OO_X(mK_X)\otimes \II(h_{can}^m))\rightarrow H^0(X, \OO_X(mK_X))$$
is an isomorphism and $(K_X, h_{can})$ is an analytic Zariski decomposition.

\qed

\end{proof}

\begin{proposition}\label{conv2}

Let $\Omega^{\dagger} = (\pi^{\dagger})^* \Omega_{can}$. There exists a closed positive $(1,1)$-current $\omega^{\dagger}$   on $Y^{\dagger}$ such that $\left(\Phi^{\dagger}\right)^*\omega^{\dagger}= \ddbar \log \Omega^{\dagger}$ on a Zariski open set of $X^{\dagger}$. Furthermore, on a Zariski open set of $Y^{\dagger}$, we have

    \begin{equation}
    (\omega^{\dagger})^{\kappa} = (\Phi^{\dagger})_* \Omega^{\dagger},
    \end{equation}
and so
    \begin{equation}
    \ric( \omega^{\dagger} )=- \omega^{\dagger} +\omega_{WP}.
    \end{equation}

\end{proposition}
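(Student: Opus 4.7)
The plan is to construct $\omega^{\dagger}$ as a monotone limit of currents pulled back from the $Y_m$'s via the birational maps $\mu_m : Y^{\dagger} \dashrightarrow Y_m$. For each $m\ge m_0$, set $\omega^{\dagger}_m = \mu_m^{*}(\omega_{m} + \ddbar \varphi_m)$, which is a well-defined closed positive $(1,1)$-current on a Zariski open set of $Y^{\dagger}$ by the Monge-Amp\`ere equation on $Y_m$. Tracing through the commutative diagram (\ref{diagone}) and using that $\varphi_m$ is (by construction) the pullback of a function from $Y_m$ and hence constant along fibres of $\Phi^{\dagger}$, one sees that $(\Phi^{\dagger})^{*}\omega^{\dagger}_m = \ddbar \log (e^{\varphi_m}\Omega_m)$ on the Zariski open set where the relevant maps in (\ref{diagone}) and $\pi^{\dagger}$ are all isomorphisms. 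From Corollary \ref{mon2} and Proposition \ref{conv1}, $e^{\varphi_m}\Omega_m \nearrow \Omega_{can}$ and the corresponding local potentials converge increasingly in $L^{1}_{\mathrm{loc}}$ modulo a smooth reference form. Passing to the limit in the sense of currents produces the required $\omega^{\dagger}$ on $Y^{\dagger}$ with $(\Phi^{\dagger})^{*}\omega^{\dagger} = \ddbar\log\Omega^{\dagger}$ on a Zariski open subset of $X^{\dagger}$.

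For the Monge-Amp\`ere identity $(\omega^{\dagger})^{\kappa} = (\Phi^{\dagger})_{*}\Omega^{\dagger}$, I would push the finite-level equation $(\omega_{m}+\ddbar\varphi_m)^{\kappa} = e^{\varphi_m}\Omega_{(m)}$ back to $Y^{\dagger}$ via $\mu_m$. Using Lemma \ref{sss} together with the definition $\Omega_{(m)} = (\Psi_m)_{*}\Omega_m$ and the commutative diagram, the right-hand side identifies (on a Zariski open set of $Y^{\dagger}$) with the push-forward $(\Phi^{\dagger})_{*}(e^{\varphi_m}\Omega_m)$. The uniform upper bound from Lemma \ref{upbound2} ensures that the monotone limit commutes with the push-forward on a Zariski open set where the fibration is smooth. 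On this open set the Monge-Amp\`ere operator reduces to the classical one, and a standard Schauder/Bedford-Taylor regularity argument together with the $L^{p}$-integrability of the right-hand side yields smoothness of $\omega^{\dagger}$ there.

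The Ricci equation is then a local computation on a Zariski open set $V\subset Y^{\dagger}$ over which $\Phi^{\dagger}$ is a smooth submersion. Since a very general fibre $X^{\dagger}_{y}$ has Kodaira dimension $0$, the restriction $\Omega^{\dagger}|_{X^{\dagger}_{y}}$ is, up to a function of $y$, a canonical Calabi-Yau volume form on $X^{\dagger}_{y}$; hence on $(\Phi^{\dagger})^{-1}(V)$ one may factor $\Omega^{\dagger} = (\Phi^{\dagger})^{*}\!\left((\omega^{\dagger})^{\kappa}\right)\wedge\Theta^{\dagger}$ with $\Theta^{\dagger}$ a fibrewise Calabi-Yau form. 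Applying $-\ddbar\log$ to both sides of $(\omega^{\dagger})^{\kappa}=(\Phi^{\dagger})_{*}\Omega^{\dagger}$ and using $(\Phi^{\dagger})^{*}\omega^{\dagger}=\ddbar\log\Omega^{\dagger}$, the correction term between $\ric(\omega^{\dagger})$ and $-\omega^{\dagger}$ is precisely the curvature of the Hodge line bundle $(\Phi^{\dagger})_{*}K_{X^{\dagger}/Y^{\dagger}}$ with the $L^{2}$-metric of Definition \ref{wp2}, i.e.\ $\omega_{WP}$. The computation parallels the last part of the proof of Theorem \ref{HKEthm}.

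The main obstacle is the first step: although the measures $e^{\varphi_m}\Omega_m$ converge monotonically on $X$, passing to the push-forward currents $\omega^{\dagger}_m$ on $Y^{\dagger}$ requires controlling local potentials across the exceptional loci of the birational maps $\mu_m$, which vary with $m$. I would fix a smooth big semipositive reference form $\alpha$ on $Y^{\dagger}$ dominating every $\omega^{\dagger}_m$ (using Kodaira's lemma applied to the pullback of an ample class from $X_{can}$-type models), write $\omega^{\dagger}_m = \alpha + \ddbar \psi_m$ with $\psi_m$ quasi-plurisubharmonic, use the monotonicity and the uniform $L^{\infty}$-bound (via Lemma \ref{upbound2} and Theorem \ref{zhang}) to extend $\psi_m$ across the pluripolar exceptional locus, and verify that the limiting potential is bounded, giving a well-defined closed positive $(1,1)$-current $\omega^{\dagger}$ on all of $Y^{\dagger}$. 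A related subtlety, resolved by the birational invariance of the Iitaka fibration, is the identification of the correction term in the Ricci equation with the intrinsic $\omega_{WP}$ of Definition \ref{wp2}.
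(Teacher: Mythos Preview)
Your overall strategy---limit of the level-$m$ Monge--Amp\`ere equations pulled back to $Y^{\dagger}$, then a local Ricci computation using the fibrewise Calabi--Yau structure---is the same as the paper's. The difference is in how the ``main obstacle'' you identify is handled. You propose to manufacture a new big semipositive reference $\alpha$ on $Y^{\dagger}$ via Kodaira's lemma and then wrestle with the varying exceptional loci of the $\mu_m$. The paper sidesteps this entirely: it fixes the reference at level $m_0$ once and for all, writing
\[
\mu_m^{*}(\omega_m+\ddbar\varphi_m)=\omega_{m_0}+\ddbar\psi_m,\qquad \psi_m=\varphi_m+\log\frac{\Omega_m}{\Omega_{m_0}},
\]
which is a cohomological identity valid on a \emph{fixed} Zariski open set $Y^{\dagger}\setminus D_{m_0}$ (outside the base locus of $|m_0!K_X|$). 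Since $\psi_m$ is increasing by Corollary~\ref{mon2} and bounded above by Lemma~\ref{upbound2}, it converges to $\psi_\infty=\log(\Omega^{\dagger}/\Omega_{m_0})$ locally uniformly on compacta of $Y^{\dagger}\setminus D_{m_0}$, and one simply sets $\omega^{\dagger}=\omega_{m_0}+\ddbar\psi_\infty$. No Kodaira lemma, no $m$-dependent exceptional sets. Your approach would work but is more laborious; the paper's trick of freezing $m_0$ is worth internalising. The Ricci computation in the paper is also a line shorter than yours: rather than factoring $\Omega^{\dagger}$, it uses that $\Omega_{m_0}/\Omega^{\dagger}$ descends to $Y^{\dagger}$ (constant on fibres), so $\ddbar\log(\Phi^{\dagger})_*\Omega^{\dagger}-\ddbar\log(\Phi^{\dagger})_*\Omega_{m_0}=\ddbar\psi_\infty$, and the Weil--Petersson term drops out as $\omega_{m_0}-\ddbar\log(\Phi^{\dagger})_*\Omega_{m_0}$.
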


\begin{proof}

Let $\psi_m = \varphi_m  + \log \frac{\Omega_{m}}{\Omega_{m_0}}$. Both $\psi_m$ and $\varphi_m$ descend to $Y^{\dagger}$ and by Proposition \ref{conv1} $$\lim_{m \rightarrow \infty} \psi_m = \psi_{\infty}=\log \frac{\Omega^{\dagger}}{\Omega_{m_0}}.$$
Consider
\[
\begin{diagram}
\node{X_{m}} \arrow{s,l,}{{\small \Psi_m} }     \node{X^{\dagger}}  \arrow{w,t,..}{f_m} \arrow{s,r}{{\small \Phi^{\dagger}}}\\
\node{Y_{m}}      \node{Y^{\dagger}} \arrow{w,b,..}{g_m }
\end{diagram}
\]
For simplicity, we use $\omega_{m}$ for $(g_m)^*\omega_m$.
Let $D_{m_0}$ be a divisor of $Y^{\dagger}$ such that on $X\backslash D_{m_0}$,  $\omega_0$ is  smooth  and $\log \frac{\Omega^{\dagger}}{\Omega_{m_0}} < \infty$. Also on $Y\backslash D_{m_0}$,

$$ (\omega_{m_0} + \ddbar \psi_m)^{\kappa} = e^{\varphi_m} (\Phi^{\dagger})_* \Omega_m.$$
Since $\psi_m $ converges uniformly on any compact set of $Y^{\dagger}\backslash D_{m_0}$ to $\psi_{\infty}$, we have on $Y^{\dagger} \backslash D_{m_0} $,
$$ (\omega_{m_0} + \ddbar \psi_\infty )^{\kappa}= \lim_{m\rightarrow \infty} ( \omega_{m_0} + \ddbar \log \psi_m )^n = \lim_{m\rightarrow \infty} e^{\varphi_m} (\Phi^{\dagger})_* \Omega_m = (\Phi^{\dagger})_* \Omega^{\dagger} .$$
Let $\omega^{\dagger} = \omega_{m_0} + \ddbar \psi_\infty$. Since it is a closed positive current on $Y^{\dagger}\backslash D_{m_0}$ and it can be extended to a closed positive current on $Y$.

Also $(\Phi^{\dagger})^*\omega_{m_0} = \ddbar \log \Omega_{m_0}$ on $X^{\dagger} \backslash ( \pi^{\dagger})^{-1}(Bs(|m_0! K_X|))$. This implies that  on $X^{\dagger} \backslash ( \pi^{\dagger})^{-1}(Bs(|m_0! K_X|))$,

$$ (\Phi^{\dagger})^* \omega^{\dagger} = \ddbar \log \Omega^{\dagger}.$$

Furthermore, we have $\frac{\Omega_{m_0}}{\Omega^{\dagger} } = \frac {(\Phi^{\dagger})_*\Omega_{m_0}}{ (\Phi^{\dagger})_*\Omega^{\dagger} }$ and so

\begin{eqnarray*}
\ric(\omega^{\dagger}) &=& \ddbar\log (\Phi^{\dagger})_*\Omega^{\dagger} \\
&=& - \omega^{\dagger} + \ddbar\log (\Phi^{\dagger})_*\Omega_{m_0}- \ddbar\log \Omega_{m_0} \\
&=&-\omega^{\dagger} + \omega_{WP}.
\end{eqnarray*}

\qed
\end{proof}

Proposition \ref{conv1} and Proposition \ref{conv2} conclude the proof of Theorem B.2.


\subsection{Uniqueness assuming finite generation of the canonical ring}

If the canonical ring $(X, R_X)$ is finitely generated, the canonical model $X_{can}$  is unique and can be constructed by the pluricanonical system $|mK_X|$ for sufficiently large $m$.
In this section, we will prove the uniqueness of the canonical measures constructed in Section 4.3 and 4.4 by assuming finite generation of the canonical ring. Furthermore, the canonical measure can be considered as a birational invariant.

\begin{theorem}\label{finitegentype} Let $X$ be an algebraic manifold of general type. If the canonical ring $R(X, K_X)$ is finitely generated, the K\"ahler-Einstein measure in Theorem B.1 is constructed in finite steps. Furthermore, it is continuous on $X$ and smooth on a Zariski open dense set of $X$.
\end{theorem}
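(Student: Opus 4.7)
The plan is to exploit finite generation of $R(X, K_X)$ to terminate the monotone construction of Section 4.3 at a single finite stage, turning the inductive limit into one Monge-Amp\`ere equation handled directly by Theorems \ref{zhang} and \ref{MAthm}. Choose $M \in \mathbf{N}$ divisible by the generating degrees of $R(X, K_X)$ and large enough that $|MK_X|$ realizes the birational canonical map $\Phi_M : X \dashrightarrow X_{can} = \operatorname{Proj} R(X, K_X)$ and the stable base locus stabilizes. Fix a single resolution $\pi : \tilde X \to X$ dominating $X_{can}$ with $\pi^*(MK_X) = L + E$, where $L$ is big and semi-ample on $\tilde X$ (pulled back from an ample line bundle on $X_{can}$) and $E$ is the fixed part. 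For any $m \geq M$, every section in $H^0(X, m!K_X)$ is a homogeneous polynomial of degree $m!/M!$ in the generators from $\bigoplus_{m' \leq M} H^0(X, m'!K_X)$, so the normalization $(\cdot)^{1/m!}$ in $\Omega_m$ absorbs the degree and yields $\Omega_m = h_m \cdot \Omega_M$ on $\tilde X$ for a bounded continuous $h_m$ pulled back from $X_{can}$; similarly $\omega_m$ stabilizes to $\omega_L := \tfrac{1}{M}\ddbar \log \sum_j |\zeta_{M,j}|^2$ modulo a pullback from $X_{can}$.

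On this fixed $\tilde X$, solve the single Monge-Amp\`ere equation $(\omega_L + \ddbar \varphi)^n = e^{\varphi} \Omega_M$, which is exactly (\ref{gt1}) at level $m = M$. The density $\Omega_M/\omega_L^n$ lies in $L^{1+\epsilon}$ by the argument of Proposition \ref{Fso}, so Theorem \ref{zhang} produces a unique $\varphi_M \in PSH(\tilde X, \omega_L) \cap C^0(\tilde X)$, and the second-order estimate together with the Schauder bootstrap from Steps 3 and 4 of the proof of Theorem \ref{MAthm} (valid because $L$ is big and semi-ample) upgrade $\varphi_M$ to $C^\infty$ on the complement of $E$ and the exceptional locus of $L$, hence on a Zariski open dense subset of $\tilde X$, and hence of $X$.

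It then remains to identify $e^{\varphi_M}\Omega_M$ with the increasing limit $\Omega_{KE} = \lim_m e^{\varphi_m}\Omega_m$ from Section 4.3. Using $\Omega_m = h_m\Omega_M$ and the analogous stabilization of $\omega_m$, equation (\ref{gt1}) at level $m > M$ pulls back via $\pi$ to an equation whose unique solution (by Theorem \ref{zhang}) must be $\varphi_M - \log h_m + c_m$ for a constant $c_m$; the exponential nonlinearity on the right-hand side forces $c_m = 0$, so $e^{\varphi_m}\Omega_m = e^{\varphi_M}\Omega_M$ for all $m \geq M$. Consequently the monotone sequence is constant beyond stage $M$ and $\Omega_{KE} = e^{\varphi_M}\Omega_M$; continuity on $X$ and smoothness on a Zariski open dense subset descend from the corresponding properties of $\varphi_M$ on $\tilde X$ exactly as in the Corollary following Theorem \ref{ke_m} (since $\Omega_M$ is continuous on $\tilde X$ and vanishes only on the base locus of $|MK_X|$, which pushes down to a proper subvariety of $X$). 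The main obstacle is this identification step: verifying quantitatively that $h_m$ and the stabilization of $\omega_m$ make the equations at levels $m$ and $M$ precisely equivalent (not merely similar), which is where finite generation of $R(X, K_X)$ is genuinely needed, rather than the mere existence of $X_{can}$ as a normal projective variety.
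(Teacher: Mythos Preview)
Your argument is correct and follows the same approach as the paper. The paper's own proof is a single sentence stating that the result is ``an immediate consequence from the proof of Theorem B.1 with the assumption of finite generation of the canonical ring''; you have supplied precisely the details that sentence leaves implicit, namely that the tower of resolutions stabilizes, that $E_m/m! = E_M/M$ as $\mathbf{Q}$-divisors so $\log h_m = g_m$ exactly, and hence that the Monge--Amp\`ere equations at levels $m \geq M$ are equivalent via the substitution $\psi = \varphi + g_m$, forcing $e^{\varphi_m}\Omega_m = e^{\varphi_M}\Omega_M$.
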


Theorem \ref{finitegentype} is an immediate consequence from the proof of Theorem B.1 with the assumption of finite generation of the canonical ring. The following theorem is proved in \cite{EyGuZe1}.

\begin{theorem}\label{EyGuZe1} Let $X$ be an algebraic manifold of general type. If the canonical ring $R(X, K_X)$ is finitely generated, $X_{can}$ will have only canonical singularities and there exists a unique K\"ahler-Einstein metric $\omega_{can}$ on $X_{can}$ with a continuous potential.

\end{theorem}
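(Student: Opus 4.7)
The plan is to use finite generation to build $X_{can}$ as a concrete projective variety, check that its singularities are canonical via a discrepancy argument on a common resolution, and then produce $\omega_{can}$ by solving a degenerate K\"ahler-Einstein Monge-Amp\`ere equation on $X_{can}$ after transferring it to a resolution. First I would use finite generation to realize $X_{can} = \mathrm{Proj}\, R(X, K_X)$ as a normal projective variety with $K_{X_{can}}$ an ample $\mathbb{Q}$-Cartier divisor, and promote the pluricanonical rational map $X \dashrightarrow X_{can}$ to a morphism after resolving indeterminacies. Choosing a common resolution $\pi : Y \to X_{can}$ that dominates $X$, the lifting of pluricanonical sections from $X_{can}$ to $Y$ without acquiring new base loci forces the discrepancies $a_i$ in $K_Y = \pi^* K_{X_{can}} + \sum_i a_i E_i$ to satisfy $a_i \ge 0$, which is exactly the definition of canonical singularities.

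For existence, I would fix $m$ large so that the basis $\{\sigma_{m,j}\}$ of $H^0(X_{can}, m K_{X_{can}})$ embeds $X_{can}$ in some $\mathbf{CP}^N$, set $\chi = \frac{1}{m}\ddbar \log \sum_j |\sigma_{m,j}|^2$ and $\Omega_0 = (\sum_j |\sigma_{m,j}|^2)^{1/m}$ so that $\ddbar \log \Omega_0 = \chi$ on the smooth locus, and look for $\varphi \in PSH(X_{can}, \chi) \cap C^0(X_{can})$ with
\[ (\chi + \ddbar \varphi)^n = e^{\varphi}\,\Omega_0 . \]
Pulling back through $\pi$ converts this to
\[ (\pi^* \chi + \ddbar \tvarphi)^n = e^{\tvarphi}\, F\, \Omega_Y \]
on $Y$, where $\Omega_Y$ is a smooth volume form and, up to a smooth positive factor, $F = \prod_i |S_{E_i}|^{2 a_i}_{h_i}$. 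Because each $a_i \ge 0$ the density $F$ is bounded and therefore lies in every $L^p(Y, \Omega_Y)$, and $\pi^*\chi$ is semipositive and big, so Theorem~\ref{zhang} delivers a unique continuous $\tvarphi \in PSH(Y, \pi^*\chi) \cap C^0(Y)$. Connectedness of the fibres of $\pi$ combined with $\pi^*\chi$-plurisubharmonicity forces $\tvarphi$ to be constant along the fibres, so it descends to the desired continuous $\chi$-plurisubharmonic $\varphi$ on $X_{can}$.

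Uniqueness follows from the comparison principle of Theorem~\ref{kol}: for two solutions $\varphi_1, \varphi_2$, integrating the Monge-Amp\`ere equation on $\{\varphi_1 < \varphi_2\}$ gives $\int_{\{\varphi_1 < \varphi_2\}} (e^{\varphi_2} - e^{\varphi_1}) \Omega_0 \le 0$, forcing that set to have zero $\Omega_0$-measure, and symmetrically for the opposite inequality. The main obstacle in the argument is not the analytic Monge-Amp\`ere step, which is handled cleanly by the Kolodziej-type statement of Theorem~\ref{zhang}, but rather the algebro-geometric input establishing that $X_{can}$ has canonical singularities and the resulting bound $F \in L^\infty(Y)$; this is where finite generation of $R(X, K_X)$ enters essentially, since without it one does not even have a projective model on which to run the discrepancy computation.
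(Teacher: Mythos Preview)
The paper does not supply its own proof of this theorem; it is quoted verbatim as a result of Eyssidieux--Guedj--Zeriahi \cite{EyGuZe1} and then used as a black box in the corollary that follows. So there is no in-paper argument to compare against.

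That said, your sketch is essentially the argument of \cite{EyGuZe1}, and it is also compatible with the machinery the present paper develops: the pull-back of the equation to a resolution and the appeal to Theorem~\ref{zhang} mirror exactly what is done in the proof of Theorem~\ref{MAthm}, and the descent of the bounded solution along connected fibres of $\pi$ is the same trick used in Step~5 there. The crucial observation you isolate --- that canonical singularities ($a_i\ge 0$) make $\pi^*\Omega_0 = F\,\Omega_Y$ with $F$ bounded, hence in $L^p$ for every $p$ --- is precisely why the degenerate Monge--Amp\`ere theory applies cleanly here. One minor point: Theorem~\ref{zhang} as stated is for equations without the $e^{\varphi}$ factor and with a fixed total mass; to use it you first need an a~priori upper bound on $\varphi$ (by the maximum principle, exactly as in the proof of Theorem~\ref{MAthm}), after which $e^{\varphi}F$ is uniformly in $L^p$ and the theorem applies. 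Your discrepancy argument for canonical singularities is correct in spirit but compressed; the rigorous version is standard birational geometry and is again deferred to the literature in \cite{EyGuZe1}.
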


Theorem C.1 is then proved as a corollary of Theorem \ref{finitegentype} and Theorem \ref{EyGuZe1}.

\begin{corollary}Let $X$ be an $n$-dimensional algebraic manifold of general type. Suppose that the canonical ring $R(X, K_X)$ is finitely generated and $\pi: X\dasharrow X_{can}$ is the pluricanonical map. Let $\omega_{can}$ be the unique K\"ahler-Einstein metric on $X_{can}$ as in Theorem \ref{EyGuZe1} and $\Omega_{KE}=\pi^* (\omega_{can}^n)$. Then $(X, \Omega_{KE}^{-1})$  coincides with the  analytic Zariski decomposition constructed in Theorem B.1.

\end{corollary}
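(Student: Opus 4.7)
The plan is to combine Theorem~\ref{finitegentype} with the uniqueness assertion in Theorem~\ref{EyGuZe1} (and Theorem~\ref{zhang}) to identify the limit measure built in Theorem~B.1 with $\pi^*(\omega_{can}^n)$. First I would use the finite generation hypothesis to fix a sufficiently large $m_0$ so that for every $m \geq m_0$ divisible by $m_0$, the rational map $\Phi_{m!}$ factors through $\pi\colon X\dashrightarrow X_{can}$ via a fixed morphism $\nu\colon X_{can}\hookrightarrow \mathbf{CP}^{d_m}$ induced by powers of a single very ample line bundle $A$ on $X_{can}$ with $\pi^*A = K_X$ (rationally). One can then arrange the resolutions $\bar{\pi}_m\colon X_m\to X$ to dominate a common smooth model $\tilde{X}$ equipped with a birational morphism $\tilde{\pi}\colon \tilde{X}\to X_{can}$ such that $\tilde{\pi}^*A = L_m + E'_m$ with $E'_m$ the exceptional/fixed part. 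By Theorem~\ref{finitegentype}, the monotone sequence $e^{\varphi_m}\Omega_m$ stabilizes for $m$ large, i.e.\ $e^{\varphi_m}\Omega_m = e^{\varphi_{m_0}}\Omega_{m_0}$ on $\tilde X$ once $m\geq m_0$, and this stabilized measure pushes down to $\Omega_{KE}$ on $X$.

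Next I would show the stabilized object descends canonically to $X_{can}$. On $\tilde{X}$ the Fubini--Study form $\omega_{m_0}$ equals $\tilde{\pi}^* \omega_A$ for a fixed K\"ahler form $\omega_A\in c_1(A)$ on $X_{can}$, and the ratio $\Omega_{m_0}/\tilde{\pi}^*\omega_A^n$ is constant along the (contracted) fibers of $\tilde\pi$. Hence the potentials $\varphi_{m_0}$ descend to a bounded quasi-plurisubharmonic function $\hat{\varphi}$ on $X_{can}$ solving a Monge--Amp\`ere equation of the form
\begin{equation}
(\omega_A+\ddbar\hat\varphi)^n = \hat F\, e^{\hat\varphi}\, \omega_A^n
\end{equation}
on $X_{can}$ with $\hat F\in L^{1+\epsilon}$ and $\hat F\,\omega_A^n$ equal (up to a positive multiplicative constant which can be absorbed into $\hat\varphi$) to the canonical volume form on $X_{can}$ obtained from the sections of $mA$. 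This is precisely the Monge--Amp\`ere equation characterizing the unique K\"ahler--Einstein metric $\omega_{can}$ with continuous potential on $X_{can}$ in Theorem~\ref{EyGuZe1}.

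Then I would invoke the uniqueness clause of Theorem~\ref{zhang} (applied on a resolution of $X_{can}$, as in the proof of Theorem~\ref{MAthm}): the continuous solution to the above equation in $PSH(X_{can},\omega_A)$ is unique, so $\omega_A+\ddbar\hat\varphi = \omega_{can}$ and consequently $(\omega_A+\ddbar\hat\varphi)^n = \omega_{can}^n$. Pulling back to $X$ under $\pi$ and using the compatibility $\pi^*\omega_A = \ddbar\log\Omega_{m_0} + \text{smooth correction}$ together with the descent in the previous paragraph yields
\begin{equation}
\Omega_{KE} = e^{\varphi_{m_0}}\Omega_{m_0} = \pi^*(\omega_{can}^n),
\end{equation}
which is the asserted identification.

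The main obstacle I expect is the careful bookkeeping of normalizations: ensuring that the particular reference volume forms $\Omega_m = (\sum|\sigma_{m,j}|^2)^{1/m!}$ used in Theorem~B.1 pushforward to exactly the volume form on $X_{can}$ that Eyssidieux--Guedj--Zeriahi use to pin down $\omega_{can}$, up to a multiplicative constant which shifts $\hat\varphi$ by an additive constant. Once this normalization is fixed, uniqueness in Theorem~\ref{zhang} and continuity of potentials on $X_{can}$ (Theorem~\ref{EyGuZe1}) close the argument with no further analytic input.
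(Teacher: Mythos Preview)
Your proposal is correct and follows essentially the same approach as the paper: use finite generation to stabilize the sequence of Monge--Amp\`ere problems from Theorem~B.1, descend the resulting equation to $X_{can}$, and then invoke the uniqueness of the K\"ahler--Einstein metric on $X_{can}$ from Theorem~\ref{EyGuZe1}. The paper's own proof is extremely terse (it simply asserts that the construction terminates in finite steps and that the limit satisfies the same Monge--Amp\`ere equation as in \cite{EyGuZe1}), so your write-up in fact supplies the descent and normalization details that the paper leaves to the reader.
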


\begin{proof} Since the canonical ring $R(X, K_X)$ is finitely generated, the pluricanonical map is stabilized for sufficiently large power so that the proof of Theorem B.1 terminates in finite steps. It is then straightforward to check that the K\"ahler-Einstein metric constructed in Theorem B.1 satisfies the same Monge-Amp\`ere equation on the unique canonical model of $X$ in Theorem \ref{EyGuZe1} (see \cite{EyGuZe1}).

\end{proof}

We shall now prove Theorem C.2.

\begin{definition}\label{push2} Suppose that $X$ is an $n$-dimensional algebraic manifold of Kodaira dimension $0<\kappa<n$ and the canonical ring $R(X, K_X)$ is finitely generated. Let  $\Phi: X \dasharrow X_{can}$ be the pluricanonical map. There exists a nonsingular model $X^{\dagger}$ of $X$ and the following diagram holds

\begin{equation}
\begin{diagram}\label{diag6}
\node{X} \arrow{se,b,..}{\Phi}      \node[2]{X^{\dagger}} \arrow[2]{w,t}{\pi^{\dagger}} \arrow{sw,r}{\Phi^{\dagger}} \\
\node[2]{X_{can}}
\end{diagram}
\end{equation}
where $\pi^{\dagger}$ is  barational and the generic fibre of $\Phi^{\dagger}$ has Kodaira dimension $0$.

\begin{enumerate}

\item

Then the pushforward measure $\Phi_* \Omega$ on $X_{can}$ is defined by

\begin{equation}
\Phi_* \Omega = (\Phi^{\dagger})_* \left(  (\pi^{\dagger})^* \Omega \right).
\end{equation}

\item

Let $\Phi=\Phi_m$ be the pluricanonical map associated to a basis $\{ \sigma_{j_m} \}_{j_m=0}^{d_m}$ of the linear system $\left| m K_X \right|$, for $m$ sufficiently large. Let $\Omega_m = \left( \sum_{j_m=0}^{d_m} \sigma_{j_m}\otimes \overline{\sigma_{j_m}} \right)^{\frac{1}{m}}$ and $\omega_{FS}$ be the Fubini-Study metric of $\mathbf{CP}^{d_m}$ restricted on $X_{can}$ associated to $\Phi_m$. Then we defined $\bar{\omega}_{WP}$ by

\begin{equation}
\bar{\omega}_{WP} = \frac{1}{m} \omega_{FS} + \ddbar \log \Phi_* \Omega_m.
\end{equation}

\end{enumerate}

In particular, $\bar{\omega}_{WP}$ coincides with $\omega_{WP}$ in Definition \ref{wp2} on a Zariski open set of $X_{can}$.

\end{definition}

\begin{lemma}\label{pushlemma} $\Phi_* \Omega$ is independent of the choice of the diagram in Definition \ref{push2}.

\end{lemma}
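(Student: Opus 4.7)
The plan is to reduce to the case of comparing any two resolutions via a common dominating resolution, and then to use the fact that pullback followed by pushforward under a birational morphism of smooth varieties acts as the identity on absolutely continuous measures.

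First, suppose we are given two diagrams
$$ X \xleftarrow{\pi_1^{\dagger}} X_1^{\dagger} \xrightarrow{\Phi_1^{\dagger}} X_{can}, \qquad X \xleftarrow{\pi_2^{\dagger}} X_2^{\dagger} \xrightarrow{\Phi_2^{\dagger}} X_{can} $$
both satisfying the requirements of Definition \ref{push2}. By resolving the indeterminacy of the induced birational map $X_1^{\dagger}\dashrightarrow X_2^{\dagger}$ (for instance, resolving the singularities of the closure of its graph), we obtain a smooth projective variety $X^{\ddagger}$ and birational morphisms $\sigma_i : X^{\ddagger}\to X_i^{\dagger}$ such that $\pi_1^{\dagger}\circ \sigma_1 = \pi_2^{\dagger}\circ \sigma_2 =: \pi^{\ddagger}$ and $\Phi_1^{\dagger}\circ \sigma_1 = \Phi_2^{\dagger}\circ \sigma_2 =: \Phi^{\ddagger}$ as morphisms; the equalities follow because both sides agree generically as rational maps and both are genuine morphisms.

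Second, it suffices to show that for each $i = 1,2$,
$$ (\Phi_i^{\dagger})_* (\pi_i^{\dagger})^{*}\Omega \;=\; (\Phi^{\ddagger})_* (\pi^{\ddagger})^{*}\Omega, $$
since then both choices agree with the common refinement. Using functoriality of push-forward and pull-back under composition, this reduces to the identity
$$ \sigma_{i,*}\, \sigma_i^{*}\bigl((\pi_i^{\dagger})^{*}\Omega\bigr) \;=\; (\pi_i^{\dagger})^{*}\Omega $$
on $X_i^{\dagger}$. Here $(\pi_i^{\dagger})^*\Omega$ is interpreted as an absolutely continuous (possibly singular or vanishing) measure on $X_i^{\dagger}$, which is legitimate because $\Omega$ is a locally $L^1$ measure on the smooth variety $X$ and $\pi_i^{\dagger}$ is a birational morphism of smooth varieties.

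Third, to verify $\sigma_* \sigma^* \mu = \mu$ for a birational morphism $\sigma : Y\to Z$ of smooth varieties and an absolutely continuous measure $\mu$ on $Z$: there is a Zariski open subset $V\subset Z$ such that $\sigma^{-1}(V)\to V$ is an isomorphism and $Z\setminus V$ is a proper algebraic subvariety, hence of Lebesgue measure zero. Testing against a continuous function $\phi$ on $Z$ gives
$$ \int_Z \phi \, d(\sigma_*\sigma^*\mu) \;=\; \int_{\sigma^{-1}(V)} (\phi\circ\sigma)\, d(\sigma^*\mu) \;=\; \int_V \phi\, d\mu \;=\; \int_Z \phi\, d\mu, $$
the last equality because $\mu$ charges no proper subvariety. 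Applied to $\mu = (\pi_i^{\dagger})^{*}\Omega$, this gives the desired identity, and combining the three steps completes the proof. The only mild subtlety is checking that $(\pi_i^{\dagger})^*\Omega$ is indeed absolutely continuous (as opposed to acquiring a singular part along the exceptional locus of $\pi_i^{\dagger}$), but this holds because $\Omega$ has an $L^1$ density with respect to any smooth volume form on $X$, and pull-back by a birational morphism between smooth varieties preserves this property away from a subvariety of measure zero.
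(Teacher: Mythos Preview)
Your proof is correct, but it takes a considerably more elaborate route than the paper's. The paper simply tests $\Phi_*\Omega$ against a continuous function $\rho$ on $X_{can}$ and computes
\[
\int_{X_{can}} \rho\, \Phi_*\Omega \;=\; \int_{X^{\dagger}} \bigl((\Phi^{\dagger})^*\rho\bigr)\,(\pi^{\dagger})^*\Omega \;=\; \int_X (\Phi^*\rho)\,\Omega,
\]
and the right-hand side visibly does not involve $X^{\dagger}$ at all. The only point to note is that $\Phi^*\rho$ is defined away from the indeterminacy locus of $\Phi$, which has measure zero with respect to the smooth measure $\Omega$, so the integral is well-defined.

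Your approach---building a common roof $X^{\ddagger}$ and invoking $\sigma_*\sigma^* = \mathrm{id}$ on absolutely continuous measures---is a standard and robust technique that would apply even in situations where one cannot write down a meaningful expression on $X$ itself (for instance, if $\Omega$ lived only on the resolutions and not on $X$). Here, however, the measure $\Omega$ is given on $X$ from the start, so the paper's shortcut of pushing everything back to $X$ is available and bypasses the need for a common refinement entirely.
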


\begin{proof} Let $\rho$ be a test function on $X_{can}$. Then

$$\int_{X_{can}} \rho \Phi_*\Omega = \int_{X^{\dagger}} \left( (\Phi^{\dagger})^* \rho \right)  (\phi^{\dagger})^* \Omega
= \int_X  \left( \Phi^* \rho \right)\Omega,$$
which is independent of the choice of the diagram \ref{diag6}.

\qed
\end{proof}

Since the generic fibre of $\Phi^{\dagger}$ has Kodaira dimension $0$, by the same argument in Lemma \ref{constantfibre}, we have the following lemma.

\begin{lemma} Let  $\{ \sigma^{(1)}_{j_p} \}_{j_p=0}^{d_p}$ and $\{ \sigma^{(2)}_{j_q} \}_{j_q=0}^{d_q}$ be basis of the linear systems $\left| p K_X \right|$ and $\left| q K_X \right|$, for $p$ and $q$ sufficiently large. Let $\Omega^{(1)} = \left( \sum_{j_p=0}^{d_p} \sigma_{j_p}\otimes \overline{\sigma_{j_p}} \right)^{\frac{1}{p}}$ and $\Omega^{(2)} = \left( \sum_{j_q=0}^{d_q} \sigma_{j_q}\otimes \overline{\sigma_{j_q}} \right)^{\frac{1}{q}}$.
Then $(\pi^{\dagger})^* \left( \frac{ \Omega^{(1)} } {\Omega^{(2)}} \right)$ is constant on any generic fibre and so

\begin{equation}
(\pi^{\dagger})^* \left( \frac{ \Omega^{(1)} } {\Omega^{(2)}} \right)  = (\Phi^{\dagger})^*\left(\frac{ \Phi_* \Omega^{(1)}} {\Phi_* \Omega^{(2)}  } \right).
\end{equation}

\end{lemma}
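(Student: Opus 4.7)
The argument is the exact analogue of Lemma \ref{constantfibre}, with $\Omega^{(1)}, \Omega^{(2)}$ replacing $\Omega_m, \Omega_{m+1}$. I would carry out the following three steps.

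First, fix a very general point $s_0 \in X_{can}$ so that the fibre $F_{s_0} = (\Phi^{\dagger})^{-1}(s_0)$ is nonsingular of Kodaira dimension $0$. Choose a small neighborhood $B \subset X_{can}$ of $s_0$ and a nowhere vanishing holomorphic $\kappa$-form $\eta$ on $B$. Then $(\pi^{\dagger})^* \sigma^{(1)}_{j_p}/\eta^p$ and $(\pi^{\dagger})^* \sigma^{(2)}_{j_q}/\eta^q$ restrict to sections of $pK_{F_{s_0}}$ and $qK_{F_{s_0}}$ respectively. Since $\kappa(F_{s_0}) = 0$, we have $\dim \mathbf{P} H^0(F_{s_0}, kK_{F_{s_0}}) = 0$ for every $k \geq 1$, so up to a constant scale $F_{s_0}$ carries a unique pluricanonical section at each level. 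Hence
\begin{equation*}
\left. (\pi^{\dagger})^*\!\left(\frac{\Omega^{(1)}}{\Omega^{(2)}}\right)\right|_{F_{s_0}}
\end{equation*}
is (pointwise, where defined) a constant on $F_{s_0}$.

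Second, combining this fibrewise constancy with the fact that both $\Omega^{(1)}$ and $\Omega^{(2)}$ are smooth, with $\Omega^{(1)}/\Omega^{(2)} < \infty$ away from the common base locus of the relevant linear systems, one obtains that $(\pi^{\dagger})^*(\Omega^{(1)}/\Omega^{(2)})$ is smooth on a Zariski open subset of $X^{\dagger}$ and constant on every very general fibre of $\Phi^{\dagger}$. By Hartogs' theorem it is then the pullback under $\Phi^{\dagger}$ of a function defined on a Zariski open subset of $X_{can}$, exactly as in Lemma \ref{constantfibre}.

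Third, it remains to identify this descended function with $\Phi_*\Omega^{(1)}/\Phi_*\Omega^{(2)}$. By Lemma \ref{pushlemma} and Definition \ref{push2}, for any test function $\rho$ on $X_{can}$,
\begin{equation*}
\int_{X_{can}} \rho\, \Phi_*\Omega^{(i)} = \int_{X^{\dagger}} (\Phi^{\dagger})^*\rho \cdot (\pi^{\dagger})^*\Omega^{(i)}.
\end{equation*}
Factoring $(\pi^{\dagger})^*\Omega^{(1)} = \left( (\Phi^{\dagger})^* h \right) (\pi^{\dagger})^*\Omega^{(2)}$ for the descended function $h$ produced in Step~2, integrating fibrewise against an arbitrary $\rho$ on the Zariski open set where everything is smooth yields $\Phi_*\Omega^{(1)} = h \cdot \Phi_*\Omega^{(2)}$, which is the asserted identity.

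The only delicate point is justifying the fibrewise constancy on the (possibly singular) fibres and extending the resulting descended function across the discriminant locus; this is handled exactly as in Lemma \ref{constantfibre} via Hartogs' extension, since both numerator and denominator arise from pluricanonical sections and the ratio is globally bounded on the Zariski open locus where $\Omega^{(2)}$ does not vanish.
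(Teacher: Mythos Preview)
Your proposal is correct and follows exactly the approach the paper intends: the paper does not give a separate proof of this lemma at all, but simply prefaces it with ``by the same argument in Lemma~\ref{constantfibre}, we have the following lemma.'' Your Steps~1 and~2 reproduce that argument, and your Step~3 supplies the identification of the descended function with $\Phi_*\Omega^{(1)}/\Phi_*\Omega^{(2)}$, which is the analogue of the corollary following Lemma~\ref{sss} and is precisely what is needed to obtain the displayed equation.
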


\begin{lemma} The definition of $\bar{\omega}_{WP}$ only depends on $X$.

\end{lemma}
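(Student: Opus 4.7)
My plan is to show that for any two sufficiently large integers $p,q\in N(K_X)$, the closed $(1,1)$-forms
\[
\bar{\omega}_{WP}^{(m)} = \tfrac{1}{m}\omega_{FS}^{(m)} + \ddbar\log\Phi_*\Omega^{(m)}, \qquad m=p,q,
\]
coincide as currents on $X_{can}$. Since $R(X,K_X)$ is finitely generated, for all sufficiently large $m$ the pluricanonical map $\Phi_m$ stabilizes and agrees with $\Phi: X \dasharrow X_{can}$, so $m$ is the only auxiliary datum that must be eliminated; basis-independence for a fixed $m$ is immediate, because a linear change of basis in $H^0(X,mK_X)$ modifies both $\tfrac{1}{m}\omega_{FS}^{(m)}$ and $-\ddbar\log\Phi_*\Omega^{(m)}$ by the same $\ddbar$-exact smooth term.

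To compare the forms for different $p$ and $q$, I would pull everything back to $X^{\dagger}$ via $\pi^{\dagger}$ and $\Phi^{\dagger}$. Three identities are needed: (i) on $X$, by construction of the Fubini-Study form from the chosen basis of $|mK_X|$, $\ddbar\log\Omega^{(m)} = \tfrac{1}{m}\Phi^*\omega_{FS}^{(m)}$; (ii) the preceding lemma gives
\[
(\pi^{\dagger})^*\!\left(\Omega^{(p)}/\Omega^{(q)}\right) = (\Phi^{\dagger})^*\!\left(\Phi_*\Omega^{(p)}/\Phi_*\Omega^{(q)}\right);
\]
(iii) from the commutative diagram, $\Phi\circ\pi^{\dagger} = \Phi^{\dagger}$ on a Zariski open set. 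Applying $\ddbar\log$ to (ii) and substituting (i) together with (iii) yields
\[
(\Phi^{\dagger})^*\!\left(\bar{\omega}_{WP}^{(p)} - \bar{\omega}_{WP}^{(q)}\right) = 0
\]
on $X^{\dagger}$.

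The only delicate step is to descend this equality from $X^{\dagger}$ back to $X_{can}$. On a Zariski open set of $X_{can}$ the difference can be written as $\ddbar v$ for $v = \log(\Phi_*\Omega^{(p)}/\Phi_*\Omega^{(q)})$, which is locally bounded thanks to the preceding lemma together with the comparisons between the $\Omega^{(m)}$ already established in Section 4.4. Then $(\Phi^{\dagger})^*v$ is a bounded pluriharmonic function on $X^{\dagger}$, hence constant along the connected generic fibres of the Iitaka fibration $\Phi^{\dagger}$, and therefore descends to a bounded pluriharmonic function on a Zariski open subset of $X_{can}$; this forces $\ddbar v = 0$ there, and standard extension of closed $(1,1)$-currents across closed pluripolar subsets upgrades the vanishing to all of $X_{can}$. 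Combined with Lemma \ref{pushlemma}, which already eliminates the choice of the diagram realizing $\Phi_*$, this establishes the claimed independence. The descent step is the main obstacle; everything else is formal manipulation of the previous lemma and the definitions.
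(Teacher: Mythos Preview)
Your strategy coincides with the paper's: pull back to $X^{\dagger}$, use the identity $\ddbar\log\Omega^{(m)}=\tfrac1m\Phi^*\omega_{FS}^{(m)}$ together with the preceding lemma, and conclude that the pullback of $\bar\omega_{WP}^{(p)}-\bar\omega_{WP}^{(q)}$ vanishes on a Zariski open set. The basis-independence remark and the invocation of Lemma~\ref{pushlemma} are also the same.

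There is, however, a slip in your descent step. You write the difference as $\ddbar v$ with $v=\log\bigl(\Phi_*\Omega^{(p)}/\Phi_*\Omega^{(q)}\bigr)$, but by the very definition you quoted,
\[
\bar\omega_{WP}^{(p)}-\bar\omega_{WP}^{(q)}
=\Bigl(\tfrac1p\,\omega_{FS}^{(p)}-\tfrac1q\,\omega_{FS}^{(q)}\Bigr)+\ddbar v,
\]
and the two Fubini--Study forms, though cohomologous, are not equal. Consequently $(\Phi^{\dagger})^*v$ is not pluriharmonic, and the argument does not go through as stated. The paper handles the descent more directly: instead of working with the full difference, it first descends the intermediate identity
\[
\tfrac1p\,\omega_{FS}^{(p)}-\tfrac1q\,\omega_{FS}^{(q)}
=\ddbar\log\frac{\Phi_*\Omega^{(p)}}{\Phi_*\Omega^{(q)}}
\]
from a Zariski open set $U\subset X_{can}$ to all of $X_{can}$, simply by observing that both sides are globally defined currents on $X_{can}$ (the left side smooth, the right side $\ddbar$ of an $L^\infty$ function by the preceding lemma) in the same cohomology class and agreeing on $U$. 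Rearranging this global identity gives the conclusion with no further work. Your route is easily repaired by writing the difference as $\ddbar(v+g)$ with $g$ a smooth global potential for $\tfrac1p\,\omega_{FS}^{(p)}-\tfrac1q\,\omega_{FS}^{(q)}$; then $v+g$ is bounded and your pluriharmonic reasoning applies to it.
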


\begin{proof} By Lemma \ref{pushlemma}, the definition of $\Omega_{WP}$ does not depend on the choice of the diagram \ref{diag5}. Let  $\{ \sigma^{(1)}_{j_p} \}_{j_p=0}^{d_p}$ and $\{ \sigma^{(2)}_{j_q} \}_{j_q=0}^{d_q}$ be basis of the linear systems $\left| p K_X \right|$ and $\left| q K_X \right|$, for $p$ and $q$ sufficiently large. Let $\Omega^{(1)} = \left( \sum_{j_p=0}^{d_p} \sigma_{j_p}\otimes \overline{\sigma_{j_p}} \right)^{\frac{1}{p}}$ and $\Omega^{(2)} = \left( \sum_{j_q=0}^{d_q} \sigma_{j_q}\otimes \overline{\sigma_{j_q}} \right)^{\frac{1}{q}}$.

Let $\omega_{FS}^{(1)}$ and $\omega_{FS}^{(2)}$ be the Fubini-Study metrics of $\mathbf{CP}^{d_p}$ and $\mathbf{CP}^{d_q}$ restricted on $X_{can}$ associated to $\Phi_p$ and $\Phi_q$. Then by avoiding the base locus of $R(X^{\dagger}, K_{X^{\dagger}})$, there exist  a Zariski open set $U$ of $X_{can}$ and a Zariski open set $V$ of $X^{\dagger}$, such that on $V$

$$\frac{1}{p}( \Phi^{\dagger})^* \omega_{FS}^{(1)} = \ddbar \log (\phi^{\dagger})^* \Omega^{(1)}, ~~~~~~ \frac{1}{q}(\Phi^{\dagger})^* \omega_{FS}^{(2)} = \ddbar \log (\phi^{\dagger})^*\Omega^{(2)}$$
and so on $U$,

\begin{equation}\label{ext}
\frac{1}{p} \omega_{FS}^{(1)}- \frac{1}{q}\omega_{FS}^{(2)} = \ddbar \log \left(\frac{ \Phi_* \Omega^{(1)}} {\Phi_* \Omega^{(2)}  } \right).
\end{equation}

Since $\frac{1}{p} \omega_{FS}^{(1)}$ and $ \frac{1}{q}\omega_{FS}^{(2)}$ are in the same class, and $\log \left(\frac{ \Phi_* \Omega^{(1)}} {\Phi_* \Omega^{(2)}  } \right)$ is in $L^{\infty}(X_{can})$, Equation \ref{ext} holds everywhere on $X_{can}$. Therefore the following equality completes the proof of the lemma

$$\frac{1}{p} \omega_{FS}^{(1)}- \ddbar \log \Phi_* \Omega^{(1)} = \frac{1}{q}\omega_{FS}^{(2)}- \ddbar \log \Phi_* \Omega^{(2)}.$$

\qed
\end{proof}

\begin{theorem} Suppose that $X$ is an $n$-dimensional algebraic manifold of Kodaira dimension $0<\kappa<n$. If the canonical ring $R(X, K_X)$ is finitely generated, $X_{can}$ is the canonical model of $X$, then there exists a unique canonical measure $\Omega_{can}$ on $X$ satisfying

\begin{enumerate}

\item $\Omega_{can}$ is continuous on $X$ and  smooth on a Zariski open set of $X$.

\item  ${\displaystyle 0< \frac{\Omega_{can}}{ \Psi_{X,M} } < \infty}$ and $(K_X, \Omega_{can}^{-1})$ is an analytic Zariski decomposition.

\item Let  $\Phi: X \dasharrow X_{can}$ be the pluricanonical map. Then there exists a unique closed positive $(1,1)$-current  $\omega_{can}$ with bounded local potential on $X_{can}$ such that  $\Phi^* \omega_{can}= \ddbar\log \Omega_{can}$ outside the base locus of the pluricanonical system. Furthermore,
$$ ( \omega_{can} )^{\kappa} = \Phi_* \Omega_{can},$$
so on a Zariski open set of $X_{can}$ we have
$$\ric(\omega_{can})= -\omega_{can} + \bar{\omega}_{WP}.$$

\end{enumerate}

Furthermore, $\Omega_{can}$ is invariant under birational transformations.

\end{theorem}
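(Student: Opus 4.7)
The strategy is to reduce the statement to the canonical model $X_{can}$ and then invoke Theorem~\ref{HKEthm} and Theorem~\ref{MAthm}. Since $R(X,K_X)$ is finitely generated, $X_{can} = \mathrm{Proj}\,R(X,K_X)$ is a well-defined normal projective variety of dimension $\kappa$ with only canonical singularities, and for some $M \in \mathbf{N}$ the pluricanonical map $\Phi = \Phi_M : X \dasharrow X_{can}$ is birationally the Iitaka fibration, with $MK_X$ pulled back from an ample polarization on $X_{can}$. Consequently the inductive construction of Section~4.4 stabilizes at $m=M$ and the limit measure can be written in closed form as $\Omega_{can} = e^{\Phi^*\varphi}\Omega_M$, where $\varphi$ is a function on $X_{can}$ to be determined below.

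\medskip

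Concretely, I would set $\chi = \frac{1}{M}\ddbar\log\sum_j|\sigma_{M,j}|^2$ viewed as a closed semi-positive $(1,1)$-form on $X_{can}$, and $F = \Phi_*\Omega_M / \chi^\kappa$. The proof of Theorem~\ref{MAthm} already proceeds by pulling back to a resolution $\pi : Y \to X_{can}$, so it applies verbatim in the present setting where $X_{can}$ has canonical singularities, yielding a unique $\varphi \in PSH(X_{can},\chi) \cap C^0(X_{can}) \cap C^\infty(X_{can}^\circ)$ solving $(\chi + \ddbar\varphi)^\kappa = F e^\varphi \chi^\kappa$. Define
\[
\omega_{can} := \chi + \ddbar\varphi, \qquad \Omega_{can} := e^{\Phi^*\varphi}\Omega_M.
\]
Then $\omega_{can}$ is a closed positive $(1,1)$-current with bounded local potential on $X_{can}$, and off the base locus of $|MK_X|$ one has $\Phi^*\chi = \ddbar\log\Omega_M$, so $\Phi^*\omega_{can} = \ddbar\log\Omega_{can}$ there. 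The fibre-integration identity $(\omega_{can})^\kappa = \Phi_*\Omega_{can}$ follows from the Monge--Amp\`ere equation after pushing forward under $\Phi$, and the Ricci identity $\ric(\omega_{can}) = -\omega_{can} + \bar\omega_{WP}$ on a Zariski open subset of $X_{can}$ is the same computation as in the proof of Theorem~\ref{HKEthm}, with the Weil--Petersson form $\bar\omega_{WP}$ of Definition~\ref{push2} replacing the smooth-fibre $\omega_{WP}$.

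\medskip

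Properties (1) and (2) are then straightforward. Continuity of $\Omega_{can}$ on $X$ follows because $\varphi$ is continuous on $X_{can}$ and $\Omega_M$ is a continuous (possibly degenerate) $(n,n)$-form on $X$; smoothness on a Zariski open subset follows from $\varphi \in C^\infty(X_{can}^\circ)$. The two-sided comparison $0 < \Omega_{can}/\Psi_{X,M} < \infty$ follows from $\|\varphi\|_{L^\infty(X_{can})} < \infty$ combined with the fact that $\Omega_M$ and $\Psi_{X,M}$ are comparable up to bounded positive factors. The analytic Zariski decomposition property is then immediate: any $\sigma \in H^0(X,mK_X)$ satisfies $|\sigma|^2_{h_{can}^m} = |\sigma|^2 \Omega_{can}^{-m}$, which is bounded because $|\sigma|^2$ is pointwise dominated by a multiple of $\Psi_{X,M}^{m/M}$ and $\Omega_{can}$ is bounded below by a positive multiple of $\Psi_{X,M}$.

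\medskip

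Uniqueness of $\Omega_{can}$, and hence of $\omega_{can}$, reduces to uniqueness in Theorem~\ref{MAthm}: any competing $\tilde\Omega_{can}$ satisfying (1)--(3) must, by the fibre-constancy argument already used in Lemma~\ref{constantfibre}, descend to a bounded quasi-plurisubharmonic solution on $X_{can}$ of the same Monge--Amp\`ere equation, and the uniqueness in Theorem~\ref{MAthm} forces agreement. Birational invariance then comes essentially for free: if $\mu : X' \to X$ is birational, then $X'$ and $X$ share the same canonical model and the same graded canonical ring, so the construction produces the same $\omega_{can}$ on $X_{can}$, and $\Omega_{can}$ pulls back compatibly under $\mu$ by uniqueness. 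The principal technical obstacle, and the step I would treat most carefully, is the extension of the identities $\Phi^*\omega_{can} = \ddbar\log\Omega_{can}$ and $(\omega_{can})^\kappa = \Phi_*\Omega_{can}$ across the base locus of the pluricanonical system and the singular locus of $X_{can}$: these require combining the boundedness of the local potential of $\omega_{can}$ with the pluripolarity of the exceptional sets in order to invoke the standard extension theorems for positive currents and for Monge--Amp\`ere masses.
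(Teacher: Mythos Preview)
Your proposal is correct and follows essentially the same approach as the paper: reduce to a Monge--Amp\`ere equation on $X_{can}$ via an Iitaka fibration resolving the indeterminacy of $\Phi$, solve it using the methods of Theorem~\ref{MAthm}, define $\Omega_{can}$ as $e^{\Phi^*\varphi}$ times the reference pluricanonical volume, and deduce uniqueness and birational invariance from the fibre-constancy argument of Lemma~\ref{constantfibre} combined with uniqueness of the Monge--Amp\`ere solution. The paper carries this out explicitly on a resolution $\pi^\dagger: X^\dagger \to X$ with a morphism $\Phi^\dagger: X^\dagger \to X_{can}$, which is precisely the device needed to make your invocation of Theorem~\ref{MAthm} rigorous and to handle the continuity and extension issues across the base locus that you flag at the end.
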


\begin{proof}

 If $R(X, K_X)$ is finitely generated, there exists the following diagram

\[
\begin{diagram}
\node{X} \arrow{se,b,..}{\Phi}      \node[2]{X^{\dagger}} \arrow[2]{w,t}{\pi^{\dagger}} \arrow{sw,r}{\Phi^{\dagger}} \\
\node[2]{X_{can}}     \\
\end{diagram}
\]
where $X_{can}$ is the canonical model of $X$ and $X^{\dagger}$ is the resolution of  the stable base locus of the pluricanonical systems such that $\left( \pi^{\dagger}\right)^* MK_X = L_M+ E_M$ for sufficiently large $M$, where $L_M$ is globally generated and $E_M$ is the fixed part of $\left| \left( \pi^{\dagger}\right)^* MK_X \right|$ with $E_M$ being a divisor with normal crossings. $X^{\dagger}$ is an Iitaka fibration over $X_{can}$ such that the generic fibre has Kodaira dimension $0$. Let $\{ \sigma_{j }\}_{j=0}^{d_M}$ be a basis of $H^0(X, M K_X)$ and $\{\zeta_{j}\}_{j=0}^{d_M}$ be a basis of $H^0(X, L_M)$ such that

$$\left(\pi^{\dagger} \right)^*  \sigma_{j} = \zeta_{j} E_M.$$

Let $\Omega= \pi^{\dagger}\left(\sum_{m=0}^M \sum_{j_m=0}^{d_m} |\sigma_{j_m}|^{\frac{2}{m}}\right)$ be a degenerate smooth volume form on $X^{\dagger}$ and $\omega= \frac{1}{M} \ddbar \log \left(\sum_{j=0}^{d_M} |\zeta_j|^2\right)$. Then the following Monge-Amp\`ere equation has a unique continuous solution $\varphi$ on $X_{can}$

\begin{equation}
\left( \omega + \ddbar \varphi \right)^{\kappa} = e^{\varphi} (\Phi^{\dagger})_* \Omega.
\end{equation}
Furthermore, $\varphi$ is smooth on a Zariski open set $X_{can}^{\circ}$ of $X_{can}$ and so is $\omega_{can}=\omega+\ddbar \varphi$.

Let $\Theta = \frac{\Omega} { ( \Phi^{\dagger} )^* ( \Phi^{\dagger} )_* \Omega}$ be an $(n-\kappa, n-\kappa)$-current  on $X^{\dagger}$. On a generic fibre $F$, $\left. \Theta \right|_F= \eta\wedge\bar{\eta}$ for some $\eta\in H^0(X, K_{F})$. So without loss of generality, we assume that on $X_{can}^{\circ}$,

$$\omega_{WP} = \ddbar \log \Theta - \frac{1}{M} \ddbar\log |E_M|^2.$$

Therefore on $X_{can}^{\circ}$

$$\ric(\omega_{can}) = -\omega_{can} + \bar{\omega}_{WP}.$$
On the other hand, we define

$$\Omega_{can} = ( (\pi^{\dagger})^{-1})^* \left( e^{\varphi} \Omega \right) =  e^{\Phi^*\varphi} \left(\sum_{m=0}^M \sum_{j_m=0}^{d_m} |\sigma_{j_m}|^{\frac{2}{m}}\right).$$
From the regularity of $\varphi$, $\Omega_{can}$ is continuous on $X$ and smooth on a Zariski open set of $X$ and $$\ddbar \log \Omega_{can}= \ddbar \log \left(\sum_{m=0}^M \sum_{j_m=0}^{d_m} |\sigma_{j_m}|^{\frac{2}{m}}\right) + \ddbar \Phi^*\varphi = \Phi^* \omega_{can}.$$

We then shall prove the uniqueness of $\Omega_{can}$. Suppose there exists another measure $\Omega'$ satisfying the assumptions in the theorem. Then let $\Omega'= e^{\varphi '} \Omega$. Since $\Phi^* \omega_{FS}= \ddbar\log \Omega$ and $\ddbar\log \Omega' -\ddbar\log \Omega$ is a pullback from $X_{can}$. Therefore on a Zariski open set  a generic fibre $F$ of $\Phi^{\dagger}$,  we have $$\sqrt{-1} \partial_F \dbar_F \log (\pi^{\dagger})^*\left(\frac{\Omega'} {\Omega}=0\right) = \sqrt{-1} \partial_F \dbar_F (\pi^{\dagger})^* \varphi'.$$

Since $(\pi^{\dagger})^* \varphi'\in L^{\infty}(F)$, $(\pi^{\dagger})^* \varphi' $ is constant along $F$. So $\varphi'$ descends to $X_{can}$ and satisfies the following Monge-Amp\`ere equation

\begin{equation}\label{uniqeqn}
 (\omega+ \ddbar \varphi')^{\kappa} = e^{\varphi'} \Phi_* \Omega.
\end{equation}
By the uniqueness of the solution of Equation \ref{uniqeqn}, $\Omega' =\Omega_{can}$ and we have proved the uniqueness of $\Omega_{can}$.

Finally we shall prove that $\Omega_{can}$ and $\Phi^* \omega_{can}$ are birational invariants.
Suppose $X_{(1)}$ and $X_{(2)}$ are birational with $X_{can}$ being the canonical model. Then we have the following diagram
\[
\begin{diagram}
\node{X_{(1)}} \arrow{se,b,..}{\Phi^{(1)}}      \node[2]{X_{(2)}} \arrow[2]{w,t,..}{f} \arrow{sw,r,..}{\Phi^{(2)}} \\
\node[2]{X_{can}}
\end{diagram}
\]
where $f$ is birational,
 $\Phi^{(1)}$ and $\Phi^{(2)}$ are the pluricanonical maps. Fix $\Omega$ on $X_{(1)}$ as constructed as in the proof of uniqueness, then by Hartog's theorem, $f^* \Omega $ is smooth and can be constructed the same way. By replacing $X_{(1)}$ and $X_{(2)}$ by their Iitaka fibration, it is straightforward to show that $$(\Phi^{(1)})_* \Omega = (\Phi^{(2)})_* \left( f^* \Omega \right).$$

 Let $(\Phi^{(1)})^* \omega= \ddbar\log \Omega$,  $\Omega_{(1)}= e^{\varphi_{(1)}}\Omega$ and  $\Omega_{(2)}= e^{\varphi_{(2)}} f^*\Omega$ be the unique canonical measures on $X_{(1)}$ and $X_{(2)}$. Both $\varphi_{(1)}$ and $\varphi_{(2)}$ descend to $PSH(X, \omega)\cap L^{\infty}(X_{can})$ and satisfy

$$(\omega+\ddbar\varphi)^{\kappa} = e^{\varphi} (\Phi^{(1)}) _* \Omega.$$
The uniqueness of the solution to the above Monge-Amp\`ere equation implies that $\varphi_{(1)} = \varphi_{(2)}$ and so

$$ f^* \Omega_{(1)} = \Omega_{(2)}.$$

\qed
\end{proof}


\section{The K\"ahler-Ricci flow}

\subsection{Reduction of the normalized K\"ahler-Ricci flow}

Let $X$ be an $n$-dimensional compact K\"ahler manifold. A
K\"ahler metric can be given by its K\"ahler form $\omega$ on $X$.
In local coordinates $z_1, ..., z_n$, we can write $\omega$ as
$$\omega=\sqrt{-1}\sum_{i, j=1}^n
g_{i\bar{j}}dz_i\wedge dz_{\bar{j}},$$ where $\{g_{i\bar{j}}\}$ is
a positive definite hermitian matrix function. Consider the normalized
K\"ahler-Ricci flow
\begin{equation}\label{krflow2}
\left\{
\begin{array}{rcl}
&&{ \displaystyle \frac{\partial \omega(t,\cdot)}{\partial t} = -
 \ric(\omega(t,\cdot))- \omega(t,\cdot),}\\
&&\\
&& \omega(0,\cdot)=\omega_0 .
\end{array} \right.
\end{equation}
where
$ \ric(\omega(t,\cdot))$ denotes the Ricci curvature of
$\omega(t,\cdot)$ and $\omega_0$ is a given K\"ahler metrics.

Let $Ka(X)$ denote the K\"ahler cone of $X$, that is,
$$Ka(X)=\{ [\omega]\in H^{1,1}(X, \mathbf{R})~|~
[\omega]>0\}.$$
%
Suppose that $\omega(t,\cdot)$ is a solution of (\ref{krflow2}) on
$[0,T)$. Then its induced equation on K\"ahler classes in $Ka(X)$
is given by the following ordinary differential equation
\begin{equation}
\left\{
\begin{array}{rcl}
&&{\displaystyle \ddt{[\omega]}}  =  \displaystyle{
-2\pi c_1(X) - [\omega]}\\
&&\\
&&[\omega]|_{t=0} = [ \omega_0] .
\end{array} \right.
\end{equation}
It follows
$$[\omega(t,\cdot)]=-2\pi c_1(X)+e^{-t}([\omega_0]+2\pi c_1(X)).$$
Now if we assume that  canonical bundle
$K_X$ is semi-positive, then for a sufficiently large integer $m$, the pluricanonical map associated to
$H^0(X, m K_X)$ gives rise to an algebraic fibre space $f: X \rightarrow X_{can}$, where $X_{can}$ is the canonical model of $X$.  Recall the Kodaira dimension $kod(X)$ of $X$ is defined to
be the dimension of $X_{can}$.
Moreover, there is a smooth K\"ahler
form $\chi$ as the Fubini-Study metric associated to a basis of $H^0(X, mK_X)$ on the normal K\"ahler space $X_{can}$ such that $f^*\chi$ represents $-2\pi
c_1(X)$. Choose the reference K\"ahler metric $\omega_t$ by

\begin{equation}\label{ref metric}
\omega_t=\chi+e^{-t}(\omega_0-\chi).
\end{equation}
Here we abuse the notation by identifying $\chi$ and $f^*\chi$ for
simplicity. Then the solution of (\ref{krflow2}) can be written as
$$\omega=\omega_t+\sqrt{-1}\partial\dbar \varphi.$$
We can always choose a smooth volume form $\Omega$ on $X$ such
that $\ric(\Omega)=\chi$. Then the evolution for the K\"ahler
potential $\varphi$ is given by the following initial value
problem:
\begin{equation} \label{**}
\left\{
\begin{array}{rcl}
&& {\displaystyle \ddt{\varphi}=
\log\frac{e^{(n- \kappa)t}(\omega_t+\sqrt{-1}\partial\dbar\varphi)^n}{\Omega
}-
\varphi }\\
&&\\
&&\varphi|_{t=0} = 0,  \end{array} \right.
\end{equation}
where $\kappa= kod (X)$.


\subsection{K\"ahler-Ricci flow on algebraic manifolds with semi-positive canonical line bundle}

\begin{theorem}\label{main2}

Let $X$ be a nonsingular algebraic variety with semi ample
canonical line bundle $K_X$ and so $X$ admits a
holomorphic fibration over its canonical model $X_{can}$ $f:
X\rightarrow X_{can}$.  Then for any initial K\"ahler metric, the
K\"ahler-Ricci flow (\ref{krflow1}) has a global solution
$\omega(t,\cdot)$ for all time $t\in [0, \infty)$ satisfying:

\begin{enumerate}

\item $\omega(t,\cdot)$ converges to $f^*\omega_{\infty}\in -2\pi
c_1(X)$ as currents for a positive current $\omega_{\infty}$ on
$\Sigma$.

\item $\omega_\infty$ is smooth on $X_{can}^{\circ}$ and satisfies
the generalized K\"ahler-Einstein equation on $X_{can}^{\circ}$
\begin{equation}
\ric(\omega_{\infty})=-\omega_{\infty}+\omega_{WP},
\end{equation}
where $\omega_{WP}$ is the induced Weil-Petersson metric.

\item for any compact subset $K\in X_{reg}$, there is a constant
$C_K$ such that
\begin{equation}
||R(t, \cdot)||_{L^\infty(K)}+ e^{(n- \kappa) t}
\sup_{f^{-1}(s)\in K} ||(\omega(t,\cdot))^{n-\kappa} |_{f^{-1}(s)}||_{L^\infty} \le C_K
.
\end{equation}

\end{enumerate}

\end{theorem}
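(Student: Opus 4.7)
The plan is to analyze the parabolic Monge-Amp\`ere equation (\ref{**}) obtained from the flow by writing $\omega=\omega_t+\ddbar\varphi$ with the degenerating reference form $\omega_t=\chi+e^{-t}(\omega_0-\chi)=e^{-t}\omega_0+(1-e^{-t})\chi$. Since $\omega_t\geq e^{-t}\omega_0>0$ for every finite $t$, short-time existence is standard and long-time existence (already established in \cite{TiZha} in this generality) reduces to uniform a priori estimates on $\varphi$ and $\ddt\varphi$. The natural limit candidate is the unique $\varphi_\infty\in PSH(X_{can},\chi)\cap C^0(X_{can})\cap C^\infty(X_{can}^{\circ})$ solving the degenerate Monge-Amp\`ere equation (\ref{MAeqn}) of Theorem~\ref{MAthm}; the strategy is to show $\varphi(t,\cdot)\to\varphi_\infty$ in a strong enough sense to pass to the limit in (\ref{**}).

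The first and principal step is a uniform bound $\|\varphi\|_{L^\infty(X)}\leq C$ independent of $t$. The upper bound is routine: at a spatial maximum of $\varphi$ one has $(\omega_t+\ddbar\varphi)^n\leq \omega_t^n$, and $\omega_t^n/\Omega$ is controlled by a pole-bounded multiple of the function $F$ from Proposition~\ref{Fso}. The lower bound is where the collapsing really bites, because $\omega_t$ degenerates transversally to the fibres and the standard Yau-type $C^0$ argument does not close. Here I would invoke the Demailly--Pali estimate (Theorem~\ref{D-P}), whose very weak dependence on the reference form is tailor-made for the present collapsing setting, applied to the elliptic Monge-Amp\`ere equation satisfied at each time slice, together with the integrability $F\in L^{1+\epsilon}$ supplied by Proposition~\ref{Fso}. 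This is the main obstacle.

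With $\varphi$ bounded, a two-sided bound on $\ddt\varphi$ follows from the parabolic maximum principle applied to the auxiliary quantities $\ddt\varphi+\varphi$ and $e^t(\ddt\varphi+\varphi-(n-\kappa))$, whose evolution equations are linear parabolic with controlled zero-order terms. This gives $\omega^n=e^{\ddt\varphi+\varphi-(n-\kappa)t}\,\Omega\sim e^{-(n-\kappa)t}\,\Omega$ uniformly on $X$, which already delivers the fibre-volume decay in item (3) after noting that $\omega^n/(f^*\chi)^\kappa$ computes the fibrewise volume form $\omega^{n-\kappa}|_{X_s}$ up to a universal combinatorial constant. For the interior regularity needed on $X^{\circ}=f^{-1}(X_{can}^{\circ})$, I would derive a local second-order estimate by applying the parabolic maximum principle to an expression of the form $\log(|S_E|_{h_E}^{2\alpha}\,\tr_{\omega_{SF}}\omega)-A\varphi+\text{(lower order)}$, with $\omega_{SF}$ the semi-flat form of Lemma~\ref{sf}, mirroring the elliptic argument used in the proof of Theorem~\ref{MAthm}. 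This produces $\omega\leq C_K\,\omega_{SF}$ on $f^{-1}(K)$ for each compact $K\subset X_{can}^{\circ}$; combined with the lower bound on $\omega^n$, the flow becomes uniformly parabolic there and standard Schauder bootstrapping yields local $C^k$ bounds.

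Convergence and the remaining conclusions follow. The uniform $C^0$ bound and compactness of quasi-plurisubharmonic functions in $L^1(X)$ force $\varphi(t,\cdot)\to\varphi_\infty$ along any subsequence, and uniqueness in Theorem~\ref{MAthm} pins down the limit, giving $\omega(t,\cdot)\to\chi+\ddbar\varphi_\infty=f^*\omega_{can}$ as currents, which is (1); Theorem~\ref{HKEthm} then supplies the generalized K\"ahler-Einstein identity on $X_{can}^{\circ}$ asserted in (2). The scalar curvature bound in (3) comes from the flow identity $R=-n+e^{-t}\tr_\omega(\omega_0-\chi)-\Delta_\omega\ddt\varphi$: on $f^{-1}(K)$ the first perturbation stays bounded because the prefactor $e^{-t}$ compensates exactly for the $e^t$ blow-up of $\tr_\omega\omega_0$ in the collapsing fibre directions controlled by the second-order estimate, while the Laplacian term is bounded by the local $C^2$-estimate applied to the uniformly bounded $\ddt\varphi$.
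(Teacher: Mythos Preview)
Your overall architecture is right---Demailly--Pali for the $C^0$ estimate, identification of $\varphi_\infty$ via Theorem~\ref{MAthm}, and the partial second-order estimate away from singular fibres---but two of the steps you treat as routine are where the actual work in the paper lies, and your proposed shortcuts do not go through.

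\textbf{Fibre volume.} You write that $\omega^n/(f^*\chi)^\kappa$ ``computes the fibrewise volume form $\omega^{n-\kappa}|_{X_s}$ up to a universal combinatorial constant.'' This is not true: there is no such algebraic identity, because $\omega$ need not split along the fibration. The paper obtains the fibre decay only after proving the Schwarz-lemma type estimate $\tr_\omega(\chi)\le C|S|_h^{-2\lambda}$ (Proposition~\ref{partialvol}), and then uses
\[
\frac{(\omega|_{X_s})^{n-\kappa}}{(\omega_0|_{X_s})^{n-\kappa}}
=\frac{\omega^{n-\kappa}\wedge\chi^\kappa}{\omega^n}\cdot\frac{\omega^n}{\omega_0^{n-\kappa}\wedge\chi^\kappa}
\le C\,(\tr_\omega\chi)^\kappa\cdot\frac{\omega^n}{\omega_0^{n-\kappa}\wedge\chi^\kappa}.
\]
Note the direction of the second-order quantity: the paper bounds $\tr_\omega(\chi)$, not $\tr_{\omega_{SF}}(\omega)$ as you propose. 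The latter is a genuinely different (and in this collapsing setting harder) estimate, and in any case it is $\tr_\omega(\chi)$ that appears both here and in the scalar curvature identity below.

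\textbf{Scalar curvature.} Your argument is that local parabolic Schauder theory gives $C^k$ bounds on $\varphi$ over $f^{-1}(K)$, hence $C^2$ bounds on $\ddt\varphi$, hence control of $\Delta_\omega\ddt\varphi$. The paper does \emph{not} establish local $C^k$ bounds on $\varphi$; indeed it explicitly defers even the $C^{1,1}$ convergence to later work. In the collapsing regime the equation is not uniformly parabolic with respect to any fixed background metric, so ``standard bootstrapping'' is unavailable. The paper instead sets $u=\ddt\varphi+\varphi=\log\bigl(e^{(n-\kappa)t}\omega^n/\Omega\bigr)$, whose evolution is $\ddt u=\Delta u+\tr_\omega(\chi)-(n-\kappa)$, and applies a Cheng--Yau/Perelman style gradient estimate directly to $u$ to bound $|\nabla u|^2$ and $-\Delta u$ (with controlled pole along the singular locus). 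Since $R=-\Delta u-\tr_\omega(\chi)$, the scalar curvature bound follows from this together with Proposition~\ref{partialvol}. This gradient estimate is the missing ingredient in your outline; without it the scalar curvature bound does not follow from the estimates you have in hand.

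Minor points: the two-sided bound you claim on $\ddt\varphi$ is only global from above; the lower bound the paper obtains carries a $\log|S|^2$ pole (Lemma~\ref{time}). And for convergence the paper does not use PSH compactness but a direct maximum-principle comparison of $\varphi-(1\pm\epsilon)\varphi_\infty-e^{-t}\rho_\epsilon\pm\epsilon\log|S_D|^2_{h_D}$, which is what upgrades the conclusion from weak convergence to local $C^0$ convergence of potentials.
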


\begin{corollary}\label{main3}

Let $X$ be a nonsingular algebraic variety with semi-ample
canonical bundle. If $X_{can}^{\circ}=X_{can}$, i.e., $X_{can}$ is
nonsingular and $f: X\rightarrow X_{can}$ has no singular fibres,
then for any initial K\"ahler metric, the K\"ahler-Ricci flow
(\ref{krflow1}) converges to a smooth limit metric
$f^*\omega_{\infty}\in K_X$ satisfying

\begin{equation}
\ric(\omega_{\infty})=-\omega_{\infty}+\omega_{WP}.
\end{equation}

\end{corollary}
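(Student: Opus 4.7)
The plan is to deduce Corollary~\ref{main3} from Theorem~\ref{main2} by exploiting the hypothesis $X_{can}^{\circ}=X_{can}$, which upgrades every ``locally on compact subsets of $X_{can}^{\circ}$'' statement in Theorem~\ref{main2} into a global statement on the compact manifold $X_{can}$. In particular, item~(3) of Theorem~\ref{main2} will yield a uniform-in-$t$ bound on the scalar curvature $R(t,\cdot)$ on all of $X$, together with a uniform exponential decay of the fibre volumes. The limit current $\omega_{\infty}$, which Theorem~\ref{main2} produces as smooth only on $X_{can}^{\circ}$, will therefore already be smooth on all of $X_{can}$ and satisfy the generalized K\"ahler-Einstein equation everywhere.

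The remaining task is to upgrade the currents convergence to smooth convergence on $X$. Writing $\omega(t,\cdot)=\omega_t+\ddbar \varphi$ with $\omega_t=\chi+e^{-t}(\omega_0-\chi)$, so that $\varphi$ solves the reduced parabolic equation~\eqref{**}, I would first observe that the $L^{\infty}$ bound for $\varphi$ produced via Theorem~\ref{D-P} in the proof of Theorem~\ref{main2} is uniform on $X\times[0,\infty)$: the pole singularities of the density $F$ identified in Proposition~\ref{Fso} are supported on $X_{can}\setminus X_{can}^{\circ}$, which is empty here, so $F$ is smooth and bounded on $X_{can}$. Together with the standard upper bound on $\dot\varphi$ by the parabolic maximum principle, this yields a uniform two-sided $C^0$ estimate for $\varphi$.

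Next I would run a parabolic Aubin--Yau maximum-principle computation to obtain a uniform estimate on $\tr_{\tilde\omega_t}\omega(t,\cdot)$, where $\tilde\omega_t$ is a reference metric built from the semi-flat form of Lemma~\ref{sf} rescaled in the fibre direction by the factor $e^{-t}$ forced by the $e^{(n-\kappa)t}$ in~\eqref{**}. With this choice the relative volume ratio stays uniformly bounded, and the bounded bisectional curvature of $\tilde\omega_t$ (which follows from the smoothness of the family of Calabi--Yau fibres over all of $X_{can}$) makes the usual Cao-type inequality for $\Delta \log \tr$ effective. Once uniform $C^0$ and $C^2$ bounds for $\varphi$ are in hand, equation~\eqref{**} is uniformly parabolic on $X$ and Evans--Krylov together with Schauder bootstrapping produce uniform $C^{k,\alpha}$ bounds for every $k$; subsequence extraction plus the uniqueness of the canonical metric on $X_{can}$ (Theorem~\ref{HKEthm}) then promote currents convergence to smooth convergence $\omega(t,\cdot)\to f^*\omega_{\infty}$.

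The main obstacle will be the uniform second-order estimate. Because $\omega(t,\cdot)$ collapses in the fibre direction as $t\to\infty$, no fixed K\"ahler metric on $X$ is uniformly equivalent to $\omega(t,\cdot)$, and a naive Aubin--Yau computation against a time-independent background will diverge. The key is to incorporate the anisotropic rescaling $e^{-t}$ in the fibre direction into the reference $\tilde\omega_t$, so that the Kodaira--Spencer variation of the fibre complex structures only contributes a term controlled by the globally smooth Weil--Petersson form $\omega_{WP}$, and the $e^{(n-\kappa)t}$ factor is absorbed into a bounded density. This is precisely where the absence of singular fibres is essential, since otherwise $\tilde\omega_t$ would blow up near the discriminant locus and the bounded-geometry input to the maximum principle would fail.
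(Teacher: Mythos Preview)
Your proposal overshoots what the paper actually proves. In the paper, Corollary~\ref{main3} is stated immediately after Theorem~\ref{main2} and given no separate proof; it is treated as the specialization of Theorem~\ref{main2} to the case $X_{can}^{\circ}=X_{can}$. The phrase ``converges to a smooth limit metric $f^*\omega_{\infty}$'' is to be read as: the flow converges in the sense already established in Theorem~\ref{main2} (i.e., as currents, via $C^0$ convergence of potentials as in Proposition~\ref{uniconv}), and the limit $\omega_{\infty}$ is smooth on all of $X_{can}$ because item~(2) of Theorem~\ref{main2} gives smoothness on $X_{can}^{\circ}$, which is now everything. The paper explicitly does \emph{not} claim smooth convergence of $\omega(t,\cdot)$: see the sentence following Theorem~A in the introduction, where even $C^{1,1}$ convergence on $X^{\circ}$ is deferred to a forthcoming paper.

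What you have outlined is therefore not a proof of the stated corollary but a program for a genuinely stronger result. As such it is reasonable in spirit --- comparing against a fibrewise-rescaled semi-flat reference is indeed the right idea, and has since been carried out in the literature --- but it contains real gaps as written. The claim that $\tilde\omega_t=\chi+e^{-t}\omega_{SF}$ has uniformly bounded bisectional curvature does not follow merely from smoothness of the Calabi--Yau fibration: the curvature of a collapsing product-like metric need not stay bounded without additional structure, and controlling the off-diagonal and horizontal curvature components requires a separate argument. Moreover, even granting a uniform trace bound $\tr_{\tilde\omega_t}\omega(t,\cdot)\le C$, Evans--Krylov and Schauder do not automatically give uniform $C^{k,\alpha}$ estimates with respect to a \emph{fixed} metric on $X$, since the ellipticity constants of~\eqref{**} degenerate in the fibre direction; one gets estimates only in the collapsing geometry of $\tilde\omega_t$, which is not what ``smooth convergence on $X$'' means. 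For the corollary as stated, none of this is needed: simply invoke Theorem~\ref{main2} and observe that every conclusion is global because $X_{can}\setminus X_{can}^{\circ}=\emptyset$.
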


\noindent{\it Step 1.} {\bf Zeroth order and volume estimates}

\bigskip\noindent We will  first derive the zeroth order estimates for $\varphi$ and
$\frac{d\varphi}{dt}$.

\begin{lemma} \label{0th}
Let $\varphi$ be a solution of the K\"ahler-Ricci flow (\ref{**}).
There exists a  constant $C>0$  such that on $[0, \infty) \times X$

\begin{enumerate}

\item    $\varphi\leq C$,

\item    $\ddt{\varphi}\leq C$,

\item    $\displaystyle \frac{e^{(n-\kappa)t} \omega^n}{\Omega} \leq C$.

\end{enumerate}

\end{lemma}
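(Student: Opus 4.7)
The plan is to establish the three bounds by parabolic maximum principle on three different quantities, supplemented by the Kolodziej-type $L^\infty$ estimate from Theorem \ref{D-P} at one crucial step.

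For item 1, I would apply the maximum principle to $\varphi$ directly. At a spatial maximum $x_t$ of $\varphi(t,\cdot)$ one has $\sqrt{-1}\partial\dbar\varphi \leq 0$, hence $\omega \leq \omega_t$ and $\omega^n \leq \omega_t^n$ pointwise. The decisive pointwise bound is
\[
e^{(n-\kappa)t}\,\omega_t^n \;\leq\; C\,\Omega,
\]
which follows from the binomial expansion of $\omega_t^n = \bigl((1-e^{-t})\chi + e^{-t}\omega_0\bigr)^n$: since $\chi$ is pulled back from the $\kappa$-dimensional base $X_{can}$, $\chi^{\kappa+1}=0$, so every surviving term carries at least $n-\kappa$ factors of $e^{-t}$. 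Plugging into the potential equation gives $\dot\varphi(t,x_t) \leq \log C - \varphi_{\max}(t)$, and the ODE comparison yields $\varphi_{\max}(t) \leq \max(\varphi_{\max}(0),\log C)$.

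For item 3, set $H := \dot\varphi + \varphi = \log\bigl(e^{(n-\kappa)t}\omega^n/\Omega\bigr)$. A direct computation from the flow yields the evolution
\[
(\partial_t - \Delta_\omega) H \;=\; \mathrm{tr}_\omega\chi \;-\; \kappa.
\]
Since $\mathrm{tr}_\omega\chi \geq 0$, the parabolic maximum principle alone is insufficient for an upper bound. Instead I would exploit the Monge-Amp\`ere structure: rewrite the flow as $(\omega_t + \sqrt{-1}\partial\dbar\varphi)^n = F\,\Omega$ with $F = e^{H-(n-\kappa)t}$, and note that the total mass $\int F\,\Omega = [\omega_t]^n \asymp e^{-(n-\kappa)t}$ provides a uniform bound $\int e^H\,\Omega \leq C$. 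After the time-dependent rescaling $\omega_t \mapsto e^{(n-\kappa)t/n}\omega_t$ which renders the cohomology class essentially $t$-independent, the hypotheses of Theorem \ref{D-P} can be verified uniformly in $t$, yielding the oscillation bound $\sup\varphi - \inf\varphi \leq C$. Combined with item 1 this produces a uniform lower bound $\varphi \geq -C'$, and then item 2 follows at once from $\dot\varphi = H - \varphi$, while item 3 is the statement $e^H \leq $ const.

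The main obstacle will be the uniform verification of the hypotheses of Theorem \ref{D-P} along the rescaled flow. Because $[\omega_t]^n \to 0$ at rate $e^{-(n-\kappa)t}$ and because $\omega_t^n/\Omega$ degenerates precisely where $\chi$ does (over the singular locus $X_{can}\setminus X_{can}^\circ$), the integrability condition $\int \bigl([\omega_t]^n\Omega/\omega_t^n\bigr)^\epsilon\Omega \leq B$ is the delicate point. It is exactly the \emph{weak dependence} of Theorem \ref{D-P} on the reference form that makes this feasible, and this is the analytic heart of the whole lemma. Everything else in the argument is essentially a coupling of two standard maximum-principle ODE comparisons once the lower bound on $\varphi$ is in hand.
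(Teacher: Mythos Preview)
Your argument for item 1 is correct and standard.  The trouble is with items 2 and 3.  You assert that the maximum principle alone is insufficient for the upper bound on $H=\dot\varphi+\varphi$ and propose to obtain it via Theorem~\ref{D-P}.  This is circular.  Theorem~\ref{D-P} requires
\[
\int_X\Bigl(\frac{F}{[\omega_t]^n}\Bigr)^p\,\Omega\;\le\;A
\qquad\text{for some }p>1,
\]
and with $F=e^{H-(n-\kappa)t}$ and $[\omega_t]^n\asymp e^{-(n-\kappa)t}$ this amounts to a uniform bound on $\int_X e^{pH}\,\Omega$ for some $p>1$.  The mass identity you quote gives only the $L^1$ bound $\int_X e^{H}\,\Omega\le C$; to pass from $L^1$ to $L^p$ with $p>1$ you would already need $e^H$ bounded above, i.e.\ item~3.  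The ``time-dependent rescaling'' does not help, since rescaling the reference form by a scalar does not rescale the potential $\varphi$ in a compatible way, and in any case does not manufacture the missing $L^p$ integrability.

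In the paper the logic runs in the opposite order.  Lemma~\ref{0th} is proved purely by maximum principle (as in \cite{SoTi}), and only \emph{afterwards} is Theorem~\ref{D-P} invoked, in Proposition~\ref{c0dm}, to obtain the lower bound on $\varphi$.  There the hypothesis $F\le C_1e^{-(n-\kappa)t}$ used to verify the $L^p$ condition is precisely item~3 of the present lemma.  So Lemma~\ref{0th} is an \emph{input} to the Demailly--Pali estimate, not a consequence of it.  For items~2 and~3 you should instead run a maximum-principle argument on a suitable combination such as $\dot\varphi$ coupled with $\varphi$, exploiting $\chi-\omega_t=e^{-t}(\chi-\omega_0)$ and $e^{-t}\omega_0\le\omega_t$; once $\dot\varphi\le C$ is established, item~3 follows immediately from item~1 via $H=\dot\varphi+\varphi$.
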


\begin{proof} The lemma is a straightforward application of the maximum principle and can be proved by the same argument as in \cite{SoTi}.

\qed
\end{proof}

\begin{proposition} \label{c0dm}
There exists a constant $C>0$  such that on $[0, \infty) \times X$
\begin{equation}
|\varphi|\leq C.
\end{equation}
\end{proposition}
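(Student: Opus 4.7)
The plan is to establish $\varphi \geq -C$ by invoking the Demailly--Pali estimate (Theorem \ref{D-P}) on the Monge--Amp\`ere reformulation of the flow at each fixed time, and then upgrading the resulting oscillation bound to a pointwise bound via an elementary integral identity. Rewriting (\ref{**}) as
\[
(\omega_t + \ddbar \varphi)^n = G_t\, \Omega, \qquad G_t = e^{\ddt{\varphi} + \varphi - (n-\kappa)t},
\]
we view $\varphi(t,\cdot)$ as a solution of a degenerate Monge--Amp\`ere equation with reference form $\omega_t = (1-e^{-t})\chi + e^{-t}\omega_0$, which is a K\"ahler form uniformly dominated by the fixed K\"ahler form $\omega_0 + \chi$.

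To verify the hypotheses of Theorem \ref{D-P} uniformly in $t$, first observe that since $\chi^{\kappa+1} = 0$ on $X$ (as $\chi$ is pulled back from the $\kappa$-dimensional $X_{can}$), binomial expansion yields both the cohomological asymptotic $[\omega_t]^n = \binom{n}{\kappa} e^{-(n-\kappa)t}[\chi]^\kappa \cdot [\omega_0-\chi]^{n-\kappa} + O(e^{-(n-\kappa+1)t})$ and the pointwise identity $\omega_t^n = \binom{n}{\kappa}e^{-(n-\kappa)t}\chi^\kappa\wedge(\omega_0-\chi)^{n-\kappa} + O(e^{-(n-\kappa+1)t})$. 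Hence $e^{(n-\kappa)t}[\omega_t]^n$ is bounded between positive constants, and combined with $\ddt\varphi + \varphi \leq C$ from Lemma \ref{0th}, this gives $\int_X (G_t/[\omega_t]^n)^p\Omega \leq A$ uniformly. The same expansion shows that $\omega_t^n/([\omega_t]^n\Omega)$ is a uniformly bounded function on $X$; moreover the reverse ratio $[\omega_t]^n\Omega/\omega_t^n$ has at worst pole singularities of $t$-independent order, thanks to the identification $\Omega \sim f^*F \cdot \chi^\kappa \wedge \omega_{SF}^{n-\kappa}$ with $F$ having poles of finite order (cf. Proposition \ref{Fso} and the semi-flat construction of Lemma \ref{sf}), so that $([\omega_t]^n\Omega/\omega_t^n)^\epsilon$ is integrable for small $\epsilon > 0$, uniformly in $t$.

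Theorem \ref{D-P} thus produces a uniform oscillation bound $\sup_X\varphi(t,\cdot) - \inf_X\varphi(t,\cdot) \leq C$. To convert this into a two-sided pointwise bound, integrate the Monge--Amp\`ere equation:
\[
\int_X e^{\ddt\varphi + \varphi}\Omega = e^{(n-\kappa)t}\int_X \omega^n = e^{(n-\kappa)t}[\omega_t]^n \geq c_0 > 0,
\]
which combined with $\ddt\varphi \leq C$ forces $\int_X e^\varphi\Omega \geq c_0 e^{-C}$, and hence $\sup_X\varphi \geq -C_1$. The oscillation bound then yields $\inf_X\varphi \geq -C_2$, and combined with the upper bound $\varphi \leq C$ in Lemma \ref{0th}, this establishes $|\varphi| \leq C$ on $[0,\infty)\times X$.

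The main obstacle is the uniform verification of hypothesis 2 of Theorem \ref{D-P} as $\omega_t$ degenerates: although the competing $e^{-(n-\kappa)t}$ factors in $[\omega_t]^n$ and $\omega_t^n$ cancel in the ratio, one still must quantify the pointwise vanishing of $\omega_t^n$ along the critical locus of $f$ and match it against the pole order of $\Omega/(\chi^\kappa\wedge\omega_{SF}^{n-\kappa})$, using the semi-flat form to control both. It is precisely this point---the weak dependence of the constant in Theorem \ref{D-P} on the reference form---that makes the Demailly--Pali estimate the essential tool here, as anticipated in the remark immediately following its statement.
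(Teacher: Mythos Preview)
Your proposal is correct and follows essentially the same route as the paper: rewrite the flow as a family of Monge--Amp\`ere equations, apply the Demailly--Pali estimate (Theorem~\ref{D-P}) to obtain a uniform oscillation bound, and combine with the upper bound from Lemma~\ref{0th}. Two remarks are worth making.

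First, your verification of hypothesis~2 of Theorem~\ref{D-P} takes a detour through $\omega_{SF}$ and the function $F$ of Proposition~\ref{Fso}; this is unnecessary and slightly delicate, since $\omega_{SF}$ is only defined on $X^\circ$ and you must still relate $\omega_{SF}^{n-\kappa}$ to the $(\omega_0-\chi)^{n-\kappa}$ appearing in your own expansion of $\omega_t^n$. The paper instead notes directly that $\omega_t^n \geq C_1^{-1}e^{-(n-\kappa)t}\chi^\kappa\wedge\omega_0^{n-\kappa}$ on all of $X$ (each term in the binomial expansion is nonnegative), so the reverse ratio is dominated by $\Omega/(\chi^\kappa\wedge\omega_0^{n-\kappa})$, a quotient of smooth top forms with denominator vanishing to finite order on the critical set of $f$ --- hence $L^\epsilon$-integrable for small $\epsilon$, with no reference to the semi-flat form.

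Second, your explicit integral argument for $\sup_X\varphi \geq -C_1$ (integrating the Monge--Amp\`ere equation and using $\ddt\varphi \leq C$) is a genuine addition: the paper's proof concludes somewhat abruptly from the oscillation bound plus the upper bound $\varphi \leq C$, leaving the lower bound on $\sup_X\varphi$ implicit. Your step makes the passage from oscillation to two-sided $C^0$ bound rigorous.
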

\begin{proof} Rewrite the parabolic flow as a family of Monge-Amp\`ere equations

$$ \displaystyle{  (\omega_t + \ddbar \varphi )^n  =  e^{\ddt{\varphi} + \varphi -(n-\kappa)t } \Omega.}$$

We will apply Theorem \ref{D-P} by letting $F(t, \cdot)=e^{\ddt{\varphi} + \varphi -(n-\kappa)t }$. Notice that there exists a constant $C_1>0$ such that $ 0 < F \leq C_1  e^{-(n-\kappa)t} $,
$$ \frac{ e^{-(n-\kappa)t} }{C_1} \leq [\omega_t]^n \leq C_1 e^{-(n-\kappa)t} $$ and
$$ \frac{ e^{-(n-\kappa)t} }{C_1} \chi^{\kappa}\wedge \omega_0^{n-\kappa} \leq \omega_t^n \leq C_1 e^{-(n-\kappa)t} \Omega.$$
The assumptions in Theorem \ref{D-P} for $F$ and $\omega_t$ are satisfied. Therefore $\sup_X \varphi - \inf_X \varphi$ is uniformly bounded for all $t\in [0, \infty)$. Since $\varphi$ is uniformly bounded from above, the proposition is proved and the uniform $C^0$-estimate is obtained.

\qed

\end{proof}

The following estimate can be proved in the same way as in \cite{SoTi}.

\begin{lemma}\label{time} There exists a divisor $D$ on $X_{can}$ and  constants $C_1$ and $C_2>0$ such that

\begin{equation}
\ddt{\varphi}\geq  C_1 \log|S|_{h}^2 - C_2,
\end{equation}
where $S$ is a defining section of $f^* D$ and $h$ is a fixed smooth hermitian metric of the line bundle associated to $[f^*D]$.
\end{lemma}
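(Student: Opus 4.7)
The plan is to apply the parabolic maximum principle to a barrier function of the type used by Song-Tian \cite{SoTi} in the elliptic surface case, combined with Tsuji's trick for localizing the analysis away from the discriminant locus. The a priori bounds of Lemma \ref{0th} and Proposition \ref{c0dm} serve as the key inputs.

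First, choose a divisor $D$ on $X_{can}$ whose support contains $X_{can}\setminus X_{can}^\circ$ and is large enough that the function $F$ of Proposition \ref{Fso} satisfies $F\leq C|S_D|^{-2k}$ for some integer $k$. Fix a smooth hermitian metric $h_D$ on $[D]$ with curvature $\theta = -\ddbar\log h_D$, and choose $\epsilon>0$ so that $\chi - \epsilon\theta$ is semi-positive on $X_{can}$; write $S = f^*S_D$ and $h = f^*h_D$. Consider the auxiliary function
$$H \;=\; \ddt{\varphi} \,+\, A\varphi \,-\, \epsilon\log |S|^2_h,$$
with $A>1$ to be chosen. Since $-\log|S|^2_h\to+\infty$ along $\{S=0\}$, the infimum of $H$ over $[0,T]\times X$ is attained either at $t=0$ or at an interior point $(t_0,x_0)$ with $x_0\notin\{S=0\}$.

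A direct computation using the identities $(\ddt{} - \Delta_\omega)\ddt{\varphi} = (n-\kappa) - \ddt{\varphi} + \tr_\omega\chi - \tr_\omega\omega_t$, $(\ddt{}-\Delta_\omega)\varphi = \ddt{\varphi} - n + \tr_\omega\omega_t$, and $(\ddt{}-\Delta_\omega)\log|S|^2_h = \tr_\omega\theta$ (away from $\{S=0\}$) gives
$$(\ddt{} - \Delta_\omega) H \;=\; (n-\kappa - An) \,+\, (A-1)\bigl(\ddt{\varphi} + \tr_\omega\omega_t\bigr) \,+\, \tr_\omega(\chi-\epsilon\theta).$$
At the interior minimum the left side is $\leq 0$; using $\chi-\epsilon\theta\geq 0$ yields $(A-1)(\ddt{\varphi}+\tr_\omega\omega_t)\leq An-(n-\kappa)$. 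The key input is the arithmetic-geometric inequality $\tr_\omega\omega_t \geq n(\omega_t^n/\omega^n)^{1/n}$. Combined with the Monge-Amp\`ere equation $\omega^n = e^{\ddt{\varphi}+\varphi-(n-\kappa)t}\Omega$ and the lower bound $\omega_t^n \geq c|S|^{2k}e^{-(n-\kappa)t}\Omega$ inherited from the pole estimate on $F$, this yields $\tr_\omega\omega_t \geq c'|S|^{2k/n}e^{-\ddt{\varphi}/n}$ at the minimum. A short calculus argument, parallel to Step 3 of the proof of Theorem \ref{MAthm}, then shows that the inequality is inconsistent with $-\ddt{\varphi}(t_0,x_0)$ being much larger than $-2k\log|S(x_0)|^2_h$, forcing a pointwise bound $\ddt{\varphi}(t_0,x_0) \geq 2k\log|S(x_0)|^2_h - C'$. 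Substituting this back into $H(t_0,x_0)$ and using $|\varphi|\leq C$ from Proposition \ref{c0dm}, we extract a global lower bound $H\geq -C''$, which upon rearrangement gives the desired inequality $\ddt{\varphi}\geq C_1\log|S|^2_h - C_2$.

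The main obstacle is the compatibility of constants: $\epsilon$ must be small enough for $\chi-\epsilon\theta$ to remain semi-positive on $X_{can}$, yet large enough relative to $2k$ that the pointwise bound at the minimum absorbs cleanly into $H$ without leaving a residual logarithmic term. This is the same bookkeeping that appears in the surface proof of \cite{SoTi}, and is resolved by choosing $D$ and a representative of $h_D$ so that $[\chi]-2k[D]$ remains in the ample cone of $X_{can}$.
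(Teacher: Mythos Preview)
The overall strategy—parabolic maximum principle applied to $H = \ddt{\varphi} + A\varphi - \epsilon\log|S|^2_h$, combined with the arithmetic--geometric mean bound on $\tr_\omega\omega_t$—is the correct one and matches the approach in \cite{SoTi} that the paper invokes. Your evolution identities are right, and you correctly isolate the only real difficulty: the tension between $\epsilon$ (which must be small enough that $\chi - \epsilon\theta \ge 0$) and the exponent $k$ governing the vanishing of $\omega_t^n/\Omega$ (which forces the pointwise bound at the minimum to involve $k\log|S|^2$, not $\epsilon\log|S|^2$).

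However, your proposed resolution—choosing $D$ so that $[\chi] - 2k[D]$ remains ample—does not work in general. The product $k\cdot[D]$ is essentially dictated by the vanishing order of $\chi^\kappa\wedge\omega_0^{n-\kappa}/\Omega$ along the critical locus of $f$, and this is an invariant of the fibration: rescaling $D$ does not change it (if $D' = ND$ then $k' = k/N$ and $k'[D'] = k[D]$). Already for an ordinary nodal degeneration (local model $xy=z$, $\kappa=1$, $n=2$) one computes $g := \chi\wedge\omega_0/\Omega \sim |x|^2+|y|^2 \asymp |z|$, so $k[D] = \tfrac12[\chi]$ is forced and $[\chi]-2k[D]=0$ is not ample. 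For worse singular fibres the obstruction only gets worse.

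The fix is to exploit the term you already have but did not fully use: $(A-1)\tr_\omega\omega_t$. For $t\ge \log 2$ one has $\omega_t \ge \tfrac12\chi$, so one may split off a piece of $(A-1)\tr_\omega\omega_t$ to absorb $\epsilon\,\tr_\omega\theta$ regardless of how large $\epsilon$ is, simply by taking $A$ large (since $\chi$ is genuinely K\"ahler on $X_{can}$). Concretely, replace the requirement ``$\chi - \epsilon\theta \ge 0$'' by ``$\tfrac{A+1}{2}\chi - \epsilon\theta \ge 0$ on $X_{can}$'', choose $\epsilon > k$ first and then $A$ accordingly, and retain a residual positive multiple of $\tr_\omega\omega_t$ for the AM--GM step. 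With this modification the bookkeeping closes: at the interior minimum one obtains $\ddt{\varphi}(t_0,x_0) \ge (1+\delta)k\log|S(x_0)|^2_h - C$ for small $\delta>0$, hence $H(t_0,x_0) \ge -C'$, and the global estimate follows. The finite interval $t\in[0,\log 2]$ is handled by smoothness of the flow.
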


\noindent{\it Step 2.} {\bf Partial second order estimates and collapsing}

\begin{proposition} \label{partialvol}
There  exist a divisor $D$ on $X_{can}$ and constants $\lambda$, $C>0$ such that
\begin{equation}
\tr_{\omega}(\chi)\leq \frac{C}{|S|_h^{2\lambda}},
\end{equation}
where $S$ is a defining section of $f^*D$ and $h$ is smooth hermitian metric of the line bundle associated to the divisor $[f^* D]$.
\end{proposition}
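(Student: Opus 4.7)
The plan is to apply the parabolic maximum principle to a weighted log-trace quantity, following the strategy of Yau's second-order estimate as adapted to the collapsing K\"ahler-Ricci flow in \cite{Ts1, SoTi}. Fix a divisor $D \subset X_{can}$ containing the singular locus of $X_{can}$ together with the discriminant of $f: X \to X_{can}$, so that $\chi$ is smooth and semi-positive with bounded bisectional curvature on $X_{can} \setminus D$ (inherited from the Fubini--Study metric on a projective embedding). Let $S$ be a defining section of $f^*D$ and $h$ a smooth Hermitian metric on $[f^*D]$, and consider
$$H \;=\; \log\bigl(|S|^{2\lambda}_h \tr_\omega \chi\bigr) - A\varphi,$$
for positive constants $\lambda$, $A$ to be chosen. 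Because $H \to -\infty$ along $f^*D$, its supremum on $[0,T] \times X$ is attained either at the initial time (where it is bounded by the initial data) or at some interior $(t_0, x_0)$ with $x_0 \notin f^*D$, where the maximum principle applies.

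The standard parabolic Schwarz lemma computation, using the flow equation $\partial_t \omega = -\ric(\omega) - \omega$ and the curvature bound on $\chi$, yields on the smooth locus an inequality of the form
$$\left(\frac{\partial}{\partial t} - \Delta_\omega\right) \log \tr_\omega \chi \;\leq\; C_0\, \tr_\omega \chi + C_0.$$
From $\omega = \omega_t + \ddbar \varphi$ with $\omega_t = (1 - e^{-t})\chi + e^{-t}\omega_0$, and since $\omega_0 \geq 0$,
$$\left(\frac{\partial}{\partial t} - \Delta_\omega\right) \varphi \;=\; \frac{\partial \varphi}{\partial t} - n + \tr_\omega \omega_t \;\geq\; \frac{\partial \varphi}{\partial t} - n + (1 - e^{-t})\, \tr_\omega \chi.$$
Combined with the uniform bound on $-\Delta_\omega \log |S|^2_h = \tr_\omega(-\ddbar \log h)$ (the curvature of $h$), one obtains for $t \geq \log 2$ and $A \geq 2(C_0 + 1)$,
$$\left(\frac{\partial}{\partial t} - \Delta_\omega\right) H \;\leq\; -\tr_\omega \chi - A\, \frac{\partial \varphi}{\partial t} + C_1.$$

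At $(t_0, x_0)$ the left-hand side is nonnegative, so $\tr_\omega \chi(x_0) + A\, \partial_t \varphi(x_0) \leq C_1$. Applying the lower bound $\partial_t \varphi \geq C_2 \log |S|^2_h - C_3$ from Lemma~\ref{time} at $x_0$,
$$|S|^{2\lambda}_h(x_0)\, \tr_\omega \chi(x_0) \;\leq\; C_4\, |S|^{2\lambda}_h(x_0) \bigl(1 - \log |S|^2_h(x_0)\bigr) \;\leq\; C_5,$$
since $|S|^{2\lambda}_h \bigl|\log |S|^2_h\bigr|$ is uniformly bounded on $X$ for any $\lambda > 0$. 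Combined with the $C^0$ bound on $\varphi$ from Proposition~\ref{c0dm}, this gives $\sup H \leq C_6$, which is exactly the claimed estimate. The main obstacle is that the naive parabolic Schwarz lemma produces a $C_0\, \tr_\omega \chi$ term of the wrong sign, while simultaneously $\chi$ degenerates both along $f^*D$ and in the fiber directions as $t \to \infty$; the key fact that makes the argument close is that $\omega_t \geq (1 - e^{-t})\chi$ contributes a controllable $\tr_\omega \chi$ of the favorable sign when differentiating $-A\varphi$, which dominates the bad term for $A$ large after a fixed waiting time, and the weight $|S|^{2\lambda}_h$ with $\lambda$ large is precisely strong enough to absorb the logarithmic blow-up of $-\partial_t \varphi$ coming from Lemma~\ref{time}.
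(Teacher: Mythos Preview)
Your approach---parabolic Schwarz lemma applied to $\log\tr_\omega\chi$, a logarithmic barrier $|S|_h^{2\lambda}$, and the good term $\tr_\omega\chi$ extracted from $-A\varphi$ via $\omega_t\geq(1-e^{-t})\chi$, together with Lemma~\ref{time} to control $-A\,\partial_t\varphi$ at the maximum---is exactly the strategy the paper uses. The architecture is right.

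There is, however, a genuine gap when $X_{can}$ is singular. You assert that $\chi$ has bounded holomorphic bisectional curvature on $X_{can}\setminus D$ because it is ``inherited from the Fubini--Study metric.'' That is not true in general: the induced metric on a subvariety of projective space can have curvature blowing up near its singular points, even on the smooth locus, because of the second fundamental form. Since $X_{can}\setminus D$ is not compact, you have no uniform constant $C_0$ in the Schwarz inequality, and the argument does not close. The same issue affects your claim that $\tr_\omega(\Theta_h)$ is ``uniformly bounded'': even if $h=f^*h_D$ so that $\Theta_h$ is pulled back from $X_{can}$, you need $\Theta_{h_D}\leq C\chi$ globally on $X_{can}$, which again fails near singularities. (And if $h$ is \emph{not} a pullback, $\tr_\omega(\Theta_h)$ genuinely blows up along the collapsing fibres.)

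The paper resolves this by passing to a birational diagram $X'\to X$, $Y'\to X_{can}$ with $Y'$ smooth, choosing a genuine K\"ahler form $\theta$ on the \emph{compact} manifold $Y'$ (hence with bounded curvature), and running the Schwarz argument for $u=\tr_{g'}\theta$ instead. One then uses Kodaira's lemma to produce a divisor $D_2$ with $\chi'-\epsilon\Theta_{h_2}$ K\"ahler, so that $-A\varphi'_\epsilon$ yields a good $-Cu$ term; a third divisor $D_3$ with weight $|S_3|^{2\delta e^{-t}}$ handles the degeneracy of $\pi^*\omega_0$ on $X'$ and disappears as $\delta\to0$. Finally $\tr_{g'}\chi'\leq u$ since $\chi'\leq\theta$. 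If $X_{can}$ happens to be nonsingular your argument is fine as written; in general you need this resolution step.
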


\begin{proof} Since $X_{can}$ might be singular, we can consider the nonsingular model $f': X' \rightarrow Y'$ for $f: X\rightarrow X_{can}$ such that following diagram commutes

\[
\begin{diagram}
\node{X} \arrow{s,l}{{\small f} }     \node{X'}  \arrow{w,t}{\pi} \arrow{s,r}{{\small f'}}\\
\node{X_{can}}      \node{Y^{'}} \arrow{w,b}{\mu }
\end{diagram}
\]
where $\pi$ and $\mu$ are birational.

Let $\varphi'= \pi^* \varphi$, $\chi'=\mu^* \chi$, $\omega_0'= \pi^* \omega$ and $\omega'=\pi^* \omega$. We also write $\chi'$ for $(f' )^*\chi'$ for simplicity.

Let $ \theta $ be a K\"ahler form on $Y'$ such that $\theta \geq \chi'$. For simplicity, we identify $\theta$ and $\chi'$ with $(f')^*\theta$ and $(f')^*\chi'$. Since $\chi'$ is semi-positive induced by the Fubini-Study metric and it only vanishes along a subvariety of $Y'$ with finite order, there exists a divisor $D_1$ on $Y'$ such that

$$\theta \leq \frac{1}{|S_1|^2_{h_1}} \chi',$$
where $S_1$ is a defining section of $(f')^* D_1$ with $h_1$ a smooth hermitian metric of the line bundle associated to $[(f')^* D_1]$. Without loss of generality, we can assume that the support of $\mu^* D$ in Lemma \ref{time} is contained in $D_1$.

Let $$u=\tr_{g'} (\theta) = (g')^{i\overline{j}}
\theta_{i \overline{j}}, $$ where $g'$ is the K\"ahler metric associated to $\omega'$. The K\"ahler-Ricci flow for $\omega$ can be pulled back to the K\"ahler-Ricci flow for $\omega'$ on $X'$ outside the exceptional divisors. Let $\Delta'$ be the Laplace operator associated to $g'$. We have then

$$ \tr_{g'} (\theta) \leq \frac{1}{|S_1|^2_{h_1}} \tr_{g'} ( \chi').$$

Following the similar calculation in \cite{SoTi}, we
have
\begin{equation}
 \left( \ddt{}-\Delta' \right) \log u
 \leq
 C(u+1)
\end{equation}

Since $[\chi']$ is big and semi-ample, there exists a divisor $D_2$ on $Y'$ such that $[\chi'] -\epsilon [D_2]$ is ample for any $\epsilon > 0$. Then let $S_2$ be the defining section for $(f')^* D_2$ and there exists a smooth hermitian metric $h_2$ on the line bundle associated to $[D_2]$ such that
$$\chi' - \epsilon\Theta_{h_2}= \chi' +  \epsilon \ddbar \log h_2 >0.$$ For simplicity, we identify $h_2$ and $(f')^* h_2$.

Let $D_3$ be a divisor on $X'$ containing the exceptional divisor of $\pi$ on $X'$. Let $S_3$ be the defining section of  $[D_3]$.  There exists a smooth hermitian metric $h_3$ on the line bundle associated to $[D_3]$ such that for all sufficiently small $\delta >0$ $$ \omega_0' + \delta \Theta_{h_3} \geq 0.$$

We define
$$\varphi'_{\epsilon} = \varphi' - \epsilon \log |S_2|^2_{h_2}  .$$
Then there exists a constant $C>0$ depending on $\epsilon$ such that

$$\Delta' \varphi'_{\epsilon} = n - \tr_{g'}(\omega_t' - \epsilon \Theta_{h_2} ) \leq n - C \tr_{g'}\theta - e^{-t} \tr_{g'} (\omega_0') = n - Cu - e^{-t} \tr_{g'} (\omega_0').$$
Calculate for sufficiently large $A>0$ and small $\epsilon>0$ and $\delta>0$
\begin{eqnarray*}
&& \left( \ddt{}-\Delta' \right) \left( \log \left( |S_1|^4_{h_1} |S_3|^{2\delta e^{-t}}_{h_3}u \right) - 2A \varphi'_{\epsilon}\right) \\
%
%
&\leq& -Au + C -\delta e^{-t} \log|S_3|^2_{h_3} - 2A \ddt \varphi'.
\end{eqnarray*}
for all $t>0$ in $X'\backslash ( D_1 \cup D_2 \cup D_3)$.

The maximum of $\log \left( |S_1|^4_{h_1} |S_3|^{2\delta e^{-t}}_{h_3}u \right) - 2A \varphi'_{\epsilon}$ can only be achieved on $X'\backslash ( D_1 \cup D_2 \cup D_3)$.
The maximum principle implies that there exist constants $\lambda$, $C>0 $ independent of $\delta$ such that for all $(t, z) \in [0, \infty)\times X'$

$$ u(t, z) \leq  C \left( |S_1|^{-2\lambda}_{h_1} |S_2|^{-2\lambda}_{h_2} |S_3|^{-2\delta e^{-t}}_{h_3} \right) (z).$$

The proposition is then proved by  letting $\delta \rightarrow 0$.

\qed
\end{proof}




%
%


From the uniform upper bound of $\omega^n$, one immediately concludes that the volume of a regular fibre of $f$ tends to $0$ exponentially fast uniformly away from the singular fibres.

\begin{corollary} \label{partialreg} There exists a divisor $D$ on $X_{can}$ and constants $\lambda$, $C>0$ such that for all $t\geq 0$ and $s\in X_{can}$

\begin{equation}
 \frac{ \left( \left. \omega\right|_{X_s} \right)^{n-\kappa} } { \left( \left. \omega_0 \right|_{X_s} \right)^{n-\kappa} } \leq e^{ -(n-\kappa)t} \frac{C} {|S|_h^2} ,
\end{equation}
where $\left. \omega_0 \right|_{X_s} $ and $\left. \omega\right|_{X_s}$ are the restriction of $\omega_0$ and $\omega_s$ on $X_s = f^{-1} (s)$, $S$ is a defining section of $f^*D$ and $h$ is smooth hermitian metric of the line bundle associated to the divisor $[f^* D]$.

\end{corollary}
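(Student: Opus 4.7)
The plan is to combine the uniform total-volume bound of Lemma \ref{0th}(3), namely $\omega^n \leq C e^{-(n-\kappa)t}\Omega$, with the partial second-order estimate of Proposition \ref{partialvol}, namely $\tr_\omega\chi \leq C|S|_h^{-2\lambda}$, by means of a pointwise algebraic comparison between $\omega^n$ and $(\omega|_{X_s})^{n-\kappa}$ mediated by $\chi$. The key observation is that since $\chi$ is pulled back from the $\kappa$-dimensional canonical model $X_{can}$, one has $\chi^{\kappa+1}\equiv 0$. In local coordinates $(z_1,\dots,z_\kappa,w_1,\dots,w_{n-\kappa})$ adapted to a regular fibre of $f$, expanding $\omega^{n-\kappa}$ binomially and wedging with $\chi^\kappa$ forces all mixed terms to drop, giving the identity $\chi^\kappa \wedge \omega^{n-\kappa} = \chi^\kappa \wedge (\omega|_{X_s})^{n-\kappa}$ as $(n,n)$-forms on $X^\circ$. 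Moreover, letting $\lambda_1,\dots,\lambda_\kappa$ denote the at most $\kappa$ nonzero eigenvalues of $\chi$ with respect to $\omega$, the arithmetic-geometric mean inequality gives
\begin{equation*}
\chi^\kappa \wedge \omega^{n-\kappa} \,=\, \kappa!(n-\kappa)!\,\lambda_1\cdots\lambda_\kappa\,\frac{\omega^n}{n!} \,\leq\, C_{n,\kappa}\,(\tr_\omega\chi)^\kappa\,\omega^n.
\end{equation*}

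Chaining these two pointwise inputs with the two a priori estimates produces, for some divisor $D_1 \subset X_{can}$ with defining section $S_1$, hermitian metric $h_1$, exponent $\mu>0$ and constant $C>0$,
\begin{equation*}
\chi^\kappa \wedge (\omega|_{X_s})^{n-\kappa} \,\leq\, C\, e^{-(n-\kappa)t}\,\frac{\Omega}{|S_1|_{h_1}^{2\mu}}.
\end{equation*}
To convert this into the stated ratio estimate, I would divide both sides by $\chi^\kappa \wedge (\omega_0|_{X_s})^{n-\kappa}$, which equals $\chi^\kappa \wedge \omega_0^{n-\kappa}$ by the same rank argument and is a smooth semi-positive $(n,n)$-form on $X$ nowhere vanishing on $X^\circ$ but vanishing to finite order along the $f$-preimage of a discriminant divisor $D_2 \subset X_{can}$. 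Absorbing $D_1\cup D_2$ into a single divisor $D$ and replacing its defining section by an appropriate power (equivalently, rescaling the divisor) reduces the exponent to $2$ and yields the desired pointwise estimate.

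The main obstacle I anticipate is not the pointwise algebra, which is routine, but verifying that the auxiliary factor $\Omega/(\chi^\kappa\wedge\omega_0^{n-\kappa})$ has pole order bounded by a fixed negative power of a section of a single $t$-independent divisor on $X_{can}$; this is the packaging statement that allows the right-hand side to be written in terms of one $|S|_h^{-2}$ uniform in $t$. Once that is verified, the corollary becomes a clean pointwise reformulation of the $L^\infty$ bound on $\omega^n$ as a collapsing statement for the volume of regular fibres, with the exponential factor $e^{-(n-\kappa)t}$ coming directly from the normalization of the flow.
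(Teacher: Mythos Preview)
Your proposal is correct and follows essentially the same route as the paper: the paper writes the ratio as $\dfrac{\omega^{n-\kappa}\wedge\chi^{\kappa}}{\omega_0^{n-\kappa}\wedge\chi^{\kappa}} = \dfrac{\omega^{n-\kappa}\wedge\chi^{\kappa}}{\omega^n}\cdot\dfrac{\omega^n}{\omega_0^{n-\kappa}\wedge\chi^{\kappa}}$, bounds the first factor by $C(\tr_\omega\chi)^\kappa$ via the same eigenvalue inequality you use, and then invokes Lemma~\ref{0th} and Proposition~\ref{partialvol}. Your ``main obstacle'' about the pole order of $\Omega/(\chi^\kappa\wedge\omega_0^{n-\kappa})$ is exactly what is established earlier in the paper (see the proof of Proposition~\ref{Fso}), so no additional work is needed.
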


\begin{proof} Notice that

$$\frac{ \left( \left. \omega\right|_{X_s} \right)^{n-\kappa} } { \left( \left. \omega_0 \right|_{X_s} \right)^{n-\kappa} } = \frac{ \omega^{n-\kappa} \wedge \chi^{\kappa}} {\omega_0^{n-\kappa}\wedge \chi^{\kappa}} = \frac{ \omega^{n-\kappa} \wedge \chi^{\kappa}} {\omega^n}  \frac{ \omega^n} {\omega_0^{n-\kappa}\wedge \chi^{\kappa}} \leq C \left( \frac{ \omega^{n-1} \wedge \chi} {\omega^n} \right)^{\kappa} \frac{ \omega^n} {\omega_0^{n-\kappa}\wedge \chi^{\kappa}} .$$

The corollary is then proved by Lemma \ref{0th} and Proposition \ref{partialvol}.

\qed
\end{proof}

\begin{corollary}
For any compact set $K \subset X_{can}^{\circ}$, there exists a constant $C_K$ such that for all $t\geq 0$ and $s\in K$

\begin{equation}
 \sup_{X_s} \varphi(t, \cdot) - \inf_{X_s} \varphi(t, \cdot) \leq C_K e^{ -t} .
\end{equation}

\end{corollary}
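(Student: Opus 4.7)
The plan is to reduce the claim, on each fibre $X_s$, to a uniform oscillation bound for a family of quasi-plurisubharmonic functions on the Calabi-Yau fibre $X_s$, after rescaling by $e^t$. The key observation is that $\chi=f^{*}\chi$ vanishes identically upon restriction to any fibre, so $\omega_t|_{X_s}=e^{-t}\,\omega_0|_{X_s}$. Setting $\tilde\varphi=e^{t}\varphi$, one then has
$$\omega_0|_{X_s}+\ddbar\bigl(\tilde\varphi|_{X_s}\bigr)=e^{t}\,\omega|_{X_s}>0$$
on $X_s$, with $\ddbar$ understood intrinsically on the fibre. Hence $\tilde\varphi|_{X_s}\in PSH(X_s,\omega_0|_{X_s})$, and the desired inequality is equivalent to a uniform bound on $\sup_{X_s}\tilde\varphi-\inf_{X_s}\tilde\varphi$ for $s\in K$ and $t\geq 0$.

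Because $K\Subset X_{can}^{\circ}$ is disjoint from the divisor $D$ appearing in Corollary \ref{partialreg}, the defining section $S$ is bounded below by a positive constant on $f^{-1}(K)$. Multiplying the estimate of Corollary \ref{partialreg} by $e^{(n-\kappa)t}$ therefore gives
$$\bigl(\omega_0|_{X_s}+\ddbar(\tilde\varphi|_{X_s})\bigr)^{n-\kappa}=e^{(n-\kappa)t}(\omega|_{X_s})^{n-\kappa}\leq C_K\,(\omega_0|_{X_s})^{n-\kappa}$$
uniformly in $t\geq 0$ and $s\in K$. So the fibrewise Monge-Amp\`ere density of $\tilde\varphi|_{X_s}$ with respect to $\omega_0|_{X_s}$ is uniformly bounded in $L^{\infty}(X_s)$ by $C_K$.

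Once this pointwise density bound is in hand, I would apply Kolodziej's $L^{\infty}$-estimate on each Calabi-Yau fibre $(X_s,\omega_0|_{X_s})$: after normalising by its supremum, this yields $\|\tilde\varphi|_{X_s}-\sup_{X_s}\tilde\varphi\|_{L^{\infty}(X_s)}\leq C(X_s,\omega_0|_{X_s},C_K)$, and hence the oscillation bound. Undoing the rescaling $\tilde\varphi=e^{t}\varphi$ produces the claimed $e^{-t}$-decay. The principal technical point, and the step I would write out most carefully, is the uniformity of Kolodziej's constant in $s\in K$. Since $f$ is a proper holomorphic submersion over $X_{can}^{\circ}$, Ehresmann's theorem furnishes a locally trivial $C^{\infty}$ family of fibres over $K$ with a smoothly varying K\"ahler form $\omega_0|_{X_s}$; Kolodziej's constant depends continuously on a finite list of geometric data attached to this family (total volume, Sobolev and capacity constants), and compactness of $K$ produces a single constant $C_K$ valid for every $s\in K$.
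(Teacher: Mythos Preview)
Your argument is correct and parallels the paper's proof closely: both reduce to a fibrewise oscillation estimate for the rescaled potential $e^{t}\varphi|_{X_s}$, using Corollary~\ref{partialreg} to control the fibrewise Monge--Amp\`ere density, and both invoke uniformity of geometric constants over the compact set $K$ via the smooth variation of fibres over $X_{can}^{\circ}$.

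The only genuine difference is the choice of $C^0$-technology on each fibre. The paper appeals to Moser iteration in Yau's $C^0$-estimate for the Calabi conjecture, with the uniformity phrased in terms of Poincar\'e and Sobolev constants of $(X_s,\omega_0|_{X_s})$; you instead invoke Kolodziej's $L^\infty$-estimate, with uniformity phrased in terms of capacity/volume data. Both are legitimate. Yau's Moser iteration is arguably more elementary and directly suited to the Calabi--Yau fibre setting, while Kolodziej's estimate treats the density bound as a black box and would generalise more readily if the right-hand side were merely in $L^p$ rather than $L^\infty$. In either case the substantive work is the same: Corollary~\ref{partialreg} plus a compactness argument for the fibre geometry over $K$.
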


\begin{proof} The Poincare and Sobolev constants with respect to $\left. \omega_0 \right|_{X_s}$ for $s\in K$ are uniformly bounded. The proof of the corollary is achieved by Corollary \ref{partialreg} and Moser's iteration in Yau's $C^0$-estimate for the Calabi conjecture.

\qed
\end{proof}

%

\noindent{\it Step 3.} {\bf Gradient estimates}
The gradient estimates in this section are obtained in the same way as in \cite{SoTi} and it is an adaption from the gradient estimate in \cite{ChYa} and the argument in \cite{Pe} to obtain a uniform bound for $\left |\nabla \ddt{\varphi}\right |_g$ and the
scalar curvature $R$.  Let
$u=\ddt{\varphi}+\varphi=\log\frac{e^{(n-\kappa)t}\omega^2}{\Omega}$. The
evolution equation for $u$ is given by
\begin{equation}
\ddt{u}=\Delta u+\tr_{\omega}(\chi)-(n-\kappa).
\end{equation}
We will obtain a gradient estimate for $u$, which will help us
bound the scalar curvature from below. Note that $u$ is uniformly
bounded from above, so we can find a constant $A>0$ such that
$A-u\geq 1$.

\begin{theorem}\label{gradient estimate}
There exist constants  $\lambda$,  $C>0$ such that
\begin{enumerate}
\item   $|S|_h^{2\lambda}|\nabla u|^2\leq
C(A-u),$
\item   $ -|S|_h^{2\lambda}\Delta u\leq
C(A-u),$
\end{enumerate}
where $\nabla$ is the gradient operator with respect to the metric
$g$ associated to $\omega$ along the flow and $|\cdot|=|\cdot|_g$.
\end{theorem}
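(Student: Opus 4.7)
The plan is to apply the parabolic maximum principle to suitably weighted quantities built from $|\nabla u|^2$ and $\Delta u$, adapting the gradient estimate of Chou--Yau together with the argument used in the elliptic surface case \cite{SoTi}. The weight $|S|_h^{2\lambda}$ serves two purposes: it cancels the singular upper bound $\tr_\omega(\chi) \leq C|S|_h^{-2\lambda}$ from Proposition \ref{partialvol}, and it forces the auxiliary functions to vanish along $f^{-1}(D)$ so that their maxima are attained in the smooth locus. Throughout, I use the fact that $u=\ddt{\varphi}+\varphi$ is bounded above by Lemma \ref{0th} and Proposition \ref{c0dm}, so $A-u\geq 1$ is positive, but may blow up near $f^{-1}(D)$, so the weight and the denominator $A-u$ work in tandem.

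For part (1), I would consider
$$H_1 \;=\; \frac{|S|_h^{2\lambda}\,|\nabla u|^2}{A-u}.$$
Using $(\ddt{}-\Delta)u = \tr_\omega(\chi)-(n-\kappa)$, the Bochner formula on the K\"ahler manifold $(X,\omega(t))$, and the fact that under the normalized K\"ahler-Ricci flow $\ric(\omega) = -\ddt{\omega}-\omega$, one computes $(\ddt{}-\Delta)H_1$. The good terms $-|\nabla\nabla u|^2$ and $-|\nabla\bar\nabla u|^2$ from Bochner can absorb, via Cauchy--Schwarz, the cross terms generated by differentiating the factors $(A-u)^{-1}$ and $|S|_h^{2\lambda}$. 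The bad contributions reduce to terms of the form $H_1\cdot|S|_h^{2\lambda}\tr_\omega(\chi) \leq C H_1$ and additive constants. At a maximum point of $H_1$, which necessarily lies in $X\setminus f^{-1}(D)$, the standard first and second derivative conditions then give $H_1 \leq C$.

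For part (2), apply the heat operator to $-\Delta u$; since $-\Delta u = -\ddt{u}+\tr_\omega(\chi)-(n-\kappa)$, its evolution is governed by $\Delta \tr_\omega(\chi)$ and the commutator terms involving the Ricci curvature, all of which can be rewritten using the flow equation. Then consider
$$H_2 \;=\; \frac{-|S|_h^{2\lambda}\Delta u}{A-u} \;+\; B\,H_1$$
for a sufficiently large constant $B>0$. The bad gradient terms $|\nabla u|^2$ arising in $(\ddt{}-\Delta)H_2$ are absorbed by the negative contribution $-B|\nabla u|^2/(A-u)$ coming from the gradient part, once the estimate in (1) is in hand. A maximum principle argument parallel to that in Part (1) then yields $H_2\leq C$, which gives the bound on $-|S|_h^{2\lambda}\Delta u$.

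The main obstacle is the careful bookkeeping of the many lower-order terms generated by differentiating the singular weight $|S|_h^{2\lambda}$, the denominator $A-u$, and the inhomogeneity $\tr_\omega(\chi)$, and choosing $\lambda$, $A$, and $B$ large enough so that each error is dominated by a good term. In particular, the Hessian of $\log|S|_h^2$ contributes the curvature of the reference metric on $[f^*D]$, which is smooth on $X$; one must verify this contributes only subordinate terms compared with $|\nabla\bar\nabla u|^2$ in Bochner. A secondary technical point is to ensure that the exponent $\lambda$ chosen for the gradient step is at least as large as the exponent required by Proposition \ref{partialvol} for the Laplacian step, so that a single $\lambda$ works in both estimates.
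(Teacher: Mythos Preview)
Your proposal is correct and matches the paper's approach: the paper gives no detailed proof here, stating only that the argument is ``the same way as in \cite{SoTi} with little modification,'' namely the Cheng--Yau parabolic gradient estimate coupled with a Perelman-type Laplacian estimate, using the weight $|S|_h^{2\lambda}$ and the denominator $A-u$ exactly as you describe. Your outline of the auxiliary functions $H_1$ and $H_2$, the role of the Bochner term, and the absorption of the error from $\tr_\omega(\chi)$ via Proposition~\ref{partialvol} is precisely the content of the \cite{SoTi} argument; one minor remark is that the reference is Cheng--Yau \cite{ChYa} rather than ``Chou--Yau.''
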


Theorem \ref{gradient estimate} is proved the same way as in \cite{SoTi} with little modification.
The following corollary is immediate by Theorem \ref{gradient estimate}, Lemma \ref{c0dm} and  Lemma \ref{time}.

\begin{corollary}\label{gradient2} For any $\delta>0$, there exist constants $\lambda$,
$C >0$ such that

\begin{enumerate}

\item $|S|_h^{2\lambda}|\nabla u|^2\leq C,$

\item $ -|S|_h^{2\lambda}\Delta u\leq C.$

\end{enumerate}
\end{corollary}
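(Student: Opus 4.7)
The plan is to combine Theorem \ref{gradient estimate} with the zeroth-order bounds on $u$ already established. The theorem gives both estimates multiplied by the factor $A-u$, so the only task is to show that $A-u$ is controlled by an arbitrarily small negative power of $|S|_h^2$, which can then be absorbed into the weight by slightly enlarging the exponent.

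First I would note that since $u = \ddt{\varphi} + \varphi$ and both $\ddt{\varphi}$ and $\varphi$ are uniformly bounded above by Lemma \ref{0th} and Proposition \ref{c0dm}, the quantity $A-u$ is bounded below by a positive constant; thus it suffices to bound it from above, equivalently to bound $u$ from below. Applying Lemma \ref{time} together with the uniform lower bound $\varphi \geq -C$ from Proposition \ref{c0dm} yields
\[
u \geq C_1 \log|S|_h^2 - C',
\]
so that $A - u \leq C'' - C_1 \log|S|_h^2$ on $[0,\infty)\times X$.

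Next, after rescaling $h$ so that $|S|_h \leq 1$, the elementary inequality $-\log x \leq \epsilon^{-1} x^{-\epsilon}$ for $x\in(0,1]$ gives $-\log|S|_h^2 \leq C_\epsilon |S|_h^{-2\epsilon}$ for any $\epsilon > 0$. Hence $A-u \leq C_\epsilon |S|_h^{-2\epsilon}$. Substituting this into the conclusion of Theorem \ref{gradient estimate}, whose weight exponent I denote $\lambda_0$, and setting $\lambda = \lambda_0 + \epsilon$, I would obtain
\[
|S|_h^{2\lambda}|\nabla u|^2 \leq C_\epsilon \quad \text{and} \quad -|S|_h^{2\lambda}\Delta u \leq C_\epsilon,
\]
which are precisely the two bounds asserted by the corollary.

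There is no real obstacle in this corollary: it is pure bookkeeping, consolidating Theorem \ref{gradient estimate} with Lemma \ref{0th}, Proposition \ref{c0dm}, and Lemma \ref{time} by using the fact that $-\log|S|_h^2$ grows slower than any negative power of $|S|_h$, so the $A-u$ factor can be absorbed at the cost of a negligible increase in the weight exponent $\lambda$.
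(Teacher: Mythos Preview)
Your proposal is correct and follows exactly the approach the paper intends: the paper states only that the corollary is ``immediate by Theorem \ref{gradient estimate}, [Proposition] \ref{c0dm} and Lemma \ref{time},'' and you have filled in precisely the bookkeeping those references imply. The key observation---that the lower bound $u \ge C_1\log|S|_h^2 - C'$ lets one absorb the factor $A-u$ into an arbitrarily small additional power of $|S|_h^{-2}$---is the only content, and you handle it correctly.
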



Now we are in the position to prove a uniform bound for the scalar
curvature. The following corollary tells that the K\"ahler-Ricci flow
will collapse with bounded scalar curvature away from the singular
fibres.

\begin{corollary}\label{gradient3}
Along the K\"aher-Ricci flow (\ref{krflow1}) the scalar curvature
$R$ is uniformly bounded on any compact subset of $X_{reg}$. More
precisely, there exist constants $\lambda$,  $C>0$ such that
\begin{equation}
  -C \leq R \leq \frac{C}{|S|_h^{2\lambda}}.
\end{equation}
\end{corollary}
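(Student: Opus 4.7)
The plan is to prove the two bounds separately. The upper bound $R \leq C|S|_h^{-2\lambda}$ is an immediate consequence of the second-order estimate already established in Corollary \ref{gradient2}, while the lower bound $R \geq -C$ is obtained by applying the parabolic minimum principle directly to the evolution of $R$ along the flow.

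For the upper bound, I would first derive the pointwise identity
\[
\ric(\omega) \,=\, -\chi - \ddbar u, \qquad R \,=\, -\tr_\omega \chi - \Delta u.
\]
Differentiating $\omega = \omega_t + \ddbar \varphi$ in $t$ gives $\partial_t \omega = \chi - \omega + \ddbar u$ (using $\partial_t \omega_t = \chi - \omega_t$), and comparing with the K\"ahler-Ricci flow $\partial_t \omega = -\ric(\omega) - \omega$ yields the first identity; tracing with $\omega$ yields the second. Because $\chi$ is the pullback of a Fubini-Study form and hence semipositive, $\tr_\omega \chi \geq 0$, so $R \leq -\Delta u$, and the desired upper bound $R \leq C|S|_h^{-2\lambda}$ follows at once from part (2) of Corollary \ref{gradient2}.

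For the lower bound I would work with the evolution equation of $R$ itself along $\partial_t g_{i\bar j} = -R_{i\bar j} - g_{i\bar j}$. Using $\partial_t g^{i\bar j} = R^{i\bar j} + g^{i\bar j}$, $\partial_t \log \det g = -R - n$, and $\partial_t R_{i\bar j} = \partial_i \partial_{\bar j} R$, the standard computation yields
\[
\partial_t R \,=\, \Delta R + |\ric|^2 + R.
\]
By Cauchy--Schwarz applied to the Hermitian matrix $R_{i\bar j}$, one has $|\ric|^2 \geq R^2/n$. Set $R_{\min}(t) := \min_X R(t,\cdot)$; at any spatial minimum $\Delta R \geq 0$, so the parabolic minimum principle gives, in the barrier sense,
\[
\frac{d}{dt} R_{\min}(t) \,\geq\, \frac{1}{n} R_{\min}(R_{\min} + n).
\]
The reaction polynomial $x(x+n)/n$ vanishes only at $x = 0$ and $x = -n$ and is nonnegative outside $(-n,0)$; an elementary ODE comparison therefore forces $R_{\min}(t) \geq \min(R_{\min}(0), -n)$ for all $t \geq 0$. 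Since $\omega_0$ is smooth on the compact manifold $X$, $R_{\min}(0)$ is finite, yielding the uniform lower bound $R \geq -C$.

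The step I expect to require the most care is the derivation of the scalar-curvature evolution under the normalized flow --- one has to track the extra $-g_{i\bar j}$ term consistently --- together with the barrier formulation of the minimum principle for $R_{\min}(t)$. What might at first look like the main obstacle, namely the fact that $|\ric|^2$ can blow up near the singular fibres as the flow collapses, actually works in our favour: whenever $R$ is very negative the reaction term $R(R+n)/n$ is large and positive, which is exactly what forces $R_{\min}$ back up toward $-n$ and prevents it from escaping to $-\infty$. No fine analysis near the singular locus is needed for the lower bound, which is why that bound is global while only the upper bound sees the divisor $D$.
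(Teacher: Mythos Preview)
Your proof is correct and follows essentially the same route as the paper: the identity $R = -\Delta u - \tr_\omega(\chi)$ together with $\chi \ge 0$ and Corollary~\ref{gradient2}(2) gives the upper bound, while the lower bound is the standard maximum-principle argument applied to the evolution $\partial_t R = \Delta R + |\ric|^2 + R$, which the paper simply cites from \cite{SoTi}. The only difference is that you have spelled out the ODE comparison for $R_{\min}$ explicitly rather than referring to the literature.
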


\begin{proof} It suffices to give an upper bound for $R$ since the scalar curvature $R$ is  uniformly bounded from below by the maximum principle (cf. \cite{SoTi}).
Notice that
$R_{i\overline{j}}=-u_{i\overline{j}}-\chi_{i\overline{j}}$ and
then
$$R=-\Delta u-\tr_{\omega}(\chi).$$
By Corollary \ref{gradient2} and the partial second order estimate, there exist constants $\lambda_6$,  $C>0$ such that
$$R\leq \frac{C}{|S|_h^{2\lambda}}.$$  \qed

\end{proof}

\bigskip

\noindent{\it Step 4.} {\bf Uniform convergence}

\bigskip \noindent Let $\varphi_{\infty} $ be the unique solution solving equation (\ref{MAeqn}) in Theorem
\ref{MAthm}. We identify $f^*\varphi_{\infty}$ and $\varphi_{\infty}$ for convenience.

Since $K_X$ is semi-ample, there exists an ample line bundle $L$ on $X_{can}$ such that $K_X = f*L = \frac{1}{m} f^*\mathcal{O}(1)$ for a fixed pluricanonical map. Let $D$ be an ample divisor on $X_{can}$ such that $[D] = \mu  [L]$  for a sufficiently large integer $\mu $,  $X_{can} \setminus X_{can}^{\circ} \subset D$ and $\varphi_{\infty}\in C^{\infty}(X_{can}\setminus D) $. Let $S_D$ be the defining section of $D$. Let $h_{FS}$ be the Fubini study metric on $\mathcal{O}(1)$ induced by the pluricanonical map. Then there exists a continuous hermitian metric $h_D = \left( h_{FS} \right) ^{ \frac{ \mu }{ m } } e^{ - \mu \varphi_{\infty}}$  on $ L^{\mu} $ such that $ - \ddbar \log h_D = \mu \chi_{\infty}$ since $\varphi_{\infty}$ is continuous.

We define $$ B_{r} (D) = \{  y \in X_{can}  ~| ~ dist_{\chi} ( y, D ) \leq r \} $$ be the geodesic  tubular neighborhood of $D$ with respect to $\chi$ and we let $\mathcal{B}_r(D) = f^{-1} \left( B_{r}(D) \right)$.

Since  $\varphi_{\infty}$ is bounded
on $X$ and $\varphi$ is uniformly bounded from above. Therefore for any $\epsilon>0$, there exists
$r_{\epsilon}>0$ with $\lim_{\epsilon\rightarrow
0}r_{\epsilon}=0$, such that for any $z\in \mathcal{B}_{r_{\epsilon}}(D)$ and $t \geq 0$ we have
$$ \left( \varphi-\varphi_{\infty}+\epsilon\log|S_D|_{h_D}^2 \right)(t, z)<-1$$ and
$$ \left( \varphi-\varphi_{\infty}-\epsilon\log|S_D|_{h_D}^2 \right)(t, z)>1$$

Let $\eta_{ \epsilon}$ be a smooth cut off function on $X_{can}$ such that $\eta_{\epsilon} = 1$ on $X_{cam}  \setminus  B_{r_{\epsilon}} ( D ) $ and $\eta_{\epsilon} = 0$ on $ B _{\frac{r_{\epsilon}}{2}}(D)$.

 Suppose the semi-flat closed form is given by
$\omega_{SF}=\omega_0+\sqrt{-1}\partial\dbar\rho_{SF}$ and
$\rho_{SF}$ blows up near the singular fibres. We let $\rho_{\epsilon}$ be an approximation for $\rho_{SF}$ given by
$$\rho_{\epsilon}= \left( f^* \eta_{\epsilon} \right)\rho_{SF}.$$
We also define
$\omega_{SF,\epsilon}=\omega_0+\sqrt{-1}\partial\dbar\rho_{\epsilon}$.
Now we define the twisted difference  of $\varphi$ and
$\varphi_{\infty}$ by

$$\psi^{-}_{\epsilon}=\varphi-(1+\epsilon) \varphi_{\infty}- e^{-t} \rho_{\epsilon}
+\epsilon\log|S_D|^2_{h_D} $$
and
$$\psi^{+}_{\epsilon}=\varphi-(1-\epsilon)\varphi_{\infty}- e^{-t} \rho_{\epsilon}
-\epsilon\log|S_D|^2_{h_D} $$
where $S_D$ is the defining section of $D$ and $h$ is a fixed smooth hermitian metric of the line bundle induced by $[D]$. We identify $f^* \left( |S_D|^2_{h_D} \right)$ and $|S_D|^2_{h_D} $ for convenience.

\begin{proposition}\label{conv}   There exists $\epsilon_0>0$ such that for any
$0<\epsilon<\epsilon_0$, there exists $T_{\epsilon}>0$ such that
for any $z\in X$ and $t>T_{\epsilon}$ we have

\begin{equation}
\psi^{-}_{\epsilon}(t, z)\leq 2 \epsilon.
\end{equation}
and
\begin{equation}
\psi^{+}_{\epsilon}(t, z)\geq - 2 \epsilon.
\end{equation}
\end{proposition}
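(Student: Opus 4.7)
The strategy is the parabolic maximum principle applied to $\psi^{-}_{\epsilon}$; the lower bound $\psi^{+}_{\epsilon} \geq -2\epsilon$ is symmetric, using the spatial minimum in place of the maximum, and I sketch only the upper bound. Set $M(t) = \sup_{z \in X}\psi^{-}_{\epsilon}(t, z)$ and $\chi_{\infty} = \chi + \ddbar \varphi_{\infty}$. By the choice of $r_{\epsilon}$ one has $\psi^{-}_{\epsilon} < -1$ on $\mathcal{B}_{r_{\epsilon}}(D)$, so whenever $M(t) > -1$ the spatial supremum is attained at some $z_0 = z_0(t) \in X \setminus \mathcal{B}_{r_{\epsilon}}(D)$, where every term in $\psi^{-}_{\epsilon}$ is smooth and $-\ddbar \log |S_D|^2_{h_D} = \mu \chi_{\infty}$. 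At such a maximum, $\ddbar \psi^{-}_{\epsilon} \leq 0$, and unwinding this (using $\omega_{SF,\epsilon} = \omega_0 + \ddbar \rho_{\epsilon}$) yields $\omega \leq \tilde{\omega}_{\epsilon}$, where
\begin{equation*}
\tilde{\omega}_{\epsilon} := (1+\epsilon+\epsilon\mu)\chi_{\infty} - (\epsilon+e^{-t})\chi + e^{-t}\omega_{SF,\epsilon}.
\end{equation*}

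The analytic heart of the argument is the asymptotic evaluation of $\tilde{\omega}_{\epsilon}^n$. Decompose $\tilde{\omega}_{\epsilon} = A + e^{-t}B$ with $A = (1+\epsilon+\epsilon\mu)\chi_{\infty} - \epsilon\chi$, which is pulled back from $X_{can}$ and hence has rank at most $\kappa$, and $B = \omega_{SF,\epsilon} - \chi$. In the binomial expansion of $\tilde{\omega}_{\epsilon}^n$ all terms with more than $\kappa$ copies of $A$ vanish for rank reasons, and in the $k = \kappa$ term only the monomial $A^{\kappa} \wedge \omega_{SF,\epsilon}^{n-\kappa}$ survives in $A^{\kappa} \wedge B^{n-\kappa}$ because $A^{\kappa} \wedge \chi = 0$. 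Thus
\begin{equation*}
\tilde{\omega}_{\epsilon}^n = \binom{n}{\kappa} e^{-(n-\kappa)t} A^{\kappa} \wedge \omega_{SF,\epsilon}^{n-\kappa} + O(e^{-(n-\kappa+1)t}).
\end{equation*}
On a compact subset of $X^{\circ}$ where $\chi_{\infty}$ is a smooth K\"ahler form and $\chi/\chi_{\infty}$ is uniformly bounded, $A^{\kappa} \leq (1+C\epsilon)\chi_{\infty}^{\kappa}$. Outside $\mathcal{B}_{r_{\epsilon}}(D)$ we have $\omega_{SF,\epsilon} = \omega_{SF}$, and Theorem \ref{MAthm} together with the construction of $\Theta$ delivers the identity $\chi_{\infty}^{\kappa} \wedge \omega_{SF}^{n-\kappa} = e^{\varphi_{\infty}}\Omega$, after absorbing the combinatorial factor $\binom{n}{\kappa}$ into the normalization of the reference volume form. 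Combining, at $z_0$:
\begin{equation*}
\log \frac{e^{(n-\kappa)t}\omega^n}{\Omega} \leq \log \frac{e^{(n-\kappa)t}\tilde{\omega}_{\epsilon}^n}{\Omega} \leq \varphi_{\infty} + C(\epsilon + e^{-t}).
\end{equation*}

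Substituting into the flow equation (\ref{**}), and using $\varphi(t, z_0) = M(t) + (1+\epsilon)\varphi_{\infty}(z_0) + e^{-t}\rho_{\epsilon}(z_0) - \epsilon \log|S_D|^2_{h_D}(z_0)$ together with the uniform boundedness of $\varphi_{\infty}$ and of $\log|S_D|^2_{h_D}$ on the $\epsilon$-dependent compact set containing $z_0$, produces
\begin{equation*}
\ddt{\psi^{-}_{\epsilon}}(t, z_0) = \ddt{\varphi}(t, z_0) + e^{-t}\rho_{\epsilon}(z_0) \leq -M(t) + C'(\epsilon + e^{-t}).
\end{equation*}
Standard envelope theory for the Lipschitz function $M(t)$ gives $M'(t) \leq -M(t) + C'(\epsilon + e^{-t})$ a.e., so by choosing $\epsilon_0$ with $C'\epsilon \leq \epsilon/2$ for $\epsilon < \epsilon_0$ and then $T_{\epsilon}$ with $C'e^{-t} \leq \epsilon/4$ for $t > T_{\epsilon}$, one gets $M'(t) \leq -\epsilon/4$ whenever $M(t) > 2\epsilon$ and $t > T_{\epsilon}$. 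Since $\varphi$ is uniformly bounded by Proposition \ref{c0dm}, $M$ is bounded above, and this differential inequality forces $M(t) \leq 2\epsilon$ for all $t \geq T_{\epsilon}$ after enlarging $T_{\epsilon}$ by the time needed for $M$ to drop from its initial bound.

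The main obstacle is the asymptotic calculation of $\tilde{\omega}_{\epsilon}^n$: one must verify that every subleading contribution is genuinely $O(e^{-t})$ relative to the leading $e^{-(n-\kappa)t}$ term, and that the multiplicative error $A^{\kappa}/\chi_{\infty}^{\kappa}$ is $1 + O(\epsilon)$. Both rest on the semi-positivity and rank-$\kappa$ degeneracy of $\chi_{\infty}$ along the fibers, the vanishing $A^{\kappa} \wedge \chi = 0$, and the Monge-Amp\`ere identity $\chi_{\infty}^{\kappa} \wedge \omega_{SF}^{n-\kappa} = e^{\varphi_{\infty}}\Omega$ from Theorem \ref{MAthm}. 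A secondary subtlety is the bookkeeping of the combinatorial constant $\binom{n}{\kappa}$, absorbed implicitly into the normalization of $\Omega$, which is why $\varphi_{\infty}$ itself (and not $\varphi_{\infty} + \log \binom{n}{\kappa}$) appears as the limiting potential.
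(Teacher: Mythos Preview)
Your argument is correct and follows essentially the same route as the paper: both apply the parabolic maximum principle to $\psi^{-}_{\epsilon}$, use that the spatial maximum must lie outside $\mathcal{B}_{r_{\epsilon}}(D)$, drop the Hessian term there, and then perform a binomial expansion of the resulting reference form to isolate the leading $e^{-(n-\kappa)t}\chi_{\infty}^{\kappa}\wedge\omega_{SF}^{n-\kappa}$ term. The paper writes out the full evolution equation first and closes with a two-case analysis (max always positive versus max eventually negative), while you phrase the Hessian bound as $\omega\le\tilde\omega_{\epsilon}$ and integrate the resulting ODE $M'(t)\le -M(t)+C(\epsilon+e^{-t})$ directly; these are interchangeable packagings of the same computation.

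One small slip: the step ``choosing $\epsilon_0$ with $C'\epsilon\le\epsilon/2$ for $\epsilon<\epsilon_0$'' is impossible once $C'>1/2$, since $C'$ is a fixed constant independent of $\epsilon$. What the ODE actually gives is $\limsup_{t\to\infty}M(t)\le C'\epsilon$, hence $M(t)\le (C'+1)\epsilon$ for $t\ge T_{\epsilon}$. This matches the paper's own conclusion $\psi^{-}_{\epsilon}\le(\mathcal{A}+3)\epsilon$ rather than the literal ``$2\epsilon$'' in the statement; the ``$2$'' should be read as an unspecified universal constant, and indeed the subsequent Proposition~\ref{uniconv} only needs $M(t)\le C\epsilon$. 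Your flag on the combinatorial constant $\binom{n}{\kappa}$ is also well taken: the paper carries it as $C_{n,\kappa}$ in the denominator of the evolution equation so that it cancels against the leading binomial coefficient, rather than absorbing it into $\Omega$, but either bookkeeping is fine.
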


\begin{proof}  The evolution equation for $\psi_{\epsilon}^{-}$ is given by

\begin{equation}
\ddt{\psi^{-}_{\epsilon}}=\log
\frac{e^{(n-\kappa)t}((1+\epsilon\mu - e^{-t})\chi_{\infty}+e^{-t}\omega_{SF,
\epsilon}+\sqrt{-1}\partial\dbar\psi^{-}_{\epsilon})^n}{C_{n, \kappa}  \chi_{\infty}^{\kappa} \wedge
\omega_{SF}^{n-\kappa}}-\psi^{-}_{\epsilon} - \epsilon \varphi_{\infty} + \epsilon\log|S|^2_h.
\end{equation}
Since $\rho_\epsilon$ is bounded on $X$, we can always choose
$T_1>0$ sufficiently large such that for $t>T_1$

\begin{enumerate}

\item
$\psi^{-}_{\epsilon}(t, z)<-\frac{1}{2}$ on
$\mathcal{B}_{r_{\epsilon}}(D)$,

\item
$\sum_{p=0}^{\kappa-1} C_{n, p} e^{-(n-p)t} \left|\frac{\chi_{\infty}^p\wedge\omega_{SF}^{n-p}}{\chi_{\infty}^{\kappa}\wedge\omega_{SF}^{n-\kappa}}\right|\leq
\epsilon$ on $X\setminus \mathcal{B}_{r_{\epsilon}}(D)$.

\end{enumerate}

We will discuss in two cases for $t>T_1$.

\begin{enumerate}

\item If $\psi^{-}_{\epsilon, max}(t)=\max_{
X}\psi^{-}_{\epsilon}(t, \cdot)=\psi^{-}_{\epsilon}(t,
z_{max,t})>0$ for all $t>T_1$. Then $z_{max,t}\in X\setminus
\mathcal{B}_{r_{\epsilon}}( D )$ for all $t>T_1$ and so $\omega_{SF, \epsilon}(z_{max, t}) = \omega_{SF} (z_{max, t})$. Applying
the maximum principle at $z_{max,t}$,  we have
\begin{eqnarray*}
&&\ddt{\psi^{-}_{\epsilon}}(t, z_{max,t})\\
\\
&\leq&
\left( \log\frac{e^{(n-\kappa)t}( (1+\epsilon\mu - e^{-t}) \chi_{\infty} - \epsilon \varphi_{\infty} + e^{-t}\omega_{SF,\epsilon})^n}
{C_{n, \kappa} \chi_{\infty}^{\kappa}\wedge\omega_{SF}^{n-\kappa}}-\psi^{-}_{\epsilon} - \epsilon \varphi_{\infty} +\epsilon\log|S|^2_h\right)
(t,z_{max,t})\\
&&\\
&=&\left( \log\frac{ \sum_{p=0}^{\kappa} \left(\begin{array}{c}
                                                                n \\ \kappa
                                                              \end{array}\right)(1+\epsilon\mu -e^{-t})^{p}\chi_{\infty}^p\wedge\omega_{SF,\epsilon}^{n-p}}{  \left(\begin{array}{c}
                                                                n \\ \kappa
                                                              \end{array}\right) \chi_{\infty}^{\kappa} \wedge\omega_{SF}^{n-\kappa} }-
\psi^{-}_{\epsilon} - \epsilon \varphi_{\infty} +\epsilon\log|S|^2_h\right) (t, z_{max,t})\\
\\
&\leq&- \psi^{-}_{\epsilon}(t,
z_{max,t})+\log \left( 1+ ( \AAA+1) \epsilon \right)+\epsilon.
\end{eqnarray*}
Applying the maximum principle again, we have
\begin{equation}
\psi^{-}_{\epsilon}\leq (\AAA+2)\epsilon+O(e^{-t}) \leq (\AAA +3) \epsilon),
\end{equation}
if we choose $\epsilon$ sufficiently small in the beginning and then $t$ sufficiently large.

\item If there exists $t_0\geq T_1$ such that $\max_{z\in
X}\psi^{-}_{\epsilon}(t_0, z)=\psi^{-}_{\epsilon}(t_0, z_0)<0$ for
some $z_0\in X$. Assume $t_1$ is the first time when $\max_{z\in
X, t\leq t_1}\psi^{-}_{\epsilon}(t, z)=\psi^{-}_{\epsilon}(t_1,
z_1) \geq ( \AAA + 3) \epsilon$. Then $z_1\in X\setminus
B_{r_{\epsilon}}(D)$ and applying the maximum
principle we have
\begin{eqnarray*}
\psi^{-}_{\epsilon}(t_1, z_1)&\leq&\left( \log\frac{e^{(n-\kappa)t}( (1+\epsilon\mu - e^{-t}) \chi_{\infty} + e^{-t}\omega_{SF,\epsilon})^n}
{\left(\begin{array}{c}
 n \\ \kappa
  \end{array}\right) \chi_{\infty}^{\kappa}\wedge\omega_{SF}^{n-\kappa}}-\psi^{-}_{\epsilon} - \epsilon \varphi_{\infty} +\epsilon\log|S|^2_h\right)
(t_1,z_1)\\
&&\\
&\leq&\log(1+ (\AAA + 1) \epsilon)+\epsilon <  (\AAA + 2 )\epsilon.
\end{eqnarray*}
which contradicts  the assumption that $\psi^{-}_{\epsilon}(t_1,
z_1)\geq (\AAA+3)\epsilon$.
Hence we have
\begin{equation*}
\psi^{-}_{\epsilon}\leq (\AAA+ 3)\epsilon.
\end{equation*}
By the same argument we have
\begin{equation*}
\psi^{+}_{\epsilon}\geq -(\AAA+ 3)\epsilon.
\end{equation*}

\end{enumerate}

\noindent This completes the proof.
\qed

\end{proof}

\medskip

\begin{proposition}\label{uniconv}
On any compact set $K$ of $X\backslash D$, we have \begin{equation} \lim_{t\rightarrow \infty}  ||\varphi(t, \cdot) - \varphi_{\infty}(\cdot) ||_{C^0(K)} = 0  .\end{equation}
\end{proposition}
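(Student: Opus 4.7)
The plan is to read the desired uniform convergence directly out of the two-sided estimates on $\psi^{\pm}_{\epsilon}$ already established in Proposition \ref{conv}. Recalling the definitions, the inequalities $\psi^{-}_{\epsilon} \le C\epsilon$ and $\psi^{+}_{\epsilon} \ge -C\epsilon$ for $t > T_{\epsilon}$ rearrange into the sandwich
$$-\,\epsilon\,\varphi_{\infty} + e^{-t}\rho_{\epsilon} + \epsilon\log|S_D|^{2}_{h_D} - C\epsilon \;\le\; \varphi - \varphi_{\infty} \;\le\; \epsilon\,\varphi_{\infty} + e^{-t}\rho_{\epsilon} - \epsilon\log|S_D|^{2}_{h_D} + C\epsilon.$$

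Now fix a compact subset $K \subset X \setminus D$. Three observations localize both sides of the sandwich on $K$. First, $|S_D|^{2}_{h_D}$ is bounded below by a positive constant on $K$, so $\bigl|\log|S_D|^{2}_{h_D}\bigr| \le M_K$. Second, by Theorem \ref{MAthm}, $\varphi_{\infty}$ is continuous on $X_{can}$, hence uniformly bounded on $X$. Third, for $\epsilon$ sufficiently small (depending on $K$) the cutoff $\eta_{\epsilon}$ equals $1$ on a neighborhood of $f(K)$, so $\rho_{\epsilon} = \rho_{SF}$ there; the semi-flat potential $\rho_{SF}$ is smooth on $X^{\circ}$ and in particular bounded on $K$ by some $M_K'$. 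Substituting these three bounds into the sandwich yields
$$\sup_{K} |\varphi(t,\cdot) - \varphi_{\infty}| \;\le\; C'_K\,\epsilon \,+\, e^{-t}\,M_K',$$
valid for all sufficiently small $\epsilon$ and all $t > T_{\epsilon}$.

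A standard diagonal argument concludes the proof. Given $\delta > 0$, first pick $\epsilon > 0$ with $C'_K\,\epsilon < \delta/2$ (which determines $T_{\epsilon}$), then pick $t$ large enough that $t > T_{\epsilon}$ and $e^{-t}M_K' < \delta/2$. This yields $\|\varphi(t,\cdot) - \varphi_{\infty}\|_{C^{0}(K)} < \delta$, proving the proposition. The only delicate point is the boundedness of $\rho_{SF}$ on $K$, which is precisely why the statement is restricted to compact subsets of $X \setminus D$, with $D$ chosen before Proposition \ref{conv} to contain the preimage of the singular fiber locus; no new estimates beyond Proposition \ref{conv} are needed.
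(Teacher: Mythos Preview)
Your proof is correct and follows essentially the same route as the paper's: rearrange the inequalities from Proposition \ref{conv} into a two-sided bound on $\varphi-\varphi_{\infty}$, use boundedness of $\varphi_{\infty}$, $\log|S_D|^{2}_{h_D}$, and $\rho_{\epsilon}$ on the compact $K\subset X\setminus D$, and then let $\epsilon\to 0$. Your write-up is actually more careful than the paper's terse version in that you explicitly track the $e^{-t}\rho_{\epsilon}$ and $\epsilon\varphi_{\infty}$ terms and justify why $\rho_{\epsilon}=\rho_{SF}$ on $K$ for small $\epsilon$.
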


\begin{proof}
By Proposition \ref{conv}, we have for $t>T_{\epsilon}$
\begin{equation*}
\varphi_{\infty}(t, z)+\epsilon\log|S|^2_h(t, z)-3\epsilon \leq
\varphi(t, z)\leq
                \varphi_{\infty}(t, z)-\epsilon\log|S|^2_h(t, z)+ 3\epsilon.
\end{equation*}

\noindent Then the proposition is proved by letting $\epsilon\rightarrow 0$.

\qed
\end{proof}


\subsection{K\"ahler-Ricci flow and minimal model program}

The K\"ahler-Ricci flow on algebraic manifolds of positive Kodaira
dimension seems to
be closely related to the minimal model program in algebraic geometry.

For any nonsingular minimal model $X$ of positive Kodaira dimension, the
canonical line bundle $K_X$ is nef and so the K\"ahler-Ricci flow
(\ref{krflow2}) has long time existence \cite{TiZha}. The abundance
conjecture predicts that $K_X$ is semi-ample, hence the canonical ring
of $X$ is finitely generated. If we assume the abundance conjecture, the
K\"ahler-Ricci flow will converge to the unique canonical metric on the
canonical  model $X_{can}$ associated to $X$ for any initial K\"ahler
metric by Theorem A.

If $X$ is not minimal, the K\"ahler-Ricci flow (\ref{krflow2}) will
develop finite time singularities. Let $T_1$ be the first time such that
$e^{-t}[\omega_0] - (1-e^{-t} ) 2\pi c_1(X)$ fails to be a K\"ahler
class. Adopting arguments in \cite{TiZha}, one can show that there is a
unique limiting current $\displaystyle \omega_{T_1}(\cdot) = \lim_{ t
\rightarrow T_1^{-} } \omega(t, \cdot) \in e^{-T_1}[\omega_0] -
(1-e^{-T_1} ) 2\pi c_1(X)$ and it is smooth outside an analytic
subvariety of $X$. Furthermore, the local potential $\varphi_{T_1}$ of
$\omega_{T_1}$ is continuous. We conjecture that $X_1$, the metric
completion of $\omega_{T_1}$, is again an algebraic variety and $X_1$
can be obtained by certain standard algebraic procedure such as a
blow-down or flip. It is reasonable to expect that such a variety $X_1$
does not have too bad singularities. In particular, we expect that a
weak K\"ahler-Ricci flow can be defined on $X_1$.  Suppose this is true,
we hope that the above procedure can be repeated as long as the
canonical line bundle is not nef. We further conjecture that after
repeating the above process finitely many times, we obtain the metric
completions $X_1$, $X_2$, ... , $X_N$ such that $K_{X_N}$ is nef !
Consequently, $X_N$ is a minimal model of $X$.

It provides a new understanding of the minimal model program from an
analytic point of view. We believe that it is interesting to further
explore this connection between the minimal model program and the study
of the regularity and convergence problem of the K\"ahler-Ricci flow on
algebraic varieties.


\section{Adjunction formulas for energy functionals}

\subsection{Generalized constant scalar curvature K\"ahler
metrics}

In fact, the canonical metrics in Section 3 belong to a class of K\"ahler
metrics which generalize Calabi's extremal metrics. Let $Y$ be a
K\"ahler manifold of complex dimension $n$ together with a fixed
closed (1,1)-form $\theta$. Fix a K\"ahler class $[\omega]$,
denote by ${\cal K}_{[\omega]}$ the space of K\"ahler metrics
within the same K\"ahler class, that is, all K\"ahler metrics of
the form $\omega_\varphi=\omega+ \sqrt{-1}\partial \bar\partial
\varphi$.
One may consider the following equation:
\begin{equation} \label{gene-C-equ} \bar\partial V_\varphi = 0,
\end{equation} where $V_\varphi$ is defined by \begin{equation}
\label{gene-C-equ2} \omega_{\varphi}(V_\varphi, \cdot) = \bar
\partial (S(\omega_\varphi) - \tr_{\omega_\varphi}(\theta)).
\end{equation}
Clearly, when $\theta=0$, (\ref{gene-C-equ}) is exactly the
equation for Calabi's extremal metrics. For this reason, we call a
solution of (\ref{gene-C-equ}) a generalized extremal metric. If
$Y$ does not admit any nontrivial holomorphic vector fields, then
any generalized extremal metric $\omega_\varphi$ satisfies
\begin{equation}
S(\omega_\varphi) - \tr_{\omega_\varphi}(\theta) = \mu,
\end{equation}
where $\mu$ is the constant given by
$$\mu=\frac{(2\pi c_1(Y) - [\theta])\cdot
[\omega]^{n-1}}{[\omega]^n}.$$ Moreover, if $2\pi c_1(Y)-
[\theta]=\lambda [\omega]$, then any such a metric satisfies
$$\ric(\omega_\varphi) = \lambda \omega_\varphi +\theta,$$
that is, $\omega_\varphi$ is a generalized K\"ahler-Einstein
metric. This can be proved by an easy application of the Hodge
theory. More interestingly, if we take $\theta$ to be the
pull-back of $\omega_{WP}$ by $f: X_{can}^{\circ}\rightarrow {\mathcal
M}_{CY}$, then we get back those generalized K\"ahler-Einstein
metrics which arise from limits of the K\"ahler-Ricci flow.

Let $f: X\rightarrow \Sigma$ be a K\"ahler surface admitting a
non-singular holomorphic fibration over a Riemann surface $\Sigma$ of genus greater than one, with fibres of
genus at least $2$. Let $V$ be
the vertical tangent bundle of $X$ and
$[\omega_t]=-f^*c_1(\Sigma)-tc_1(V)$.

Let $\chi$ be a K\"ahler form in $-c_1(\Sigma)$ and
$\omega_0 \in -c_1(V)$. Then $\omega_0=\omega_H\oplus \theta \chi$,
where $\omega_H$ is the hyperbolic K\"ahler form on each fiber and
$\theta$ is a smooth function on $X$. We then set
$$\omega_t=\chi+t\omega_0.$$
The following theorem is proved by Fine in \cite{Fi}.
\begin{theorem}
For $t>0$ sufficiently small, there exists a constant scalar
curvature K\"ahler metric in $[\omega_t]$. Furthermore, such a
family of constant scalar curvature K\"ahler metrics converge to a K\"ahler metric $\chi_{\infty}$ on
$\Sigma$ defined by

\begin{equation}
S(\chi_{\infty})-\tr_{\chi_{\infty}}(\theta)=const.
\end{equation}
where $\theta$ is the Weil-Petersson metric pulled back from the
moduli spaces of the fibre curves.

\end{theorem}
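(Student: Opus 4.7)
The plan is to implement an adiabatic limit construction in the spirit of Fine's original argument. The parameter $t$ measures the size of the fibres relative to the base, and as $t\to 0$ the surface $X$ collapses onto $\Sigma$. One first builds an approximate constant scalar curvature metric by a power series in $t$, and then upgrades it to a genuine solution via the implicit function theorem; the limit equation on the base falls out of the solvability obstruction at leading order.

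\textbf{Step 1 (formal expansion).} First I would expand $S(\omega_t)$ as an asymptotic series in $t$. Since the restriction $\omega_0|_{X_s}$ is a hyperbolic form of constant scalar curvature $-1$ on each fibre, the fibrewise contribution produces a leading $-t^{-1}$ term, while $\chi$ and the variation of the fibre complex structures contribute an $O(1)$ correction. The constant $\mu_t=(2\pi c_1(X)\cdot[\omega_t])/[\omega_t]^2=-t^{-1}+O(1)$ matches the leading term automatically, but the $O(1)$ discrepancy must be killed by a correction $\varphi_t\sim\sum_{k\geq 1}t^k\varphi_k$. Equating powers of $t$ in the equation $S(\omega_t+\ddbar\varphi_t)=\mu_t$ gives at each order a linear equation $L_t\varphi_k=F_k$ whose leading part is the fibrewise bi-Laplacian.

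\textbf{Step 2 (solvability on the base).} Solvability of the $k$-th equation requires $F_k$ to be orthogonal to functions pulled back from $\Sigma$, and at $k=1$ this orthogonality condition translates (after fibrewise integration, using Riemann--Roch and the Kodaira--Spencer description of $\theta$) precisely into
\[
S(\chi_\infty)-\tr_{\chi_\infty}(\theta)=\text{const}
\]
on $\Sigma$, where $\theta$ is the pull-back of the Weil--Petersson form on moduli of the fibre curves. Because $\Sigma$ has genus $\geq 2$ and admits no nontrivial holomorphic vector fields, I would solve this equation for $\chi_\infty$ by a standard continuity or variational argument on the Riemann surface. Once $\chi_\infty$ is in hand, the fibrewise part of $\varphi_1$ is determined by inverting the vertical bi-Laplacian on fibrewise-mean-zero functions; iterating solves for all $\varphi_k$ and produces an approximate cscK metric $\omega_t^{(N)}$ with $S(\omega_t^{(N)})-\mu_t=O(t^N)$ for any $N$.

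\textbf{Step 3 (implicit function theorem and convergence).} To close the argument I would apply the contraction mapping theorem to the full cscK equation in suitable weighted H\"older spaces. The main obstacle is a $t$-uniform lower bound on the first nonzero eigenvalue of the Lichnerowicz operator $\mathcal{D}_t^\ast\mathcal{D}_t$ of $\omega_t^{(N)}$: in the adiabatic limit the low modes concentrate on functions pulled back from $\Sigma$, so one needs a Poincar\'e-type estimate split into fibrewise and horizontal pieces, with weights tracking the powers of $t$. The non-existence of holomorphic vector fields on $X$ (forced by genus $\geq 2$ of both base and fibres) prevents the spectrum from touching zero, and this is the technical heart of the argument. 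With uniform invertibility in hand, Picard iteration produces the cscK metric $\omega_{t,\infty}$ for all small $t>0$, and the convergence of $f_\ast\omega_{t,\infty}$ to $\chi_\infty$ is immediate from the construction, since $\omega_{t,\infty}-\omega_t^{(N)}=O(t^N)$.
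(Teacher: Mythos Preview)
The paper does not give its own proof of this theorem: it is quoted as a result of Fine \cite{Fi} and stated without argument. Your outline is a faithful sketch of Fine's original adiabatic-limit method---formal expansion in $t$, identification of the base equation as the leading-order solvability obstruction, and closure via an implicit function theorem with a uniform spectral gap for the Lichnerowicz operator---so there is nothing to compare against in the present paper, and your strategy is the correct one for the cited result.
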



\subsection{Asymptotics of the Mabuchi energy by the large
K\"ahler structure limits}

Let $X$ be an $n$-dimensional compact K\"ahler manifold and $\omega$ a K\"ahler form. The Mabuchi energy functional $\mathcal{K}_{\omega}(\cdot)$ is
defined on $PSH(X, \omega)$ as follows

\begin{equation}\small
\mathcal{K}_{\omega}(\varphi)= \int_X
\log\frac{\omega_{\varphi}^n}{\omega^n}\omega_{\varphi}^n-\sum_{j=0}^{n-1}\int_X
\varphi \ric(\omega)\wedge \omega^{j}\wedge
\omega_{\varphi}^{n-j-1}+ \frac{n \mu}{n+1} \sum_{j=0}^{n}\int_X
\varphi \omega^{j}\wedge \omega_{\varphi}^{n-j},
\end{equation}
where $\omega_{\varphi} = \omega + \ddbar \varphi$ and $\mu=\frac{ 2\pi c_1(X) \cdot
[\omega]^{n-1}}{[\omega]^n}.$

\begin{definition}Let $X$ be a compact K\"ahler manifold of complex
dimension $n$. Let $\omega$ be a K\"ahler metric and $\theta$ a
closed $(1,1)$-form on $X$. Then the generalized Mabuchi energy
functional $\mathcal{K}_{\omega, \theta}(\cdot)$ is defined by
\begin{equation}\small
\mathcal{K}_{\omega,\theta}(\varphi)= \int_X
\log\frac{\omega_{\varphi}^n}{\omega^n}\omega_{\varphi}^n-\sum_{j=0}^{n-1}\int_X
\varphi (\ric(\omega)-\theta) \wedge \omega^{j}\wedge
\omega_{\varphi}^{n-j-1}+ \frac{n \mu}{n+1} \sum_{j=0}^{n}\int_X
\varphi \omega^{j}\wedge \omega_{\varphi}^{n-j},
\end{equation}
where $\mu=\frac{ (2\pi c_1(X) - [\theta])\cdot
[\omega]^{n-1}}{[\omega]^n}.$

\end{definition}

The following proposition can be proved by straightforward calculation.
\begin{proposition}

\begin{equation}
\delta \mathcal{K}_{\omega, \theta}=-\int_X \delta \varphi
\left(S(\omegaphi)-\tr_{\omegaphi}(\theta)\right)\omegaphi^n.
\end{equation}
Therefore
\begin{equation}\label{kenergy2}
\mathcal{K}_{\omega, \theta} (\varphi)=-\int_0^1\int_X  \dot{\varphi_t}
\left(S(\omega_t )-\tr_{\omega_t }(\theta)\right)\omega_t^n dt
\end{equation}
where $ \{ \varphi_t \}_{t\in[0,1]}$ is a smooth path in $PSH(X, \omega)$ with $\varphi_0=0$ and $\varphi_1= \varphi$, and $\omega_t = \omega + \ddbar \varphi_t$. The formula \ref{kenergy2} is independent of the choice of the path
$\varphi_t$.

\end{proposition}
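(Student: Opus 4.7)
The plan is to derive the first variation formula by direct differentiation of each summand in the definition of $\mathcal{K}_{\omega,\theta}$, and then obtain the path-integral representation and its path-independence as immediate corollaries.

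I would first record three elementary identities used throughout: $\delta(\omegaphi^n) = n\,\ddbar(\delta\varphi)\wedge \omegaphi^{n-1}$; the Ricci identity $\ric(\omegaphi) - \ric(\omega) = -\ddbar \log(\omegaphi^n/\omega^n)$; and the scalar-curvature identity $n\,\ric(\omegaphi)\wedge \omegaphi^{n-1} = S(\omegaphi)\,\omegaphi^n$. Since $\omega$, $\omegaphi$, $\ric(\omega)$ and $\theta$ are all closed, these let me move $\ddbar$ freely via integration by parts on the closed manifold $X$.

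The computation then proceeds term by term. For the entropy piece $I_1 := \int_X \log(\omegaphi^n/\omega^n)\,\omegaphi^n$, the contribution from $\delta\log\omegaphi^n$ integrates to zero because $\omegaphi^{n-1}\wedge \ddbar(\delta\varphi)$ is exact; integration by parts on the remaining piece, together with the Ricci and scalar-curvature identities, yields
\begin{equation*}
\delta I_1 \;=\; n\!\int_X \delta\varphi\,\ric(\omega)\wedge \omegaphi^{n-1} \;-\; \int_X \delta\varphi\, S(\omegaphi)\,\omegaphi^n.
\end{equation*}
For the linear combinations, the key tool is a telescoping identity: for any closed $(1,1)$-form $\alpha$,
\begin{equation*}
\delta\!\left(\sum_{j=0}^{n-1}\int_X \varphi\, \alpha \wedge \omega^j\wedge \omegaphi^{n-j-1}\right) \;=\; n\!\int_X \delta\varphi\, \alpha\wedge \omegaphi^{n-1},
\end{equation*}
and analogously $\delta\bigl(\sum_{j=0}^n \int_X \varphi\, \omega^j\wedge\omegaphi^{n-j}\bigr) = (n+1)\int_X \delta\varphi\,\omegaphi^n$. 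These are proved by a summation-by-parts combined with $\ddbar\varphi = \omegaphi - \omega$, causing all cross terms $\int_X \delta\varphi\,\alpha\wedge\omega^j\wedge\omegaphi^{n-j-1}$ with $0<j<n-1$ to cancel. Applying the first identity with $\alpha = \ric(\omega)-\theta$ gives $\delta I_2$, and the second gives $\delta I_3 = n\mu\int_X \delta\varphi\,\omegaphi^n$.

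Summing, the two contributions $\pm n\int_X \delta\varphi\,\ric(\omega)\wedge\omegaphi^{n-1}$ cancel, and the identity $n\,\theta\wedge\omegaphi^{n-1} = \tr_{\omegaphi}(\theta)\,\omegaphi^n$ converts the surviving $\theta$ term into $\int_X \delta\varphi\,\tr_{\omegaphi}(\theta)\,\omegaphi^n$, producing the asserted variation formula (with the constant $J$-term contribution $n\mu\int_X \delta\varphi\,\omegaphi^n$ absorbed into the usual mean-zero normalization, since $n\mu$ is precisely the average of $S(\omegaphi)-\tr_{\omegaphi}(\theta)$). Equation (6.9) then follows by integrating the first variation along any smooth path $\{\varphi_t\}_{t\in[0,1]}$ joining $0$ to $\varphi$; because the left-hand side $\mathcal{K}_{\omega,\theta}(\varphi)$ is already defined by the explicit formula without reference to a path, the equality of the path integral with the explicit expression is automatically path-independent. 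The only step requiring substantive bookkeeping is the telescoping summation-by-parts for the two displayed identities; it is routine but the one place where indices must be tracked with care.
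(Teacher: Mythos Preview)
The paper gives no proof beyond the phrase ``straightforward calculation''; your proposal is exactly that calculation and is correct. Your remark about the residual $n\mu\int_X\delta\varphi\,\omegaphi^n$ is on point: the honest variation is $-\int_X\delta\varphi\bigl(S(\omegaphi)-\tr_{\omegaphi}(\theta)-n\mu\bigr)\omegaphi^n$, so the paper's formula should be read modulo constant variations of $\varphi$ (equivalently, with $n\mu$ absorbed as the average of $S(\omegaphi)-\tr_{\omegaphi}(\theta)$), which is the standard convention for Mabuchi-type functionals.
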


Let $X$ be an $n$-dimensional compact K\"ahler manifold with semi-ample canonical line bundle. Suppose  $0< \kod(X) = \kappa < n$ and $X_{can} = X_{can}^{\circ}$, i.e., $X_{can}$ is nonsingular and the algebraic fibration $f: X \rightarrow X_{can}$ has no singular fibre.

Fix $\chi\in -c_1(X)$ as in Section 3.1 and $\omega_0$ an arbitrary
K\"ahler form with $\int_{X_s} \omega_{0,s}^{n-\kappa} = 1$, where $X_s = f^{-1}(s)$ and $\omega_{0, s} = \left. \omega_0 \right|_{X_s} $. Let $\omega_t=\chi+t\omega_0$ and
$\omega_{\varphi}=\omega_t+\ddbar\varphi$. Let $\omega_{SF}$ be
the semi-flat form in $[\omega_0]$. Then
$\varphi$ can always be decomposed as $$\varphi=\bar{\varphi}+t\psi$$ where
$\bar{\varphi}=\int_{X_y} \varphi \omega_{SF}^{n-k}$ is the
push-forward of $\varphi$ with respect to the flat metric on the
fibres.

\begin{theorem}\label{kenergy1}
Along the above class deformation of the K\"ahler
class on $X$,

\begin{equation}
\mathcal{K}_{\omega_t}(\varphi)={\tiny \left(%
\begin{array}{c}
  n \\
  \kappa \\
\end{array}%
\right)}t^{n-\kappa}\left(
\mathcal{K}_{\chi,\omega_{WP}}(\bvarphi)+ +
\mathcal{L}_{\chi,\chiphibar, \omega_0} (\psi) \right)+
O(t^{n-\kappa+1})
\end{equation}
and
\begin{eqnarray}\small
\mathcal{L}_{\chi,\chiphibar,\omega_0}(\psi)&=&\int_{s\in X_{can}} \left(\int_{X_s}\log
\frac{\omega_{\psi, s}^{n-\kappa}}{\omega_{0,s}^{n-\kappa}}\omega_{\psi, s}^{n-\kappa}\right)
\chiphibar^{\kappa}\\ \nonumber
&&- \sum_{j=0}^{n-\kappa-1}\sum_{i=0}^{\kappa} A_{i,j}
\int_{s\in X_{can}}\left( \int_{X_s} \psi \ric(\omega_{0, s})\wedge
\omega_{0,s}^{j}\wedge\omega_{\psi, s}^{n-\kappa-1-j} \right)
\chi^{i}\wedge \chiphibar^{\kappa-i},
\end{eqnarray}
where   $\small A_{i,j}= \tiny \left(%
\begin{array}{c}
  n \\
  \kappa \\
\end{array}%
\right) ^{-1} \left(%
\begin{array}{c}
  i+j \\
  i \\
\end{array}%
\right) \left(%
\begin{array}{c}
  n-1-i-j \\
  \kappa-i \\
\end{array}%
\right)$ and $\mathcal{K}_{\chi, \omega_{WP}}(\cdot)$ is the generalized Mabuchi energy on $X_{can}$.

In particular, when $\chi=\chiphibar$,
$$\mathcal{L}_{\chi,\chiphibar, \omega_0}(\psi)
=\int_{ s \in X_{can}} \mathcal{K}_{\omega_{0,s}}(\psi) \chi^{\kappa},$$
where $\mathcal{K}_{\chi}(\cdot)$ is the Mabuchi energy on
$X_{can}$, $\mathcal{L}_{\chi,\chiphibar\omega_0} (\cdot)$ is
defined by

\end{theorem}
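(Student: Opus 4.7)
The strategy is to expand $\mathcal{K}_{\omega_t}(\varphi)$ as an asymptotic series in $t$ and read off the coefficient of $t^{n-\kappa}$. The key structural input is that $\chi$ is pulled back from the $\kappa$-dimensional base $X_{can}$, so $\chi^{\kappa+1}=0$ and $\chiphibar^{\,\kappa+1}=0$; this forces the leading power in every integrand to be $t^{n-\kappa}$. First I would rewrite $\omega_\varphi=\chiphibar+t\,\omega_\psi$ using $\varphi=\bvarphi+t\psi$ and $\omega_\psi=\omega_0+\ddbar\psi$, where $\ddbar\bvarphi$ descends to $X_{can}$. The binomial expansion then gives
$$\omega_\varphi^n \;=\; \binom{n}{\kappa}t^{n-\kappa}\,\chiphibar^{\,\kappa}\wedge\omega_\psi^{\,n-\kappa} \;+\; O(t^{n-\kappa+1}),$$
and similarly $\omega_t^n=\binom{n}{\kappa}t^{n-\kappa}\chi^\kappa\wedge\omega_0^{n-\kappa}+O(t^{n-\kappa+1})$.

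Next I would substitute into the three summands of $\mathcal{K}_{\omega_t}(\varphi)$ and compute the leading contribution of each. The entropy term $\int_X\log(\omega_\varphi^n/\omega_t^n)\,\omega_\varphi^n$ reduces at leading order to the integral of $\log\bigl(\chiphibar^{\,\kappa}\wedge\omega_\psi^{\,n-\kappa}/(\chi^\kappa\wedge\omega_0^{n-\kappa})\bigr)$ against $\chiphibar^{\,\kappa}\wedge\omega_\psi^{\,n-\kappa}$; after replacing $\omega_0$ by $\omega_{SF}$ modulo an exact piece and using that $\omega_{SF}^{\,n-\kappa}|_{X_s}$ is Ricci-flat, this factors multiplicatively into a horizontal log-ratio (depending only on $\bvarphi$ after fiber averaging) and the fiberwise log-ratio $\omega_{\psi,s}^{\,n-\kappa}/\omega_{0,s}^{\,n-\kappa}$. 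For the Ricci term the essential identity is
$$-\ddbar\log\bigl(\chi^{\kappa}\wedge\omega_{SF}^{\,n-\kappa}\bigr)\;=\;\ric(\chi^{\kappa})\big|_{X_{can}^{\circ}}\;+\;f^{*}\omega_{WP}$$
on $X^\circ$, where the $f^{*}\omega_{WP}$ contribution is precisely the curvature of the Hodge metric $h_{WP}$ on $f_{*}K_{X/X_{can}}$ as recalled in Section 3.1. The binomial expansion of $\omega_t^{\,j}\wedge\omega_\varphi^{\,n-j-1}$ then produces the stated coefficients $A_{i,j}$ by counting the ways to distribute $\kappa$ factors of $\chi,\chiphibar$ among the $n-1$ available slots, with the remaining $n-1-\kappa$ slots filled by $\omega_0,\omega_\psi$.

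Collecting all three contributions, Fubini over the fibration $f:X\to X_{can}$ (integrating fibers first, using $\int_{X_s}\omega_{0,s}^{n-\kappa}=1$) separates the coefficient of $t^{n-\kappa}$ into one piece depending only on $\bvarphi$ and one piece involving $\psi$. The first piece reassembles into $\mathcal{K}_{\chi,\omega_{WP}}(\bvarphi)$ by direct comparison with the definition of the generalized Mabuchi energy, with the twist $\omega_{WP}$ entering through the Ricci identity displayed above. The second piece is, by construction, the stated formula for $\mathcal{L}_{\chi,\chiphibar,\omega_0}(\psi)$. In the special case $\chiphibar=\chi$, the horizontal sum $\sum_{i}A_{i,j}\,\chi^{i}\wedge\chiphibar^{\,\kappa-i}$ collapses to a multiple of $\chi^\kappa$, the cross-terms in $\mathcal{L}$ drop out, and what remains on each fiber is precisely $\mathcal{K}_{\omega_{0,s}}(\psi)$ integrated against $\chi^\kappa$, giving the second displayed identity.

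The main obstacle will be the Ricci term, since $\ric(\omega_t)=-\ddbar\log\omega_t^{\,n}$ has no naive $t\to 0$ limit: although the scalar factor $t^{n-\kappa}$ cancels inside the logarithm, the subleading terms of $\omega_t^{\,n}$ contribute at exactly the order we need. A careful horizontal/vertical decomposition of $\ric(\omega_t)$ is required so that the Weil--Petersson curvature formula is applied in the precise slot where fiber integration produces the determinant of the Hodge bundle, and so that the correct pairings of $\ddbar\bvarphi$ versus $\ddbar\psi$ with the various $\chi,\omega_0$ factors produce the combinatorial coefficients $A_{i,j}$ and the clean split between the base Mabuchi energy and the fiber functional.
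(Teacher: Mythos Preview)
Your proposal is correct and follows essentially the same route as the paper: both expand the three summands of the Mabuchi energy via the binomial theorem using $\chi^{\kappa+1}=0$ and the decomposition $\omega_\varphi=\chiphibar+t\,\omega_\psi$, identify the Weil--Petersson contribution in the Ricci term through the curvature of the Hodge bundle, and then separate base and fiber by Fubini. The paper's proof is simply the terse version of what you have outlined, stating the leading-order result for each of the three summands and then combining them.
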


\begin{proof} The proof boils down to direct computation. First, calculate \begin{equation} \small \int_X \log \frac{\omegaphi^n}{\omega^n} \omegaphi^n = t^{n-k}\left(%
\begin{array}{c}
  n \\
  \kappa \\
\end{array}%
\right) \left( \int_X \log
\frac{\chiphibar^{\kappa}}{\chi^{\kappa}}\chiphibar^{\kappa}+\int_{X_{can}}
\left(\int_{X_y}
\log\frac{\omegapsi^{n-\kappa}}{\omega^{n-\kappa}}\omegapsi^{n-\kappa}\right)\chiphibar^{\kappa}
+O(t)\right).
\end{equation}

\noindent Also
\begin{eqnarray*} \small
&&-\sum_{j=0}^{n-1}\int_X \varphi \ric(\omega)\wedge
\omega^{j}\wedge
\omega_{\varphi}^{n-j-1}\\
&=& t^{n-\kappa}\sum_{j=1}^{\kappa-1}
\left(%
\begin{array}{c}
  n \\
  \kappa \\
\end{array}%
\right)
 \int_{X_{can}} \bvarphi \left(
-\ric(\chi)+\omega_{WP}\right)\wedge \chi^j\wedge
\chiphibar^{\kappa-j-1}  \\
&&  -t^{n-\kappa}
\sum_{j=0}^{n-\kappa-1}\sum_{i=j}^{\kappa+j}A_{i,j} \int_{ s\in X_{can}}
\left( \int_{X_s} \ric(\omega_{0, s})\wedge
\omega_{0,s}^{j}\wedge\omega_{\psi, s}^{n-\kappa-1-j} \right)
\chi^{i}\wedge \chiphibar^{\kappa-i} +O(t^{n-\kappa+1})
\end{eqnarray*}
and
\begin{equation}
\sum_{j=0}^{n}\int_X \varphi \omega_t^j\wedge \omegaphi^{n-j}
=\left(%
\begin{array}{c}
  n+1 \\
  \kappa+1 \\
\end{array}%
\right) t^{n-\kappa} \sum_{j=0}^{\kappa} \int_{X_{can}} \bvarphi
\chi^j\wedge \chiphibar^{\kappa-j}+O(t^{n-\kappa+1}).
\end{equation}

The theorem follows from straightforward calculation by combining the above formulas.
\begin{eqnarray*} \small
&&\mathcal{K}_{\omega_t}(\varphi)\\
&=& {\tiny \left(%
\begin{array}{c}
  n \\
  \kappa \\
\end{array}%
\right) }t^{n-\kappa} \left(\int_{X_{can}} \log
\frac{\chiphibar^{\kappa}}{\chi^{\kappa}}\chiphibar^{\kappa}
-\int_{X_{can}}\bvarphi (\ric(\chi)-\omega_{WP})\wedge
\chi^{j}\wedge \chiphibar^{\kappa-j}
+\frac{\kappa\bar{\mu}}{\kappa+1}\int_{X_{can}}\bvarphi
\chi^{j}\wedge \chiphibar^{\kappa-j}\right)\\
&&+{\tiny \left(%
\begin{array}{c}
  n \\
  \kappa \\
\end{array}%
\right)} t^{n-\kappa} \int_{ s\in X_{can}} \left(\int_{X_s}\log
\frac{\omega_{\psi, s}^{n-\kappa}}{\omega_{0,s}^{n-\kappa}}\omega_{\psi, s}^{n-\kappa}\right)
\chiphibar^{\kappa}\\
&&- t ^{n -\kappa} \sum_{j=0}^{n-\kappa-1}\sum_{i=j}^{m+j} \int_{s \in X_{can}}
\left( \int_{X_s} \ric(\omega_{0, s})\wedge
\omega_{0,s}^{j}\wedge\omega_{\psi, s}^{n-\kappa-1-j} \right)
\chi^{i-j}\wedge \chiphibar^{m-i+j} \\
&&+O(t^{n-\kappa+1}),
\end{eqnarray*}
where $ \bar{\mu} = \frac{ ( 2\pi c_1(X_{can}) - [\omega_{WP}] ) \cdot [\chi]^{\kappa -1} } { [\chi]^{\kappa}}$.

\qed
\end{proof}

We also investigate the asymptotics of the Mabuchi energy in the
case of a fibred space studied by Fine in \cite{Fi}.

Let $f: X\rightarrow \Sigma$ be a K\"ahler surface admitting a
non-singular holomorphic fibration over $\Sigma$, with fibres of
genus at least $2$. We also assume $c_1(\Sigma) < 0$. Let $V$ be
the vertical tangent bundle of $X$ and
$[\omega_t]=-f^*c_1(\Sigma)-tc_1(V)$.

Let $\chi$,
$\omega_0 \in-c_1(V)$ and $\omega_t$ be defined as in Section 6.1.
We consider the asymptotics of the Mabuchi energy $\mathcal{K}_{\omega_t}(\cdot)$ as $t$ tends to
$0$.

\begin{theorem}\label{kenergy3}
Let $\omega_0\in -c_1(V)$ be a closed $(1,1)$ such that its
restriction on each fibre is a hyperbolic metric. Let
$\omega_t=\chi+t\omega_0$ and $\omegaphi=\omega_t+\ddbar \varphi$
be a metric deformation, where $\varphi\in C^{\infty}(\Sigma)$.
Then we have

\begin{equation}
\mathcal{K}_{\omega_t}(\varphi)= 2t \mathcal{K}_{\chi,
\theta}(\varphi)+ O(t^{2}).
\end{equation}

\end{theorem}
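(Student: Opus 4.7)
The plan is to expand each term in $\mathcal{K}_{\omega_t}(\varphi)$ in powers of $t$ and isolate the leading coefficient, following the philosophy of Theorem \ref{kenergy1} applied to the special case $n=2$, $\kappa=1$ with the further simplification that $\varphi \in C^\infty(\Sigma)$. Since $\ddbar\varphi$ is then pulled back from the one-dimensional base, both $\chi$ and $\chiphi := \chi + \ddbar\varphi$ are pullbacks from $\Sigma$, so $\chi^2 = \chiphi^2 = \chi\wedge\chiphi = 0$ on $X$. In the notation $\varphi = \bar\varphi + t\psi$ of Theorem \ref{kenergy1}, this means $\psi \equiv 0$: only the base-direction Mabuchi contribution can survive, and the fibre-direction term $\mathcal{L}$ vanishes identically.

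The three summands expand as follows. From
\begin{equation*}
\omega_t^2 = 2t\,\chi\wedge\omega_0 + t^2\omega_0^2, \qquad \omegaphi^2 = 2t\,\chiphi\wedge\omega_0 + t^2\omega_0^2,
\end{equation*}
one obtains $\omegaphi^2/\omega_t^2 = \chiphi/\chi + O(t)$ at the level of ratios of $(1,1)$-forms on $\Sigma$, and fibre integration gives
\begin{equation*}
\int_X \log\frac{\omegaphi^2}{\omega_t^2}\,\omegaphi^2 = 2t\,V_s\!\int_\Sigma \log(\chiphi/\chi)\,\chiphi + O(t^2),
\end{equation*}
where $V_s = \int_{X_s}\omega_0|_{X_s}$ is the (topologically constant) hyperbolic fibre volume. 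An analogous computation using $\omega_t\wedge\omegaphi = t(\chi+\chiphi)\wedge\omega_0 + t^2\omega_0^2$ shows that the normalization piece $\frac{2\mu}{3}\sum_{j=0}^2\int_X \varphi\,\omega_t^j\wedge\omegaphi^{2-j}$ contributes $2tV_s\mu\int_\Sigma\varphi(\chi+\chiphi) + O(t^2)$.

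The crucial and only nontrivial step is the Ricci piece $-\int_X \varphi\,\ric(\omega_t)\wedge(\omega_t + \omegaphi)$. In a local splitting into a vertical direction $z_1$ and a horizontal direction $z_2$, one has $\det g_t = t\,(\omega_0)_{VV}\,\chi_{HH} + O(t^2)$, whence $\ric(\omega_t) = -\ddbar\log(\omega_0)_{VV} - \ddbar\log\chi_{HH} + O(t)$. The horizontal summand is $\ric(\chi)$ pulled back from $\Sigma$; the vertical summand restricts on each fibre $X_s$ to the Ricci form of the hyperbolic metric (namely $-\omega_H$), and its fibre push-forward yields $-V_s\,\omega_{WP}$ by the standard interpretation of $\omega_{WP}$ as the curvature of the $L^2$-metric on $f_* K_{X/\Sigma}$ (see \cite{Ti4} and the discussion preceding Definition \ref{hkemetric}). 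Thus
\begin{equation*}
f_*\bigl(\ric(\omega_t)\wedge\omega_0\bigr) = V_s\bigl(\ric(\chi) - \omega_{WP}\bigr) + O(t),
\end{equation*}
and assembling all three pieces with $\theta := \omega_{WP}$ gives $\mathcal{K}_{\omega_t}(\varphi) = 2tV_s\,\mathcal{K}_{\chi,\theta}(\varphi) + O(t^2)$, which is the claimed expansion after absorbing $V_s$ into the normalization (equivalently, scaling $\omega_0$ so that $V_s = 1$). The main obstacle is this Fujiki-Schumacher-style identification of the Weil-Petersson form as the horizontal push-forward of the fibre curvature; the rest is purely algebraic bookkeeping identical in spirit to the proof of Theorem \ref{kenergy1}.
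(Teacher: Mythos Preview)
The paper does not supply a proof of this theorem; it is stated immediately after Theorem \ref{kenergy1} and is meant to follow by the same kind of direct expansion, so your overall strategy is the intended one.

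There is, however, a genuine gap. In Theorem \ref{kenergy1} the fibres are Calabi--Yau, so $\mu = 2\pi c_1(X)\cdot[\omega_t]^{n-1}/[\omega_t]^n$ stays bounded as $t\to 0$. Here the fibres have genus at least $2$, hence $c_1(X_s)\neq 0$, and since $[\omega_t]^2 = 2t[\chi]\cdot[\omega_0]+O(t^2)$ while $2\pi c_1(X)\cdot[\omega_t] = -2\pi[\chi]\cdot[\omega_0]+O(t)$, one finds $\mu = -\pi/t + O(1)$. Consequently your expression $2tV_s\mu\int_\Sigma\varphi(\chi+\chiphi)$ for the normalization piece is of order $1$, not order $t$; and the $t^2\omega_0^2$ correction, after multiplication by $\mu$, lands at order $t$ rather than $t^2$. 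The Ricci piece has a matching $O(1)$ contribution that you omit: wedging $\ric(\omega_t)\big|_{t=0} = \ric(\chi) - \ddbar\log(\omega_0)_{VV}$ with $\chi+\chiphi$ (rather than with $2t\omega_0$) and using $\ric(\chi)\wedge\chi=0$ on a curve yields, after fibre integration and Gauss--Bonnet,
\[
\Bigl(-\int_{X_s}\ric(\omega_0|_{X_s})\Bigr)\int_\Sigma\varphi(\chi+\chiphi) \;=\; 2\pi(2g-2)\int_\Sigma\varphi(\chi+\chiphi).
\]
These two order-$1$ terms cancel precisely because $V_s = [\omega_0]\cdot[X_s] = -\deg T_{X_s} = 2g-2$, but this cancellation is the one genuinely new feature relative to Theorem \ref{kenergy1} and must be checked explicitly. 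Once it is, one also has to carry the subleading part of $\mu$ and the term $\mu t^2\int_X\varphi\,\omega_0^2$ into the order-$t$ bookkeeping before the pieces assemble into $2t\,\mathcal{K}_{\chi,\theta}(\varphi)$. Your Weil--Petersson identification for the order-$t$ Ricci contribution is the right idea.
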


Theorem \ref{kenergy1} and Theorem  \ref{kenergy3} can be considered as an adjunction type formula for the Mabuchi energy on an algebraic fibre space.

\bigskip
\bigskip

\noindent {\bf Acknowledgments } The first named author would like
to H. Fang, D.H. Phong, J. Sturm, V. Tosatti, B. Weinkove and Z. Zhou for  enlightening discussions. Both authors thank
MSRI for its hospitality during the program of geometric evolution equations and related topics where part of the work was carried out.
The second named author would also like to thank the Clay Math. Inst. for financial support during his visit to MSRI.


\bigskip

\bigskip

\bigskip

\footnotesize

\end{document}